\documentclass[11pt]{amsart}
\usepackage[dvipsnames]{xcolor}
\usepackage{amsfonts,amsthm,amsmath,amssymb,etoolbox,comment,graphicx,array,tabu,commath,latexsym,environ,tikz,tikz-cd,stackengine,diagbox,caption}
\usepackage{subcaption}
\usepackage[ twoside,top=1in, bottom=1in, left=1in, right=1in]{geometry}
\usepackage{mathrsfs}
\usepackage{soul}
\usepackage{eso-pic}
\usepackage[toc,page]{appendix}
\usepackage{enumitem}
\usepackage{mathtools}

\usepackage[colorlinks, linkcolor = black, anchorcolor = black, citecolor = black, urlcolor=black]{hyperref}

\newcommand{\fg}{\mathfrak{g}}

\newcommand{\cC}{\mathcal{C}}

\newcommand{\cF}{\mathcal{F}}

\newcommand{\cH}{\mathcal{H}}

\newcommand{\cN}{\mathcal{N}}
\newcommand{\cO}{\mathcal{O}}

\newcommand{\cR}{\mathcal{R}}
\newcommand{\cS}{\mathcal{S}}
\newcommand{\cT}{\mathcal{T}}

\newcommand{\cV}{\mathcal{V}}

\newcommand{\cW}{\mathcal{W}}

\newcommand{\cZ}{\mathcal{Z}}
\usepackage[normalem]{ulem}

\newcommand{\bB}{\mathbf{B}}
\newcommand{\bC}{\mathbf{C}}

\newcommand{\bF}{\mathbf{F}}

\newcommand{\bI}{\mathbf{I}}

\newcommand{\bL}{\mathbf{L}}
\newcommand{\bM}{\mathbf{M}}

\newcommand{\bg}{\mathbf{g}}

\newcommand{\R}{\mathbb R}

\newcommand{\Z}{\mathbb Z}
\newcommand{\N}{\mathbb N}

\newcommand{\Zc}{\mathcal Z}

\newcommand{\Fc}{\mathcal F}

\newcommand{\Fb}{\mathbf F}
\newcommand{\Mb}{\mathbf M}
\newcommand{\id}{\mathrm {id}}

\newcommand{\length}{\mathrm{length}}
\newcommand{\area}{\operatorname{area}}

\newcommand{\dist}{\operatorname{dist}}

\newcommand{\rad}{\mathrm{rad}}

\newcommand{\diam}{\mathrm{diam}}

\renewcommand{\tilde}{\widetilde}

\newcommand{\spt}{\operatorname{spt}}

\newcommand{\sing}{\operatorname{sing}}
\newcommand{\x}{\times}

\newcommand{\ins}{\mathrm{in}}
\newcommand{\out}{\mathrm{out}}

\newcommand{\injrad}{\mathrm{injrad}}

\newcommand{\GS}{\mathcal{S}}

\newcommand{\bd}{\mathbf{d}}

\newtheorem{claimIntro}{Claim}[subsection]


\usepackage[style=alphabetic, backend=biber, sorting=nyt, url=false ]{biblatex}
\addbibresource{reference.bib}

\title[Min-max theory and 
minimal surfaces with prescribed genus]{Min-max theory and 
minimal surfaces with\\ prescribed genus}

\author{Adrian Chun-Pong Chu, Yangyang Li, Zhihan Wang} 
\address{Cornell University, Ithaca, NY 14853, USA}
\email{cc2938@cornell.edu}
\address{University of Notre Dame,
Notre Dame, IN 46556, USA}
\email{yangyangli@nd.edu}
\address{Cornell University, Ithaca, NY 14853, USA}
\email{zw782@cornell.edu}

\date{\today}

\numberwithin{equation}{section}
\newtheorem{thm}{Theorem}[section]
\newtheorem*{thm*}{Theorem}
\newtheorem{cor}[thm]{Corollary}

\newtheorem{prop}[thm]{Proposition}
\newtheorem{lem}[thm]{Lemma}

\theoremstyle{definition}
\newtheorem{defn}[thm]{Definition}

\newtheorem{exmp}[thm]{Example}
\newtheorem{rmk}[thm]{Remark}

\newtheorem{oqn}[thm]{Question}
\newtheorem{cond}{Condition}

\newtheorem*{question*}{Question}

\begin{document}

\begin{abstract}     
We establish a general min-max type theorem that produces minimal surfaces with prescribed genus in 3-manifolds with positive Ricci curvature. An important intermediate step is to show that, in a generic metric with positive Ricci curvature, any family of smooth embedded surfaces, possibly with finitely many singularities, can be deformed into a certain topologically optimal family.

Results in this paper  will be crucial  to our  program on the construction of multiple minimal surfaces with prescribed genus in 3-spheres via topological methods \cite{chu2025arbitraryGenus,chuLiWang2025fourGenus2}.
\end{abstract}
\maketitle 

\section{Introduction}
In recent years there is a rapid development on the construction of minimal surfaces from min-max theory. The Almgren-Pitts min-max theory has led to the proof of Yau's conjecture on the existence of infinitely many minimal surfaces in every 3-manifold \cite{MN17,IMN18, ChodoshMantoulidis2020AC,Zho20,Son23}; see also  \cite{chuStern2025doubling} for an approach via gluing. Another line of research is to construct  minimal surfaces of prescribed genus. It was conjectured by Yau (1982) that every 3-sphere has at least 4 embedded minimal spheres, and by White that every 3-sphere has least 5 embedded minimal tori. Wang--Zhou \cite{WangZhou23FourMinimalSpheres} verified Yau's conjecture for generic metrics and metrics with positive Ricci curvature. In \cite{chuLi2024fiveTori}, the first two authors verified White's conjecture for metrics with positive Ricci curvature. Regarding more works on the construction of minimal submanifolds of controlled topological types under general metrics, see also   \cite{LS47, Str84, GJ86,Grayson89, Whi91, Zho16, HK19, bettiolPiccione2023bifurcationsCliffordTorus, HK23, Ko23a, Ko23b,bettiolPiccione2024nonplanarMinSpheres,LiWang2024NineTori,liWangYao2025minimalLensSpace}.
The above conjectures lead to the following question: 
\begin{oqn}
    In any given Riemannian 3-manifold $M$, how can one construct  embedded  minimal surfaces of a prescribed genus?
\end{oqn}
In this paper, we reduce this problem to the purely topological problem of {\it constructing   suitable ``non-trivial" families of surfaces in $M$.}
Results in this paper are  instrumental to our program on constructing minimal surfaces with prescribed genus in 3-spheres -- namely, the existence of minimal surfaces of arbitrary genus \cite{chu2025arbitraryGenus}, and the existence of 4  minimal surfaces of genus two  in Ricci-positive 3-spheres \cite{chuLiWang2025fourGenus2}\footnote{The current paper and \cite{chuLiWang2025fourGenus2} together supersede  \cite{chuLiWang2025genus2PartI}, which will not be submitted for publication.}.

Here is the setting. In a closed Riemannian $3$-manifold $M$, we let $\cS(M)$ be the set of all surfaces smoothly embedded in $M$ that {\it possibly have finitely many singularities}. For some technical reasons, we will focus on surfaces that are orientable, finite-area, and separate $M$ into two regions. Note that the notion of genus extends naturally to these singular surfaces, so we may let $ \cS_{\leq g}(M)\subset \cS (M)$ be the set of all singular surfaces of genus $\leq g$; readers may refer to \S \ref{sect:prelim} for the precise definitions. Our main theorem below essentially states that if there is a family $\Phi$ of singular surfaces with genus $\leq g$ that detects some non-trivial relative structure of the pair $(\cS_{\leq g}(M),\cS_{\leq g-1}(M))$, then $\Phi$ can be used to produce a minimal surface of genus $g$; see Figure \ref{fig:Sg} for an illustration.

\vbox{
\begin{thm}\label{thm:weakTopoMinMax}
    Let $M$ be a closed orientable Riemannian $3$-manifold with positive Ricci curvature, and $g$ be a positive integer. Suppose there exists a Simon--Smith family
    $$\Phi:X\to \cS_{\leq g}(M)$$ that cannot be deformed via pinch-off processes to become a map into $\cS_{\leq g-1}(M)$. 
    Then $M$ admits an orientable, embedded  minimal surface of genus $g$ with area at most $\displaystyle\max_{x\in X}\area(\Phi(x))$.
\end{thm}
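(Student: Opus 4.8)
\emph{Proof plan.}

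\textbf{A restricted width.} Write $A_0 = \max_{x\in X}\area(\Phi(x))$. Let $\cC$ be the collection of Simon--Smith families obtainable from $\Phi$ by finitely many pinch--off processes. Since pinch--offs compose, no member of $\cC$ deforms into $\cS_{\le g-1}(M)$; in particular each $\Psi\in\cC$ has a slice of genus exactly $g$. The plan is to run a min--max that only sees the genus--$g$ slices: for $\Psi\in\cC$ set $\mathbf W(\Psi) = \sup\{\area(\Psi(x)) : \genus(\Psi(x)) = g\}$ and put $\mathbf W = \inf_{\Psi\in\cC}\mathbf W(\Psi)$, so $\mathbf W\le\mathbf W(\Phi)\le A_0$. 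One checks $\mathbf W>0$: since $\Ric>0$ there is $\varepsilon_0>0$ so that any embedded surface of area $<\varepsilon_0$ bounds a region of small volume (monotonicity and the isoperimetric inequality) and hence pinches down to genus $0$, so if $\mathbf W<\varepsilon_0$ one could pinch every genus--$g$ slice of a nearly optimal $\Psi\in\cC$ to genus $0$ and land in $\cS_{\le g-1}(M)$. The goal becomes: produce an orientable embedded minimal surface of genus $g$ and area $\le\mathbf W$.

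\textbf{Min--max and the structure of the limit.} Pick $\Psi_i\in\cC$ with $\mathbf W(\Psi_i)\to\mathbf W$ and genus--$g$ slices $\Sigma_i = \Psi_i(x_i)$ with $\area(\Sigma_i)\to\mathbf W$. By the Simon--Smith min--max machinery (pull--tight and interpolation as in Colding--De Lellis and De Lellis--Pellandini, together with the regularity inputs cited in the introduction): either the $\Sigma_i$ stay a fixed varifold distance from every smooth minimal surface, and then the area--decreasing pull--tight deformation --- an ambient isotopy, so genus--preserving and keeping families in $\cC$ --- yields $\Psi_i'\in\cC$ with $\mathbf W(\Psi_i')\le\mathbf W-\delta$, contradicting the choice of $\mathbf W$; or, along a subsequence, $\Sigma_i\to V=\sum_j m_j\lvert\Gamma_j\rvert$ as varifolds with the $\Gamma_j$ disjoint closed connected smooth embedded minimal surfaces, $m_j\ge1$, and $\lVert V\rVert(M)=\mathbf W$. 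Since $\Ric>0$, Frankel's theorem forbids disjoint closed minimal surfaces, so $V = m\lvert\Gamma_0\rvert$ with $\Gamma_0$ connected and $\area(\Gamma_0) = \mathbf W/m$. The genus bounds for Simon--Smith min--max (De Lellis--Pellandini, Ketover) applied to the genus--$g$ approximants $\Sigma_i$ give $\genus(\Gamma_0)\le g$ when $\Gamma_0$ is two--sided, and $\genus(\Gamma_0')\le g$ when $\Gamma_0$ is one--sided, where $\Gamma_0'$ is the boundary of a thin tubular neighbourhood of $\Gamma_0$ (its orientation double cover).

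\textbf{No genus drop.} I would then argue $\Gamma_0$ must be two--sided with $\genus(\Gamma_0)=g$, in which case --- being a two--sided embedded surface in the orientable $M$ --- it is orientable, minimal, of genus $g$ and of area $\mathbf W/m\le A_0$, as required. Otherwise $\Gamma_0$ is one--sided or is two--sided with $\genus(\Gamma_0)\le g-1$. If $\Gamma_0$ is one--sided and $\genus(\Gamma_0')=g$, then since $g\ge1$ and $\Ric>0$ forces $\pi_1(M)$ finite (Bonnet--Myers), the orientable genus--$g$ surface $\Gamma_0'$ is compressible in $M$ and carries a genuine handle that can be surgered; in all remaining cases the genus--bound analysis shows that the near--maximal genus--$g$ slices $\Sigma_i$ differ from (a multiple of) $\Gamma_0$, respectively $\Gamma_0'$, only by finitely many small, localised handles and necks. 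Either way there is a neighbourhood $\cU$ in the space of surfaces of the limiting varifolds occurring here such that every genus--$g$ surface in $\cU$ can be pinched, by a process localised near the limit and increasing area by at most $\varepsilon$, into $\cS_{\le g-1}(M)$. Feeding this into a Simon--Smith min--max deformation theorem --- which replaces $\Psi_i$ by a family in $\cC$ each of whose genus--$g$ slices either lies in $\cU$ or has area $<\mathbf W-\varepsilon$ --- and then carrying out the pinch continuously in $x$ (switched off for slices of area $<\mathbf W-\varepsilon$) produces $\Psi'\in\cC$ all of whose remaining genus--$g$ slices have area $<\mathbf W-\varepsilon$, so $\mathbf W(\Psi')\le\mathbf W-\varepsilon$, contradicting the choice of $\mathbf W$. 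Hence $\Gamma_0$ is as claimed.

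\textbf{Main obstacle.} The heart of the matter is the last step: rigorously passing from ``the min--max surface has genus $<g$ (or is one--sided)'' to ``$\Phi$ pinches into $\cS_{\le g-1}(M)$''. One needs that a genus--$g$ surface lying close --- as a varifold and in $\cS(M)$ --- to a multiplicity--$m$ minimal surface of relevant genus $<g$ can be pinched into $\cS_{\le g-1}(M)$ \emph{continuously in families} and essentially without creating area, and that this surgery can be interleaved with the ambient pull--tight near $\mathbf W$; this calls for a Simon--Smith min--max deformation theorem adapted to $\cC$ and to the restricted width $\mathbf W(\cdot)$, where essentially all the technical difficulty lies. The other place where positive Ricci curvature is essential is the structural analysis of the limit: Frankel's theorem for connectedness, the genus bounds for its topology, and finiteness of $\pi_1(M)$ to exclude the one--sided case.
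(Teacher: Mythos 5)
Your outline replaces the paper's argument by a min--max on the \emph{restricted} width $\mathbf W=\inf_{\Psi\in\cC}\sup\{\area(\Psi(x)):\fg(\Psi(x))=g\}$, and this is where the proposal breaks down. The level $\mathbf W$ is in general strictly below the Simon--Smith width $\bL(\Lambda(\Phi))$, and none of the min--max machinery you invoke is available at a sub-width level: the pull-tight deformation, the almost-minimizing property (hence the regularity of the limit varifold), and the De Lellis--Pellandini/Ketover genus bounds are all proved for min--max sequences at the width of an isotopy class, where the combinatorial/replacement arguments use that the sup is taken over the whole parameter space. For a sequence of genus-$g$ slices $\Sigma_i$ with $\area(\Sigma_i)\to\mathbf W$ there is no reason the limit is stationary, let alone a smooth embedded minimal surface with the stated genus bound; your dichotomy ("either uniformly far from minimal surfaces, or converges to $\sum m_j|\Gamma_j|$") is not a consequence of any existing theorem. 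The sup is also taken over the genus-$g$ locus, an open set that moves under pinch-off deformations (which are not isotopies), so $\cC$ is not a homotopy class and even the pull-tight step would have to be rebuilt from scratch. The paper avoids all of this by never isolating a "genus-$g$ level": it runs the \emph{full} Simon--Smith min--max repeatedly (Theorem~\ref{thm:repetitiveMinMax}), at each stage obtaining a genuine width, a multiplicity-one minimal surface (via Wang--Zhou, after perturbing the metric so that minimal surfaces are non-degenerate with $\Z$-independent areas --- a perturbation your argument also needs but omits), and then either stopping (genus $g$) or peeling off the large-area cap after pinching it down to the genus of the min--max surface.

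The second gap is the one you yourself flag as the "main obstacle": pinching a family of genus-$g$ punctate surfaces that are $\bF$-close to a lower-genus minimal surface down into $\cS_{\le g-1}(M)$, continuously in the parameter and compatibly with the min--max deformations. This is not a technicality to be deferred --- it is Theorem~\ref{thm:pinchOffMinMax} together with Sections~\ref{sect:short_loops} and~\ref{sect:interpolation} and Appendix~\ref{sect:proof_pinchOffMinMax}, i.e.\ the bulk of the paper. Two points you underestimate: (i) the members of the family are punctate, and a genus-$g$ punctate surface close to a genus-$g_0$ surface need \emph{not} carry short homologically independent loops (Appendix~\ref{sect:PS_long_loops}); one only gets loops confined to small balls, which changes the surgery construction; (ii) making the surgeries consistent across overlapping parameter regions requires the combinatorial covering arguments of Lemmas~\ref{lem:combinatorial_I}--\ref{lem:combinatorial_II}. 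Without supplying this interpolation theorem and a min--max theorem valid at the level $\mathbf W$, the proposal does not close.
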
}

\begin{figure}
    \centering
    \includegraphics[width=1.8in]{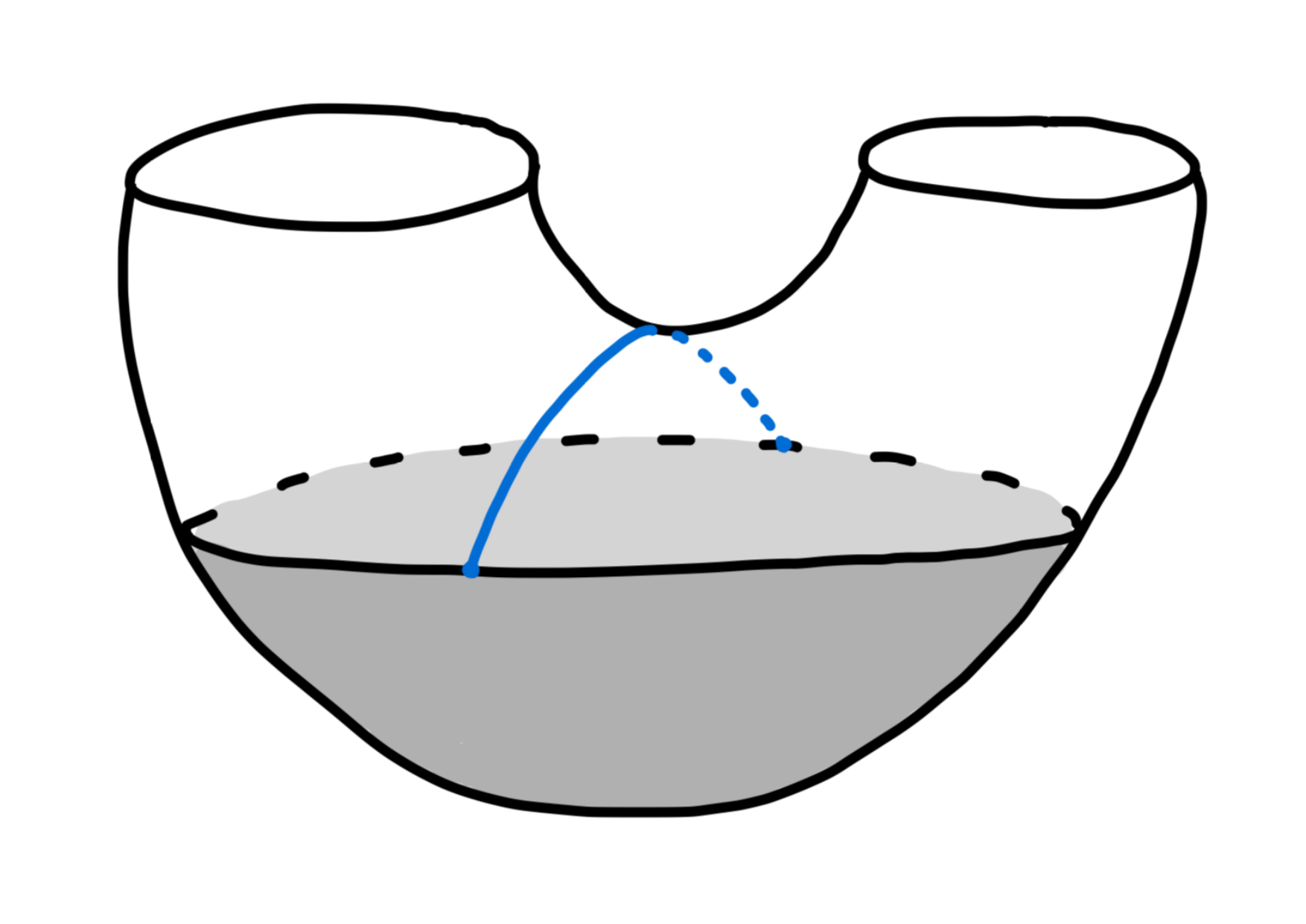} 
    \caption{This schematic diagram shows the set $\cS_{\leq g}(M)$ of all surfaces with genus at most $g$, possibly with singularities. The gray cap below represents $\cS_{\leq g-1}(M)$, while the blue path represents the image of a map $\Phi:X\to\cS_{\leq g}(M)$, detecting some non-trivial relative structure of the pair $(\cS_{\leq g}(M),\cS_{\leq g-1}(M))$.}
    \label{fig:Sg}
\end{figure}

The definitions of ``Simon--Smith family" and ``deformation via pinch-off processes" are given in Definition \ref{def:Simon_Smith_family} and Definition \ref{defn:deformViaPinchoff}, respectively. Note that we will not equip $\cS_{\leq g}(M)$ with a topology. Instead, the notion of the Simon--Smith family includes mild conditions on the map $\Phi$ to ensure the applicability of the Simon--Smith min-max theory. A pinch-off process refers to a deformation process for elements in $\cS(M)$ that consists of any combination of the following three procedures: (1) isotopy, (2) neck-pinch surgery, and (3) shrinking connected components to points (see Figure \ref{fig:pinchOff2}).  In particular, pinch-off processes do not increase genus. 

\begin{figure}
    \centering
    \makebox[\textwidth][c]{\includegraphics[width=5in]{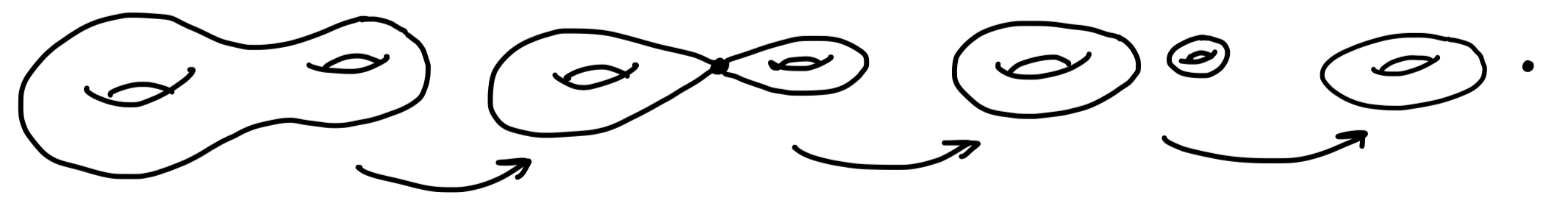}}
    \caption{This is an example of pinch-off process. It has one neck-pinch surgery, and one connected component shrunk to a point.}
    \label{fig:pinchOff2}
\end{figure}

\begin{rmk}
    Regarding the assumption of positive Ricci curvature, a primary purpose is to guarantee the Frankel property that every embedded minimal surface is connected. Without this, heuristically, it is possible that while the pair $(\cS_{\leq g}(M),\cS_{\leq g-1}(M))$ has some non-trivial relative structure, the ``genus $g$" minimal surface produced is however just the union of multiple minimal surfaces of lower genus whose total genus sums to $g$. Furthermore, positive Ricci curvature enables the application of the multiplicity one theorem by Wang--Zhou \cite{wangZhou2022higherMulti}, which will play a crucial role in our argument.
\end{rmk}

Theorem \ref{thm:weakTopoMinMax} is a direct  consequence of the following theorem, which says given any Simon-Smith family $\Phi$, we can find an associated ``topologically optimal family" $\Phi'$, obtained from $\Phi$ by  deformation  via pinch-off processes, such that $\Phi'$ detects some minimal surfaces $\Gamma_1,\dots,\Gamma_l$ of genus $\leq g$ and satisfies the conclusions \eqref{item:optimal} and \eqref{item:ti} below.
 
\begin{thm}[Existence of a topologically optimal family]\label{thm:repetitiveMinMax}
    Let $(M,\mathbf g)$ be a closed orientable Riemannian $3$-manifold with positive Ricci curvature, and $\Phi:X \to \mathcal{S}(M)$ be a Simon--Smith family of genus $\leq g$ such that  the min-max width $L:=\bL(\Lambda(\Phi))>0$. Assume Condition \ref{cond:A} for $(M, \bg)$, $g$ and $L$ (which is a generic condition on $\bg$ by Remark \ref{rmk:generic}).
    Then for any $r > 0$, there exist:
    \begin{itemize}
        \item a deformation via pinch-off processes, $H:[0,1]\x X\to\cS_{\leq g}(M)$, with $H(0,\cdot)=\Phi$, 
        \item  distinct, orientable, multiplicity one, embedded  minimal surfaces $\Gamma_1,\Gamma_2, \dots,\Gamma_l$ of genus $\leq g$ and area  $\leq L$,
        \item subcomplexes $D_0,D_1, \dots,D_l$ of $X$ (after refinement) that cover $X$,
    \end{itemize}
such that: 
    \begin{enumerate}
        \item \label{item:optimal} For  the family $\Phi':= H(1,\cdot)$,  each member of $\Phi'|_{D_i}$  has genus at most that of $\Gamma_i$ for each $i=1,\dots,l$, while each member of $\Phi'|_{D_0}$ has genus $0$.
        \item \label{item:ti} For each $i=1,\dots,l$, there is a smooth function $f_i:D_i\to[0,1]$ such that for every $x\in D_i$, $H(f_i(x),x)$ has the same  genus  as $\Gamma_i$ and is $r$-close to $\Gamma_i$ in the $\bF$-metric. 
    \end{enumerate}
\end{thm}

Below is the definition of Condition \ref{cond:A}. 
Let $\cO_{g,\leq L}(M,\bg)$ (resp. $\cN_{g,\leq L}(M,\bg)$) be the set of all orientable (resp. non-orientable) embedded minimal surfaces in $(M,\bg)$ with genus $g$ and area $\leq L$. Analogously, we define the sets $\cO_{\leq g,\leq L}(M,\bg)$ and $\cN_{\leq g,\leq L}(M,\bg)$ in which the genus of the minimal surfaces concerned is $\leq g$. 

\begin{cond}\label{cond:A}
For a closed Riemannian 3-manifold $(M,\bg)$, a positive integer $g$, and a real number $L>0$, the following hold:

    \begin{itemize}
    \item The set $\cO_{\leq g,\leq L}(M,\bg)$ is finite.
    \item Every element in $\cO_{\leq g-1,\leq L}(M,\bg)$ is non-degenerate.
    \item For  every  $\Gamma\in \cO_{g,\leq L}(M,\bg)$, numbers in the set 
        \begin{align*}
            &\{\area(\Gamma'):\Gamma'\in\cO_{\leq g-1,\leq L}(M,\bg)\cup\cN_{\leq g+1,\leq L}(M,\bg)
            \}
            \cup\{\area(\Gamma)\}
        \end{align*}
        are linearly independent over $\Z$. 
    \end{itemize}
\end{cond}

\begin{rmk}\label{rmk:generic}  Theorem \ref{thm:repetitiveMinMax} is applicable for a generic set\footnote{We consider genericity in the sense of Baire category under $C^\infty$-topology.} of Ricci-positive metrics. More precisely, for a generic   $\bg$, Condition \ref{cond:A}  holds for  every $g$ and $L>0$. Indeed,
  by White \cite{Whi91, white2017bumpy} and Marques-Neves \cite[Proposition 8.5]{MN21}, there is a generic set of metrics $\bg$ such that every $\bg$-minimal surface is strongly\footnote{Strong non-degeneracy means, after passing to the double cover,  the minimal surface concerned is still non-degenerate.} non-degenerate, and every finite collection of $\bg$-minimal surfaces $\{\Gamma_j\}$ has their area linearly independent over $\Z$. Moreover, for every $g$ and $L$, $\cO_{\leq g, \leq L}(M, \bg)\cup \cN_{\leq g, \leq L}(M, \bg)$ must be finite, since otherwise by \cite[Lemma 6.2]{chuLi2024fiveTori}, a pairwise distinct infinite sequence $\Sigma_j$ in this family will converge to some smooth minimal surface $\Sigma_\infty$ with multiplicity, which is either two-sided and degenerate, or one-sided and has a degenerate double cover, contradicting to the choice of $\bg$. 
\end{rmk}
 
The above theorem builds upon numerous important  foundational results in Simon--Smith min-max theory~\cite{Smith82, CD03, DP10, Ket19, Zho20, MN21,WangZhou23FourMinimalSpheres}. Theorem \ref{thm:repetitiveMinMax} is crucial to the  work \cite{chu2025arbitraryGenus} by the first author on the existence of minimal surfaces of arbitrary genus, and also our enumerative min-max theory  \cite{chuLiWang2025fourGenus2}, which constructs {\it multiple} minimal surfaces of prescribed genus.

\subsection{Main ideas}
This paper consists of three main components: 
\begin{enumerate}
    \item {\it Optimal family and repetitive min-max.} Given a Simon-Smith family $\Phi:X\to\cS_{\leq g}(M)$, we repetitively run min-max  to obtain  a ``topologically optimal family", establishing Theorem \ref{thm:repetitiveMinMax}. From this Theorem \ref{thm:weakTopoMinMax} follows directly. 
    \item {\it Decreasing genus.} A crucial intermediate step is that, when a minimal surface $\Sigma$ is detected in any stage of min-max, on the ``cap of high-area" in the pulled-tight family near $\Sigma$, we need to deform  the whole cap {\it to decrease the genus of each of its member to become the same as $\Sigma$} (Theorem \ref{thm:pinchOffMinMax} and Proposition \ref{Prop_Technical Deformation}). To achieve this, for each member $S$ in the cap, we need to choose some small balls (in $M$) and destroy all topology of $S$ within via neck-pinches and shrinking some components into points, in a  way continuous  across the whole cap. To execute this interpolation, it is essential to consider surfaces with singularities  (see e.g. \cite[Example 2.6]{chuLi2024fiveTori}).
    \item {\it Selecting homologically linearly independent short loops.} To choose these balls, we need to locate on each $S$ in total $\fg(S)-\fg(\Sigma)$ homologically linearly independent short loops (Proposition \ref{prop:short_loops_II} and \ref{lem:small_balls_II}), where  $\fg(\cdot)$ denotes the genus, and  then pick the desired balls to enclose these loops.
\end{enumerate}

Although the repetitive min-max strategy was already employed by the first two authors in \cite{chuLi2024fiveTori} to produce 5 minimal tori in 3-spheres, in our current setting steps (2) and (3) are particularly delicate    as the   surfaces have {\it arbitrary genus}.  The collapse of genus  in step (2) needs to be done {\it continuously across the cap}, but in general  it is impossible to choose balls in step (3) continuously. In addition, it is necessary to bound the area change of all members on the cap during the neck-pinches and shrinking of  components, so that in the repetitive min-max process, the min-max width at each stage strictly decreases. These greatly complicate the selection of short loops and pinching. 
Let us explain in more detail.

\subsubsection{Optimal family and repetitive min-max}
To derive Theorem \ref{thm:weakTopoMinMax} using Theorem \ref{thm:repetitiveMinMax}, 
suppose by contradiction that no embedded genus $g$ minimal surface has area at most $L_0:=\max_x\area(\Phi(x))$. This implies, by Choi-Schoen compactness \cite{ChoiSchoen1985compactness}, that for some $\delta_0>0$, no orientable embedded genus $g$ minimal surface has area at most $L_0+\delta_0$. Thus, using Marques-Neves' result \cite[Proposition 8.5]{MN21} and Choi-Schoen compactness \cite{ChoiSchoen1985compactness}, it is possible to perturb the metric $\mathbf g$ to ${\mathbf g}'$ such that $(M,{\mathbf g}')$ satisfies Condition \ref{cond:A} 
 and have no genus $g$ minimal surface.

 Hence, applying Theorem \ref{thm:repetitiveMinMax} to $\Phi$ in $(M,\bg')$, we obtain an   optimal family $\Phi'$. In particular,   all the minimal surfaces $\Gamma_1,\dots,\Gamma_l$ given by  Theorem \ref{thm:repetitiveMinMax} must have genus $\leq g-1$. Hence, by item \eqref{item:optimal} in Theorem \ref{thm:repetitiveMinMax}, the family $\Phi'$  actually maps into $\cS_{\leq g-1}(M)$. So we have deformed $\Phi$ to become a map into $\cS_{\leq g-1}(M)$. This contradicts the assumption on $\Phi$, finishing the proof of  Theorem \ref{thm:weakTopoMinMax}. 

As for Theorem \ref{thm:repetitiveMinMax}, the optimal family $\Phi'$  is constructed using a repeated application of Simon--Smith min-max theory.  Starting from $\Phi$ and running min-max theory, we obtain a family that detects some minimal surface $\Sigma$. We would remove the portion (i.e. cap) of this family that is in the neighborhood of $\Sigma$, but a critical step is to first deform the whole cap to have the same genus as $\Sigma$, before removing it: This  ensures the {\it boundary} of the new family has genus $\fg(\Sigma)$,  which is essential for proving Theorem \ref{thm:repetitiveMinMax} \eqref{item:optimal}. We then run min-max again and repeat the process, which must terminate in finitely many steps. Then, as in \cite[\S 3.3]{chuLi2024fiveTori} we assemble the pieces created in a suitable way to obtain the optimal family $\Phi'$ and the deformation $H$. We refer readers to    \S 1.2.3 therein for the sketch, and shall not repeat it here.

\subsubsection{Decreasing genus}    We are given a ``cap of high area" $\Phi|_Y$   (where $Y\subset X$ is some subcomplex in $I^m$) consisting of  singular surfaces of genus $\leq g$ that are close to some fixed smooth minimal surface $\Sigma$, where $g\geq\fg(\Sigma)$. For simplicity, let us only  consider families $\Phi=\Phi|_Y$ of  {\it smooth surfaces}, all with genus exactly $g$. 
Essentially we need to prove the following; see Theorem \ref{Prop_Technical Deformation} for the precise statement.

\begin{claimIntro}[Deforming a cap]\label{claim:deformCap}

Fix   any $\epsilon_1>0$. There exists $\delta>0$ such that, for any  family $\Phi:Y\to\cS_{g}(M)$  that is  $\delta$-close to $\Sigma$ (in $\bF$-distance), we can construct a deformation $H:[0,1]\x Y\to\cS_{\leq g}(M)$ such that:
\begin{enumerate}
    \item\label{item:deformCap_sameGenus} $H(0,\cdot )=\Phi$ and each member of $H(1,\cdot)$ has the same genus  as $\Sigma$.
    \item For each $t$ and $x$, $\bF(\Phi(x),H(t,x))<\epsilon_1.$
    \item For each $x$, the deformation $t\mapsto H(t,x)$ is a pinch-off process, and thus genus-decreasing.
\end{enumerate}
\end{claimIntro} 

Let us sketch the proof of  Claim \ref{claim:deformCap}.  We will find for each $y\in Y$ in total  $ q\leq g-\fg(\Sigma)$ balls in $M$ and then destroy the topology of $\Phi(y)$ in these balls:

\begin{claimIntro} [Selecting small balls for a cap]\label{claimIntro:shortLoopCap}
Fix any sufficiently small $\epsilon_2>0$. There exists $\delta>0$ such that, for any family $\Phi:Y\to \cS_{g}(M)$ that is $\delta$-close to $\Sigma$,  we can find a finite cover $\{O_x\}_x$ by open balls for $Y$,  where each $O_x$ has center $x$, and find for each such (finitely many) $x$ in total $q\leq g-\fg(\Sigma)$  balls\footnote{In practice $q$ should depend on $x$, but here for simplicity we assume it is the same for all $x$.} $\hat B^x_1,\dots,\hat B^x_q\subset M$ with the following property. For any $y\in Y$, if $y\in O_{x_1}\cap\dots\cap O_{x_k}$, then we have:
\begin{enumerate}
    \item The {\it boundary spheres} of the balls
    \begin{equation}\label{eq:hatBall}
        \hat B^{x_1}_1,\dots,\hat B^{x_1}_q,\dots, \hat B^{x_k}_1,\dots,\hat B^{x_k}_q
    \end{equation}
    are all disjoint, and these balls have radius in the range $(\epsilon_2,C\epsilon_2)$ for some\footnote{The constant $C$ depends only on $q$ and $m$.} $C$.
    \item For each ball $B$ in the list \eqref{eq:hatBall}, $\partial B$ intersects $\Phi(y)$ transversely.
    \item\label{item:subcollection} For an arbitrary subcollection $\{x_{i_1},\dots,x_{i_p}\}$ of $x$'s,  $\Phi(y) \setminus \cup_{\alpha=1}^p\cup_{\beta=1}^q  \hat B^{x_\alpha}_\beta$ has genus $\fg(\Sigma)$.
    \item $\area(\Phi(y)\cap B)<100\rad(B)^2.$
    \item $\length(\Phi(y)\cap \partial B)<100\rad(B).$

\end{enumerate}
\end{claimIntro}

We first explain how to derive Claim \ref{claim:deformCap} from  Claim \ref{claimIntro:shortLoopCap}. Fix $\epsilon_1>0$. Pick $\epsilon_2>0$ small, to be determined, and apply Claim \ref{claimIntro:shortLoopCap}. For each $x$ indexing the covering $\{O_x\}_x$, let $\eta_x:Y\to[0,1]$ be a smooth cut-off supported on $O_x$ such that $\eta_x\equiv 1$ on a slightly smaller ball $\hat O_x\subset O_x.$ We can assume the finite collection $\{\hat O_x\}$ still covers $Y.$ We decrease genus in two steps:
\begin{enumerate}
    \item If $y\in Y$ lies in $\hat O_x$, then for each ball $B$ in the list \eqref{eq:hatBall} that intersects $\Phi(y)$, we use $\partial B$ to ``cut" $\Phi(y)$. Namely, for each loop  component $\gamma$ of $\partial B\cap \Phi(x)$, we perform a neck-pinch surgery for $\Phi(y)$ along $\gamma$. Doing this\footnote{The loops $\gamma$'s may be nested, so we should perform surgery starting from the inner-most ones.} for all such $\gamma$, we can make $\Phi(y)$ disjoint from $\partial B$. Note, since by Claim \ref{claimIntro:shortLoopCap} (1) the boundaries of the balls in \eqref{eq:hatBall} are disjoint, such surgery processes would not interfere with each other.

    If $y\notin O_x$, we need not use  $\partial\hat B^{x}_1,\dots,\partial\hat B^{x}_q$ to cut $\Phi(y)$.
    As for $y\in O_x\backslash \hat O_x$, we use the cut-off function $\eta_x$ to interpolate in between, to ensure continuity across $Y.$
    \item Then, for any $y\in \hat O_x$, we collapse all components of $\Phi(y)$ within each ball $\hat B^{x}_1,\dots,\hat B^{x}_q$ to its center. This eliminates any topology of $\Phi(y)$ in those balls. Note the balls in \eqref{eq:hatBall} may be nested, and $y$ may belong to multiple $\hat O_x$: We  collapse them one-by-one  anyway, following an arbitrarily chosen order of the balls $\{\hat B^x_\beta\}_{x,\beta}$. At the end, due to Claim \ref{claimIntro:shortLoopCap} \eqref{item:subcollection}, $\Phi(y)$ becomes genus $\fg(\Sigma)$.
    
    Again, if $y\notin O_x$ we do not do anything, and if $y\in O_x\backslash \hat O_x$ we use $\eta_x$ to interpolate. These two steps   guarantees Claim \ref{claim:deformCap} \eqref{item:deformCap_sameGenus}.
\end{enumerate}
Clearly,  these deformations are via pinch-off processes only, as required by Claim \ref{claim:deformCap}  (3).
Now, due to item (1), (4), and (5)  in  Claim \ref{claimIntro:shortLoopCap}, if $\epsilon_2$ is small enough  depending on $\epsilon_1$, we can bound the change in  $\bF$-distance from the original family $\Phi$ by $\epsilon_1$ throughout the surgeries and collapsing, thereby ensuring Claim \ref{claim:deformCap} (2).    Hence, we obtain Claim \ref{claim:deformCap}. 

Now it remains to prove Claim \ref{claimIntro:shortLoopCap}. We  first need the following statement that selects small balls  for just {\it one arbitrary surface} $S$ close to $\Sigma$. Still, for simplicity, we assume $S$ is smooth. For any ball $B\subset M$, denote by $kB$ be the scaled ball with radius $k\;\rad(B)$ but center unchanged.

\begin{claimIntro}[Selecting small balls  for one surface]\label{claimIntro:shortLoopOneSurface}
For any $N\in\N$  and $\epsilon_3>0$, there exists $\delta>0$ such that, for any smooth surface $S\subset M$ with genus $g$ that is $\delta$-close to $\Sigma$,  there exist $q\leq g-\fg(\Sigma)$ balls $B_1,\dots,B_q\subset M$ such that:
\begin{enumerate}
    \item\label{item:shortLoopOneSurfaceDisjoint}  $\epsilon_3 <\rad(B_i)<C(N,g)\epsilon_3$ and the scaled balls $NB_i$ are disjoint.
    \item For any $i$ and any $k=1,\dots, N$, $\partial (kB_i)$ intersects $S$ transversely.
    \item \label{item:shortLoopOneSurfaceGenus} For   any choices of  $k_i=1,\dots,N$,   $S\backslash \cup_i k_iB_i$ has genus $\fg(\Sigma).$
\end{enumerate} 
\end{claimIntro}

We for a moment assume Claim \ref{claimIntro:shortLoopOneSurface} to prove Claim \ref{claimIntro:shortLoopCap}. Fix $\epsilon_2$, and some sufficiently large $N$ determined later. Let $\epsilon_3:=\epsilon_2$   and   apply  Claim \ref{claimIntro:shortLoopOneSurface}. Then putting $S=\Phi(x)$ for any $x\in Y$,  we obtain some balls $B^x_1,\dots,B^x_{q}\subset M.$ We can pick a sufficiently small ball $O_x\subset Y$ centered at $x$ such that the properties in  Claim \ref{claimIntro:shortLoopOneSurface}  hold not only for $\Phi(x)$ but also $\Phi(y)$ for every $y\in O_x$. By the fact that $Y$ is a cubical subcomplex of $I^m$, we can take a finite subcover of $\{O_x\}_{x\in Y}$ such that each member $O_x$ intersects at most $C(m)$ other members. For simplicity we still denote by $\{O_x\}_x$ this finite subcover. 

By a combinatorial argument (Lemma~\ref{lem:combinatorial_I}),  for a sufficiently large $N$ (depending only on $q$ and $m$), we can find some $k^x_i \in \{1,\dots,N\}$ for each $x$ and $i$ with the following property. Let $\hat B^x_{i}$ be the scaled ball $k^x_i  B^x_i$. For any $y\in Y$, if $y\in O_{x_1}\cap\dots\cap O_{x_k}$, then  the boundary spheres of the balls
\begin{equation}\label{eq:ballsBxi}
        \hat B^{x_1}_1,\dots,\hat B^{x_1}_q,\dots,\hat B^{x_k}_1,\dots,\hat B^{x_k}_q
    \end{equation}
    are all disjoint. Thus, they satisfy Claim \ref{claimIntro:shortLoopCap} (1). By  Claim \ref{claimIntro:shortLoopOneSurface} (3), the surface of interest in  Claim \ref{claimIntro:shortLoopCap} (3) has genus at most $\fg(\Sigma)$. But by closeness to $\Sigma$, the genus is exactly $\fg(\Sigma).$ Now, the area bound in Claim \ref{claimIntro:shortLoopCap} (4) follows from the $\delta$-closeness to $\Sigma$ and the  bound  $\epsilon_3 < \rad(B_i) < C(N,g)\epsilon_3$, and  item (5)  from a coarea formula argument. This proves  Claim \ref{claimIntro:shortLoopCap}.

As for the proof of Claim \ref{claimIntro:shortLoopOneSurface}, it depends on the following claim (Proposition \ref{prop:short_loops_II} and \ref{lem:small_balls_II}), which selects homologically independent short loops for any surface close to $\Sigma.$

\begin{claimIntro}[Selecting independent short loops for one surface]\label{claimIntro:linIndepLoop} Fix any $\epsilon_4>0$. There exists $\delta$ such that  the following holds. For any smooth embedded   surface $S$ of genus $g$ that is $\delta$-close to $\Sigma$,  
        there exist $g - \fg(\Sigma)$ disjoint simple loops $\{c_j\}^{g - \fg(\Sigma)}_{j = 1}$ on $S$ such that: 
        \begin{enumerate}[label = \normalfont(\arabic*)]
            \item $\length(c_j)<\epsilon_4$.
            \item The classes $[c_j]$ are  linearly independent in $H_1(S; \Z_2)$.
        \end{enumerate} 
    \end{claimIntro}
We use this claim to prove Claim \ref{claimIntro:shortLoopOneSurface} as follows. Fix $\epsilon_3$. We apply Claim \ref{claimIntro:linIndepLoop} with $\epsilon_4:=\epsilon_3$. Then for any $S$ $\delta$-close to $\Sigma$, we  obtain the loops $c_j$. For each $i$, pick a ball $\tilde B_i\subset M$ that contains $c_j$, with radius $\epsilon_4$.  Via a combinatorial argument (Lemma \ref{lem:disjoint_balls}), if $\epsilon_4$ is small enough (depending on $N,g-\fg(\Sigma)$), we can choose a fewer collection  of {\it larger balls}\footnote{The center of each $B_i$ will be the center of some $\tilde B_j$.}, $\{B_i\}^q_{i=1}$ where $q\leq g-\fg(\Sigma)$ and  $\rad(B_i)\leq (3N)^{g-\fg(\Sigma)}\epsilon_4$, such that $\cup_iB_i$ still contains $\cup_j\tilde B_j$ but now the scaled balls $NB_i$ are all disjoint, so that Claim \ref{claimIntro:shortLoopOneSurface} \eqref{item:shortLoopOneSurfaceDisjoint} holds. Also, $\cup_iB_i$  contains all the loops $c_j$, so in its complement the surface $S$ must have genus $\leq \fg(\Sigma)$, and by the closeness to $\Sigma$ the genus is actually exactly $\fg(\Sigma)$,  as required by  Claim \ref{claimIntro:shortLoopOneSurface}  \eqref{item:shortLoopOneSurfaceGenus}.   So now it remains to prove Claim \ref{claimIntro:linIndepLoop}.

\subsubsection{Selecting homologically linearly independent short loops}\label{sect:selectShortLoops}\;\\ 
 
\noindent {\bf Step 0.}  
We begin  with a claim that selects homologically independent loops for surfaces of small area (Proposition \ref{prop:short_loops_I} and \ref{lem:small_balls_I}). Note this statement concerns  {\it intrinsic surfaces}.
\begin{claimIntro}\label{claimIntro:smallArea} For any $\epsilon_5>0$ and $g\in\N$, there exists $\delta$ such that for all  closed Riemannian surface $\Gamma$ of genus $\leq g$ and area $<\delta$, there exist $\fg(\Gamma)$ disjoint embedded loops on $\Gamma$, with length $<\epsilon_5$, that are linearly independent in $H_1(\Gamma;\Z_2).$
\end{claimIntro}
The proof of this claim uses a systolic inequality of Loewner (see Theorem \ref{thm:systolic_ineq}), which says there exists some universal constant $C$ such that in any surface $\Gamma$, the minimal length of a homologially non-trivial loop is at most $C\area(\Gamma)$. To prove Claim \ref{claimIntro:smallArea},   we first use Loewner's theorem to locate a non-trivial loop $c_1$, and then perform surgery to $\Gamma$ along $c_1$, in which {\it the two discs glued back have  arbitrarily small area}: This is possible as the surgery is intrinsic. Then, we   use Loewner's theorem again to locate a second loop $c_2.$ We may need to modify $c_2$ to avoid the previous surgery discs (see Figure \ref{fig:construct_tilde_c}), so that $c_2$ can also be viewed as embedded in the original surface $\Gamma.$ Repeating, we obtain $\fg(\Gamma)$   short loops at the end, as required by Claim \ref{claimIntro:smallArea}. In the proof,  we can ensure if $\area(\Gamma)$ is less than some $\delta(\epsilon_5,g)$ then all those loops have length $<\epsilon_5.$

\medskip

\noindent {\bf Step 1.}  Returning to Claim \ref{claimIntro:linIndepLoop}, we have a surface $S$ of genus $g$  close to $\Sigma$, with lower genus. Choose $r_0$ such that  $\Sigma$ has a tabular neighborhood $N_{2r_0}(\Sigma)\subset M$ of radius $2r_0$. Fix $\epsilon_4$. By the coarea formula, we can find some $\delta$ such that for all surfaces $S$ $\delta$-close to $\Sigma$, there exists some $r\in (r_0,2r_0)$ such that, regarding   the tabular neighborhood $N_r(\Sigma)$, 
\begin{itemize}
    \item $\partial N_r(\Sigma)$ intersects $S$ transversely,
    \item $\area(S\backslash N_r(\Sigma))<\epsilon_4$ and $ \length(S\cap\partial N_r(\Sigma))<\epsilon_4.$
\end{itemize}
Then we perform surgery (inside $M$) along the loops in $S\cap\partial N_r(\Sigma)$ such that the resulting surface avoids $\partial N_r(\Sigma).$ Denote by $S_1$ the components inside $N_r(\Sigma).$

\medskip

\noindent {\bf Step 2.} In this step, we will find a suitable triangulation on $\Sigma$ to guide us in cutting $S_1$ into many small pieces, to which we will apply Claim \ref{claimIntro:smallArea} later in Step 3, in order to locate short loops.

We need to pick a triangulation $\cT$ on $\Sigma$ such that, 
for any 2-simplex $\Delta$ of $\cT$, we have  (see Figure \ref{fig:Gamma_Delta}):
\begin{itemize}
    \item Denote by $\Delta\x [-r,r]\subset M$  the prism of height $2r$ for which the middle slice $\Delta\x\{0\}$ coincides with $\Delta\subset\Sigma$, and denote by $\partial\Delta\x [-r,r]$ its three side faces. Then $\partial\Delta\x [-r,r]$ intersects $S_1$ transversely. 
    \item $(\partial\Delta\x [-r,r])\cap S_1$  consists of one {\it long loop} $c_\Delta$ and some {\it short loops}.
    \item The long loop $c_\Delta$ consists of 3 smooth segments, lying respectively on the 3 side faces of $\partial\Delta\x [-r,r]$, and $|\length(c_\Delta)-\length(\partial\Delta)|<\epsilon_4$.
    \item Each short loop lies on the {\it interior} of the side faces of $\partial\Delta\x [-r,r]$, and their total length is $<\epsilon_4.$ (In particular, for each vertex $v$ of $\Delta$, the vertical line $v\x[-r,r]$ cannot intersect any short loop.)
\end{itemize}
Furthermore,    {\it we wish that this triangulation $\cT$ depends not on $S$}. Clearly, this is impossible: If $\cT$ were  chosen first, we can easily find $S$ such that $S_1$ intersects some $\partial\Delta\x[-r,r]$ non-transversely. Therefore, in practice, we will  first choose some triangulation\footnote{ This $\cT_0$ does not need to depend on $\epsilon_4,\delta,r_0$.} $\cT_0$ so that for any $S$ concerned, we can perturb $\cT_0$ to some $\cT$ where the above conditions hold; moreover, {\it the amount of perturbation can be controlled uniformly independently of $S.$} . This perturbation is very delicate and occupies a great portion of Proposition \ref{prop:short_loops_II}.

Now, we perform surgery (in $M$) to $S_1$ along all {\it short loops}.  The resulting  surface $S_2$ will intersect any $\partial\Delta\x[-r,r]$ only along the  long loop $c_\Delta$.

\medskip

\noindent{\bf Step 3.} 
For each 2-simple $\Delta$,  we consider the following {\it intrinsic} closed surface $S_\Delta$: We cut $S_2$  along  $c_\Delta$ and then glue back a  topological disc to $S_2\cap (\Delta \x [-r,r])$ along $c_\Delta$, intrinsically.

Now note, back when we chose $\cT_0$ and $\cT$, we could require the area of $\Delta\subset\Sigma$ to be sufficiently small, and thus so is each intrinsic surface $S_\Delta$ (as $S$ is $\delta$-close to $\Sigma$), such that   Claim \ref{claimIntro:smallArea}, applied to $S_\Delta$, would imply the existence of $\fg(S_\Delta)$ disjoint, embedded, homologically non-trivial, linearly independent loops of length $<\epsilon_4$ in $S_\Delta$. A priori, these loops may intersect the surgery discs at the end of Step 2, but by modifying them actually we can  prevent this. 

\medskip 
\noindent{\bf Step 4.} 
Finally, denote by $\cC$ the collection of all the loops in Step 1, the {\it short loops} in Step 2, and the loops in Step 3. Note all these loops are disjoint, embedded, and have length $<\epsilon_4$. Moreover, they  can be viewed as lying in the original surface $S$. Then, it is easy to see that, if we perform surgery {\it intrinsically} to the surface $S$ along all members of $\cC$, the resulting surface has the same genus as $\Sigma$. Hence, there   exists a subcollection of $\fg(S)-\fg(\Sigma)$ members in $\cC$ that are homologically independent. This finishes the proof of Claim \ref{claimIntro:linIndepLoop}.

\subsection{Organization}
We first introduce the necessary terminologies in \S \ref{sect:prelim}. Then in \S \ref{sect:minMaxThm} we prove  Theorem \ref{thm:weakTopoMinMax} assuming Theorem \ref{thm:repetitiveMinMax}, and  in \S \ref{sect:ProofRepetitiveMinMax} we prove   Theorem \ref{thm:weakTopoMinMax} using a repetitive min-max procedure. An essential step in the construct of optimal family is the  interpolation  from a family of surfaces to one   with lower genus using pinch-off processes.  In \S \ref{sect:short_loops}, we locate short loops in members of the family for pinching, and finally in \S\ref{sect:interpolation} we carry out the interpolation.

\subsection*{Acknowledgment} We would like to  thank Andr\'e Neves for his constant support. 
This material is based upon work supported by the NSF Grant  DMS-1928930, while the first author was in residence at the SLMath. The first and the second author were partially supported by the AMS-Simons travel grant.

\section{Preliminaries}\label{sect:prelim}
\subsection{Notations}

In the sequel, all simplicial complexes and cubical complexes are assumed to be finite; all chains, homology groups, and cohomology groups are assumed to have $\Z_2$-coefficients, unless specified otherwise. In general, we use $M$ to denote a closed Riemannian manifold.
 
\begin{itemize}
    \item $\bI_n(M;\Z_2)$: the set of integral $n$-dimensional currents in $M$ with $\Z_2$-coefficients.
    \item $\Zc_{n}(M;\Z_2)\subset \bI_n(M;\Z_2)$: the subset that consists of elements $T$ such that $T=\partial Q$ for some $Q\in\bI_{n+1}(M;\Z_2)$ (such $T$ are also called {\it flat $k$-cycles}).
    \item $\Zc_{n}( M;\nu;\Z_2)$ with $\nu=\cF,\bF,\bM$: the set $\Zc_{n}( M;\Z_2)$ equipped with the three topologies given corresponding respectively to the  {\em flat norm $\Fc$},  the {\em $\Fb$-metric}, and the  {\em mass norm $\Mb$} (see \cite{Pit81} and the survey paper \cite{MN20}). For the flat norm, there are two definitions that, by the isoperimetric inequality, would induce the same topology:
    $$\cF(T):=\inf\{\bM(P)+\bM(Q):T=P+\partial Q\},\quad \textrm{ and } \quad \cF(T):=\inf\{\bM(Q):T=\partial Q\}.$$
    In this paper, we will use the second definition.
    \item $\mathcal{V}_n(M)$: the closure, in the varifold weak topology, of the space of $n$-dimensional rectifiable varifolds in $M$.
    \item $\cC(M)$: the space of Caccioppoli sets in $M$, equipped with the metric induced by the Hausdorff measure of the symmetric difference.
    \item $\partial^* \Omega$: the reduced boundary of $\Omega \in \cC(M)$.
    \item $\nu_\Omega$: the inward pointing normal of $\Omega \in \cC(M)$.
    \item $\norm{V}$: the Radon measure induced on $M$ by  $V\in \mathcal{V}_n(M)$.
    \item $|T|$: the varifold in $\cV_n(M)$ induced by a current $T\in \Zc_{n}(M;\Z_2)$, or a countable $n$-rectifiable set $T$. In the same spirit, given a map $\Phi$ into $\Zc_n(M;\Z_2)$, the associated map into $ \mathcal{V}_n(M)$ is denoted by $|\Phi|$.
    \item $\spt(\cdot)$: the support of a current or a measure.
    \item $[W]$: the $\Z_2$-current induced by $W$, if $W$ is a countably $2$-rectifiable set with $\cH^2(W) < \infty$. In the same spirit, given a map $f$ that outputs countably $2$-rectifiable sets, the associated map into the space $\cZ_n(M; \Z_2)$ is denoted by $[f]$.
    \item $[\mathcal{W}] := \{[\Sigma_i]\} \subset \cZ_n(M;\Z_2)$ for a set $\mathcal{W}$ of varifolds $\{V_i\} \subset \cV_n(M)$, each associated with a countably $n$-rectifiable set $\Sigma_i$.
    \item $\bB^\nu_\varepsilon(\cdot)$:  the open $\varepsilon$-neighborhood of an element or a subset of the space $\Zc_n(M;\nu;\Z_2)$.
    \item $\bB^{\Fb}_\varepsilon(\cdot)$: the open $\varepsilon$-neighborhood  of an element or  a subset of $\cV_n(M)$ under the $\Fb$-metric.
    \item $\Gamma^\infty(M)$: the set of smooth Riemannian metrics on $M$.
    \item $B_r(p)$: the open $r$-neighborhood of a point $p$; $B_r(\{p_1, \dots,p_n\}):=\bigcup_iB_r(p_i)$.
    \item $\fg(S)$: the genus of a surface $S$. 
    \item $\ins(S)$: the inside, open region of a  surface $S$;  $\out(S)$: the outside  region of a  surface $S$.
\end{itemize}

For an $m$-dimensional cube $I^m = \mathbb{R}^m \cap \{x : 0 \leq x_i \leq 1, i = 1,2, \dots, m\}$, we can give it {\em cubical complex structures} as follows.
\begin{itemize}
    \item  $I(1,j)$: the cubical complex on $I := [0,1]$ whose $1$-cells and $0$-cells are respectively 
    $$[0,1/3^j],[1/3^j,2/3^j],\dots,[1-1/3^j,1]\;\;\textrm{ and }\;\; [0],[1/3^j],[2/3^j],\dots,[1].$$
    \item $I(m,j)$: the cubical complex structure
    \[
        I(m,j)=I(1,j)\otimes\dots\otimes I(1,j)\;\;(m\textrm{ times})
    \]
    on $I^m$. $\alpha = \alpha_1 \otimes \cdots \otimes \alpha_m$ is a {\em $q$-cell} of $I(m, j)$ if and only if each $\alpha_i$ is a cell in $I(1, j)$ and there are exactly $q$ $1$-cells. A cell $\beta$ is a {\em face} of a cell $\alpha$ if and only if $\beta \subset \alpha$ as sets.
\end{itemize}
We call $X \subset I(m, j)$ a {\em cubical subcomplex of $I(m, j)$} if every face of a cell in $X$ is also a cell in $X$. For convenience, we also call $X$ a {\em cubical complex} without referring to the ambient cube. 

We denote by $|X|$ the {\em underlying space} of $X$.  For the sake of simplicity, we will also consider a complex and its underlying space as identical unless there is ambiguity. 
Given a cubical subcomplex $X$ of some $I(m, j)$, for $j' > j$, one can {\em refine} $X$ to a cubical subcomplex
\[
    X(j') := \{\sigma \in I(m, j'): \sigma \cap |X| \neq \emptyset\}
\]
of $I(m, j')$. For the sake of convenience, we will denote the refined cubical subcomplex by $X$ unless there is ambiguity.

\subsection{Surface with singularities}
\label{sect:basicDef}
Let $M$ be a closed orientable 3-manifold, equipped with a Riemannian metric (though all notions in \S \ref{sect:basicDef}, except those involving the $\bF$-metric, will not depend on the metric of $M$). In this paper, we will consider surfaces with finite area and finitely many singularities, which possibly include some isolated points too:

\begin{defn}[Punctate surface]\label{def:punctate_surf} Let $M$ be a closed orientable 3-manifold.
    A closed subset $S \subset M$ is a {\it punctate surface} in $M$ provided that: 
    \begin{enumerate}
        \item The 2-dimensional Hausdorff measure $\cH^2(S)$ is finite.
        \item\label{item:punctateOrientable} There exists a (possibly empty) finite set $P \subset S$ such that $S \setminus P$ is a smooth, orientable, embedded surface.
        \item\label{item:punctateSeparate} (Separating) Let $S_{\mathrm{iso}}$ be the set of isolated points of $S$. Then the complement of $S\backslash S_{\mathrm{iso}}$ is a disjoint union of two open regions of $M$, each having $S\backslash S_{\mathrm{iso}}$ as its topological boundary.
    \end{enumerate} 
    We denote by $\cS(M)$ the set of all punctate surfaces in $M$.
\end{defn}
Note that the condition of $S\backslash P$ being orientable in item (2) is redundant, as this follows from item (3) and the assumption that $M$ is orientable. Moreover,
it is clear that for every $S\in\cS(M)$, there exists a smallest possible set, denoted by $S_{\sing}$, satisfying item (2) above, which consists of the non-smooth points of $S$; see Figure \ref{fig:a punctate surface} for an example. In this paper, we often consider the case where $P$  contains smooth points of $S$, e.g., in Definition \ref{def:Simon_Smith_family}.

\begin{defn}[Punctate set]
    Let $S\in\cS(M)$. For \textit{any} finite set $P$ such that $S \setminus P$ is a smooth embedded surface, we call $P$  a {\it punctate set} for $S$.
\end{defn}

\begin{figure}[!ht]
    \centering
    \includegraphics[width=0.5\linewidth]{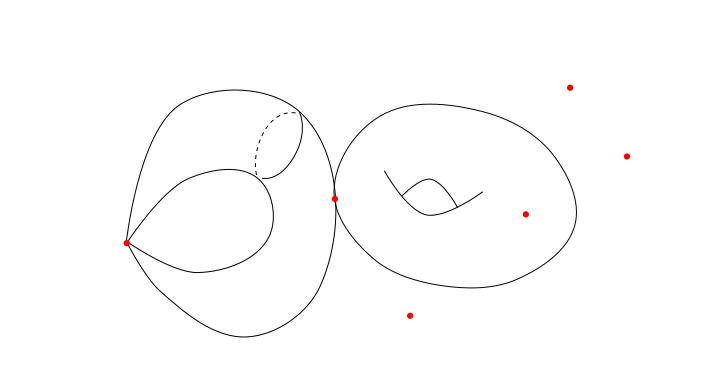}
    \caption{This picture shows an element $S\in\cS(M)$: It is a closed set, which contains a (black) smooth surface part and also the red points. The smallest possible punctate set for $S$ is given by the red points. Note $S$ has genus $1$.}
    \label{fig:a punctate surface}
\end{figure}
 
    As shown in \cite[\S 2]{chuLi2024fiveTori}, every element  $S\in \cS(M)$ can be associated uniquely to an integral 2-current $[S]\in \bI_2(M;\Z_2)$ with $\partial [S]=0$. In other words, $[S]\in\cZ_2(M;\Z_2)$.
Hence, we can borrow the $\bF$-metric  on $\cZ_2(M;\Z_2)$ and impose it on $\cS(M)$.
\begin{defn}\label{def:bF_metric_GS} 
    We define $\bF:\cS(M)\x\cS(M)\to\R$ by
    $$\bF(S_1,S_2):=\bF([S_1],[S_2])=\cF([S_1],[S_2])+\bF(|S_1|,|S_2|).$$
\end{defn}

One can check that $\bF$ is a pseudometric on $\cS(M)$ (two members of $\cS(M)$ differing just by finitely many points would have zero $\bF$-distance).

Now, for any $S \in \GS(M)$, let $S_{\sing}$ be the set of non-smooth points of $S$.
Then by Sard's theorem, there exists a set  $E \subset (0, \infty)$ of full measure such that for every $r \in E$, $S \setminus B_r(S_{\sing} )$ is a compact smooth surface with boundary, where $B_{r}(P)$ denotes the union of the open $r$-balls centered at the points in $P$. Moreover, by~\cite[Chapter~1~\S 2.1~Lemma~1.5]{CM12}, for any $r_1 > r_2 > 0$ in $E$, we have the   genus inequality 
    $\mathfrak{g}(S \setminus B_{r_1}(S_{\sing})) \leq \mathfrak{g}(S\setminus B_{r_2}(S_{\sing}  ))\,.$
Hence, the limit
\[
    \lim_{r\in E,r\to 0} \mathfrak{g}(S\setminus B_{r}(S_{\sing} )) \in \N \cup \{\infty\}
\]
always exists.
 
\begin{defn}[Genus of a punctate surface]\label{def:genus} 
    Let $S\in\cS(M)$, and let $E$ be the set of $r>0$ such that $S \setminus B_r(S_{\sing} )$ is a smooth surface with boundary. Then we define the {\it genus} of $S$ by
    \[
        \fg(S) := \lim_{r\in E,r\to 0} \fg(S \setminus B_{r}(S_{\sing}  ))\,.
    \]
    Moreover, the set of $S\in\cS(M)$ with genus $g$ is denoted $\cS_{g}(M)$, or simply $\cS_{g}$; the set of $S\in\cS(M)$ with genus $\leq g$ is denoted $\cS_{\leq g}(M)$, or simply $\cS_{\leq g}$.
\end{defn}

Figure \ref{fig:a punctate surface} is an example of a genus one punctate surface. Moreover, it is easy to check that given any $S\in\cS(M)$, for any punctate set $P$, if we let $E$ be the set of $r>0$ such that $S \setminus B_r(P)$ is a smooth surface with  boundary, then $\lim_{r\in E,r\to 0} \fg(S \setminus B_{r}(P))=\fg(S)$. 

We finish this section with a lemma relating genus and the $\bF$-pseudometric, which is proved in Appendix \ref{sect:genusIneq}.
\begin{lem}\label{lem:genusLarger}
    Let $(M,\bg)$ be a closed Riemannian $3$-manifold, and $\Sigma$ be an orientable smooth embedded surface in $M$.  Then there exists $r>0$ such that for any $S\in\cS(M)$ with $\bF(S,\Sigma)<r$, we have $\fg(S)\geq \fg(\Sigma)$.
\end{lem}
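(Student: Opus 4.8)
The plan is to show that $\bF$-closeness to the fixed smooth surface $\Sigma$ forces $S$, after discarding small pieces, to carry a mod-$2$ degree-one map onto $\Sigma$, which then bounds $\fg(S)$ from below by Poincar\'e duality. First I would dispose of trivialities: if $\fg(\Sigma)=0$ there is nothing to prove, and if $[\Sigma]\neq 0$ in $H_2(M;\Z_2)$ then by Remark~\ref{rmk:bFdist} (which gives $[S]$ null-homologous for every $S\in\cS(M)$) the flat distance $\cF([S]-[\Sigma])$, hence $\bF(S,\Sigma)$, is uniformly bounded away from $0$, so the statement is vacuous for small $r$. So assume $\fg(\Sigma)\geq1$ and $[\Sigma]=0$, write $\Sigma=\partial^*\Omega$, and fix a thin two-sided tubular neighborhood $N\cong\Sigma\times[-3\delta,3\delta]$ with $\Sigma\times(0,3\delta]\subset U^+:=\Omega$, $\Sigma\times[-3\delta,0)\subset U^-:=M\setminus\overline{\Omega}$, and projection $\pi:N\to\Sigma$. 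Taking $\delta>0$ small (depending only on $(M,\bg,\Sigma)$) I may assume: the Jacobian of $\pi$ along any $2$-plane tangent to $N$ is at most $1+C\delta$; each slice $\Sigma\times\{s\}$, $|s|\leq3\delta$, has systole bounded below by a constant $\ell_0>0$, taken so small that any closed curve of length $<\ell_0$ in a slice also bounds an intrinsic disc there of diameter $<\delta$.

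Now fix $r>0$ small (depending on $\delta$, hence on $\Sigma$) and let $S\in\cS(M)$ with $\bF(S,\Sigma)<r$; I may assume $\fg(S)<\infty$. Testing the varifolds against suitable bounded Lipschitz functions gives, with an error $\epsilon=\epsilon(r,\delta)\to0$ as $r\to0$: $\cH^2(S)\leq\cH^2(\Sigma)+\epsilon$ and $\cH^2\big(S\cap\{|x_3|\geq\delta/4\}\big)\leq\epsilon$; and from $\cF([S]-[\Sigma])<r$, using $[S]=\partial[\ins(S)]$ and $[\Sigma]=\partial[U^+]$, one gets $\cH^3(\ins(S)\,\triangle\,U^+)<r$ (after possibly relabelling $U^+\leftrightarrow U^-$). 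Coarea and Fubini then produce good levels $s^+\in[\delta/2,\delta]$ and $s^-\in[-\delta,-\delta/2]$ such that $S$ meets $\Sigma\times\{s^\pm\}$ transversally, away from $S_{\sing}$, with $\cH^2(S\cap\Sigma\times\{s^\pm\})=0$, along a $1$-manifold $C^\pm$ with $\cH^1(C^\pm)<\ell_0$ (so each component of $C^\pm$ is null-homotopic in its slice and bounds a small intrinsic disc there), and such that $\Sigma\times\{s^+\}$ (resp.\ $\Sigma\times\{s^-\}$) lies in $\ins(S)$ (resp.\ $\out(S)$) outside a subset whose $\pi$-image has $\cH^2$-measure $<\epsilon$. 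Write $N_0:=\Sigma\times[s^-,s^+]$.

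The core step is a multiplicity estimate via the area formula. For a.e.\ $p\in\Sigma$ the fibre arc $\gamma_p:=\{p\}\times[s^-,s^+]$ meets $S$ in finitely many transverse points of $S_{\mathrm{reg}}$; for $p$ off an $\cH^2$-small set the two endpoints of $\gamma_p$ lie in $\out(S)$ and $\ins(S)$ respectively, so $\#(\gamma_p\cap S)$ is odd, in particular $\geq1$, and these points all project to $p$. Hence $\#(\pi|_{S\cap N_0})^{-1}(p)\geq1$ on a set $G\subset\Sigma$ with $\cH^2(G)\geq\cH^2(\Sigma)-C\epsilon$. On the other hand $\int_\Sigma\#(\pi|_{S\cap N_0})^{-1}(p)\,d\cH^2(p)=\int_{S\cap N_0}J^\Sigma\pi\,d\cH^2\leq(1+C\delta)\cH^2(S)\leq(1+C\delta)(\cH^2(\Sigma)+\epsilon)$. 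Since on $G$ the multiplicity is an odd integer $\geq1$, the set where it is $\geq3$ has measure $\leq C\delta\,\cH^2(\Sigma)+C\epsilon$; hence the set $G_1\subset\Sigma$ where $\gamma_p$ meets $S$ in exactly one point satisfies $\cH^2(\Sigma\setminus G_1)\to0$ as we let first $\delta$, then $r$, tend to $0$.

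It remains to extract the topology. For a generic small $\rho$, $S_\rho:=S\setminus B_\rho(S_{\sing})$ is a compact smooth surface with boundary (Definition~\ref{def:genus}), and $\hat S_\rho:=S_\rho\cap N_0$ is obtained from $S_\rho$ by cutting along the interior curves $S_\rho\cap(\Sigma\times\{s^\pm\})$, so $\fg(\hat S_\rho)\leq\fg(S_\rho)$; capping the boundary circles of $\hat S_\rho$ with abstract discs gives a closed surface $\bar S_\rho$ with $\fg(\bar S_\rho)=\fg(\hat S_\rho)$. Every boundary circle of $\hat S_\rho$ is either a short null-homotopic curve in a slice $\Sigma\times\{s^\pm\}$ or a circle on a small sphere $\partial B_\rho(p_i)$; either way it bounds a disc (of diameter $O(\delta)$, resp.\ $O(\rho)$) that $\pi$ sends into a small disc in $\Sigma$, so $\pi|_{\hat S_\rho}$ extends to a continuous map $\bar\pi:\bar S_\rho\to\Sigma$ whose caps have images of total $\cH^2$-measure tending to $0$. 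Picking $p\in G_1$ avoiding these cap-images, $\pi(B_\rho(S_{\sing}))$, and the critical values of $\bar\pi|_{\hat S_\rho}$ (possible for $r,\rho$ small, as $G_1$ meets every component of $\Sigma$ in almost full measure), we find $\#\bar\pi^{-1}(p)=1$, so $\bar\pi_*[\bar S_\rho]=[\Sigma]$ in $H_2(\Sigma;\Z_2)$. Poincar\'e duality over $\Z_2$ makes the pairing $(a,b)\mapsto\langle a\smile b,[\Sigma]\rangle$ on $H^1(\Sigma;\Z_2)$ nondegenerate, so $\bar\pi^*:H^1(\Sigma;\Z_2)\to H^1(\bar S_\rho;\Z_2)$ is injective, whence $2\fg(\bar S_\rho)\geq\dim_{\Z_2}H^1(\bar S_\rho;\Z_2)\geq\dim_{\Z_2}H^1(\Sigma;\Z_2)=2\fg(\Sigma)$. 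Together with $\fg(S)\geq\fg(S_\rho)$ (Definition~\ref{def:genus} and the monotonicity from~\cite{CM12}) and $\fg(S_\rho)\geq\fg(\hat S_\rho)=\fg(\bar S_\rho)$, this yields $\fg(S)\geq\fg(\Sigma)$. The main obstacle is this middle part: selecting the good slices $s^\pm$ whose intersection curves with $S$ are short enough to be null-homotopic (this is exactly what allows $\pi$ to be extended over the caps), and keeping the mass defect, the Jacobian deficiency $C\delta$, and all the small bad sets simultaneously controlled, which forces the order of choice $\Sigma\rightsquigarrow\delta\rightsquigarrow r\rightsquigarrow(s^\pm,\rho)$; the area-formula estimate genuinely needs the tube to be thin, since with only a crude Jacobian bound the multiplicity count collapses.
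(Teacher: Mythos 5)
Your argument is correct and is essentially the paper's proof: the paper likewise truncates $S$ to a thin tubular neighborhood of $\Sigma$, caps off the short boundary curves (Corollary~\ref{cor:restr_PS_to_nbhd}), and then deduces injectivity of $\pi^*$ on $H^1(\Sigma;\Z_2)$ from the mod-$2$ degree-one projection together with the nondegeneracy of the cup product pairing (Lemma~\ref{lem:genus_bound_of_close_in_nbhd}), your fibre-endpoint parity count being the same computation the paper carries out with the $3$-chain bounding $[S]-[\Sigma]$. The only substantive deviation is superfluous: the area-formula multiplicity estimate producing the set $G_1$ where fibres meet $S$ exactly once is not needed, since odd multiplicity at a single regular value already determines the mod-$2$ degree.
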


\subsection{Simon--Smith family}

\begin{defn}[Simon--Smith family]\label{def:Simon_Smith_family}
    Let $X$ be a cubical or simplicial complex. 
    A map $\Phi: X \to \GS(M)$ is called a {\em Simon--Smith family} with parameter space $X$ if the following hold.
    \begin{enumerate}[label=\normalfont(\arabic*)]
        \item \label{item:Hausdorff_cts} The  map $x\mapsto\cH^2(\Phi(x))$ is continuous. 
        \item \label{item:closedFamily} (Closedness) The family $\Phi$ is ``closed" in the sense that 
        $\{(x,p)\in X\x M: p\in \Phi(x)\}$
        is a closed subset of $X \x M$. 
        \item \label{item:SingPointsUpperBound} ($C^\infty$-continuity away from singularities) For each $x\in X$, we can choose a finite set $P_\Phi(x)\subset \Phi(x)$ such that:
        \begin{itemize}
            \item $\Phi(x)\backslash P_\Phi(x)$ is a smooth embedded orientable surface, i.e.  $P_\Phi(x)$ is  a  punctate set for $\Phi(x)$.
            \item For any $x_0\in X$  and within any open set $U\subset\subset M \backslash P_\Phi(x_0)$, $\Phi(x)\to \Phi(x_0)$ smoothly whenever $x\to x_0$.
            \item $\displaystyle N(P_\Phi) := \sup_{x\in X} |P_\Phi(x)|<\infty.$
        \end{itemize}
    \end{enumerate}
\end{defn}

\begin{exmp}
    The set $P_\Phi(x)$ might contain more points than the non-smooth points of $\Phi(x)$. Consider a family $\Phi:[0,1]\to \cS(M)$, where $\Phi(t)$ is a smooth torus for $t< 1$,  $\Phi(1)$ is a {\it smooth} sphere, and as $t\to 1$, the handle of the torus becomes smaller and eventually vanishes at some smooth point $p$ on the sphere $\Phi(1)$. In this case, $P_\Phi(1)$ contains the point $p$. 
\end{exmp}

\begin{rmk} 
    In previous literature, Simon--Smith families are often allowed to contain certain subsets of measure zero as members. In this paper, to simplify the notations, we choose not to do that. But we note that all results can be modified in obvious ways to allow Simon--Smith families to contain closed subsets of measure zero. See \cite{chuLi2024fiveTori} for such a modification.
\end{rmk}

\begin{defn}
    A Simon--Smith family $\Phi$ is of  {\it genus $\leq g$} if each $\Phi(x)$ has genus $\leq g$. 
\end{defn} 
 
As shown in \cite[\S 2]{chuLi2024fiveTori}, every Simon--Smith family is $\bF$-continuous. 

\begin{defn}[Simon--Smith isotopy]\label{def:homotopyClass}
    Two Simon--Smith families $\Phi,\Phi':X\to\cS(M)$ are called {\em (Simon--Smith) isotopic} to each other if there exists a continuous map 
    $\varphi: [0, 1] \times X \to \operatorname{Diff}^\infty(M)$ such that:
    \begin{enumerate}
        \item $\varphi(0, x) = \operatorname{Id}$ for all $x \in X$,
        \item $\varphi(1, x)(\Phi(x)) = \Phi'(x)$ for all $x \in X$,
    \end{enumerate}
    where $\textrm{Diff}^\infty(M)$ denotes the group of diffeomorphisms on $M$ equipped with the $C^\infty$ topology. Moreover, the set of all families isotopic to a Simon--Smith family $\Phi$ is called the {\em (Simon--Smith) isotopy class associated to $\Phi$}, denoted by $\Lambda (\Phi)$.
\end{defn}

\begin{rmk}\label{rmk:homotopy}
    \begin{itemize} 
        \item []
        \item In Definition \ref{def:Simon_Smith_family}, for any choice of sets $P_\Phi(x)$ for $\Phi$, and for any isotopy $\varphi$, the sets $P_{\Phi'}(x) := \varphi(1, x) \circ P_\Phi(x)$ would satisfy the conditions for the family $\Phi':=\varphi(1,\cdot)\circ\Phi \in \Lambda (\Phi)$. Consequently, we may assume that $\sup_x |P_\Phi(x)| < \infty$ is a constant within the isotopy class $\Lambda (\Phi)$.
        \item If $\Phi$ is of genus $\leq g$, then so is every $\Phi' \in \Lambda (\Phi)$.
    \end{itemize}
\end{rmk}

\subsection{Min-max theorems} 
\label{sect:minMax}

\begin{defn}[Simon--Smith width]
    Given an isotopy class $\Lambda$ of Simon--Smith families, its {\em width} is defined as
    \[
        \mathbf{L}(\Lambda):=\inf_{\Phi\in \Lambda}\sup_{x\in X}\cH^2(\Phi(x))\,.
    \]
\end{defn}
\begin{defn}[Minimizing sequence and min-max sequence]
    A sequence $\{\Phi_i\}$ in $\Lambda$ is called {\em minimizing} if 
    \[
        \lim_{i\to\infty} \sup_{x\in X}\cH^2(\Phi_i(x))=\mathbf{L}(\Lambda)\,.
    \]
    If $\{\Phi_i\}$ is a minimizing sequence in $\Lambda$ and $\{x_i\} \subset X$ is such that $ \cH^2(\Phi_i(x_i))\to\mathbf{L}(\Lambda)\,,$
    then $\{\Phi_i(x_i)\}$ is called a {\em min-max sequence}. 
    For a minimizing sequence $\{\Phi_i\}$, we define its \textit{critical set} $\bC(\{\Phi_i\})$ to be the set of all subsequential varifold-limit of its min-max sequences:
    \[
        \bC(\{\Phi_i\}):=\{V=\lim_j|\Phi_{i_j}(x_j)|: x_j\in X, \| V\|(M)=\bL(\Lambda)\}\,.
    \]
    And $\{\Phi_i\}$ is called \textit{pulled-tight} if every varifold in $\bC(\{\Phi_i\})$ is stationary.
\end{defn}

\begin{defn}\label{def:minimal_surf_varifolds}
    Let $(M, {\mathbf g})$ be a closed orientable  Riemannian $3$-manifold and  $L > 0$.
    \begin{enumerate}
        \item Denote by $\cW_L(M, {\mathbf g})$ the set of all       varifolds $W \in \cV_2(M)$, with $\|W\|(M) = L$, of the form 
        \begin{equation}\label{eq:formOfW}
            W=m_1|\Gamma_1|+\cdots+m_l|\Gamma_l|\,,
        \end{equation}
        where  $m_1, \dots,m_l$ are   positive integers, and $\Gamma_1, \dots,\Gamma_l$ are disjoint smooth, connected, embedded minimal surfaces in $(M, {\mathbf g})$, with $m_i$ even if $\Gamma_i$ is non-orientable (see \cite[Remark 1.4]{Ket19}).
        \item Denote by $\cW_{L, \leq g}(M, {\mathbf g})$ the set of all varifolds $W \in \cW_L(M, {\mathbf g})$ (of the form (\ref{eq:formOfW})) satisfying
        \begin{equation}\label{eq:genus_bound}
            \sum_{j\in I_O}m_j\fg(\Gamma_j)+ \frac{1}{2}\sum_{j\in I_N}m_j(\fg(\Gamma_j)-1)\leq g\,,
        \end{equation}
        where $\Gamma_j$ is orientable if $j \in I_O$ and non-orientable if $j \in I_N$. 
    \end{enumerate}
\end{defn}

For simplicity, when the ambient manifold $(M, {\mathbf g})$ is clear, we may omit it in the notations, and use the notations $\cW_{L}$ and $\cW_{L, \leq g}$, respectively.

\begin{thm}[Simon--Smith min-max theorem]\label{thm:minMax}
    Let $(M, \bg)$ be a closed orientable Riemannian $3$-manifold. 
    Suppose a Simon--Smith family $\Phi$ of genus $\leq g$ has $L := \mathbf{L}(\Lambda(\Phi)) > 0$. Then for any $r > 0$, there exists a minimizing sequence $\{\Phi_{i}\}$ in $\Lambda(\Phi)$ such that:
    \begin{enumerate}[label=\normalfont(\arabic*)]
        \item\label{item:minMaxPulltight} The sequence $\{\Phi_i\}$ is pulled-tight and 
        $\bC(\{\Phi_{i}\})\cap \cW_{L, \leq g} \neq \emptyset\,.
        $
        \item\label{item:minMaxMultiplicityone} There exists some
        $$W=m_1|\Gamma_1|+\cdots+m_l|\Gamma_l|\in \bC(\{\Phi_{i}\})\cap \cW_{L, \leq g},$$
        under the notation in Definition \ref{def:minimal_surf_varifolds} (1),
        such that:
        \begin{enumerate}[label=\normalfont(\alph*)]
            \item If $\Gamma_j$ is unstable and two-sided, then $m_j = 1$.
            \item If $\Gamma_j$ is one-sided, then its connected double cover is stable and $m_j$ is even.
        \end{enumerate}
        \item \label{item:minMaxW} Furthermore, there exists $\eta > 0$ such that for all sufficiently large $i$,
        \[
            \cH^2(\Phi_{i}(x)) \geq L - \eta \implies |\Phi_i(x)| \in \bB^{\Fb}_{r}(\cW_{L, \leq  g})\,.
        \]
    \end{enumerate}
\end{thm}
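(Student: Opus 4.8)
The plan is to assemble this statement from the established pillars of Simon--Smith min-max theory, run in the order: pull-tight, combinatorial selection of an almost minimizing min-max sequence, interior regularity, genus bound, and finally multiplicity control. None of these ingredients is new in isolation, so the proof is mainly a careful bookkeeping that runs them compatibly and records the output in the form needed in later sections. First I would fix a minimizing sequence in $\Lambda(\Phi)$ and run the tightening construction of Simon--Smith (see \cite{Smith82, CD03, Ket19}): from a continuous family of ambient isotopies that strictly decreases the mass of every non-stationary varifold of mass at most $L$, one produces a new minimizing sequence $\{\Phi_i\}$, still in $\Lambda(\Phi)$, all of whose mass-$L$ min-max varifolds are stationary. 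The point needing care is that the tightening is realized by \emph{isotopies}, so that $\Phi_i\in\Lambda(\Phi)$ and both the genus bound $\le g$ and the uniform bound $\sup_x|P_\Phi(x)|<\infty$ persist (cf.\ Remark \ref{rmk:homotopy}); by compactness of $X$, $\bC(\{\Phi_i\})\ne\emptyset$ and every element of it is stationary.

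Next I would apply Pitts' combinatorial argument (\cite{Pit81, CD03}) to extract from $\{\Phi_i\}$ a min-max sequence $\{\Phi_i(x_i)\}$ that is almost minimizing in sufficiently small annuli, and, passing to a subsequence, having varifold limit $V\in\bC(\{\Phi_i\})$; $V$ is stationary by the previous paragraph. Simon--Smith interior regularity (\cite{Smith82, CD03}, in the form of \cite{Ket19}, which also handles one-sided components) then gives $V=m_1|\Gamma_1|+\cdots+m_l|\Gamma_l|$ as in \eqref{eq:formOfW}, so $V\in\cW_L$. To upgrade this to $V\in\cW_{L,\le g}$ I would invoke the genus bound of De Lellis--Pellandini \cite{DP10}, refined by Ketover \cite{Ket19} so as to include the non-orientable correction term: since every $\Phi_i(x_i)$ has genus $\le g$ and the convergence to $V$ is smooth with locally bounded area away from finitely many points, the limiting weighted genus is controlled by $\limsup_i \fg(\Phi_i(x_i))\le g$, and this bound takes precisely the shape \eqref{eq:genus_bound} once multiplicities and non-orientable components are accounted for. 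This proves \ref{item:minMaxPulltight}.

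For \ref{item:minMaxMultiplicityone} I would, before running the previous two paragraphs, replace $\Phi$ by the perturbed sweepout supplied by the multiplicity one theorem of Wang--Zhou (\cite{wangZhou2022higherMulti}, in the Simon--Smith adaptation of \cite{WangZhou23FourMinimalSpheres}); this produces, among the minimal varifolds output by the construction, one $W=\sum_j m_j|\Gamma_j|\in\bC(\{\Phi_i\})\cap\cW_{L,\le g}$ whose two-sided unstable components carry multiplicity $1$. A one-sided component $\Gamma_j$ of $W$ then has $m_j$ even by the definition of $\cW_L$; moreover, if the connected double cover of $\Gamma_j$ were unstable, the catenoid estimate for one-sided minimal surfaces would produce a competitor in $\Lambda(\Phi)$ with strictly smaller maximal mass, contradicting $\|W\|(M)=L=\bL(\Lambda(\Phi))$, so the double cover is stable. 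I expect this to be the main obstacle: the Wang--Zhou perturbation must be carried out \emph{compatibly} with the genus truncation, so that the perturbed family remains a Simon--Smith family of genus $\le g$ inside $\Lambda(\Phi)$, while the non-orientable contributions to \eqref{eq:genus_bound} are tracked simultaneously with the multiplicity one conclusion; the pull-tight and regularity steps, by contrast, are essentially quotation.

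Finally, \ref{item:minMaxW} follows by the standard compactness argument. If it failed, there would be $r>0$ and, after passing to a subsequence, slices $\Phi_i(y_i)$ with $\cH^2(\Phi_i(y_i))\ge L-1/i$ but $|\Phi_i(y_i)|\notin\bB^{\Fb}_r(\cW_{L,\le g})$; a subsequential limit $V'$ then lies in $\bC(\{\Phi_i\})$, has mass $L$, is stationary by the pull-tight step, yet stays $\Fb$-far from $\cW_{L,\le g}$. This is excluded by the same almost-minimizing/deformation dichotomy used above: a stationary mass-$L$ limit of near-maximal slices that is not a smooth embedded minimal surface of genus $\le g$ must fail to be almost minimizing near some point, producing an area-decreasing deformation of the nearby slices that contradicts the minimality of $\{\Phi_i\}$.
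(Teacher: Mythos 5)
Your proposal is correct and follows essentially the same route as the paper, which does not reprove this theorem but cites it directly from \cite{chuLi2024fiveTori} (Theorem 2.16 therein), whose proof is precisely the assembly you describe: pull-tight by isotopies preserving $\Lambda(\Phi)$, Pitts-style almost-minimizing extraction, Ketover's regularity and the De Lellis--Pellandini/Ketover genus bound giving \eqref{eq:genus_bound}, the Wang--Zhou multiplicity-one theorem for item (2), and the Marques--Neves-type deformation theorem for item (3). The only caveat is that your first argument for (3) (a far-away stationary limit in $\bC(\{\Phi_i\})$) is not by itself a contradiction, but you correctly note that the almost-minimizing/deformation dichotomy is what actually closes it, which is how the cited proof proceeds.
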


The above theorem builds upon the foundational results ~\cite{Smith82, CD03, DP10, Ket19, MN21,WangZhou23FourMinimalSpheres}, and was proven  in  \cite{chuLi2024fiveTori}: See Theorem 2.16 therein (we can guarantee that   $m_i$ is even if $\Gamma_i$ is one-sided \cite[Remark 1.4]{Ket19}). Note that since $M$ is orientable, a surface is two-sided if and only if it is orientable. 

The following theorem, proven in \cite[\S 3]{chuLi2024fiveTori}, is also useful. Roughly speaking, if all min-max minimal surfaces are of multiplicity one, then there exists some family where in the ``large-area region"  we can guarantee closeness to the minimal surfaces in the $\bF$-norm for currents.

\begin{thm}[$\bF$-closeness]\label{thm:currentsCloseInBoldF}
    Let $(M, {\bg})$ be a closed orientable  Riemannian $3$-manifold,  and $\Phi:X\to\cS(M)$ be a Simon--Smith family of genus $\leq g$ such that $L := \mathbf{L}(\Lambda(\Phi)) > 0$. Suppose that $\cW_{L, \leq g}$ consists of only finitely many varifolds, each induced by a multiplicity one embedded minimal surface. Then for any $r>0$, there exists $\eta>0$ and $\Phi' \in \Lambda(\Phi)$ such that 
    \[
      \cH^2(\Phi'(x))\geq L -\eta \implies [\Phi'(x)] \in \bB^\bF_r([\cW_{L, \leq g}]) \,.
    \]
\end{thm}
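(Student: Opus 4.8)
The plan is to derive Theorem~\ref{thm:currentsCloseInBoldF} from the varifold-closeness conclusion of Theorem~\ref{thm:minMax} by upgrading, via the multiplicity-one hypothesis, $\bF$-closeness of the \emph{varifolds} to $\bF$-closeness of the underlying $\Z_2$-\emph{currents}. No genuine modification of the family is needed: the $\Phi'$ we produce will simply be a member $\Phi_i$ of the minimizing sequence furnished by Theorem~\ref{thm:minMax}, for $i$ large. Write $\cW_{L,\leq g}=\{|\Gamma_1|,\dots,|\Gamma_m|\}$, where each $\Gamma_k$ is a multiplicity-one embedded minimal surface; since members of $\cS(M)$ separate $M$, the relevant $\Gamma_k$ do too, and $[\Gamma_k]=\partial[\ins(\Gamma_k)]\in\cZ_2(M;\Z_2)$. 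Because $\cW_{L,\leq g}$ is finite, there is $\rho_0>0$ with $\bB^{\bF}_\rho(\cW_{L,\leq g})=\bigsqcup_{k=1}^{m}\bB^{\bF}_\rho(|\Gamma_k|)$ for every $\rho\le\rho_0$.

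The heart of the matter is the following no-cancellation lemma. \emph{Let $\Gamma\subset M$ be a smooth closed embedded separating surface. For every $\delta>0$ there is $\rho(\Gamma,\delta)>0$ such that every $S\in\cS(M)$ with $\bF(|S|,|\Gamma|)<\rho(\Gamma,\delta)$ satisfies $\cF([S],[\Gamma])<\delta$.} To prove it, suppose for contradiction $S_i\in\cS(M)$ with $\bF(|S_i|,|\Gamma|)\to0$ but $\cF([S_i],[\Gamma])\ge\delta_0>0$. Set $\Omega_i:=\ins(S_i)$, so $[S_i]=\partial[\Omega_i]$ and $\operatorname{Per}(\Omega_i)=\cH^2(S_i)\to\cH^2(\Gamma)$; in particular the perimeters are bounded, so by $BV$ compactness $\Omega_i\to\Omega$ in $L^1(M)$ along a subsequence. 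Continuity of $\partial$ on flat chains gives $[S_i]\to\partial[\Omega]$ in the flat topology, hence $\cF(\partial[\Omega],[\Gamma])\ge\delta_0$ and in particular $\partial[\Omega]\ne[\Gamma]$. On the other hand $|\partial^*\Omega_i|=|S_i|\rightharpoonup|\Gamma|$ (as varifolds), so by localized lower semicontinuity of perimeter the perimeter measure of $\Omega$ charges no set disjoint from $\Gamma$; since $\Gamma$ is a smooth embedded surface, $\Omega$ agrees, up to a null set, with a union of closures of connected components of $M\setminus\Gamma$, and therefore $\partial[\Omega]$ is a \emph{proper} sub-union of the components of $\Gamma$. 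Choose a component $\Gamma''$ of $\Gamma$ not occurring in $\partial^*\Omega$; then in a thin tubular neighborhood $T\cong\Gamma''\times(-1,1)$ of $\Gamma''$ one has $\vol(\Omega_i\cap T)\to0$ or $\vol(T\setminus\Omega_i)\to0$, while $|\partial^*\Omega_i|\llcorner T\rightharpoonup|\Gamma''|\llcorner T$ has multiplicity one. This is impossible: a sequence of sets of finite perimeter with vanishing volume confined near $\Gamma''$ has reduced boundaries that must ``come back'', so after passing to a slightly smaller tube (using generic slicing levels to absorb the vanishing contributions near $\partial T$) and slicing along the $(-1,1)$-fibres, each fibre meets $\partial^*\Omega_i$ in an even number of points in the limit, forcing every subsequential weak limit of $|\partial^*\Omega_i|\llcorner T$ to have even density on $\Gamma''$, contradicting multiplicity one. (Equivalently, in the global case $\Omega\in\{\emptyset,M\}$: then $[S_i]=\partial[\Omega_i]\to0$ in the flat norm with $\cH^2(S_i)\not\to0$, and a sequence of $\Z_2$-integral $2$-cycles converging to $0$ in the flat norm with mass bounded away from $0$ cannot converge, as varifolds, to a multiplicity-one surface.) This proves the lemma.

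Granting the lemma, the theorem follows by bookkeeping. Let $r>0$ be given; set $\delta:=r/2$ and $\rho:=\min\bigl(\rho_0,\,r/2,\,\min_{1\le k\le m}\rho(\Gamma_k,\delta)\bigr)>0$. Apply Theorem~\ref{thm:minMax} with this $\rho$ in the role of its parameter $r$: part (3) produces a pulled-tight minimizing sequence $\{\Phi_i\}\subset\Lambda(\Phi)$ and $\eta>0$ such that, for all large $i$, $\cH^2(\Phi_i(x))\ge L-\eta$ implies $|\Phi_i(x)|\in\bB^{\bF}_\rho(\cW_{L,\leq g})=\bigsqcup_k\bB^{\bF}_\rho(|\Gamma_k|)$. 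Fix one such large $i$ and put $\Phi':=\Phi_i\in\Lambda(\Phi)$. If $\cH^2(\Phi'(x))\ge L-\eta$, then $\bF(|\Phi'(x)|,|\Gamma_k|)<\rho\le\rho(\Gamma_k,\delta)$ for a (unique) $k$, so the lemma gives $\cF([\Phi'(x)],[\Gamma_k])<\delta=r/2$; combined with $\bF(|\Phi'(x)|,|\Gamma_k|)<\rho\le r/2$ this yields $[\Phi'(x)]\in\bB^{\bF}_r([\cW_{L,\leq g}])$, as required.

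The main obstacle is the no-cancellation lemma, and specifically its last step: the quantitative fact that a sequence of sets of finite perimeter with vanishing volume, concentrating near a fixed surface, has reduced-boundary varifolds that converge with even multiplicity. This is precisely the no-cancellation phenomenon underlying multiplicity-one arguments in min-max theory (compare \cite{MN21,WangZhou23FourMinimalSpheres,chuLi2024fiveTori}); isolating and proving it is essentially the whole content, and once it is in place the passage from varifold- to current-closeness costs nothing, since one is free to shrink $\rho$ before invoking Theorem~\ref{thm:minMax}.
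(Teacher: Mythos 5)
Your overall strategy (invoke Theorem \ref{thm:minMax}\ref{item:minMaxW} and then upgrade varifold closeness to current closeness using multiplicity one) is natural, but the ``no-cancellation lemma'' on which the entire proof rests is false, so the argument does not go through. Here is a counterexample. Let $\Gamma''\subset M$ be any smooth closed embedded separating surface and work in a tubular neighborhood $\Gamma''\times(-\epsilon,\epsilon)$. For each $i$ choose pairwise disjoint closed disks $Q_{i,1},\dots,Q_{i,m_i}\subset\Gamma''$ of diameter $\to 0$ whose union equidistributes to \emph{half} of $\Gamma''$ (a fine checkerboard), with total perimeter $P_i$, and set $\Omega_i:=\bigcup_j Q_{i,j}\times(-h_i,h_i)$ with $h_i\to 0$ chosen so that $P_ih_i\to 0$ (e.g.\ side length $\sim 1/i$ and $h_i=1/i^2$). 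After rounding corners, $S_i:=\partial\Omega_i$ is a disjoint union of smooth embedded spheres, hence a genus-$0$ element of $\cS(M)$. Then $\vol(\Omega_i)\to 0$, so $[S_i]=\partial[\Omega_i]\to 0$ in the flat norm and $\cF([S_i],[\Gamma''])\to\cF([\Gamma''])>0$; on the other hand the top and bottom faces contribute $2\cdot\tfrac12\,\cH^2\llcorner\Gamma''$ in the limit, with tangent planes converging to $T\Gamma''$, while the lateral faces carry mass $O(P_ih_i)\to 0$, so $|S_i|\to|\Gamma''|$ as varifolds \emph{with multiplicity one}. Thus $\bF(|S_i|,|\Gamma''|)\to 0$ while $\cF([S_i],[\Gamma''])$ stays bounded away from zero, contradicting your lemma (and, verbatim, your parenthetical claim that a sequence of $\Z_2$-cycles converging to $0$ in the flat norm with non-vanishing mass cannot converge to a multiplicity-one varifold).

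The precise step that fails is the assertion that since each fibre meets $\partial^*\Omega_i$ in an even number of points, ``every subsequential weak limit of $|\partial^*\Omega_i|$ has even density on $\Gamma''$.'' Evenness of the fibrewise count --- equivalently, of the density of the projected measure $\pi_\#\|\partial^*\Omega_i\|$ --- is \emph{not} preserved under weak-$*$ convergence: in the example the density is $0$ over the white squares and $2$ over the black squares, and these interlace so finely that the limit has density $1$. Consequently, varifold closeness to a multiplicity-one minimal surface does not imply current closeness for an arbitrary element of $\cS(M)$, and one cannot take $\Phi'$ to be simply a far-out member of the minimizing sequence produced by Theorem \ref{thm:minMax}: that theorem only controls the varifolds $|\Phi_i(x)|$ on the large-area region, and the above shows this is strictly weaker information. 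This is exactly why Theorem \ref{thm:currentsCloseInBoldF} is a separate, nontrivial statement; the proof in \cite[\S 3]{chuLi2024fiveTori} (which this paper cites rather than reproduces) must exploit the global structure of the family --- continuity in $x$ of the surfaces and of the regions they bound, and the minimizing/sweepout property, which rules out checkerboard-type cancellation because such configurations admit large area-decreasing deformations --- rather than arguing surface-by-surface from the varifold information alone, as you do.
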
 

\subsection{Pinch-off process and interpolation}\label{sect:pinchOff}

In this section, we state an important interpolation result, Theorem \ref{thm:pinchOffMinMax}, an improvement of Theorem \ref{thm:currentsCloseInBoldF}. Namely, given a family $\Phi$ that produces a \textit{multiplicity one, non-degenerate} min-max minimal surface $\Gamma$, we can construct a ``better" family $\Phi'\in\Lambda(\Phi)$ such that {\it on the large-area region, each element has the same genus as $\Gamma$}. This imposes a topological conditions on the large-area region, compared to Theorem \ref{thm:currentsCloseInBoldF}. This is achieved by performing {\it pinch-off processes}, continuously across the whole region of punctate surfaces of large area in $\Phi''$. See Figure \ref{fig:pinchOff2} for an illustration.

\begin{defn}\label{defn:pinch_off}
    In a closed 3-manifold $M$, a  collection  $\{\Gamma(t)\}_{t\in[a,b]}$ of elements in $\cS(M)$ is called a {\em    pinch-off process} if the following holds. Regarding the parameter space $[a, b]$   as a time interval, there exist finitely many spacetime points $(t_1,p_1), \dots,(t_n,p_n)$ in $(a,b]\x M$, with each  $p_i\in \Gamma(t_i)$, such that  each $(t,p)\in[a,b]\x M$ is of one of the following types.

    If $(t,p)$ is not equal to any $(t_i,p_i)$, then:
    \begin{enumerate}
        \item  There exists a closed neighborhood $J\subset [a,b]$ around $t$ and a ball $U\subset M$ around $p$ such that $\{\Gamma(t)\}_{t\in J}$ deforms by an isotopy within $U$.
    \end{enumerate}
    
    If $(t,p)$ is equal to some $(t_i,p_i)$, then there exists a closed neighborhood $J\subset [a,b]$ around $t$ and a ball $U\subset M$ around  $p$ such that the family $\{\Gamma(t')\cap U\}_{t'\in J}$ of surfaces is of one of the following types (Note it is possible that $t=b$.):
    \begin{enumerate}
    \setcounter{enumi}{1}
        \item {\it Surgery }:
        The initial surface deforms by an isotopy for $t'<t$, but then at time $t$  becomes, up to diffeomorphism, a double cone with $p$ as the cone point (as shown in the second picture in Figure \ref{fig:pinchOff2}). Afterwards, it either remains a double cone or splits into two smooth discs: In either case it is allowed to deform by another isotopy.
        \item {\it Shrinking into points}: 
        For each $t'\in J$, {\it $\Gamma(t')$ does not intersect $\partial U$.} Moreover, for the family
        $\{ \Gamma(t')\cap U\}_{t' \in J }$, the initial surface deforms by  isotopy in $U$ when $t'<t$, but then becomes just the point $p$ at time $t$, and this point moves smoothly after time $t.$
    \end{enumerate}
\end{defn}

Note, all pinch-off processes are Simon--Smith families. In addition, in (3), since $\Gamma(t')$ does not intersect $\partial U$, the part we are shrinking must be a union of several connected components. The reason for allowing the double cone to remain after time $t$ in case (2) will be addressed in Remark
\ref{rmk:leftIntact}. Now, the lemma below follows directly from the definition of the pinch-off process. 

\begin{lem}\label{prop:pinchOffSOB} 
    Along a pinch-off process, genus is non-increasing.
\end{lem}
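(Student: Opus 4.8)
The claim, Lemma \ref{prop:pinchOffSOB}, asserts that genus is non-increasing along a pinch-off process. Here is how I would organize the argument.

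\textbf{Strategy.} The statement is local in the time variable together with a compactness step, so I would first reduce to showing that genus does not increase across each of the three building blocks of Definition \ref{defn:pinch_off}, and then glue these local statements together. Formally, fix a pinch-off process $\{\Gamma(t)\}_{t\in[a,b]}$ with spacetime singular points $(t_1,p_1),\dots,(t_n,p_n)$; I want to show $\fg(\Gamma(b))\le\fg(\Gamma(a))$, and more generally that $t\mapsto\fg(\Gamma(t))$ is non-increasing. It suffices to show $\fg$ is locally non-increasing near every time $t_0\in[a,b]$, since a non-increasing-on-a-neighborhood function on a compact interval is globally non-increasing (cover $[a,b]$ by finitely many such neighborhoods). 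Because the $t_i$ are finite, every $t_0$ either has a neighborhood $J$ containing no $t_i$, or is some $t_i$ and has a neighborhood $J$ containing no other $t_j$.

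\textbf{The three cases.} In the first situation (no singular time in $J$), by the compactness of $\Gamma(t_0)$ and condition (1) of Definition \ref{defn:pinch_off} together with the $C^\infty$-continuity built into the definition of a Simon-Smith family, the surfaces $\Gamma(t)$ for $t\in J$ are all ambient-isotopic (one covers $\Gamma(t_0)$ by finitely many balls $U$ in which the deformation is by isotopy, shrinks $J$, and composes the local isotopies using a partition-of-unity argument in $\operatorname{Diff}^\infty(M)$); hence $\fg$ is constant on $J$. In the \emph{shrink} case (case (3)), the deformation is by isotopy away from the ball $U$, and inside $U$ one or more entire connected components are collapsed to a point; removing connected components can only decrease the genus (by the additivity of genus over components in Definition \ref{def:genus}), so $\fg(\Gamma(t))\le\fg(\Gamma(t_0^-))$ for $t\ge t_0$ in $J$, with equality before $t_0$. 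In the \emph{surgery} case (case (2)), outside $U$ we again have an isotopy, and inside $U$ a neck is pinched: a tube $S^1\times(-1,1)$ is replaced first by a double cone and then either left as the double cone or replaced by two disjoint discs. I would verify using the definition of genus for surfaces with boundary (closing up boundary circles with discs, as spelled out before Definition \ref{def:genus}) that capping off the two ends of the severed neck with discs corresponds exactly to either (i) cutting a non-separating handle, which drops the genus by one, or (ii) splitting off a connected component, which by additivity keeps the total genus the same or drops it; in neither subcase does the genus increase. The fact that the limiting object at time $t_0$ is a double cone — a singular point of the punctate surface — is harmless: by Definition \ref{def:genus} the genus is computed after excising a small ball about the singular set, and for $r$ small this excised surface is exactly the pre-surgery or post-surgery surface with a small disc removed near each side of the neck, whose genus has already been controlled.

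\textbf{Main obstacle.} The only genuinely delicate point is the surgery case, and specifically making rigorous that "pinching a neck" never \emph{raises} genus — one must rule out, e.g., some orientation-reversing or non-orientable phenomenon. Here the separating/orientability hypotheses in Definition \ref{def:punctate_surf}, item \ref{item:punctateOrientable}, do the work: $\Gamma(t)\setminus P$ is always an orientable embedded surface, so the surgery is a surgery on an orientable surface along an embedded circle with trivial normal bundle, and for such surgeries the standard formula $\fg(\text{after}) \le \fg(\text{before})$ holds (with a strict drop exactly when the circle is non-separating and equality when it separates off a lower-genus piece). Assembling the local pictures into the monotone statement $t\mapsto\fg(\Gamma(t))$ then only uses the finiteness of $\{t_i\}$ and the continuity-and-genus comparison lemma already invoked in the paragraph before Definition \ref{def:genus}. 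I would keep the write-up short, citing \cite[Chapter~1~\S2.1~Lemma~1.5]{CM12} for the elementary comparison of genus under excising balls, and treating the three cases in a single displayed list.
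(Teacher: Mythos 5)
Your proof is correct and fills in exactly the case analysis that the paper leaves implicit: the paper offers no written proof, stating only that the lemma ``follows directly from the definition of pinch-off process,'' and your three-case argument (isotopy preserves genus; shrinking removes a component, so genus drops by additivity; neck-pinch surgery on an orientable surface along an embedded circle either cuts a non-separating handle or separates components, in neither case raising total genus) is the intended reasoning. Your handling of the double-cone singularity via the excision definition of genus (Definition \ref{def:genus}) is also the right way to reconcile the surgery picture with punctate surfaces.
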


\begin{defn}\label{defn:deformViaPinchoff}
    Let $\Phi,\Phi':X\to\cS(M)$ be Simon--Smith families. Suppose we have a Simon--Smith family $H:[0,1]\x X\to \cS (M)$ such that:
    \begin{itemize}
        \item  $H(0, \cdot)=\Phi$ and $H(1, \cdot)=\Phi'$.
        \item For each $x\in X$, $t\mapsto H(t,x)$ is a pinch-off process.
    \end{itemize}
    Then   $H$ is called a {\it deformation via pinch-off processes} from $\Phi$ to $\Phi'$.
\end{defn}
    
We now state a crucial interpolation theorem.

\begin{thm}[Interpolation]\label{thm:pinchOffMinMax}
    In a closed orientable Riemannian $3$-manifold $(M, {\mathbf g})$, let $\Phi:X\to\GS(M)$ be a Simon--Smith family  of genus $\leq g$ with $X$ a cubical complex. Assume that $L := \bL(\Lambda  (\Phi)) > 0$, and $\cW_{L, \leq g}=\{|\Gamma|\}$ where $\Gamma$ is a nondegenerate embedded closed minimal surface. Then for any $r > 0$, there exists a Simon--Smith family $\Phi': X \to \mathcal{S}_{\leq g}$  with the following properties:
    \begin{enumerate}[label=\normalfont(\arabic*)]
        \item There exists an $\eta>0$ such that for all $x \in X$ with $\cH^2(\Phi'(x))\geq L-\eta$, we have 
        $\Phi'(x)\in \cS_{\fg(\Gamma)}\cap \bB^\bF_{r}(\Gamma).$
        \item\label{item:mapping_cylinder} $\Phi$ can be deformed via pinch-off processes to $\Phi'$.
    \end{enumerate}
\end{thm}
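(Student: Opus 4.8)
The plan is to start from the $\bF$-closeness family supplied by Theorem~\ref{thm:currentsCloseInBoldF} and then ``clean it up'' by a surgery homotopy. Since $\cW_{L,\le g}=\{|\Gamma|\}$ with $\Gamma$ multiplicity one, Theorem~\ref{thm:currentsCloseInBoldF} gives, for a small $r_0>0$ to be fixed later, some $\eta_0>0$ and $\Phi''\in\Lambda(\Phi)$ with $\cH^2(\Phi''(x))\ge L-\eta_0 \Rightarrow [\Phi''(x)]\in\bB^{\bF}_{r_0}([\Gamma])$. An ambient isotopy is a (degenerate) pinch-off process, so $\Phi$ is already deformed via pinch-off processes to $\Phi''$, and it remains to deform $\Phi''$ to the desired $\Phi'$ by a pinch-off homotopy $H:[0,1]\times X\to\cS(M)$ supported near the large-area region $U_\eta:=\{x:\cH^2(\Phi''(x))>L-\eta\}$ for a suitable $0<\eta<\eta_0$, arranged so that $\Phi'=\Phi''$ outside $U_\eta$ while $\Phi'(x)$ has genus exactly $\fg(\Gamma)$ and lies in $\bB^{\bF}_r(\Gamma)$ for $x$ in a slightly smaller large-area region. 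Concatenating with the isotopy $\Phi\rightsquigarrow\Phi''$ and invoking Lemma~\ref{prop:pinchOffSOB} (so genus never increases, keeping everything in $\cS_{\le g}$) then yields the theorem.

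First I would establish a structure result on $U_\eta$: after further shrinking $r_0$ and $\eta$, each $S=\Phi''(x)$, $x\in U_\eta$, decomposes as $S=G_x\cup R_x$, where $G_x$ is a multiplicity one normal graph over $\Gamma$ minus finitely many small discs and $R_x$ is ``excess'' --- finitely many handles glued to $G_x$ along short thin necks together with finitely many closed components, all of total area $<\delta$ and of small diameter. The ingredients are: $\bF$-closeness of $[S]$ to $[\Gamma]$ together with $\cH^2(S)$ close to $\cH^2(\Gamma)$ force $|S|$ to converge to $|\Gamma|$ as a multiplicity one varifold, so by monotonicity $S$ is Hausdorff-close to $\Gamma$ and carries almost all its area in one graphical sheet; the Simon-Smith hypothesis gives $C^\infty$-convergence $\Phi''(x)\to\Phi''(x_0)$ away from the at most $N(P_{\Phi''})$ puncta, localizing the non-graphical part near a bounded number of points; and the bound $\fg(S)\le g$ caps the number of necks and handles. (This is an analogue of structure statements appearing in \cite{chuLi2024fiveTori}.)

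Then I would build the surgery homotopy. Refine $X$ so that $\overline{U_\eta}$ is covered by finitely many small subcomplexes on each of which the puncta and the excess pieces $R_x$ vary in a controlled way; on each such piece, perform continuously in $x$ a neck-pinch surgery on every short neck joining a handle of $R_x$ or a closed component to $G_x$, then shrink each resulting closed component to a point, then isotope the remaining graph $G_x$ onto $\Gamma$. To glue across pieces and across $\partial U_\eta$, use that a neck-pinch can be realized by continuously thinning the neck and completing it only past a threshold: pick a cutoff on $X$ equal to $1$ on the smaller large-area region and $0$ off $U_\eta$, thin each neck by the amount dictated by the cutoff, and complete a surgery only where the cutoff is $1$. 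This produces a Simon-Smith family $H$ with each slice $t\mapsto H(t,x)$ a pinch-off process in the sense of Definition~\ref{defn:pinch_off}, so $\Phi$ is deformed via pinch-off processes (Definition~\ref{defn:deformViaPinchoff}) to $\Phi'$; on the smaller large-area region the final surface is $G_x$ transported onto $\Gamma$ by an isotopy, hence of genus $\fg(\Gamma)$ --- and not less, since every surgery detaches a topologically trivial piece and so removes precisely the excess genus $\fg(S)-\fg(\Gamma)\ge0$ --- and, since only small-area, small-diameter pieces are removed and the terminal isotopy is small, it stays in $\bB^{\bF}_r(\Gamma)$ once $r_0,\delta$ are small relative to $r$.

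The hard part will be carrying out the surgeries \emph{continuously in the parameter $x$}. The number and location of handles and small components of $\Phi''(x)$, hence of the necks to be pinched, can jump as $x$ moves, and a neck-pinch is an intrinsically irreversible, ``discrete'' operation; reconciling this with the requirement that $H$ be a genuine Simon-Smith family whose slices are honest pinch-off processes --- via a finite good cover, the uniform smallness of the excess from the structure result, and the thin-the-neck-then-complete device --- is the technical core of the theorem. A secondary difficulty is proving the structure result itself without any almost-minimality of the family members, which forces one to combine the $\bF$-closeness of Theorem~\ref{thm:currentsCloseInBoldF}, the Simon-Smith $C^\infty$-control away from the bounded set of puncta, and the genus bound all at once.
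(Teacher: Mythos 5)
Your overall strategy --- start from the $\bF$-close family of Theorem~\ref{thm:currentsCloseInBoldF}, then run a continuous surgery homotopy supported on the large-area region, completing each surgery only past a cutoff threshold --- is indeed the skeleton of the paper's proof (the cutoff device and the cell-by-cell consistency issue, which the paper resolves with the combinatorial Lemma~\ref{lem:combinatorial_I}, are exactly as you describe). But there are two genuine gaps.

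First, your structure result is not available as stated, and the way you propose to get it would fail. Members of a Simon-Smith family carry no curvature control, so $\cF$-closeness of $[S]$ to $[\Gamma]$ together with $\cH^2(S)\approx\cH^2(\Gamma)$ does \emph{not} give Hausdorff closeness or graphicality via monotonicity: a surface of tiny excess area can send long filigrees far from $\Gamma$ and can fold arbitrarily inside a tubular neighborhood (this is precisely the pathology illustrated in Appendix~\ref{sect:PS_long_loops}, and the reason the genus-one loops there can be made arbitrarily long). Also, the $C^\infty$-continuity in Definition~\ref{def:Simon_Smith_family} controls how $\Phi''(x')$ varies in $x'$ near a fixed $x$; it says nothing about how a fixed $\Phi''(x)$ sits relative to $\Gamma$, so it cannot localize the non-graphical part. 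The paper's substitute for your decomposition $S=G_x\cup R_x$ is the machinery of \S\ref{sect:short_loops}: cut off the part of $S$ outside a thin tubular neighborhood of $\Gamma$ (Lemma~\ref{lem:restr_to_nbhd}), treat it as a small-area piece via the systolic inequality (Proposition~\ref{prop:short_loops_I}), and handle the part inside the neighborhood by a coarea/Sard argument over a generic perturbation of a fine triangulation of $\Gamma$ (Proposition~\ref{prop:short_loops_II}), extended to punctate surfaces as ``small balls'' rather than short loops (Lemma~\ref{lem:small_balls_II}). This is the technical core you would have to supply, and it is quite different from a graphical decomposition.

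Second, and independently, your construction cannot deliver conclusion (1) without an extra area-decreasing step. On the transition annulus between the inner large-area region and the complement of $U_\eta$, your thin-the-neck-then-complete device leaves \emph{partially} surgered surfaces, which in general still have genus $>\fg(\Gamma)$; for conclusion (1) their area must drop strictly below $L-\eta$. Their area is that of $\Phi''$ there (close to $L$) plus the area increase of the surgery, and the parameters are circular: the surgery's area increase $\varepsilon$ determines the required closeness $\delta(\varepsilon)$, which forces $r_0\le\delta(\varepsilon)$, which determines $\eta_0(r_0)$ from Theorem~\ref{thm:currentsCloseInBoldF}, and one would need $\varepsilon<\eta_0(\delta(\varepsilon))$, which no choice of $\varepsilon$ guarantees. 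The paper breaks this circularity with Step 3 of Appendix~\ref{sect:proof_pinchOffMinMax}: using the Marques--Neves local foliation around the nondegenerate $\Gamma$ (Theorem~\ref{thm:local-min-max}), it first flows the collar parameters along $v\mapsto-(1-|v|^2)\nabla A^z(v)$ to decrease area by a \emph{fixed} amount $\varepsilon_1/2$ depending only on $\Gamma$, chosen before $\delta$, $r_0$, $\eta$; the subsequent pinch-off increase $\varepsilon_1/10$ is then harmless. Your proposal omits this step entirely, and it is essential.
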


We postpone the proof of Theorem \ref{thm:pinchOffMinMax} to Section~\ref{sect:interpolation}, which details the pinch-off constructions, and Appendix~\ref{sect:proof_pinchOffMinMax}. As a special case, when $L = 0$, the whole family can be deformed via pinch-off processes to surfaces of genus $0$:

\begin{prop}\label{prop:zeroWidth}
    In a closed orientable Riemannian $3$-manifold $(M, {\mathbf g})$,  suppose that $\Phi:X\to\GS(M)$ is a Simon--Smith family of genus $\leq g$ with $X$ a cubical complex and $\bL(\Lambda  (\Phi)) = 0$. Then $\Phi$ can be deformed via pinch-off processes into some  Simon--Smith family $\Phi' : X \to \GS_{0}$ of genus 0.
\end{prop}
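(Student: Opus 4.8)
The plan is to deduce Proposition~\ref{prop:zeroWidth} from Theorem~\ref{thm:pinchOffMinMax} by an induction on the genus $g$, reducing the case $\bL(\Lambda(\Phi)) = 0$ to repeated applications of the interpolation theorem. The subtlety is that Theorem~\ref{thm:pinchOffMinMax} is stated assuming $L := \bL(\Lambda(\Phi)) > 0$, so we cannot apply it directly when $L = 0$. Instead I would first observe that if $\bL(\Lambda(\Phi)) = 0$, then no member of any minimizing sequence can develop positive-area concentration in the limit, and in particular $\max_x \cH^2(\Phi_i(x)) \to 0$; so after passing far enough along a minimizing sequence we may assume (replacing $\Phi$ by an isotopic family $\Phi_i$, which does not change genus by Remark~\ref{rmk:homotopy}) that $\sup_x \cH^2(\Phi(x))$ is as small as we like.

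\textbf{Step 1: Reduce to small area.} Choose a minimizing sequence $\{\Phi_i\}$ in $\Lambda(\Phi)$; since $\bL(\Lambda(\Phi)) = 0$ we have $\sup_{x} \cH^2(\Phi_i(x)) \to 0$. Fix $i$ large and replace $\Phi$ by $\Phi_i$, so that now $\sup_x \cH^2(\Phi(x)) < \delta$ for a small $\delta$ to be chosen. Being isotopic, $\Phi_i$ is still a Simon-Smith family of genus $\leq g$, and it suffices to deform $\Phi_i$ via pinch-off processes into $\GS_0$.

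\textbf{Step 2: Run min-max on the small-area family.} If $\sup_x \cH^2(\Phi(x))$ is below the area of any closed embedded minimal surface in $(M,\bg)$, then in fact $\bL(\Lambda(\Phi))$ being zero together with the Simon-Smith min-max theorem (Theorem~\ref{thm:minMax}) forces $\cW_{L,\leq g} = \emptyset$ unless $L = 0$; the point is there are no nonzero stationary integral varifolds of arbitrarily small mass. Here I would either invoke a direct interpolation argument, or — cleaner — argue that a Simon-Smith family all of whose members have area less than the smallest minimal surface area can be pinched off to genus $0$ directly: each member $\Phi(x)$, being an embedded (punctate) surface of small area, lies in a small $\bF$-ball of a low-area sweepout, and one applies the pinch-off constructions of \S\ref{sect:interpolation} fiberwise. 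The technical content is exactly the fiberwise pinch-off machinery already developed for Theorem~\ref{thm:pinchOffMinMax}, specialized to the degenerate case where the limiting varifold is $0$: one continuously performs neck-pinch surgeries and shrinks connected components to points across all of $X$, using the smallness of area to guarantee the surgeries can be done compatibly and continuously in $x$.

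\textbf{Main obstacle.} The genuine difficulty is not the algebraic topology but the fiberwise continuity of the pinch-off: we must choose the surgery regions and the shrinking data continuously (indeed, as a Simon-Smith family $H:[0,1]\times X \to \GS(M)$ of pinch-off processes) over the whole parameter complex $X$, handling the combinatorics of the cubical structure as in the proof of Theorem~\ref{thm:pinchOffMinMax}. I expect the cleanest route is to present Proposition~\ref{prop:zeroWidth} as a formal corollary: run the proof of Theorem~\ref{thm:pinchOffMinMax} verbatim, noting that every place where the hypothesis $L>0$ and the nondegeneracy of $\Gamma$ were used is vacuous when $\cW_{L,\leq g}=\emptyset$, so the output family $\Phi'$ has every member of area $\leq L - \eta = -\eta < 0$ on the "large-area region" — i.e., the large-area region is empty — which forces $\Phi'$ to map entirely into $\GS_0$ after the pinch-off. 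Thus the one real check is that the construction in \S\ref{sect:interpolation} does not secretly require a nonempty critical set, and that the base case $g=0$ is trivial (a genus-$0$ family needs no deformation). I would state this reduction and defer the verification to the same section and appendix as Theorem~\ref{thm:pinchOffMinMax}.
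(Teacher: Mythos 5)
Your overall route is the paper's: use $\bL(\Lambda(\Phi))=0$ to replace $\Phi$ by an isotopic family with $\sup_x\cH^2$ below a threshold $\delta$ (the isotopy itself being a pinch-off process), and then invoke the fiberwise pinch-off machinery of \S\ref{sect:interpolation} on the small-area family. The paper packages exactly this second step as Proposition~\ref{prop:Technical Deformation_II} (the $g_0=0$, $\mathscr{S}_0=\{\emptyset\}$ case of Proposition~\ref{Prop_Technical Deformation}), whose proof rests on the systolic inequality: every member, having small area, contains $\fg$ disjoint homologically independent loops inside small disjoint balls (Lemma~\ref{lem:small_balls_I}), and the combinatorial/cubical arguments make the surgeries and shrinkings continuous over $X$. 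So your ``main obstacle'' diagnosis and your preferred resolution are correct and match the paper.

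One caveat: your fallback framing of the result as a ``formal corollary'' of Theorem~\ref{thm:pinchOffMinMax} with $\cW_{L,\leq g}=\emptyset$ does not work as stated. Conclusion (1) of that theorem only controls the genus of $\Phi'(x)$ on the region $\{\cH^2(\Phi'(x))\geq L-\eta\}$; if that region is empty you get no genus conclusion anywhere, so nothing forces $\Phi'$ into $\GS_0$. Moreover the proof of Theorem~\ref{thm:pinchOffMinMax} genuinely uses the local min-max deformation around a nondegenerate $\Gamma$, which has no analogue when the critical set is empty. The correct mechanism in the zero-width case is not ``the large-area region is empty'' but rather that \emph{every} member has small area, so the short-loop/pinch-off construction applies to the whole family at once --- which is your primary route and the paper's.
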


The above proposition is a direct corollary of Proposition~\ref{prop:Technical Deformation_II}; see \S\ref{sect:proof_prop:zeroWidth}. 

Finally, we finish this section with a proposition useful in proving the enumerative min-max theorem in \cite{chuLiWang2025fourGenus2}, which is also proved in \S \ref{sect:interpolation}

\begin{prop}\label{prop:pinchOffg0g1}
    Given a closed orientable Riemannian $3$-manifold $(M, {\mathbf g})$,  and integers $0\leq g_0<g_1$, suppose that $\Phi:X\to\GS(M)$ is a Simon--Smith family of genus $\leq g_1$ with $X$ a cubical complex. Let $K$ be an arbitrary compact subset in $X$, and assume $\Phi|_K$ maps into $\cS_{\leq g_0}$.
    Then  there exists a subcomplex $Z\subset X$ (after refinement) whose interior contains $K$ such that  $\Phi|_Z$ can be deformed via pinch-off processes to become some Simon--Smith family of genus $\leq g_0$.
\end{prop}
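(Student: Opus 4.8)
\textbf{Proof proposal for Proposition~\ref{prop:pinchOffg0g1}.}

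The plan is to reduce to the two interpolation tools already available—Theorem~\ref{thm:pinchOffMinMax} and Proposition~\ref{prop:zeroWidth}—applied on a carefully chosen neighborhood of $K$, combined with an induction on the genus bound $g_1$. First I would fix a slightly larger compact set and then a subcomplex $Z_0 \subset X$ (after passing to a fine enough cubical refinement of $X$) whose interior contains $K$ and which is so close to $K$, in the $\bF$-pseudometric sense, that every member of $\Phi|_{Z_0}$ is within a controlled $\bF$-distance of the genus-$\le g_0$ set $\Phi(K)$; this uses the $\bF$-continuity of Simon-Smith families. The point of shrinking $Z_0$ toward $K$ is to control the width: I want to arrange that the Simon-Smith width $L := \bL(\Lambda(\Phi|_{Z_0}))$ is small, or at least that the min-max varifolds in $\cW_{L,\le g_1}$ associated to $\Phi|_{Z_0}$ all have genus $\le g_0$, so that no ``new'' genus-$g_1$ critical surface can obstruct the deformation.

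Next I would run the min-max machinery on $\Phi|_{Z_0}$. If $L = 0$, Proposition~\ref{prop:zeroWidth} immediately deforms $\Phi|_{Z_0}$ via pinch-off processes into $\cS_0 \subset \cS_{\le g_0}$ and we are done with $Z = Z_0$. If $L > 0$, I would invoke the Simon-Smith min-max theorem (Theorem~\ref{thm:minMax}) to produce a pulled-tight minimizing sequence whose critical set meets $\cW_{L,\le g_1}$; the multiplicity-one conclusion (item~\ref{item:minMaxMultiplicityone}, using positive Ricci curvature—wait, that hypothesis is not assumed here, so I must instead assume, as is standard in this circle of results and as the companion theorems do, that the ambient metric's minimal surfaces are nondegenerate, or more honestly invoke the actual hypotheses under which Theorem~\ref{thm:pinchOffMinMax} is stated) lets us apply Theorem~\ref{thm:pinchOffMinMax} to pinch off $\Phi|_{Z_0}$ near the large-area region to a family $\Phi'$ whose large-area members have genus exactly $\fg(\Gamma)$. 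Since $\Gamma$ arises as a varifold limit of the min-max sequence and all members of $\Phi|_{Z_0}$ near $K$ are $\bF$-close to genus-$\le g_0$ surfaces, Lemma~\ref{lem:genusLarger} forces $\fg(\Gamma) \le g_0$ on the relevant part—this is where the closeness to $K$ pays off. Away from the large-area region, the area bound $L - \eta$ together with the original genus-$\le g_1$ control must be leveraged: on $\{\cH^2(\Phi'(x)) < L - \eta\}$ the family has strictly smaller width, so we close the induction by applying the proposition inductively (with the same $g_0$ but the smaller ambient genus bound, or with strictly smaller width) on a further subcomplex, and glue the pinch-off homotopies along the overlap using a partition-of-unity-style interpolation in the time parameter.

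The main obstacle I anticipate is the gluing and the bookkeeping of subcomplexes: Theorem~\ref{thm:pinchOffMinMax} only controls the \emph{large-area} region, so a single application does not land the whole family in $\cS_{\le g_0}$, and one must iterate, each time restricting to a smaller subcomplex on which the width has strictly decreased, then stitch the resulting pinch-off deformations together into one Simon-Smith family $H:[0,1]\times Z \to \cS(M)$ that is still a pinch-off process in each fiber. Making sure the nested subcomplexes still have interiors containing $K$, that the finitely many iterations terminate (the width is a real number but the genus drops are integer-quantized, so termination follows from the genus bound $g_1 - g_0$ being finite once we track genus rather than area), and that the concatenated homotopy remains continuous and has only the permitted spacetime singularities, is the delicate part. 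I would handle termination by inducting on $g_1$: each stage either already lands in $\cS_{\le g_0}$ or produces a surface $\Gamma$ of genus in $(g_0, g_1]$, after which Theorem~\ref{thm:pinchOffMinMax} strictly lowers the genus ceiling on a subcomplex, so after at most $g_1 - g_0$ steps every member has genus $\le g_0$.
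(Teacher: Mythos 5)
Your approach is not the paper's and, more importantly, it does not work as stated. The proposition carries no positive-Ricci, bumpiness, or nondegeneracy hypotheses, so the tools you lean on are unavailable: Theorem~\ref{thm:pinchOffMinMax} requires $\cW_{L,\le g}$ to consist of a \emph{single nondegenerate multiplicity-one} minimal surface, and the multiplicity-one conclusion of Theorem~\ref{thm:minMax}\ref{item:minMaxMultiplicityone} is not free either. You notice this mid-proof and propose to ``instead assume'' those hypotheses — but the proposition must be proved as stated, for an arbitrary closed orientable metric. Beyond that, the min-max strategy is structurally mismatched: shrinking $Z_0$ toward $K$ does not shrink the width (the members of $\Phi|_K$ may have large area, and $\bL(\Lambda(\Phi|_{Z_0}))$ is an infimum over isotopies of a sup of areas, not something controlled by the size of the parameter set), and there is no reason a min-max critical varifold $\Gamma$ of $\Phi|_{Z_0}$ should have genus $\le g_0$. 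Your appeal to Lemma~\ref{lem:genusLarger} runs the wrong way: that lemma gives a genus \emph{lower} bound for surfaces $\bF$-close to a smooth surface; $\bF$-closeness to genus-$\le g_0$ surfaces never yields a genus \emph{upper} bound, which is what you would need.

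The statement is in fact soft and local, and the paper proves it without any min-max. The key input you never use is the hypothesis $\Phi(K)\subset\cS_{\le g_0}$ combined with Definition~\ref{def:genus} and the $C^\infty$-continuity away from punctate sets in Definition~\ref{def:Simon_Smith_family}: for each $x\in K$ one finds finitely many small balls around the punctate set $P_\Phi(x)$ (via Sard's theorem and Lemma~\ref{lem:disjoint_balls}) outside of which $\Phi(x)$ is a smooth surface of genus $\le g_0$, and the smooth convergence away from $P_\Phi(x)$ shows the same balls work for all $y$ in a neighborhood $O_x$. Compactness of $K$ gives a finite subcover, a refinement of $X$ produces the subcomplex $Z$ with $K\subset\operatorname{int}(Z)\subset Z\subset\bigcup_j O_{x_j}$, the combinatorial Lemma~\ref{lem:combinatorial_I} selects pairwise disjoint annuli consistently across neighboring cells, and the local pinching and shrinking deformations (Lemmas~\ref{Lem_Loc Deform A}--\ref{Lem_Loc Deform Composition}) assemble, exactly as in Step~3 of the proof of Proposition~\ref{Prop_Technical Deformation}, into the desired pinch-off deformation $H:[0,1]\times Z\to\GS(M)$ landing in $\GS_{\le g_0}$ — with no area bookkeeping required. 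You should discard the width-induction and gluing scheme entirely and rework the argument along these lines.
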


\section{Proof of Theorem \ref{thm:weakTopoMinMax} via Theorem \ref{thm:repetitiveMinMax}}\label{sect:minMaxThm}

Assuming Theorem \ref{thm:repetitiveMinMax}, let us prove Theorem \ref{thm:weakTopoMinMax} by contradiction: Suppose that no embedded genus $g$ minimal surface has area at most $\max\cH^2\circ\Phi$. This implies, by Choi-Schoen compactness \cite{ChoiSchoen1985compactness}, that for some $\delta_0>0$, no orientable embedded genus $g$ minimal surface has area at most $\max\cH^2\circ\Phi+\delta_0$. Thus, using \cite[Proposition 8.5]{MN21} and Choi-Schoen compactness \cite{ChoiSchoen1985compactness}, we can perturb the metric $\mathbf g$ to ${\mathbf g}'$ such that $(M,{\mathbf g}')$ has the following properties: 
\begin{enumerate}
    \item\label{item:propg'unstable} It has positive Ricci curvature. 
    \item All embedded minimal surfaces are non-degenerate.
    \item\label{item:propg'LinIndep} The areas of the embedded minimal surfaces   are linearly independent over $\mathbb Z$.
    \item\label{item:propg'Nonexist} There does not exist an orientable embedded minimal surface of genus $g$  and   area at most $\max\cH^2_{\mathbf 
 g'}\circ\Phi+\delta_0/2$. 
\end{enumerate}

It is easy to see that now $(M,\bg')$ satisfy the assumptions in Theorem \ref{thm:repetitiveMinMax}.  We claim that applying Theorem \ref{thm:repetitiveMinMax} (to be proven in \S \ref{sect:ProofRepetitiveMinMax}) to $(M,\bg')$, $g$ and $\Phi$, with $r=1$, leads to a contradiction and thus prove Theorem \ref{thm:weakTopoMinMax}. Indeed, using the defining properties (\ref{item:propg'unstable}) - (\ref{item:propg'LinIndep}) of $(M,\mathbf g')$, the assumptions of  Theorem \ref{thm:repetitiveMinMax} are satisfied.  So we can apply Theorem \ref{thm:repetitiveMinMax} and obtain a Simon--Smith family $H:[0,1]\x X\to\cS_{\leq g}$ that satisfies the properties in Theorem \ref{thm:repetitiveMinMax}. However, recall that by property (\ref{item:propg'Nonexist}) of $(M,\bg')$, there does not exist an orientable embedded minimal surface of genus $g$ that has area at most $\max\cH^2_{\mathbf 
 g'}\circ\Phi$. As a result, all those minimal surfaces $\Gamma_1, \dots,\Gamma_l$ produced in  Theorem \ref{thm:repetitiveMinMax} must have genus $\leq g-1$. By item~\eqref{item:ti} of the theorem, $\Phi'|_{D_i}$ maps into  $ \cS_{\leq g-1}$ for each $i=1, \dots,l$. Moreover, $\Phi'|_{D_0}$ has genus $0$, so we know actually the whole family $\Phi'$ maps into $\cS_{\leq g-1} $. This contradicts the assumption of Theorem \ref{thm:weakTopoMinMax} that $\Phi$ cannot be deformed via pinch-off processes to become a map into $\cS_{\leq g-1} $. Therefore, we have proven Theorem \ref{thm:weakTopoMinMax}.

\section{Proof of Theorem \ref{thm:repetitiveMinMax}: Repetitive min-max}\label{sect:ProofRepetitiveMinMax}
    The goal of this section is to prove Theorem  \ref{thm:repetitiveMinMax}.
    
    First, we will repetitively run min-max using the given family $\Phi$, and obtain a number of families. Then, we assemble these families to obtain the desired ``optimal family" $\Phi'$, whose domain is a union of parts $D_0, D_1, \dots, D_l$. The min-max construction together with our interpolation theorem, Theorem \ref{thm:pinchOffMinMax}, leads to the conclusions of Theorem \ref{thm:repetitiveMinMax}.

\subsection{Repetitive min-max: $k$-th stage}
In this section, we will repetitively run min-max. But before that, let us fix some constants. 
\begin{itemize}
    \item First, we choose a $r'>0$ such that $r'<r$, where $r$ is given in  Theorem  \ref{thm:repetitiveMinMax}.
    \item Let   $d_0$ be the minimum value of $\bF(\Sigma,\Sigma')$ where $\Sigma,\Sigma'$ are any two distinct orientable embedded minimal surfaces of genus $\leq g$ in $(M,\bg)$: Recall there are only finitely many orientable embedded minimal surfaces of genus $\leq g$ by assumption. By choosing a smaller value for $r'$, let us assume $r'<d_0$.
    \item By further shrinking $r'$,  by Lemma \ref{lem:genusLarger}, we can assume that for each of the finitely many orientable embedded minimal surfaces $\Sigma$ of genus $\leq g$, if $S\in\cS(M)$ is such that $\bF(S,\Sigma)<r'$, then $\fg(S)\geq \fg(\Sigma)$.
\end{itemize}

\begin{rmk}
    Throughout \S \ref{sect:ProofRepetitiveMinMax}, a superscript $^k$ or subscript $_k$ would indicate that the object concerned is constructed from running the min-max process for the $k$-th time.
\end{rmk}

We will {\it recursively} describe the process. Let $k=1,2, \dots$. Right before  the $k$-th stage of min-max, we would have on our hands a subcomplex $X^{k-1}$ of $X$ (up to refinement), and a Simon--Smith family
$\Phi^{k-1}:X^{k-1}\to\cS_{\leq g}.$
(Before the first time we run min-max, i.e., $k=1$, we of course have $X^0:=X$ and $\Phi^0:=\Phi$.)
 
For each $k \in \mathbb{N}^+$, we apply the min-max theorem, Theorem~\ref{thm:minMax}, to $\Phi^{k-1}$ and $r'>0$. If the width $\bL(\Lambda(\Phi^{k-1}))$ is positive, then we obtain a varifold $V^k$ in $\cW^k:=\cW_{\bL(\Lambda(\Phi^{k-1})),\leq g}$ (defined in Definition \ref{def:minimal_surf_varifolds}) satisfying Theorem~\ref{thm:minMax} (2); Otherwise, this $k$-th stage of min-max would actually be the last stage, which will be discussed in the subsequent section. So let us assume the width is positive at this stage.

Since $(M, \bg)$ has positive Ricci curvature, the Frankel property holds. Consequently, every varifold in $\mathcal{W}^k$ is associated with a connected surface. Also, we know that $\spt(V^k)$ must be orientable, for otherwise it would be one-sided (as $M$ is orientable) and its double cover would be stable (by Theorem  \ref{thm:minMax} (2) (b)), contradicting the fact that $M$ has positive Ricci curvature. As a result, by Theorem~\ref{thm:minMax} (2), $V^k$ is of multiplicity one. Furthermore, we know the following:

\begin{lem}\label{lem:W_one_elmt} 
   We retain the assumptions in  Theorem  \ref{thm:repetitiveMinMax}. Let $\Psi:Y\to\cS_{\leq g}(M)$ be a Simon--Smith  family whose width $L':=\bL(\Lambda(\Psi))$ satisfies $0<L'<L$. Assume that   set $\cW _{L',\leq g}(M,\bg)$ contains some element $|\Gamma|$ of multiplicity one, where $\Gamma$ is orientable.
    \begin{enumerate}[label=\normalfont(\arabic*)]
        \item If $\fg(\Gamma)=g$, then every member in $\mathcal{W}_{L',\leq g}(M,\bg) $ is induced by  some minimal surface of genus $g$ and  multiplicity one,
        \item otherwise (so that $\fg(\Gamma)<g$), 
        $\mathcal{W}_{L',\leq g}(M,\bg)   = \{|\Gamma|\}$.
    \end{enumerate}
    In particular, $\mathcal{W}_{L,\leq g} (M,\bg)$ consists of finitely many varifolds, each associated with a multiplicity one orientable minimal surface.
\end{lem} 
\begin{proof}
In case (1), using the third bullet point of Condition \ref{cond:A} regarding  linear independence of area,   every $V\in \cW^G_{L',\leq g}(M,\bg)$ must be of multiplicity one and necessarily $\spt V $ has the same genus as $\Sigma$.  The second item can be proven similarly. The finiteness of $\cW^G_{L',\leq g}(M,\bg)$ follows from the assumption in  Theorem \ref{thm:repetitiveMinMax} that there are only finitely many orientable embedded minimal surfaces of genus $\leq g$.
\end{proof}

Using this lemma, there are two cases regarding $V^k\in\cW^k:=\cW_{\bL(\Lambda(\Phi^{k-1})),\leq g}$:
    \begin{itemize}
    \item[Case 1.]   $\cW^{k}$ consists of finitely many elements, each  associated with a multiplicity one, orientable, embedded minimal surface of genus $g$; Denote them as $T^k_1, \dots,T^k_{n_k} \in \cS_g$;
        
    \item[Case 2.] $\cW^{k}=\{V^{k}\}$, and $V^{k}$ is associated with a multiplicity one, orientable, embedded minimal surface of genus $\leq g-1$, which must be non-degenerate as assumed in Theorem \ref{thm:repetitiveMinMax}. Let $T^k\in \cS_{\leq g-1}$ be the associated element,
        
    \end{itemize}
Let us analyze them separately.

\subsubsection{Case 1}  Applying
 Theorem \ref{thm:currentsCloseInBoldF} to $\Phi^{k-1}:X^{k-1}\to\cS(M)$ and $r'>0$, 
    we can obtain some $\tilde\Phi^k\in\Lambda(\Phi^{k-1})$ and  $\delta_k>0$ such that 
    $$\cH^2(\tilde \Phi^k(x))\geq \bL(\Lambda(\Phi^{k-1}))-\delta_k\implies\tilde
        \Phi^k(x)\in\bB^\bF_{r'}(\cW^k).$$
    Hence, we can choose some subcomplex $\hat E^k$ of $X^{k-1}$ (after refining $X^{k-1}$), which we call a ``cap",
 such that:
 \begin{itemize}
     \item $\cH^2(\tilde \Phi^k(x))\geq \bL(\Lambda(\Phi^{k-1}))-\delta_k/2 \implies x\in \hat E^k,$
     \item $x\in \hat E^k \implies \tilde
        \Phi^k(x)\in\bB^\bF_{r'}(\cW^k).$
 \end{itemize} 
    In addition, for future purpose, let us choose a slightly larger subcomplex $E^k \supset \hat E^k$ in $X^{k-1}$ (after refinement) such that $ E^k$ also satisfies these two properties, and that there exists a smooth cut-off function $\eta_k:X^{}\to[0,1]$ such that $\eta_k=1$ on $\overline{X\backslash E^k}$ while  $\eta_k=0$ on $ \hat E^k$.
         
    Since $r'<d_0$, $E^k$ is a {\it disjoint} union of subcomplexes $E^k_1, \dots,E^k_{n_k}$ such that, for each $j=1, \dots,n_k$, there exists $T^k_j \in \mathcal{W}^k$ satisfying
   $\tilde \Phi^k(E^{k}_j)\subset \bB^\bF_{r'}(T^k_j)\subset \bB^\bF_{r}(T^k_j)\,.$
    In fact, since $r'$ is less than the constant given by Lemma \ref{lem:genusLarger} (applied to any $T^k_j$), we know that every member of $\tilde \Phi^k|_{E^k}$ is of genus $g$. Hence, we have 
        $\tilde \Phi^k(E^{k}_j)\subset  \cS_g\cap \bB^\bF_{r}(T^k_j).$
    In addition, recalling by definition $\tilde\Phi^k$ is in the same Simon--Smith isotopy class as $\Phi^{k-1}$, we let $H^k:[0,1]\x X^{k-1}\to \cS_{\leq g}$  be  such that $H^k(0,\cdot)=\Phi^{k-1}$,  $H^k(1,\cdot)=\tilde\Phi^k$, and $t\mapsto H^k(t,y)$ is a smooth deformation for every $y$. 
  
\subsubsection{Case 2}
    In this case, we let $T^k\in\cS_{\leq g-1}$ be the element associated to the varifold $V^k$, and  apply Theorem~\ref{thm:pinchOffMinMax} to $\Phi^{k-1}:X^{k-1}\to\cS(M)$ and  $r$. We obtain: 
    \begin{itemize}
        \item a Simon--Smith family of genus $\leq g$, 
       $\tilde\Phi^k:X^{k-1}\to\cS_{\leq g}\,,$
        \item a Simon--Smith family of genus $\leq g$, $
            H^k: [0,1]\x  X^{k-1} \to \GS_{\leq g},$
        \item a constant $\delta_k>0$,
    \end{itemize}
    satisfying the following:
    \begin{itemize}
    \item $ H^k(0,\cdot)=\Phi^{k-1}$, $H^1(1,\cdot)=\tilde\Phi^k$,
        \item If $\cH^2(\tilde\Phi^k(x))\geq \bL(\Lambda(\Phi^{k-1}))-\delta_k$, then
        $\tilde\Phi^k(x)\in  \cS_{\fg(T^k)}\cap\bB^{\bF}_{r}(T^k)$.
        \item $t\mapsto H^k(t,x)$ is a pinch-off process for each $x\in X^{k-1}$.
    \end{itemize}
    Clearly, we can choose some subcomplex $\hat E^k\subset X^{k-1}$, which we call a ``cap",  
     such that 
     \begin{itemize}
         \item $\cH^2(\tilde \Phi^k(x))\geq \bL(\Lambda(\Phi^{k-1}))-\delta_k/2 \implies x\in \hat E^k,$
         \item $x\in  \hat E^k \implies \tilde
        \Phi^k(x)\in  \cS_{\fg(T^k)}\cap \bB^\bF_{r}(T^k).$
     \end{itemize}
    As in Case 1, for future use, we choose a slightly larger subcomplex $ E^k\supset \hat E^k$ in $X^{k-1}$ (after refinement) such that $E^k$ also satisfies these two properties, and that there exists a smooth cut-off function $\eta_k:X^{}\to[0,1]$ such that $\eta_k=1$ on $\overline{X\setminus E^k}$ while  $\eta_k=0$ on $ \hat E^k$.

\subsubsection{Family for running the $(k+1)$-th min-max}

Now, we have described the objects we  obtain in the $k$-th stage of min-max, in both Case 1 and 2. To enter the next stage, we need to construct a new family. Namely, we define
$X^k:=\overline{X^{k-1}\backslash \hat E^k}, $ and $\Phi^k:=\tilde\Phi^k|_{X^k}.$ 
Note, by how we defined $\hat E^k$, it is guaranteed that the new width $\bL(\Lambda(\Phi^{k}))$ is strictly less than $\bL(\Lambda(\Phi^{k-1}))$.
 Combined with the assumption that $(M,\bg)$ has only finitely many orientable embedded minimal surfaces of genus $\leq g$, our repetitive min-max process must terminate in finitely many steps. Specifically, for some $K \geq 2$, when we apply the min-max process for the $K$-th time using the family $\Phi^{K-1}$, the width $\bL(\Lambda(\Phi^{K-1}))$ is zero. 

\subsection{The last stage}
    We now have that the width $\bL(\Lambda(\Phi^{K-1}))$ is zero. Thus, using Theorem \ref{prop:zeroWidth}, we have:
\begin{itemize}
    \item  a Simon--Smith  family of genus 0, $\Phi^K: X^{K-1} \to  \GS_{0}$,
    \item a Simon-Simon family of genus $\leq g$, $H^K:[0,1]\x  X^{K-1}\to\cS_{\leq g}$,
\end{itemize}
such that:
\begin{itemize} 
                \item $H^K(0,\cdot)=\Phi^{K-1}$ and $H^K(1,\cdot) = \Phi^K$.
                \item For all $x \in X^{K-1}$, $t \mapsto H^K(t, x)$ is a  pinch-off process. 
        \end{itemize}

\subsection{An optimal family $\Phi'$}\label{subsect:new_family_Xi}
    In this subsection, we will use the families defined above to reconstruct a new Simon--Smith family $\Phi'$ of genus $\leq g$, with parameter space $X$, and a deformation via pinch-off processes, $H$, from $\Phi$ to $\Phi'$. Readers may refer to Figure \ref{fig:Xi} for a schematic of the families constructed.
    
    \begin{figure}[!ht]
        \centering
        \includegraphics[width=0.6\textwidth]{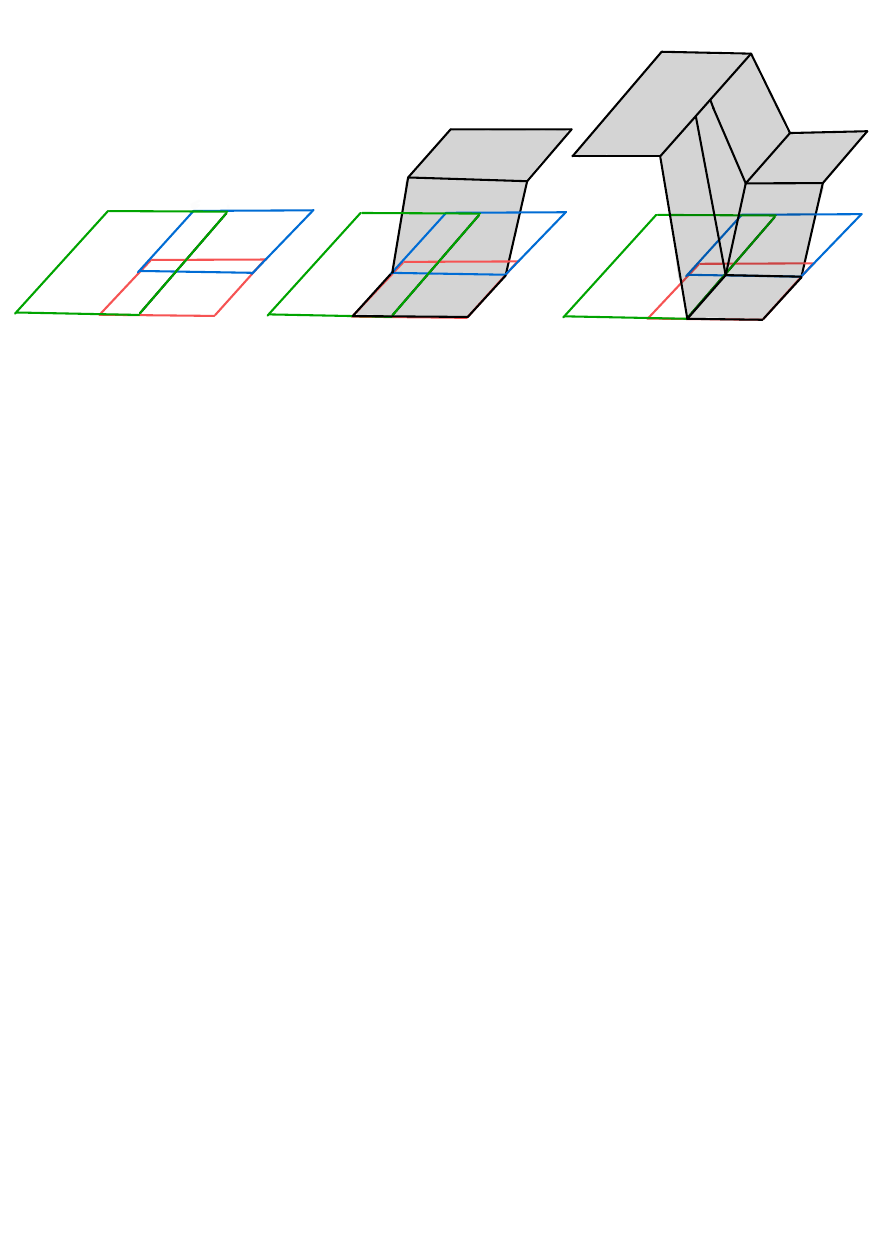}
        \caption{In the first picture, the red region is $X^{K-1}$, the blue is $E^{K-1}$, and the green is $E^{K-2}$. In the second picture, the gray surface (with suitable smoothing) denotes $\Xi^{K-1}$. In the last picture, the gray surface denotes $\Xi^{K-2}$.}
        \label{fig:Xi}
    \end{figure} 

\begin{itemize}
    \item First, we take the last family, $\Phi^K$, which has domain $X^{K-1}=\overline{\tilde X^{K-2}\backslash \hat E^{K-1}}$. We ``lift" $\Phi^K$ on  ``bridge region" $\overline{E^{K-1}\backslash\hat E^{K-1}}$, using the deformation $H^K$ and the cut-off $\eta_{K-1}$. More precisely, we consider the family 
\begin{equation*} 
         H^K(\eta_{K-1}(x),x), \quad x\in  X^{K-1}.
    \end{equation*} Note, for $x$ at the ``boundary" $X^{K-1}\cap\hat E^{K-1}$, the expression above becomes $H^K(0,x)=\Phi^{K-1}(x)$. 
    \item As a result, this  family  can be glued with the family $\tilde\Phi^{K-1}|_{\hat E^{K-1}}$ along $X^{K-1}\cap\hat E^{K-1}$. Then, the resulting family, denoted $\Xi^{K-1}$, has domain $X^{K-2}$. Now, it is easy to see that there exists a deformation via pinch-off processes from $\tilde \Phi^{K-1}$ to $\Xi^{K-1}$. Recall we have also the deformation via pinch-off processes $H^{K-1}$ from $\Phi^{K-2}$ to $\tilde\Phi^{K-1}$. Thus, let us concatenate these two processes, to obtain a deformation via pinch-off processes 
$$G^{K-1}:[0,1]\x  X^{K-2}\to \cS_{\leq g}$$such that $G^{K-1}(0,\cdot)=\Phi^{K-2}$ and $G^{K-1}(1,\cdot)=\Xi^{K-1}$.
    \item Now, we ``lift" $\Xi^{K-1}$ on the ``bridge region" $\overline{E^{K-2}\backslash\hat E^{K-2}}$, using the deformation $G^{K-1}$ and the cut-off $\eta_{K-2}$. More precisely, we consider the family 
\begin{equation*} 
        G^{K-1}(\eta_{K-2}(x),x), \quad x\in  X^{K-2}.
    \end{equation*} Note, for $x$ at the ``boundary" $X^{K-2}\cap\hat E^{K-2}$, the expression above becomes $G^{K-1}(0,x)=\Phi^{K-2}(x)$. 
    \item As a result, this  family  can be glued with the family $\tilde\Phi^{K-2}|_{\hat E^{K-2}}$ along $X^{K-2}\cap\hat E^{K-2}$. The resulting family, denoted $\Xi^{K-2}$ has domain $X^{K-3}$. Now, as before, we know there exists  a deformation via pinch-off processes 
$$G^{K-2}:[0,1]\x  X^{K-3}\to \cS_{\leq g}$$such that $G^{K-2}(0,\cdot)=\Phi^{K-3}$ and $G^{K-1}=\Xi^{K-2}$.
\end{itemize}

We repeat this process. At the final step, we would be gluing some family of the form $G^{2}(\eta_{1}(x),x)$, $x\in  X^{1}$
    to the family $\tilde\Phi^1|_{\hat E^1}$, and obtain a family $\Xi^1$ with domain $X^1\cup \hat E^1=X$. Now, we define the desired ``optimal family" $\Phi'$ to be $\Xi^1$. By construction,  it is easy to see that there exists some deformation via pinch-off processes
    $G^1:[0,1]\x X\to\cS_{\leq g}$ 
such that $G^1(0,\cdot)=\Phi$ and $G^1(1,\cdot)=\Phi'$. 

In the following, we will denote $G^1$ by $H$ instead, to align with the notation in Theorem \ref{thm:repetitiveMinMax}.

\subsection{Completion of the proof}
 
For simplicity, we now collect all the minimal surfaces $T^k$ and $T^k_j$ together, and relabel them $\Gamma_1,\Gamma_2,\dots,\Gamma_l,$
but in an order that {\it reverses} the order they were obtained in the min-max process (i.e. if $i\leq j$, then $\Gamma_i$ was obtained in a later stage of min-max than $\Gamma_j$). Note they all must have area $\leq \bL(\Lambda(\Phi))$. At the same time, we relabel their corresponding associated caps $E^k$ and $E^k_j$ as
$\tilde{D}_1,\tilde D_2,\dots,\tilde D_l.$
We also set $\tilde{D}_0 := X^{K-1}\,.$
Finally, for each $i = 0, 1, 2, \cdots, l$, we define
\[
    D_i := \overline{\tilde{D}_i \setminus \bigcup^l_{j = i + 1} \tilde D_j}\,.
\]
In other words, if $\tilde{D}_i$ was obtained at the $k$-th stage of min-max, then we remove from it all the ``bridge parts" coming from the $k$-th stage or earlier   (see Figure \ref{fig:Xi}). Moreover, if some $D_i$ is empty, we remove $D_i$ from the collection $\{D_j\}$ and the corresponding $\Gamma_i$ from the set $\{\Gamma_j\}$, and then relabel the remaining sets accordingly.

Let us finish the proof of Theorem \ref{thm:repetitiveMinMax}. The fact that $H(0,\cdot)=\Phi$ was already explained at the end of the previous section. 
We fix an $i=1, \dots,l$, and suppose the cap $\tilde{D}_i$ was defined in the $k$-th stage of min-max. Then, by how we defined the caps $E^k$ and $E^k_j$, we have $\tilde\Phi^k|_{\tilde{D}_i}\in\cS_{\fg(\Gamma_i)}\cap\bB^\bF_r(\Gamma_i)$, where $\Gamma_i$ is the minimal surface associated with the cap $\tilde{D}_i$. Note that $D_i$ is constructed by removing the ``bridge parts" above $\tilde{D}_i$, and thus, the family $\Phi|_{D_i}$ can be deformed via pinch-off processes to become $\tilde\Phi^k|_{D_i}$, which in turn can be deformed via pinch-off processes to become $\Phi'|_{D_i}$, and both deformations appear as subfamilies in $H$. Then, item (\ref{item:ti})  follows immediately. Moreover, this also imply that $\Phi'|_{D_i}$ must be of genus $\leq \fg(\Gamma_i)$, which is the first part of item (\ref{item:optimal}). Finally, $\Phi'|_{D_0}$ is exactly $\Phi^K|_{D_0}$, so $\Phi'|_{D_0}$ is of genus $0$. Thus, item (\ref{item:optimal}) is true.
This finishes the proof of Theorem \ref{thm:repetitiveMinMax}.



\begin{rmk}\label{rmk:leftIntact}
    In the above construction, we concatenated multiple pinch-off processes together. Namely, the ending surface of one process is the beginning of another. If the ending surface of the first process has a cone point, this cone point would be preserved in the second process. This explains why in Definition \ref{defn:pinch_off} (2) we need to allow the  cone to be left intact.
\end{rmk}

\section{Existence of homologically independent simple short loops} \label{sect:short_loops}
    Let $(M, \bg)$ be an orientable closed Riemannian $3$-manifold. We let $\Gamma$ and $\Sigma$ denote smooth embedded orientable closed surfaces in $M$, and $S$ denote a punctate surface in $\GS(M)$. The main goal of these sections is to locate homologically independent simple short loops in a smooth surface and small balls covering nontrivial topology in a punctate surface. 
    
    Recall that the \emph{systole} of a surface $\Gamma$, denoted by $\operatorname{sys} \pi_1(\Gamma)$, is the length of the least length of a noncontractible loop in $\Gamma$. In~\cite{Gro92}, Gromov introduced the notion of \emph{$\Z_2$ homological systole} of $\Gamma$, denoted by $\operatorname{sys} H_1(\Gamma; \Z_2)$, defined as the least length of a loop in $\Gamma$, which represents a non-trivial class in $H_1(\Gamma; \Z_2)$. The remarkable \emph{systolic inequality} by Loewner and Gromov states that if the area of $\Gamma$ is small, then the $\Z_2$ homological systole is also small; see Theorem~\ref{thm:systolic_ineq}. By the standard minimization process in the calculus of variations, the systole can be realized by a simple loop ---specifically, simple closed geodesic. 

    In general, if the genus $\fg(\Gamma) > 1$ and the area $\mathcal{H}^2(\Gamma)$ is small, one expects to find $\fg(\Gamma)$ homologically independent simple short loops in $\Gamma$. This is closely related to \emph{Bers' constant} for hyperbolic surfaces \cite{Bers_74_degenerating_surf,Bers_85_inequality_surf}. A natural idea is to perform a neck-pinch surgery along the detected loop in $M$ realizing the systole and then employ an inductive argument; see Proposition~\ref{prop:short_loops_I} in \S\ref{subsect:short_loops_small_area} 

    In \S\ref{subsect:short_loops_large_area}, we show that in $(M, \bg)$, for $g_1 \geq g_0 \in \N$, a smooth embedded orientable closed surface $\Gamma$ of genus $g_1$ must contain $g_1 - g_0$ disjoint simple short loops which are homologically independent, provided that it is close to a smooth embedded orientable closed surface $\Sigma$ of genus $g_0$ with respect to the $\bF$-metric. 
    
    If $\fg(\Gamma) > \fg(\Sigma)$, and $\Gamma$ lies in a tubular neighborhood $N_\eta(\Sigma)$ of $\Sigma$, then the map
    $f: \Gamma \hookrightarrow N_\eta(\Sigma) \to \Sigma$
    induces a non-injective map
        $f_*: \pi_1(\Gamma) \to \pi_1(\Sigma)\,.$
    It follows from Gabai's resolution of the so-called \emph{Simple Loop Conjecture}~\cite[Theorem~2.1]{Gabai_85_simple_loop} that there exists a non-contractible simple closed curve $\alpha \subset \Gamma$ such that $f|_\alpha$ is homotopically trivial. In particular, together with Dehn's lemma, $\alpha$ bounds a compressing disk in $M$. However, this topological result does not give us any control on the length of the loop, and our result can be regarded as a quantitative and homological generalization of this simple loop conjecture.

    Heuristically, if $\Gamma$ and $\Sigma$ are sufficiently close with respect to the $\bF$-metric, a large portion $\Gamma'$ of $\Gamma$ is homeomorphic to $\Sigma$ removing finitely many disks. Hence, $\Gamma \setminus \Gamma'$ has small area, and one may appeal to Proposition~\ref{prop:short_loops_I} to detect multiple simple short loops which lie outside $\Gamma'$, provided that its boundary also has small length. The technical details will be presented in the proof of Proposition~\ref{prop:short_loops_II}, where we also need to deal with the case where some thin parts, such as \emph{filigrees}, of $\Gamma$ are not inside a tubular neighborhood of $\Sigma$.

    Next, in \S\ref{subsect:small_balls_punctate_surf}, we extend this result to a genus $g_1$ punctate surface. However, as illustrated in Appendix~\ref{sect:PS_long_loops}, one cannot expect these loops to have small lengths; instead, they are contained in $3$-balls of small radius, i.e., small balls. 

\subsection{Simple short loops in surfaces with small area}\label{subsect:short_loops_small_area} 

    If the surface $\Gamma$ is an embedded surface, then in general, the simple short loop realizing the systole might be knotted in $M$, and in this case, there is no compressing disk to perform the surgery in the ambient manifold. Therefore, in this subsection, $\Gamma$ is always regarded as an \emph{intrinsic surface} so that we can carry out the inductive argument. 

    \begin{prop}[Existence of homologically independent simple short loops I]\label{prop:short_loops_I} 
        For any $g \in \mathbb{N}$ and $\varepsilon > 0$, there exists $\delta = \delta(g, \varepsilon) \in (0, \varepsilon)$ with the following property.
        For any smooth orientable closed surface $\Gamma$ containing a finite set of disjoint closed disks $\{\overline{D_i}\}^n_{i = 1}$ in $\Gamma$ with piecewise smooth boundary satisfying 
        \[
            \mathfrak{g}(\Gamma) \leq g\,, \quad \mathcal{H}^2(\Gamma) < \delta\,, \quad \sum^n_{i = 1} \mathcal{H}^1(\partial D_i) < \delta\,,
        \]
        there exist $\mathfrak{g}(\Gamma)$ disjoint simple loops $\{c_j\}^{\mathfrak{g}(\Gamma)}_{j = 1}$ in $\Gamma \setminus \bigcup^n_{i = 1} \overline{D_i}$ such that:
        \begin{enumerate}[label=\normalfont(\arabic*)]
            \item\label{item:short_loops_I_short} Each $c_j$ has length $\mathcal{H}^1(c_j) \leq \varepsilon$.
            \item\label{item:short_loops_I_indep} $\{[c_j]\}^{\mathfrak{g}(\Gamma)}_{j = 1}$ is linearly independent in $H_1(\Gamma; \Z_2)$.
        \end{enumerate}
        In particular, $\Gamma \setminus \bigcup^{\mathfrak{g}(\Gamma)}_{j = 1} c_j$ is a genus $0$ surface with boundary.
    \end{prop}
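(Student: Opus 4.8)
\textbf{Proof plan for Proposition~\ref{prop:short_loops_I}.}
The plan is to argue by induction on $g$, using the systolic inequality to produce one short loop at each step and then performing an intrinsic surgery to reduce the genus. For the base case $g=0$ there is nothing to prove, so suppose the statement holds for all genera $<g$ with some threshold $\delta'(\cdot,\cdot)$, and let $\Gamma$ be as in the hypothesis with $\fg(\Gamma)\le g$. If $\fg(\Gamma)=0$ we are done immediately, so assume $\fg(\Gamma)\ge 1$. First I would apply the Loewner--Gromov systolic inequality (Theorem~\ref{thm:systolic_ineq}, to be invoked from the later section): since $\cH^2(\Gamma)<\delta$ is small, the $\Z_2$ homological systole of $\Gamma$ is bounded by $C\sqrt{\delta}$, and it is realized by a simple closed geodesic $c$ with $\cH^1(c)\le C\sqrt{\delta}$. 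For $\delta$ chosen small enough in terms of $\varepsilon$ this already gives $\cH^1(c)\le\varepsilon$. The one subtlety here is that the short loop $c$ produced by minimization could a priori run through one of the disks $\overline{D_i}$; but since $\sum_i\cH^1(\partial D_i)<\delta$ and each $D_i$ has small area, one can first push $c$ off the disks by a length-controlled homotopy (replacing any subarc inside $D_i$ by a subarc of $\partial D_i$ and then re-minimizing), at the cost of enlarging the length bound by a controlled factor; alternatively one works on $\Gamma\setminus\bigcup_i\mathrm{int}(D_i)$ with its induced length structure from the start. Either way, arrange $c\subset\Gamma\setminus\bigcup_i\overline{D_i}$, simple, non-trivial in $H_1(\Gamma;\Z_2)$, with length $\le\varepsilon$.

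Next I would cut $\Gamma$ along $c$ and cap off: since $[c]\ne 0$ in $H_1(\Gamma;\Z_2)$ and $\Gamma$ is orientable, $c$ is non-separating, so $\Gamma':=(\Gamma\setminus c)\cup(\text{two disks})$ is a connected smooth orientable closed surface with $\fg(\Gamma')=\fg(\Gamma)-1$. One can do this surgery so that the two capping disks have area as small as we like and so that $\cH^2(\Gamma')\le\cH^2(\Gamma)<\delta$; the disks $\overline{D_i}$, being disjoint from $c$, survive as disjoint disks in $\Gamma'$ with the same boundary lengths, and we also add the two small capping disks to the collection, keeping the total boundary length below $\delta$ (shrinking the caps if necessary). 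Now apply the inductive hypothesis to $\Gamma'$ with genus bound $g-1$: choosing $\delta\le\delta'(g-1,\varepsilon)$ we obtain $\fg(\Gamma')=\fg(\Gamma)-1$ disjoint simple loops $c_1,\dots,c_{\fg(\Gamma)-1}$ in $\Gamma'$ minus all the disks, each of length $\le\varepsilon$, linearly independent in $H_1(\Gamma';\Z_2)$. Because these loops avoid the capping disks, they live in $\Gamma\setminus c$ as well, hence in $\Gamma$, disjoint from $c$ and from the $\overline{D_i}$. Set $c_{\fg(\Gamma)}:=c$.

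It remains to check linear independence of $\{[c_1],\dots,[c_{\fg(\Gamma)-1}],[c]\}$ in $H_1(\Gamma;\Z_2)$. This follows from a standard Mayer--Vietoris / intersection-form argument: the classes $[c_1],\dots,[c_{\fg(\Gamma)-1}]$ map to a linearly independent set in $H_1(\Gamma';\Z_2)$, and under the surgery correspondence the kernel of $H_1(\Gamma\setminus c)\to H_1(\Gamma')$ (or rather the relation between $H_1(\Gamma)$ and $H_1(\Gamma')$) is spanned by $[c]$ together with the class dual to $[c]$; concretely, $[c]$ has non-zero $\Z_2$-intersection with some class supported away from the $c_j$, while all $[c_j]$ have zero intersection with that class, so $[c]$ is not in the span of the $[c_j]$, and the $[c_j]$ are independent among themselves since their images in $H_1(\Gamma';\Z_2)$ are. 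The final assertion that $\Gamma\setminus\bigcup_{j=1}^{\fg(\Gamma)}c_j$ has genus $0$ is then immediate: cutting a genus $h$ orientable surface along $h$ disjoint homologically independent simple closed curves always yields a (connected or disconnected) surface of genus $0$, since the first Betti number drops by exactly $2h$ and cutting along simple closed curves cannot create genus.

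\textbf{Main obstacle.} The genuinely delicate point is not the induction skeleton but making the length bookkeeping uniform: I must choose a single $\delta=\delta(g,\varepsilon)$ that (i) is small enough for the systolic inequality to force $\cH^1(c)\le\varepsilon$ even \emph{after} the push-off away from the disks $\overline{D_i}$, and (ii) is $\le\delta'(g-1,\varepsilon)$ so the inductive step applies, and (iii) controls the capping-disk areas and boundary lengths so the hypotheses $\cH^2(\Gamma')<\delta$ and $\sum\cH^1(\partial\text{disk})<\delta$ genuinely persist. Since there are only finitely many induction steps and at each step $\delta$ only needs to be shrunk by a factor depending on $g$ and the systolic constant, such a choice exists; but one has to be careful that the push-off of $c$ off the disks does not inflate its length uncontrollably — this is where the hypothesis $\sum_i\cH^1(\partial D_i)<\delta$ (as opposed to just "the disks are small in area") is essential, and I would spell out that replacement-and-reminimize step carefully.
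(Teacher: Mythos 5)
Your proposal is correct and follows essentially the same route as the paper: the systolic inequality produces a short homologically non-trivial simple loop, a detour along the disk boundaries (costing at most $\sum_i\cH^1(\partial D_i)$ in length, exactly the paper's Lemma~\ref{lem:bypass_forbid}) pushes it off the forbidden regions, and an induction over genus with intrinsic surgery and small capping disks supplies the remaining loops, with the constants shrunk through finitely many steps. The only organizational difference is that you surger along the newly found systole and recurse into the lower-genus capped surface, whereas the paper keeps the original $\Gamma$, surgers along all previously found loops, and extracts the next systole from the resulting surface — the independence argument in $H_1(\Gamma;\Z_2)$ comes out the same either way.
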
 
    \begin{rmk} 
        In this proposition, as well as Proposition~\ref{prop:short_loops_II}, we specify a finite set of disks that serves as \emph{forbidden regions} for the loops. This technical assumption allows us to apply the propositions to punctate surfaces by removing neighborhoods of their punctate sets.
    \end{rmk} 
    \begin{rmk} 
        By Corollary~\ref{cor:Pitts_bF_flat_quant}, in $(M, \bg)$, small $\mathcal{H}^2(\Gamma)$ is equivalent to the assumption that $\bF(\Gamma, \emptyset)$ is small. Hence, if we regard $\emptyset$ as a degenerate genus $0$ surface, then the condition $\mathcal{H}^2(\Gamma) < \delta$ can be replaced by $\bF(\Gamma, \emptyset) < \delta$ and the proposition is a special case of Proposition~\ref{prop:short_loops_II} with $\Sigma = \emptyset$.
    \end{rmk} 

    Note that without the forbidden regions condition, this follows immediately from a quantitative estimate on the lengths of pants decomposition \cite[Proposition~6.3]{Balacheff_Parlier_Sabourau_12_short_loop_decomp}. In the following, we prove it by applying the systolic inequality inductively and a technical result that allows loops to bypass the forbidden regions. 
    
    \begin{thm}[Loewner torus inequality~{\cite[Section~3]{Pu52}}, Gromov's homological systolic inequality~{\cite[Section~2.C]{Gro92}}]\label{thm:systolic_ineq} 
        Given a smooth orientable closed surface $\Gamma$ of positive genus, there exists a universal constant $C > 0$ such that
        \[
            \operatorname{sys} H_1(\Gamma; \Z_2) := \inf \left\{\mathcal{H}^1(c) : [c] \neq 0 \in H_1(\Gamma; \Z_2)\right\} \leq C \sqrt{\mathcal{H}^2(\Gamma)}\,.
        \]
        In particular, there exists a simple closed geodesic $c \in \Gamma$ such that $[c] \neq 0 \in H_1(\Gamma; \Z_2)$ and $\mathcal{H}^1(c) \leq C \sqrt{\mathcal{H}^2(\Gamma)}$.
    \end{thm}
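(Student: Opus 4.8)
\emph{This is a classical fact}: the positive‑genus‑one case is Pu's inequality \cite{Pu52} (refining Loewner's unpublished inequality for the torus), and the general case is due to Gromov \cite{Gro92}; below I outline the arguments. The ``\emph{In particular}'' clause follows formally once the displayed inequality is known, and does not depend on the value of $C$: by the direct method in the calculus of variations the infimum $\operatorname{sys} H_1(\Gamma;\Z_2)$ is attained by a rectifiable loop $c$ (parametrize competitors by arclength and apply Arzel\`a--Ascoli, using that $[\,\cdot\,]\in H_1(\Gamma;\Z_2)$ is locally constant under $C^0$‑small perturbations and that $\mathcal{H}^1$ is lower semicontinuous); such a minimizer is a closed geodesic, and it is simple, because if it had a (necessarily transverse) self‑intersection $p$, writing $c=c_1\ast c_2$ as a concatenation of loops based at $p$ with $[c_1]+[c_2]=[c]\neq 0$ would give a strictly shorter closed curve in a nonzero $\Z_2$‑class, contradicting minimality.

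\textbf{Positive genus one (Pu).} Suppose $\fg(\Gamma)=1$. By the uniformization theorem write the metric as $e^{2u}g_0$, where $g_0$ is a flat metric on $\Gamma\cong\R^2/\Lambda$ normalized so that $\mathcal{H}^2_{g_0}(\Gamma)=1$. Let $v\in\Lambda$ be a shortest lattice vector, of $g_0$‑length $\ell_0$, and let $\{\gamma_y\}$ be the foliation of $\Gamma$ by the closed $g_0$‑geodesics in the homology class of $v$, with $y$ the $g_0$‑arclength along a transversal of total length $1/\ell_0$. Each $\gamma_y$ satisfies $[\gamma_y]\neq 0$ in $H_1(\Gamma;\Z_2)$, so its length in the metric $e^{2u}g_0$, namely $\int_{\gamma_y}e^{u}\,d\mathcal{H}^1_{g_0}$, is at least $\operatorname{sys} H_1(\Gamma;\Z_2)$. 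Averaging over $y$, using the coarea formula for the flat metric, and then the Cauchy--Schwarz inequality,
\[
    \operatorname{sys} H_1(\Gamma;\Z_2)\;\leq\;\ell_0\int_\Gamma e^{u}\,d\mathcal{H}^2_{g_0}\;\leq\;\ell_0\Big(\int_\Gamma e^{2u}\,d\mathcal{H}^2_{g_0}\Big)^{1/2}\;=\;\ell_0\sqrt{\mathcal{H}^2(\Gamma)}\,.
\]
Finally $\ell_0^2\leq 2/\sqrt{3}$: taking $v,w$ a reduced basis of $\Lambda$ with enclosed angle in $[\pi/3,2\pi/3]$ gives $1=|v|\,|w|\sin\angle\geq \ell_0^2\sin(\pi/3)$. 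Hence the inequality holds with $C^2=2/\sqrt{3}$.

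\textbf{Higher genus.} For $\fg(\Gamma)=g\geq 2$ the delicate point is that $C$ must not degenerate as $g\to\infty$. One natural attempt is to reduce to the genus‑one case: choose an embedded one‑holed torus $\Sigma_0\subset\Gamma$ (a ``handle'') for which $H_1(\Sigma_0;\Z_2)\to H_1(\Gamma;\Z_2)$ is injective, cap $\partial\Sigma_0$ with a disk to obtain a metric torus $T$ with $\mathcal{H}^2(T)$ arbitrarily close to $\mathcal{H}^2(\Sigma_0)\leq\mathcal{H}^2(\Gamma)$, and try to compare $\operatorname{sys} H_1(\Gamma;\Z_2)$ with $\operatorname{sys} H_1(T;\Z_2)$ via the genus‑one bound. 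However, controlling the capping metric so that a shortest homologically nontrivial loop of $T$ is not ``shortcut'' through the cap (whose boundary length one cannot in general make small) is precisely where the argument becomes subtle, and I expect this to be the main obstacle. The clean route that does yield a genus‑independent constant is Gromov's filling‑radius inequality \cite{Gro92}: since a closed orientable surface of positive genus is $\Z_2$‑essential, one has $\operatorname{sys} H_1(\Gamma;\Z_2)\leq 6\,\operatorname{FillRad}(\Gamma)$, together with the quantitative filling estimate $\operatorname{FillRad}(\Gamma)\leq C'\sqrt{\mathcal{H}^2(\Gamma)}$; this filling estimate is the substantive input. Combining the two ranges (genus one directly, genus $\geq 2$ via the filling radius) and enlarging $C$ if necessary gives a single universal constant, completing the proof.
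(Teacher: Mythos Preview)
The paper does not prove this theorem; it is stated with citations to \cite{Pu52} and \cite{Gro92} and used as a black box. Your outline of the classical arguments (Loewner's averaging-over-a-foliation proof for the torus, and Gromov's filling-radius route for higher genus) is correct and standard, as is your derivation of the ``In particular'' clause via the direct method and the splitting-at-a-self-intersection trick.

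One cosmetic remark: the header ``Positive genus one (Pu)'' is slightly misleading, since the argument you give is Loewner's torus inequality; Pu's inequality proper concerns $\RP^2$. This does not affect the mathematics.
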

    \begin{rmk}
        In \cite{Gro92}, Gromov proved an optimal estimate that there exists a universal constant $\tilde C > 0$ such that for any $\Gamma$ of genus $g \geq 2$, $\operatorname{sys} H_1(\Gamma; \Z_2) \leq \tilde C {\log g} \sqrt{\mathcal{H}^2(\Gamma)/g}\,.$
        This optimal result is not needed in our paper.
    \end{rmk}

    \begin{lem}[Bypassing the forbidden regions]\label{lem:bypass_forbid}
        Let $\alpha$ be a positive real number. Suppose that in a smooth orientable closed surface $\Gamma$, there exists a simple loop $c \subset \Gamma$ and a finite set of disjoint closed disks $\{\overline{D_i}\}^n_{i = 1}$ in $\Gamma$ with piecewise smooth boundary satisfying
            $\sum^n_{i = 1} \mathcal{H}^1(\partial D_i) < \alpha\,.$
        Then there exists a finite union of disjoint simple loops $\tilde c$ in $\Gamma \setminus \bigcup^n_{i = 1} \overline{D_i}$ such that
        $[\tilde c] = [c]$ in $ H_1(\Gamma; \Z_2)$ and $ \mathcal{H}^1(\tilde c) < \mathcal{H}^1(c) + \alpha\,.$
    \end{lem}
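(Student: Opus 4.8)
The plan is to leave the portion of $c$ lying outside the disks completely untouched and, inside each $\overline{D_i}$, to excise the arcs of $c$ and reconnect the resulting endpoints by short arcs running just outside $\partial D_i$; the key point is that, with $\Z_2$-coefficients, the precise way one reconnects is homologically irrelevant, which buys enough freedom to keep the length controlled.

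In more detail: after a $C^\infty$-small isotopy of $c$, changing $\mathcal{H}^1(c)$ by less than any prescribed amount (which we absorb into the strictly positive gap $\alpha-\sum_i\mathcal{H}^1(\partial D_i)$), we may assume $c$ is transverse to each $\partial D_i$. If $c\subset\overline{D_i}$ for some $i$, then $c$ is null-homotopic and $\tilde c=\emptyset$ works; otherwise, for each $i$ with $c\cap\overline{D_i}\neq\emptyset$ the intersection $c\cap\overline{D_i}$ is a finite disjoint union of embedded arcs, and $P_i:=c\cap\partial D_i$ consists of an even number $2m_i$ of points, cutting $\partial D_i$ into $2m_i$ sub-arcs. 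Selecting every other one of these sub-arcs yields a perfect matching of $P_i$ by $m_i$ pairwise disjoint sub-arcs of $\partial D_i$ of total length at most $\mathcal{H}^1(\partial D_i)$. Replace $c\cap\overline{D_i}$ by these $m_i$ arcs, pushed slightly off $\overline{D_i}$ into a thin collar of $\partial D_i$ chosen disjoint from the other $\overline{D_j}$, and perturbed to generic ``heights'' so that the resulting set is an embedded $1$-manifold; after smoothing the corners at the points of $P_i$ and pushing everything slightly off $\bigcup_i\overline{D_i}$ one obtains $\tilde c$, a finite disjoint union of simple loops in $\Gamma\setminus\bigcup_i\overline{D_i}$.

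Two things then need checking. For the homology identity, writing $B_i:=c\cap\overline{D_i}$ and letting $A_i$ denote the collection of new arcs near $\partial D_i$, one has $\tilde c+c=\sum_i(A_i+B_i)$ as $\Z_2$-chains; since $\partial A_i=\partial B_i=P_i$ modulo $2$, each $A_i+B_i$ is a $\Z_2$-cycle carried by $\overline{D_i}$ together with a collar of $\partial D_i$ --- a topological disk --- hence is null-homologous in $\Gamma$, so $[\tilde c]=[c]\in H_1(\Gamma;\Z_2)$. For the length bound, $\mathcal{H}^1(\tilde c)\le \mathcal{H}^1\!\big(c\setminus\bigcup_i\operatorname{int}(D_i)\big)+\sum_i\mathcal{H}^1(A_i)+\varepsilon\le \mathcal{H}^1(c)+\sum_i\mathcal{H}^1(\partial D_i)+\varepsilon<\mathcal{H}^1(c)+\alpha$, where $\varepsilon$ accounts for the finitely many small pushes and is fixed small at the outset.

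The substantive point --- and the reason a more obvious approach stalls --- is the choice of reconnection pattern. Trying to push $c$ off the disks purely by isotopies (surgering away innermost intersection arcs one at a time, each time reconnecting by the original pattern) yields no useful length bound: if $c$ weaves through some $\overline{D_i}$ many times, the forced reconnecting arcs must wind far around $\partial D_i$, and the accumulated excess length can exceed $\mathcal{H}^1(\partial D_i)$ many times over. Using instead that over $\Z_2$ \emph{every} perfect matching of $P_i$ yields the same homology class, one is free to reconnect along the cheap ``alternate sub-arc'' matching, and this is exactly what makes the per-disk bound $\mathcal{H}^1(\partial D_i)$ hold. Everything else --- transversality, disjointness of the new arcs in the collar, corner smoothing --- is routine perturbation.
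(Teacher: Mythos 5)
Your proof is correct and is essentially the paper's argument: the paper also replaces the portions of $c$ inside (slightly enlarged, transverse) disks by the alternating sub-arcs of the disk boundaries --- there phrased as $\partial D'_i\cap\partial E'_{i,1}$ for a two-coloring $E'_{i,1}\sqcup E'_{i,2}$ of $D'_i\setminus c$, which is the same ``every other sub-arc'' matching --- and concludes the homology identity and length bound exactly as you do. The only cosmetic difference is that the paper achieves transversality by enlarging the disks via Sard's theorem rather than by isotoping $c$.
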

    \begin{proof}
        Since each disk has piecewise smooth boundary, by Sard's theorem, for each disk $D_i$, we can choose a larger disk $D'_i \supset \overline{D_i}$ such that $\{\overline{D'_i}\}^n_{i = 1}$ are also pairwise disjoint, $c$ intersects $\partial D'_i$ transversally, and 
           $ \sum^n_{i = 1} \mathcal{H}^1(\partial D'_i) < \alpha\,.$

        Since each $D'_i$ is contractible, $c$ separates $D'_i$ into two sets $E'_{i,1}$ and $E'_{i, 2}$ such that
            $\partial E'_{i, 1} \cap D'_i = \partial E'_{i, 2} \cap D'_i = c \cap D'_i\,.$
        We define (see figure~\ref{fig:construct_tilde_c})
        \[
            \tilde c = (c \setminus \bigcup^n_{i = 1} (\partial E'_{i, 1} \cap D'_i) ) \cup \bigcup^n_{i = 1} (\partial D'_i \cap \partial E'_{i, 1})\,.
        \]

        \begin{figure}
            \centering
            \includegraphics[width=0.5\textwidth]{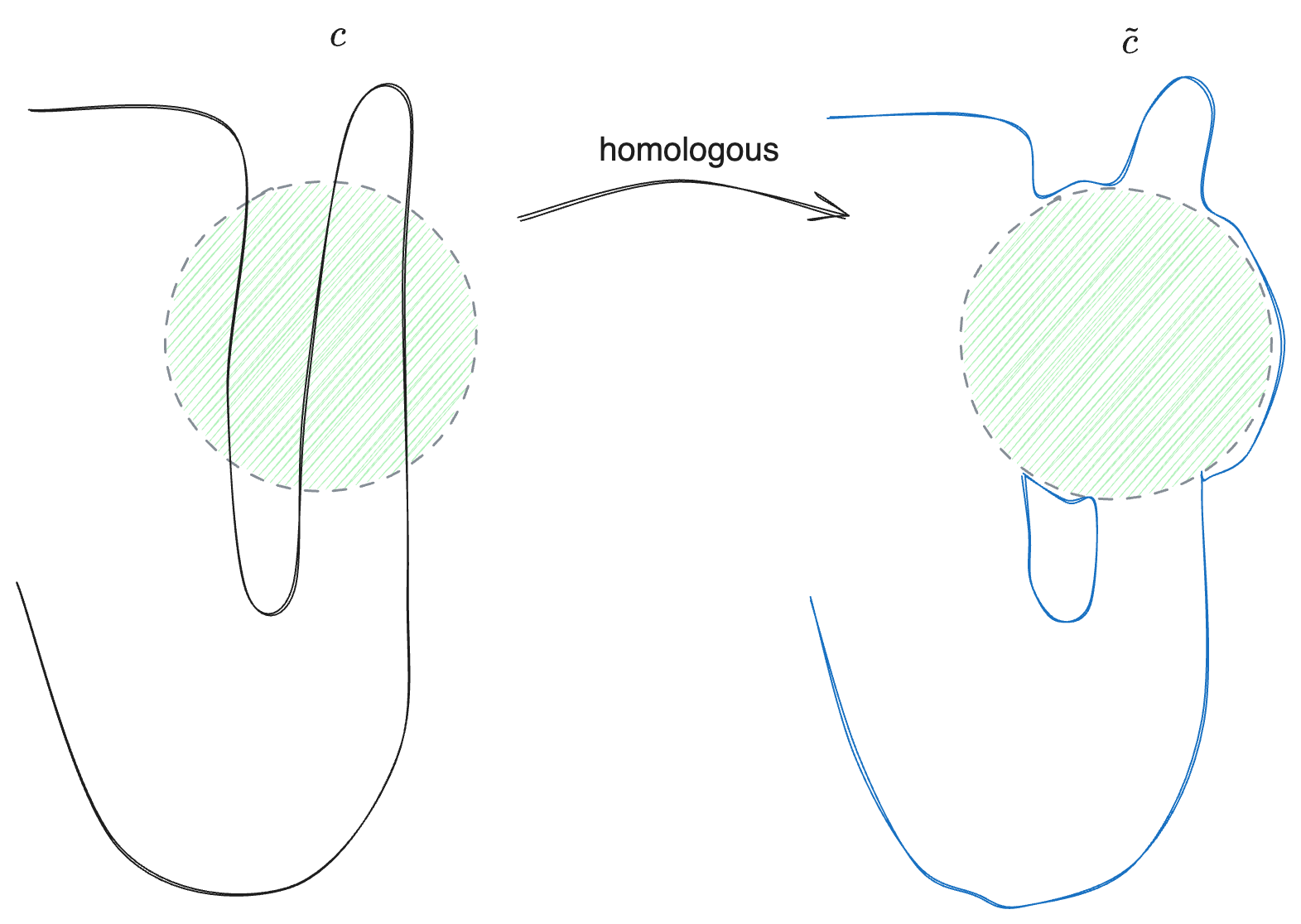}
            \caption{Constructing $\tilde c$ from $c$}
            \label{fig:construct_tilde_c}
        \end{figure}

        Clearly, by the transversality, $\tilde c$ is a union of disjoint simple loops in $\Gamma \setminus \bigcup^n_{i = 1} \overline{D_i}$ satisfying $[\tilde c] = [c] + \partial [\cup_i E'_{i,1}] = [c]$ in   $H_1(\Gamma; \Z_2)$,
        and
        \[
            \mathcal{H}^1(\tilde c) \leq \mathcal{H}^1(c) + \sum^n_{i = 1} \mathcal{H}^1(\partial D'_i \cap \partial E'_{i, 1}) \leq \mathcal{H}^1(c) + \sum^n_{i = 1} \mathcal{H}^1(\partial D'_i) < \mathcal{H}^1(c) + \alpha\,.
        \]
    \end{proof}

    \begin{proof}[Proof of Proposition~\ref{prop:short_loops_I}]
        If $\mathfrak{g}(\Gamma) = 0$, the proposition holds trivially. In the following, we fix $g \in \mathbb{N}^+$ and only consider the case where $\mathfrak{g}(\Gamma) \in \mathbb{N}^+$. 
        
        It suffices to prove that for any $g' \in \{1, 2, \dots, g\}$ and $\varepsilon > 0$, there exists $\delta'(g, g', \varepsilon) \in (0, \varepsilon)$ with the following property: For any smooth orientable closed surface $\Gamma$ containing disjoint closed disks $\{\overline{D_i}\}^n_{i = 1}$ with piecewise smooth boundary, satisfying:
        \begin{enumerate}[label = \normalfont(\roman*)']
            \item\label{item:short_loops_I_genus'} $g' \leq \mathfrak{g}(\Gamma) \leq g$,
            \item\label{item:short_loops_I_area'} $\mathcal{H}^2(\Gamma) < \delta'(g, g', \varepsilon)$, 
            \item\label{item:short_loops_I_bdry'} $\sum^n_{i = 1} \mathcal{H}^1(\partial D_i) < \delta'(g, g', \varepsilon)$,
        \end{enumerate}
        there exist $g'$ disjoint simple loops $\{c_j\}^{g'}_{j = 1}$ in $\Gamma \setminus \bigcup^n_{i = 1} \overline{D_i}$ such that:
        \begin{enumerate}[label=\normalfont(\arabic*)']
            \item\label{item:short_loops_I_short'} Each $c_j$ has length $\mathcal{H}^1(c_j) \leq \varepsilon$.
            \item\label{item:short_loops_I_indep'} $\{[c_j]\}^{g'}_{j = 1}$ is linearly independent in $H_1(\Gamma; \Z_2)$.
        \end{enumerate} 
        
        We prove this existence result inductively on $g'$. For $g' = 1$, we define $\delta'(g, 1, \varepsilon) := \min({\varepsilon}/{2}, {\varepsilon^2}/({4C^2}))\,,$
        where $C$ is the universal constant given in Theorem~\ref{thm:systolic_ineq}. By Theorem~\ref{thm:systolic_ineq}, for any $\Gamma$ and $\{\overline{D_i}\}^n_{i = 1}$ satisfying~\ref{item:short_loops_I_genus'}, ~\ref{item:short_loops_I_area'} and ~\ref{item:short_loops_I_bdry'} there exists a loop $c'_1 \subset \Gamma$ such that
        \[
            \mathcal{H}^1(c'_1) \leq C\sqrt{\mathcal{H}^2(\Gamma)} \leq \frac{\varepsilon}{2}\,, \quad [c'_1] \neq 0\in H_1(\Gamma; \Z_2)\,.
        \]
        By Lemma~\ref{lem:bypass_forbid}, $c'_1$ is homologous to a finite union of disjoint simple loops $\tilde c_1$ lying outside $\{\overline{D_i}\}^n_{i = 1}$, and 
        \[
            \mathcal{H}^1(\tilde c_1) < \delta'(g, 1, \varepsilon) + \mathcal{H}^1(c'_1) < \varepsilon\,, \quad [\tilde c_1] = [c'_1] \neq 0 \in H_1(\Gamma; \Z_2)\,.
        \]
        Hence, there exists at least one loop in $\tilde c_1$, denoted by $c_1$, such that $\mathcal{H}^1(c_1) < \varepsilon\,,$ and $ [c_1] \neq 0 \in H_1(\Gamma; \Z_2)\,.$
        
        Inductively, assume the existence of $\delta'(g, g', \varepsilon)$ for any $\varepsilon$ and $g' = k \in \{1, 2, \dots, g - 1\}$. We define 
        \[
            \delta'(g, k + 1, \varepsilon) := \min({\varepsilon}/{4}, {\varepsilon^2}/({8C^2}), \delta'(g, k, {\varepsilon}/({8k})) \,)\,.
        \]
        By the inductive assumption, for any $\Gamma$ and $\{\overline{D_i}\}^n_{i = 1}$ satisfying ~\ref{item:short_loops_I_genus'}, ~\ref{item:short_loops_I_area'} and ~\ref{item:short_loops_I_bdry'}, there exist $k$ disjoint simple loops $\{c_j\}^k_{j = 1}$ in $\Gamma \setminus \bigcup^n_{i = 1} \overline{D_i}$ satisfying ~\ref{item:short_loops_I_short'} and~\ref{item:short_loops_I_indep'} with $\varepsilon$ replaced by $\frac{\varepsilon}{8k}$. 
        
        We perform a sequence of surgeries on $\Gamma$ along $\{c_j\}^k_{j = 1}$---cut open along these disjoint loops and attach $2k$ disks, denoted by $\{D_i\}^{n + 2k}_{i = n + 1}$, of area small enough such that the resulting smooth orientable closed surface $\Gamma'$ of positive genus has
        \begin{equation}\label{eqn:area_Sigma'}
            \mathcal{H}^2(\Gamma') \leq \mathcal{H}^2(\Gamma) + \frac{\varepsilon^2}{8C^2} < \delta'(g, k + 1, \varepsilon) + \frac{\varepsilon^2}{8C^2} \leq \frac{\varepsilon^2}{4C^2}\,.
        \end{equation}
        Moreover, since each $c_i$ has $\mathcal{H}^1(c_i) \leq \frac{\varepsilon}{8k}$, the $(n + 2k)$ disks $\{D_i\}^{n + 2k}_{i = 1}$ in $\Gamma'$ satisfy
        \begin{equation}\label{eqn:loop_sum_est}
            \sum^{n + 2k}_{i = 1} \mathcal{H}^1(\partial D_i) = \sum^{n}_{i = 1} \mathcal{H}^1(\partial D_i) + 2k \frac{\varepsilon}{8k} < \delta'(g, k + 1, \varepsilon) + \frac{\varepsilon}{4} \leq \frac{\varepsilon}{2}\,.
        \end{equation}
        Applying Theorem~\ref{thm:systolic_ineq} again to $\Gamma'$, together with ~\eqref{eqn:area_Sigma'}, we obtain a loop $c'_{k + 1} \subset \Gamma'$ with length
        \[
            \mathcal{H}^1(c'_{k + 1}) \leq C \sqrt{\mathcal{H}^2(\Gamma')} \leq  {\varepsilon}/{2}\,.
        \]
        By Lemma~\ref{lem:bypass_forbid} with~\eqref{eqn:loop_sum_est}, $c'_{k + 1}$ is homologous to a finite union of loops $\tilde c_{k+1} \subset \Gamma' \setminus \bigcup^{n + 2k}_{i = 1} \overline{D_i}$ with 
        \[
            \mathcal{H}^1(\tilde c_{k + 1}) < \mathcal{H}^1(c'_{k + 1}) + \frac{\varepsilon}{2} \leq \varepsilon\,, \quad [\tilde c_{k + 1}] = [c'_{k + 1}] \neq 0 \in H_1(\Gamma'; \Z_2)\,.
        \] 
        Hence, there exists a simple loop $c_{k + 1}$ in $\tilde c_{k + 1}$ with $[c_{k+1}] \neq 0 \in H_1(\Gamma'; \Z_2)$, which implies that $[c_{k + 1}] $ is linearly independent of $[c_1], [c_2], \dots, [c_k]$ in $H_1(\Gamma; \Z_2)$. Notice that 
        \[
            \Gamma' \setminus \bigcup^{n + 2k}_{i = 1} \overline{D_i} = \Gamma \setminus \bigcup^{n}_{i = 1} \overline{D_i}\,,
        \] 
        so $c_{k + 1}$ is also a simple loop in $\Gamma \setminus \bigcup^{n}_{i = 1} \overline{D_i}$. Therefore, we construct $\{c_j\}^{k + 1}_{j = 1}$ satisfying ~\ref{item:short_loops_I_short'} and~\ref{item:short_loops_I_indep'}. 

        This closes the induction, and the proposition holds with
            $\delta(g, \varepsilon) := \min_{1 \leq g' \leq g}\delta'(g, g', \varepsilon)\,.$
    \end{proof} 

\subsection{Simple short loops in surfaces with large area}\label{subsect:short_loops_large_area} 

    In this subsection, we first locate short loops for smooth surfaces. 

    \begin{prop}[Existence of homologically independent simple short loops II] \label{prop:short_loops_II} 
        In an orientable closed Riemannian $3$-manifold $(M, \bg)$, let $\Sigma$ be an embedded smooth orientable genus $g_0$ closed surface. For any $\varepsilon\in (0,1)$ and any integer $g_1 \geq g_0$, there exists $\delta = \delta(M, \bg, \Sigma, g_1, \varepsilon) \in (0, \varepsilon)$ with the following property. 
        In $(M, \bg)$, for any embedded smooth orientable closed surface $\Gamma$ containing a finite set of disjoint closed disks $\{\overline{D_i}\}^n_{i = 1}$ with piecewise smooth boundary satisfying:
        \begin{enumerate}[label = \normalfont(\roman*)]
            \item\label{item:short_loops_II_genus} $g_0 \leq \mathfrak{g}(\Gamma) \leq g_1$,
            \item\label{item:short_loops_II_F_close} $\mathbf{F}(\Gamma, \Sigma) \leq \delta$,
            \item\label{item:short_loops_II_bdry} $\sum^n_{i = 1} \mathcal{H}^1(\partial D_i) < \delta$,
            \item\label{item:short_loops_II_diam} $\sum^n_{i = 1} \operatorname{diam}_M(\overline{D_i}) < \delta$,
        \end{enumerate}
        there exist $\mathfrak{g}(\Gamma) - g_0$ disjoint simple loops $\{c_j\}^{\mathfrak{g}(\Gamma) - g_0}_{j = 1}$ in $\Gamma \setminus \bigcup^n_{i = 1} \overline{D_i}$ such that: 
        \begin{enumerate}[label = \normalfont(\arabic*)]
            \item\label{item:short_loops_II_short} Each $c_j$ has length $\mathcal{H}^1(c_j) \leq \varepsilon$.
            \item\label{item:short_loops_II_indep} $\{[c_j]\}^{\mathfrak{g}(\Gamma) - g_0}_{j = 1}$ is linearly independent in $H_1(\Gamma; \Z_2)$.
        \end{enumerate} 

        In particular, $\Gamma \setminus \bigcup^g_{j = 1} c_j$ is a genus $g_0$ surface with $2(\mathfrak{g}(\Gamma) - g_0)$ boundary circles. 
    \end{prop}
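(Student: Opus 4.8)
The plan is to reduce Proposition~\ref{prop:short_loops_II} to Proposition~\ref{prop:short_loops_I}: peel off from $\Gamma$ a ``core'' piece $\Gamma_0$ that carries all of the genus of $\Sigma$, and apply the systolic argument of Proposition~\ref{prop:short_loops_I} to the small-area remainder $\Gamma\setminus\Gamma_0$. Fix a tubular neighbourhood $N := N_\eta(\Sigma)\cong\Sigma\times(-\eta,\eta)$ with normal projection $\pi:N\to\Sigma$, where $\eta=\eta(M,\bg,\Sigma)>0$ depends only on the geometry of $(M,\Sigma)$ (such an $N$ exists since $M$ and $\Sigma$ are orientable, so $\Sigma$ is two-sided). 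Throughout, $\omega(\delta)$ denotes a quantity tending to $0$ as $\delta\to0$ once $\eta$ has been fixed. First I would record the consequences of $\bF(\Gamma,\Sigma)\le\delta$ that are needed: from the varifold part, $\mathcal{H}^2(\Gamma)\le\mathcal{H}^2(\Sigma)+\omega(\delta)$ and $\mathcal{H}^2(\Gamma\setminus N)\le\omega(\delta)$ (test the varifolds of $\Gamma$ and $\Sigma$ against a cutoff supported away from $\Sigma$); from the flat part, combined with these area bounds, a standard slicing-and-projection argument produces a subsurface $\Gamma_0\subseteq\Gamma\cap N$ on which $\pi$ restricts to a diffeomorphism onto $\Sigma$ with finitely many disjoint small disks removed, with $\mathcal{H}^2(\Gamma\setminus\Gamma_0)\le\omega(\delta)$ and $\mathcal{H}^1(\partial\Gamma_0)\le\omega(\delta)$.

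Set $\Gamma_1:=\overline{\Gamma\setminus\Gamma_0}$; it has $\mathcal{H}^2(\Gamma_1)\le\omega(\delta)$ and boundary $\partial\Gamma_1=\partial\Gamma_0$ of length $\le\omega(\delta)$. Cap $\partial\Gamma_1$ off along small disks $\{\widehat D_m\}$ to obtain a smooth closed orientable surface $\widehat\Gamma$ with $\mathcal{H}^2(\widehat\Gamma)\le\omega(\delta)$; since $\fg(\Gamma_0)=\fg(\Sigma)=g_0$, an Euler-characteristic computation gives $\fg(\widehat\Gamma)=\fg(\Gamma)-g_0$. Using hypotheses~\ref{item:short_loops_II_bdry} and~\ref{item:short_loops_II_diam}, each $\overline{D_i}$ either lies in the interior of $\Gamma_0$ (so it is automatically disjoint from $\Gamma_1$) or, after harmlessly enlarging the caps to swallow the tiny straddling ones, may be viewed as a forbidden disk inside $\widehat\Gamma$ with boundary of small length. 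Now apply Proposition~\ref{prop:short_loops_I} to $\widehat\Gamma$, taking its $g$ to be $g_1$ and its forbidden disks to be the caps $\{\widehat D_m\}$ together with the relevant $\overline{D_i}$'s: the total forbidden-boundary length and $\mathcal{H}^2(\widehat\Gamma)$ are both $\le\omega(\delta)$, so choosing $\delta$ small enough (relative to $\delta(g_1,\varepsilon)$ from Proposition~\ref{prop:short_loops_I}) yields $\fg(\widehat\Gamma)=\fg(\Gamma)-g_0$ disjoint simple loops of length $\le\varepsilon$, disjoint from all forbidden disks and linearly independent in $H_1(\widehat\Gamma;\Z_2)$. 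Being disjoint from the caps they lie in $\Gamma_1\setminus\bigcup_i\overline{D_i}\subseteq\Gamma\setminus\bigcup_i\overline{D_i}$, giving~\ref{item:short_loops_II_short}; and~\ref{item:short_loops_II_indep} follows by transferring linear independence from $H_1(\widehat\Gamma;\Z_2)$ to $H_1(\Gamma;\Z_2)$ via Mayer--Vietoris for $\Gamma=\Gamma_0\cup_\partial\Gamma_1$, using that $\Gamma_0$ contributes a complementary rank-$2g_0$ piece disjoint from the span of the loops. The final assertion is then automatic: no subcollection of $\fg(\Gamma)-g_0$ disjoint simple loops with independent $\Z_2$-classes can separate $\Gamma$, so cutting along all of them leaves a connected surface whose Euler characteristic forces genus exactly $g_0$ with $2(\fg(\Gamma)-g_0)$ boundary circles.

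\textbf{Main obstacle.} The crux is the decomposition step, i.e.\ producing $\Gamma_0$ with $\mathcal{H}^1(\partial\Gamma_0)$ genuinely small while $\mathcal{H}^2(\Gamma\setminus\Gamma_0)$ is small. Since we assume no curvature bound on $\Gamma$, one cannot simply use a regular level set of the distance to $\Sigma$: inside $N$ the surface $\Gamma$ may be multi-sheeted over small regions of $\Sigma$, and, as the discussion preceding the proposition stresses, $\Gamma$ may sport thin ``filigrees'' that leave $N$ altogether. Controlling these --- choosing a good graphical sheet, routing $\partial\Gamma_0$ along short curves, and showing that everything discarded (extra sheets, filigrees, and $\Gamma\setminus N$) has small area and attaches to $\Gamma_0$ along short curves --- together with checking that the genus drop in passing to $\widehat\Gamma$ is \emph{exactly} $g_0$ and not merely $\le g_0$, is where the real work lies; I would carry it out with a careful co-area argument and defer the details accordingly.
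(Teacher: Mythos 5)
Your high-level strategy — split $\Gamma$ into a genus-$g_0$ ``core'' near $\Sigma$ and a small-area remainder, then feed the remainder to Proposition~\ref{prop:short_loops_I} — is the right idea and is also the spirit of the paper's proof. But the step you label as the ``main obstacle'' and defer is not a technical detail to be filled in later: it is essentially the entire content of the proposition. The claim that a ``standard slicing-and-projection argument'' produces a single subsurface $\Gamma_0\subseteq\Gamma\cap N$ on which $\pi$ restricts to a diffeomorphism onto $\Sigma$ minus finitely many small disks, with $\mathcal{H}^1(\partial\Gamma_0)\le\omega(\delta)$, is not standard and is stronger than what the paper ever establishes. $\bF$-closeness controls area and flat distance, but $\Gamma$ may be multi-sheeted over many tiny regions scattered all over $\Sigma$ (the area bound only forbids a second \emph{full} sheet), and may carry filigrees exiting $N$; selecting one sheet globally and routing $\partial\Gamma_0$ along curves of small \emph{total} length around all of these defects is precisely the hard analytic/combinatorial work. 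As written, your proposal has a genuine gap there.

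It is worth noting how the paper gets around this, because it never constructs a graphical core at all. After first using Lemma~\ref{lem:restr_to_nbhd} to split off the part of $\Gamma$ outside a tubular neighborhood (small area, handled by Proposition~\ref{prop:short_loops_I} directly), it fixes a fine triangulation $\mathcal{T}$ of $\Sigma$ and a two-parameter family of perturbations $\varphi_{z,t}$ of $\mathcal{T}$. A coarea/averaging argument over $(z,t)$ produces a position in which (a) the vertical line over each vertex meets $\Gamma$ in exactly one point, and (b) the cylinder over each triangle boundary meets $\Gamma$ in one ``long loop'' of controlled length plus ``short loops'' of small total length. Cutting $\Gamma$ along all these loops decomposes it into pieces $\Gamma_\Delta$, one per prism, each of area $<\delta_0$; Proposition~\ref{prop:short_loops_I} is then applied \emph{locally in each prism} to kill the genus of each capped-off piece, and the long loops reassemble the leftover triangles into a surface homeomorphic to $\Sigma$, which is how the exact genus count $\fg(\Gamma)-g_0$ comes out. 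So the triangulation plus averaging plays exactly the role your deferred ``careful co-area argument'' would have to play, but it does so without ever needing a single graphical sheet — only a controlled $1$-skeleton of intersections. If you want to complete your proof along your stated lines, you would either have to reprove something equivalent to the paper's Step 1, or weaken your claim about $\Gamma_0$ to a piecewise statement of this kind.
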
 
    \begin{rmk}\label{rmk:drop_genus_lb_cond} 
        By Lemma~\ref{lem:genusLarger}, for sufficiently small $\delta$ depending on $M, \bg$ and $\Sigma$, if $\mathbf{F}(\Gamma, \Sigma) < \delta$, then $\mathfrak{g}({\Gamma}) \geq \mathfrak{g}({\Sigma})$. Therefore, we can replace the condition ``$g_0 \leq \mathfrak{g}(\Gamma) \leq g_1$'' by ``$\mathfrak{g}(\Gamma) \leq g_1$'' in the proposition.
    \end{rmk} 

    We first prove a useful lemma which enables us to reduce to the case where $\Gamma$ is in a small neighborhood of $\Sigma$. 

    \begin{lem}\label{lem:restr_to_nbhd} 
        In an orientable closed Riemannian $3$-manifold $(M, \bg)$, let $\Sigma$ be an embedded smooth orientable closed surface. For any $\varepsilon \in (0, 1)$, there exists $\delta = \delta(M, \bg, \Sigma, \varepsilon) \in (0, \varepsilon / 2)$ with the following property. 
        In $(M, \bg)$, for any embedded smooth orientable closed surface $\Gamma$ containing a finite set of disjoint closed disks $\{\overline{D_i}\}^n_{i = 1}$ with piecewise smooth boundary satisfying:
        \begin{enumerate}[label = \normalfont(\roman*)]
            \item\label{item:restr_to_nbhd_F_close} $\mathbf{F}(\Gamma, \Sigma) \leq \delta$,
            \item\label{item:restr_to_nbhd_diam} $\sum^n_{i = 1} \operatorname{diam}_M(\overline{D_i}) < \delta$,
        \end{enumerate}
        there exists $\eta \in (0, \varepsilon)$ such that:
        \begin{enumerate}[label = \normalfont(\arabic*)]
            \item\label{item:restr_to_nbhd_normal_exp} $\exp^\perp: \Sigma \times [-\eta, \eta] \to M$ is a diffeomorphism to its image $\exp^\perp(\Sigma\times [-\eta, \eta])$; We denote $N_\eta(\Sigma) := \exp^\perp(\Sigma\times (-\eta, \eta))$.
            \item\label{item:restr_to_nbhd_inter_trans} $\partial N_\eta(\Sigma)$ intersects $\Gamma$ transversally and $\partial N_\eta(\Sigma) \cap \bigcup^n_{i = 1} \overline{D_i} = \emptyset$.
            \item\label{item:restr_to_nbhd_inter_small} $\mathcal{H}^1(\partial N_\eta(\Sigma) \cap \Gamma) < \varepsilon$.
            \item\label{item:restr_to_nbhd_outside_small} $\mathcal{H}^2(\Gamma \setminus N_\eta(\Sigma)) < \varepsilon$.
        \end{enumerate}
        Moreover, by taking $\delta$ even smaller, we can remove a small tubular neighborhood of $\partial N_\eta(\Sigma) \cap \Gamma$ in $\Gamma$, and attach disjoint embedded disks $\{D'_i\}^{n'}_{i = 1}$ in $M$ to obtain two disjoint embedded smooth orientable closed surfaces $\Gamma_1$ and $\Gamma_2$ satisfying:
        \begin{enumerate}[label = \normalfont(\arabic*)]
            \setcounter{enumi}{4}
            \item\label{item:restr_to_nbhd_nbhd} $\Gamma_1 \subset N_\eta(\Sigma)$.
            \item\label{item:restr_to_nbhd_Gamma_1_F_close} $\bF(\Gamma_1, \Sigma) < \varepsilon$.
            \item\label{item:restr_to_nbhd_Gamma_2_area} $\mathcal{H}^2(\Gamma_2) < \varepsilon$.
        \end{enumerate}
        And $\{\overline{D_i}\}^{n_\Gamma}_{i = 1} := \{\overline{D_i}\}^n_{i = 1} \cup \{\overline{D'_i}\}^{n'}_{i = 1}$ is also a finite set of disjoint closed disks with piecewise smooth boundary, which satisfies:
        \begin{enumerate}[label = \normalfont(\arabic*)]
            \setcounter{enumi}{7}
            \item\label{item:restr_to_nbhd_new_bdry} $\sum^{n'}_{i = 1} \mathcal{H}^1(\partial D'_i) < \varepsilon / 2$.
            \item\label{item:restr_to_nbhd_new_diam} $\sum^{n'}_{i = 1} \operatorname{diam}_M(\overline{D'_i}) < \varepsilon / 2$.
        \end{enumerate} 
    \end{lem}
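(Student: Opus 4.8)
The plan is to select a good slicing radius $\eta$ by a coarea argument, and then perform a cut‑and‑cap surgery on $\Gamma$ along the resulting slice. Fix first, once and for all, $\eta_0 = \eta_0(M,\bg,\Sigma)\in(0,\varepsilon/2)$ small enough that $\exp^\perp\colon \Sigma\times[-\eta_0,\eta_0]\to M$ is a diffeomorphism onto the tubular neighborhood $N_{\eta_0}(\Sigma)$; then item~\ref{item:restr_to_nbhd_normal_exp} holds automatically for every $\eta\le\eta_0$, the signed normal coordinate $u:=\mathrm{pr}_2\circ(\exp^\perp)^{-1}$ is a smooth function on $N_{\eta_0}(\Sigma)$ with $|\nabla u|\le L=L(M,\bg,\Sigma)$, and $\{|u|=r\}=\partial N_r(\Sigma)$ for $r<\eta_0$. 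We will look for $\eta$ in the fixed interval $J:=(\eta_0/2,\eta_0)$.

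The role of $\bF$‑closeness is to force $\Gamma$ to concentrate near $\Sigma$. Since $\bF(\Gamma,\Sigma)\le\delta$ makes $|\Gamma|$ converge to $|\Sigma|$ in the varifold $\bF$‑metric as $\delta\to0$, and $\|\Sigma\|$ gives zero mass to the closed set $M\setminus N_{\eta_0/2}(\Sigma)$ and to the closed shell $\mathcal A:=\{\eta_0/2\le d(\cdot,\Sigma)\le\eta_0\}$, weak-$*$ convergence yields, for any prescribed $\beta>0$, a threshold $\delta_0(M,\bg,\Sigma,\beta)>0$ so that $\delta\le\delta_0$ implies
\[ \mathcal H^2\!\big(\Gamma\setminus N_{\eta_0/2}(\Sigma)\big)<\beta,\qquad \mathcal H^2(\Gamma\cap\mathcal A)<\beta,\qquad \mathcal H^2(\Gamma)<\mathcal H^2(\Sigma)+1. \]
We will take $\beta$ a small multiple of $\varepsilon$ and also demand $\delta<\eta_0/16$. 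Applying the coarea formula to $u|_\Gamma$ over $\Gamma\cap\mathcal A$ gives $\int_{\eta_0/2}^{\eta_0}\mathcal H^1(\Gamma\cap\partial N_r(\Sigma))\,dr\le L\,\mathcal H^2(\Gamma\cap\mathcal A)<L\beta$, so by Chebyshev the set of $r\in J$ with $\mathcal H^1(\Gamma\cap\partial N_r(\Sigma))\ge\varepsilon/2$ has measure $<2L\beta/\varepsilon$; since $d(\cdot,\Sigma)$ is $1$‑Lipschitz, the set of $r$ with $\partial N_r(\Sigma)\cap\bigcup_i\overline{D_i}\neq\emptyset$ has measure $\le\sum_i\operatorname{diam}_M(\overline{D_i})<\delta$; and by Sard the critical values of $u|_\Gamma$ are null. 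Taking $\beta$ (hence $\delta$) small enough that $2L\beta/\varepsilon+\delta<|J|/2$, we pick $\eta\in J$ that is a regular value of $u|_\Gamma$, has $\partial N_\eta(\Sigma)\cap\bigcup_i\overline{D_i}=\emptyset$, and $\mathcal H^1(\Gamma\cap\partial N_\eta(\Sigma))<\varepsilon/2$. This gives items~\ref{item:restr_to_nbhd_inter_trans} and~\ref{item:restr_to_nbhd_inter_small}, and item~\ref{item:restr_to_nbhd_outside_small} follows since $\Gamma\setminus N_\eta(\Sigma)\subset\Gamma\setminus N_{\eta_0/2}(\Sigma)$ has area $<\beta<\varepsilon$.

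For the ``moreover'' part set $\mathcal C:=\Gamma\cap\partial N_\eta(\Sigma)$, a disjoint union of smoothly embedded circles with $\mathcal H^1(\mathcal C)<\varepsilon/2$. The key claim is that each circle $c\subset\mathcal C$ bounds an embedded disk in $M$ of diameter $\le C\,\mathcal H^1(c)$, with $C=C(M,\bg,\Sigma)$, and that these disks can be taken pairwise disjoint and contained in $\overline{N_\eta(\Sigma)}$. Granting this, excise from $\Gamma$ a thin embedded annular neighborhood $A$ of $\mathcal C$; the components of $\Gamma\setminus A$ fall into $\Gamma^{\mathrm{in}}$ (those in $\overline{N_\eta(\Sigma)}$) and $\Gamma^{\mathrm{out}}$ (those in $M\setminus N_\eta(\Sigma)$), with $\mathcal H^2(\Gamma^{\mathrm{out}})\le\mathcal H^2(\Gamma\setminus N_\eta(\Sigma))<\beta$. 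Cap $\Gamma^{\mathrm{in}}$ with copies of the above disks pushed slightly into $N_\eta(\Sigma)$, and $\Gamma^{\mathrm{out}}$ with copies pushed slightly to the other side; a small perturbation makes everything embedded, smooth, orientable, and makes the resulting closed surfaces $\Gamma_1\supset\Gamma^{\mathrm{in}}$, $\Gamma_2\supset\Gamma^{\mathrm{out}}$ disjoint. Writing $\{D'_i\}$ for the caps: $\Gamma_1\subset N_\eta(\Sigma)$ gives item~\ref{item:restr_to_nbhd_nbhd}; $\mathcal H^2(\Gamma_2)=\mathcal H^2(\Gamma^{\mathrm{out}})+(\text{cap areas})<\varepsilon$ for $\beta$ small, giving item~\ref{item:restr_to_nbhd_Gamma_2_area}; since $\Gamma_1$ is obtained from $\Gamma$ by deleting $\Gamma^{\mathrm{out}}\cup A$ and inserting small caps, $\bF(\Gamma_1,\Sigma)\le\bF(\Gamma_1,\Gamma)+\bF(\Gamma,\Sigma)=O(\beta)+\delta<\varepsilon$, giving item~\ref{item:restr_to_nbhd_Gamma_1_F_close}; and $\sum_i\mathcal H^1(\partial D'_i)\le 2\,\mathcal H^1(\mathcal C)<\varepsilon$, $\sum_i\operatorname{diam}_M(\overline{D'_i})\le 2C\,\mathcal H^1(\mathcal C)<2C\varepsilon$, so running the whole argument with $\varepsilon$ replaced by $\varepsilon/(4\max(1,C))$ from the outset yields items~\ref{item:restr_to_nbhd_new_bdry} and~\ref{item:restr_to_nbhd_new_diam}. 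That the combined family is a disjoint collection of closed disks with piecewise smooth boundary is immediate.

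The main obstacle is exactly the geometric claim just used: that the slice circles $\mathcal C$ bound embedded disks of diameter comparable to their length. Flat closeness alone is insufficient, since it permits a small‑area but ``long‑necked'' piece of $\Gamma$ to protrude through $\partial N_\eta(\Sigma)$ along a circle that is short but bounds no small disk; it is the varifold part of the $\bF$‑metric, forcing area concentration, that rules this out. A clean way to organize this is to first establish a standalone graphical decomposition: for $\Gamma$ sufficiently $\bF$‑close to the multiplicity one surface $\Sigma$, one has $\Gamma\cap N_{\eta_0}(\Sigma)=G\sqcup B$ with $G$ a single‑valued graph over an open subset of $\Sigma$ and $\mathcal H^2(B)$ arbitrarily small; after arranging $\delta$ small enough that $G\subset N_{\eta_0/2}(\Sigma)$, the circles $\mathcal C$ lie in $B$, whose components have area well below a fixed geometric scale of $\Sigma$, hence near $\partial N_\eta(\Sigma)$ are thin tubes, so $\mathcal C$ consists of meridian curves bounding the evident transverse‑slice disks; an innermost‑disk argument then makes the disks disjoint.
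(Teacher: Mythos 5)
Your slicing argument for items (1)--(4) (coarea plus Chebyshev plus Sard to select a regular value $\eta$ in a fixed window, avoiding the disks $\overline{D_i}$ and making both $\mathcal H^1(\Gamma\cap\partial N_\eta(\Sigma))$ and $\mathcal H^2(\Gamma\setminus N_\eta(\Sigma))$ small) is essentially the paper's argument and is fine. The gap is in the capping step, which you have correctly flagged as the main obstacle but not closed. You need each slice circle $c\subset\Gamma\cap\partial N_\eta(\Sigma)$ to bound an embedded disk of diameter comparable to $\mathcal H^1(c)$, and you propose to obtain this from a ``graphical decomposition'' $\Gamma\cap N_{\eta_0}(\Sigma)=G\sqcup B$ with $G$ a single-valued graph over $\Sigma$ and $\mathcal H^2(B)$ small, followed by the assertion that the components of $B$ crossing $\partial N_\eta(\Sigma)$ are ``thin tubes'' whose meridians bound transverse-slice disks. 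Neither step is proved, and neither is routine: $\bF$-closeness of a general embedded (non-minimal) surface to a multiplicity-one $\Sigma$ does not come with an Allard-type graphical decomposition for free, and even granting one, a component of $B$ of small area meeting $\partial N_\eta(\Sigma)$ need not be a tube --- small area controls neither its diameter nor its topology, so ``meridian curves bounding the evident transverse-slice disks'' does not follow.

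The paper sidesteps all of this with one observation you are missing: the slice curves lie on the \emph{fixed} closed surface $\partial N_\eta(\Sigma)$, and any simple closed curve on such a surface of length below a threshold $\delta_\Sigma(M,\bg,\Sigma)$ is null-homotopic there and, by Jordan--Schoenflies together with the isoperimetric inequality on that surface, bounds a disk $D\subset\partial N_\eta(\Sigma)$ with $\mathcal H^2(D)\le\nu_\Sigma(\mathcal H^1(c))^2$ and $\operatorname{diam}_M(\overline D)\le\nu_\Sigma\,\mathcal H^1(c)$. Pushing these disks slightly to either side of $\partial N_\eta(\Sigma)$ (innermost first, to keep them disjoint) produces the caps with all the stated length, diameter and area bounds, with no structural input about $\Gamma$ whatsoever. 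Separately, your proof of item (6) is too quick: the claim $\bF(\Gamma_1,\Gamma)=O(\beta)$ requires converting flat-norm and mass control into $\bF$-control; the paper does this by filling $\Gamma_2$ via the isoperimetric inequality to bound $\cF([\Gamma],[\Gamma_1])$, bounding $|\mathcal H^2(\Gamma)-\mathcal H^2(\Gamma_1)|$ by $2\mathcal H^2(\Gamma_2)$, and then invoking Corollary~\ref{cor:Pitts_bF_flat_quant}. That part is easily repaired; the disk claim is the genuine gap.
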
 
    \begin{proof} 
        Firstly, we choose $\eta_0 = \eta_0(M, \bg, \Sigma) \in (0, 1)$ such that 
$\exp^\perp: \Sigma \times [-\eta_0, \eta_0] \to M$
        is a diffeomorphism to its image. We fix $\zeta \in (0, \min(\varepsilon, \eta_0))$ which will be determined later. 

        By the definition of $\bF$-metric, we can choose $\delta \in (0, \zeta/10)$ such that for any $\Gamma$ and $\{\overline{D}_j\}^n_{j = 1}$ satisfying~\ref{item:restr_to_nbhd_F_close} and~\ref{item:restr_to_nbhd_diam},
      $\mathcal{H}^2(\Gamma \setminus N_{\zeta / 2}(\Sigma)) < \zeta^2/10.$
        By the coarea formula and Sard's theorem, there exists a measurable set $E \in (\zeta / 2, \zeta)$ with $\mathcal{H}^1(E) > \zeta / 5$ such that for any $r \in E$, $\Gamma$ intersects $\partial N_{r}(\Sigma)$ transversally and
       $\mathcal{H}^1 (\Gamma \cap \partial N_{r}(\Sigma)) < \zeta < \varepsilon\,.$
        By~\ref{item:restr_to_nbhd_diam}, we can choose $\eta \in E$ such that $\partial N_\eta(\Sigma) \cap \bigcup^n_{i = 1} \overline{D_i} = \emptyset$. These confirm~\ref{item:restr_to_nbhd_normal_exp},~\ref{item:restr_to_nbhd_inter_trans},~\ref{item:restr_to_nbhd_inter_small} and~\ref{item:restr_to_nbhd_outside_small}. 
        
        By the Jordan-Schoenflies theorem and the isoperimetric inequality, Lemma~\ref{lem:isop_ineq}, there exist positive constants $\nu_\Sigma$ and $\delta_\Sigma$, depending on $M$, $\bg$ and $\Sigma$, such that for any $r \in (-\eta_0, \eta_0)$ and any simple loop $c$ in $\partial N_r(\Sigma)$ with $\mathcal{H}^1(c) \leq \delta_\Sigma$, the loop $c$ bounds a disk $D \subset \partial N_r(\Sigma)$ with 
        \[
            \mathcal{H}^2(D)\leq \nu_\Sigma (\mathcal{H}^1(c))^2 \leq \nu_\Sigma \delta_\Sigma \mathcal{H}^1(c)\,, \quad \operatorname{diam}_M(\overline{D}) < \nu_\Sigma \mathcal{H}^1(c)\,.
        \] 
        
        Therefore, if we require that
           $ \zeta < \min\left(\frac{\varepsilon}{2 + 4\nu_\Sigma}, \delta_\Sigma\right)\,,$
        then in $(M, \mathbf{g})$, choosing suitable $\delta$ and $\eta$ as before, we can remove a small tubular neighborhood of $\Gamma \cap \partial N_{\eta}(\Sigma) = \bigcup^{n'}_{i = 1} c'_i$ and attach disks $\{D'_i\}^{2n'}_{i = 1}$ near $\partial N_\eta(\Sigma)$ to obtain two disjoint embedded closed surfaces $\Gamma_1 \subset N_\eta(\Sigma)$ and $\Gamma_2$ satisfying:
        $$\mathcal{H}^2(\Gamma_2) < \mathcal{H}^2(\Gamma \setminus N_\eta(\Sigma)) + \sum^{n'}_{i = 1} \mathcal{H}^2(D'_i)< \mathcal{H}^2(\Gamma \setminus N_\eta(\Sigma)) + 2 \nu_\Sigma \delta_\Sigma \sum^{n'}_{i = 1} \mathcal{H}^1(\partial D'_i)\leq (1 + 2\nu_\Sigma \delta_\Sigma)\zeta  \leq \varepsilon\,,$$
        $$
            \sum^{n'}_{i = 1} \mathcal{H}^1(\partial D'_i) \leq \zeta < \varepsilon / 2\,,\quad\textrm{ and }\quad 
       \sum^{n'}_{i = 1} \operatorname{diam}_M(\overline{D'_i}) \leq \nu_\Sigma \sum^{n'}_{i = 1} \mathcal{H}^1(\partial D'_i) + \delta < \nu_\Sigma \zeta + \zeta < \varepsilon / 2\,.$$
        These confirm~\ref{item:restr_to_nbhd_nbhd},~\ref{item:restr_to_nbhd_Gamma_2_area}, ~\ref{item:restr_to_nbhd_new_bdry} and~\ref{item:restr_to_nbhd_new_diam}. 

        Finally, we require that 
            $\zeta < \frac{\delta(M, \bg)}{1 + 2\nu_\Sigma \delta_\Sigma}\,,$
        where $\delta(M, \bg)$ is a constant from Lemma~\ref{lem:isop_ineq}. By the isoperimetric inequality again, $\Gamma_2$ bounds a $3$-dimensional modulo $2$ integral current $A$ such that 
        \[
            \mathcal{H}^3(A) \leq \nu_M (\mathcal{H}^2(\Gamma_2))^{3/2} \leq \nu_M (1 + 2\nu_\Sigma \delta_\Sigma)^{3/2} \zeta^{3/2} \,,
        \]
        where $\nu_M$ is also a constant from Lemma~\ref{lem:isop_ineq}.
        Hence, 
        \[
            \mathcal{F}([\Gamma], [\Gamma_1]) \leq \nu_M (1 + 2\nu_\Sigma \delta_\Sigma)^{3/2} \zeta^{3/2}\,,
        \] and
        \[
            |\mathcal{H}^2(\Gamma) - \mathcal{H}^2(\Gamma_1)| \leq 2 \mathcal{H}^2(\Gamma_2) < 2(1 + 2\nu_\Sigma \delta_\Sigma)\zeta\,.
        \]
        It follows from Corollary~\ref{cor:Pitts_bF_flat_quant} that, if we choose $\zeta$ small enough, choosing suitable $\delta$ and $\eta$ as before, we have 
            $\mathbf{F}(\Gamma, \Gamma_1) < \varepsilon / 2$ and $ \mathbf{F}(\Gamma, \Sigma) \leq \delta < \varepsilon / 2\,.$
        This confirms~\ref{item:restr_to_nbhd_Gamma_1_F_close} of the proposition. 
        Note that the choice of $\zeta$ only depends on $M$, $\bg$, $\Sigma$ and $\varepsilon$ and so does $\delta$. 
    \end{proof} 

    This lemma can be extended to punctate surfaces, which will be used in Appendix~\ref{sect:genusIneq}.
    
    \begin{cor}\label{cor:restr_PS_to_nbhd} 
        In an orientable closed Riemannian $3$-manifold $(M, \bg)$, let $\Sigma$ be an embedded smooth orientable closed surface. For any $\varepsilon \in (0, 1)$, there exists $\delta = \delta(M, \bg, \Sigma, \varepsilon) \in (0, \varepsilon)$ with the following property. 
        In $(M, \bg)$, for any punctate surface $S$ with $\mathbf{F}(S, \Sigma) \leq \delta$,
        there exists $\eta \in (0, \varepsilon)$ such that:
        \begin{enumerate}[label = \normalfont(\arabic*)]
            \item\label{item:restr_PS_to_nbhd_normal_exp} $\exp^\perp: \Sigma \times [-\eta, \eta] \to M$ is a diffeomorphism to its image $\exp^\perp(\Sigma\times [-\eta, \eta])$. We denote $N_\eta(\Sigma) := \exp^\perp(\Sigma\times (-\eta, \eta))$.
            \item\label{item:restr_PS_to_nbhd_inter_trans}$\partial N_\eta(\Sigma)$ intersects $S$ transversally.
            \item\label{item:restr_PS_to_nbhd_inter_small} $\mathcal{H}^1(\partial N_\eta(\Sigma) \cap S) < \varepsilon$.
            \item\label{item:restr_PS_to_nbhd_outside_small} $\mathcal{H}^2(S \setminus N_\eta(\Sigma)) < \varepsilon$.
        \end{enumerate}
        Moreover, by taking $\eta$ even smaller, we can remove a small tubular neighborhood of $\partial N_\eta(\Sigma) \cap S$ in $S$, and attach disjoint embedded disks $\{D'_i\}^{n'}_{i = 1}$ in $M$ to obtain two disjoint embedded punctate orientable closed surfaces $S_1$ and $S_2$ satisfying:
        \begin{enumerate}[label = \normalfont(\arabic*)]
            \setcounter{enumi}{4}
            \item\label{item:restr_PS_to_nbhd_nbhd} $S_1 \subset N_\eta(\Sigma)$.
            \item\label{item:restr_PS_to_nbhd_Gamma_1_F_close} $\bF(S_1, \Sigma) < \varepsilon$.
            \item\label{item:restr_PS_to_nbhd_Gamma_2_area} $\mathcal{H}^2(S_2) < \varepsilon$.
        \end{enumerate}
    \end{cor}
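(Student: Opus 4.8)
The plan is to deduce Corollary~\ref{cor:restr_PS_to_nbhd} from Lemma~\ref{lem:restr_to_nbhd} by first excising small balls around the finite singular set $S_{\sing}$ and capping off, so as to replace the punctate surface $S$ by a genuinely smooth embedded orientable closed surface $\Gamma$ that is still $\bF$-close to $\Sigma$; the surfaces $\Gamma_1,\Gamma_2$ produced by that lemma will then serve as the desired $S_1,S_2$ (the isolated points of $S$, which have zero $\bF$-pseudodistance, are simply discarded), and the disks $\{D'_i\}$ of the corollary will be the caps together with the disks output by the lemma.

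First I would run the proof of Lemma~\ref{lem:restr_to_nbhd} with parameter $\varepsilon/2$ to fix its constants, notably the auxiliary scale $\zeta=\zeta(M,\bg,\Sigma,\varepsilon)\in(0,\varepsilon/2)$ from which that proof selects its width $\eta$ inside a positive-measure subset $E\subset(\zeta/2,\zeta)$, and I would set $\delta$ equal to a small fixed fraction of the constant $\delta(M,\bg,\Sigma,\varepsilon/2)$. Given a punctate surface $S$ with $\bF(S,\Sigma)\le\delta$, I would then pick the excision radius $r>0$ (depending on $S$) small enough that the balls $B_r(p)$, $p\in S_{\sing}$, are disjoint, $S$ meets every $\partial B_r(p)$ transversally, and $\mathcal{H}^2(S\cap B_r(S_{\sing}))$, $\mathcal{H}^1(S\cap\partial B_r(S_{\sing}))$ and $2r\,|S_{\sing}|$ all lie below a tiny threshold $\beta=\beta(M,\bg,\Sigma,\varepsilon)$. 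The boundary circles of $S\setminus B_r(S_{\sing})$ lie on the spheres $\partial B_r(p)$, and by the Jordan--Schoenflies theorem they bound disjoint disks there; after pushing these slightly inward and smoothing at the seams I obtain disjoint embedded disks $\{\overline{D^0_i}\}\subset B_r(S_{\sing})$ and a smooth embedded orientable closed surface $\Gamma:=(S\setminus B_r(S_{\sing}))\cup\bigcup_i\overline{D^0_i}$. An isoperimetric-inequality estimate combined with Corollary~\ref{cor:Pitts_bF_flat_quant}, just as at the end of the proof of Lemma~\ref{lem:restr_to_nbhd}, gives $\bF(\Gamma,S)<\delta$, hence $\bF(\Gamma,\Sigma)<\delta(M,\bg,\Sigma,\varepsilon/2)$, while $\sum_i\mathcal{H}^1(\partial D^0_i)=\mathcal{H}^1(S\cap\partial B_r(S_{\sing}))<\beta$ and $\sum_i\diam_M(\overline{D^0_i})\le 2r\,|S_{\sing}|<\beta$; so, taking $\beta$ below the constant $\delta$ of Lemma~\ref{lem:restr_to_nbhd} for parameter $\varepsilon/2$, the pair $(\Gamma,\{\overline{D^0_i}\})$ satisfies that lemma's hypotheses.

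Feeding $\Gamma$ into Lemma~\ref{lem:restr_to_nbhd} then supplies $\eta\in(0,\varepsilon/2)\subset(0,\varepsilon)$, the normal-exponential diffeomorphism on $\Sigma\times[-\eta,\eta]$, transversality of $\partial N_\eta(\Sigma)$ with $\Gamma$ and disjointness from the caps, the bounds $\mathcal{H}^1(\partial N_\eta(\Sigma)\cap\Gamma)<\varepsilon/2$ and $\mathcal{H}^2(\Gamma\setminus N_\eta(\Sigma))<\varepsilon/2$, and disjoint embedded closed surfaces $\Gamma_1\subset N_\eta(\Sigma)$, $\Gamma_2$ with $\bF(\Gamma_1,\Sigma)<\varepsilon/2$ and $\mathcal{H}^2(\Gamma_2)<\varepsilon/2$, obtained from $\Gamma$ by surgery along $\partial N_\eta(\Sigma)\cap\Gamma$ and capping with finitely many disjoint embedded disks. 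I would set $S_1:=\Gamma_1$ and $S_2:=\Gamma_2$; conclusions \ref{item:restr_PS_to_nbhd_normal_exp}, \ref{item:restr_PS_to_nbhd_nbhd}, \ref{item:restr_PS_to_nbhd_Gamma_1_F_close}, \ref{item:restr_PS_to_nbhd_Gamma_2_area} are then immediate, \ref{item:restr_PS_to_nbhd_outside_small} follows from $S\setminus N_\eta(\Sigma)\subset(\Gamma\setminus N_\eta(\Sigma))\cup(S\cap B_r(S_{\sing}))$, and $S_1\cup S_2$ is obtained from $S$ by removing $S\cap B_r(S_{\sing})$ together with a small tubular neighborhood of $\partial N_\eta(\Sigma)\cap S$ and attaching the disjoint disks $\{\overline{D^0_i}\}$ and those produced by the lemma --- the corollary puts no size constraints on these disks, so the statement is matched.

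The hard part is conclusions \ref{item:restr_PS_to_nbhd_inter_trans} and \ref{item:restr_PS_to_nbhd_inter_small}: because $\Gamma$ differs from $S$ inside $B_r(S_{\sing})$, I still need $\partial N_\eta(\Sigma)$ transverse to $S$ there and $\mathcal{H}^1(\partial N_\eta(\Sigma)\cap S\cap B_r(S_{\sing}))$ small, even though $\eta$ is chosen only after $r$. The remedy is that Lemma~\ref{lem:restr_to_nbhd} constrains $\eta$ only to the positive-measure set $E\subset(\zeta/2,\zeta)$, so before fixing $\eta$ I can impose two further almost-every-$\eta$ conditions: that $\partial N_\eta(\Sigma)$ be transverse to $S\setminus S_{\sing}$ and miss the finite set $S_{\sing}$ (Sard's theorem), and that $\mathcal{H}^1(\partial N_\eta(\Sigma)\cap S\cap B_r(S_{\sing}))<\varepsilon/2$. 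The last condition fails only on a set of $\eta$ of measure $\lesssim\beta\,|S_{\sing}|/\varepsilon$, since the coarea formula applied to the distance function to $\Sigma$ on $S\cap B_r(S_{\sing})$ gives $\int_0^{\varepsilon}\mathcal{H}^1(\{x\in S\cap B_r(S_{\sing}):\dist(x,\Sigma)=t\})\,dt\le\mathcal{H}^2(S\cap B_r(S_{\sing}))<\beta$; choosing $\beta$ small enough (depending only on $M,\bg,\Sigma,\varepsilon$) keeps the bad set of $\eta$ of measure below $\zeta/10$, so an admissible $\eta\in E$ survives, and for it $\mathcal{H}^1(\partial N_\eta(\Sigma)\cap S)\le\mathcal{H}^1(\partial N_\eta(\Sigma)\cap\Gamma)+\mathcal{H}^1(\partial N_\eta(\Sigma)\cap S\cap B_r(S_{\sing}))<\varepsilon$, which is \ref{item:restr_PS_to_nbhd_inter_small}. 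Apart from this coordination of the excision scale $r$ with the tubular-neighborhood width $\eta$, every step is a routine transcription of Lemma~\ref{lem:restr_to_nbhd} and its proof.
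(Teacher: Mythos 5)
Your proof is correct in substance, but it takes a noticeably different (and much longer) route than the paper's. The paper's entire proof of the corollary is one sentence: a punctate set $P$ is viewed as a finite collection of \emph{degenerate} forbidden disks (zero diameter, zero boundary length), so the hypotheses \ref{item:restr_to_nbhd_diam} of Lemma~\ref{lem:restr_to_nbhd} are satisfied trivially and its proof runs verbatim on $S$ itself --- the choice of $\eta$ in that proof already avoids the finitely many forbidden regions, hence avoids $S_{\sing}$, and the coarea/Sard selection of $\eta$ is already performed there on $S$ directly, so no transfer step is needed. You instead desingularize first: excise small balls around $S_{\sing}$, cap off via Jordan--Schoenflies to produce a genuine smooth closed surface $\Gamma$ that is $\bF$-close to $\Sigma$ (essentially a special case of the paper's Lemma~\ref{lem:approx_PS}), apply Lemma~\ref{lem:restr_to_nbhd} to $\Gamma$, and then pay for the discrepancy between $\Gamma$ and $S$ inside $B_r(S_{\sing})$ with an extra Sard/coarea constraint on $\eta$. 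Your approach has the virtue of using the lemma's output surfaces $\Gamma_1,\Gamma_2$ as a black box for conclusions \ref{item:restr_PS_to_nbhd_nbhd}--\ref{item:restr_PS_to_nbhd_Gamma_2_area}, but it forces you to reopen the lemma's proof anyway to wedge the two additional almost-every-$\eta$ conditions into the set $E$, so nothing is really saved; the paper's route is the more economical one. Two small points: your bound $\sum_i\diam_M(\overline{D^0_i})\le 2r\,|S_{\sing}|$ is wrong as stated, since the number of capping disks is the number of circles of $S\cap\partial B_r(S_{\sing})$, which need not be bounded by $|S_{\sing}|$; the correct estimate is $\sum_i\diam_M(\overline{D^0_i})\lesssim\sum_i\mathcal{H}^1(\partial D^0_i)=\mathcal{H}^1(S\cap\partial B_r(S_{\sing}))<\beta$, available from the Jordan--Schoenflies/isoperimetric choice of the ``small-side'' disks that you already invoke. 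Also, invoking Corollary~\ref{cor:Pitts_bF_flat_quant} to get $\bF(\Gamma,S)<\delta$ uses a threshold depending on $S$, so you should phrase this as choosing $r$ small depending on $S$ (which is harmless, since $r$ and $\eta$ are allowed to depend on $S$) rather than below a uniform $\beta$.
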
 
    \begin{proof}
        For a punctate surface $S$, a punctate set $P$ for $S$ can be viewed as a finite set of degenerating disks. Hence, one can follow the same proof of Lemma~\ref{lem:restr_to_nbhd} to conclude the corollary.
    \end{proof}

    By Lemma \ref{lem:restr_to_nbhd}, the closed surface $\Gamma$ can be decomposed as two components $\Gamma_1$ and $\Gamma_2$. Since $\Gamma_2$ has small area, Proposition~\ref{prop:short_loops_I} can be applied to exhaust all short simple loops it contains. For $\Gamma_1$, which lies in a $\eta$-normal neighborhood of $\Sigma$, we choose a fine triangulation of $\Sigma$ and apply Proposition~\ref{prop:short_loops_I} again to $\Gamma_1$ in the $\eta$-normal neighborhood of each triangle. With small enough $\eta$, removing all those short simple loops in $\Gamma_1$ as described above yields a genus $g_0$ surface with boundary. The details are presented below.   
    
    \begin{proof}[Proof of Proposition~\ref{prop:short_loops_II}] 
        If $g_1 = g_0$, the proposition holds trivially. In the following, we fix $g_1 > g_0 \in \mathbb{N}$, an embedded orientable genus $g_0$ closed surface $\Sigma \subset (M, \mathbf{g})$ and $\varepsilon > 0$ as in the proposition. 

        \medskip
        \paragraph*{\textbf{Step 0. Set up}} 

        Set
        \begin{equation}\label{eqn:short_loops_II_delta_0}
            \delta_0 := \delta(g_1, \varepsilon/2) \in (0, \varepsilon / 2)
        \end{equation} from Proposition~\ref{prop:short_loops_I}. Fix a triangulation $\mathcal{T}$ of $\Sigma$ such that:
        \begin{enumerate}[label = \normalfont(0.\roman*)]
            \item\label{item:Delta_small_area} For any $\Delta \in \mathcal{T}^{(2)}$, $\mathcal{H}^2(\Delta) \in (0, {\delta_0}/{10})$.
            \item\label{item:Delta_small_length} For any $e \in \mathcal{T}^{(1)}$, $\mathcal{H}^1(e) \in (0, {\delta_0}/{10})$.
        \end{enumerate} 
        Let $m$ denote the total number of cells in $\mathcal{T}$. Set $
            \tau := \frac{1}{1000m^4}$.

        Let $X$ be a smooth vector field defined on $\Sigma$ such that for any $e \in \mathcal{T}^{(1)}$, $X|_e$ is nowhere zero and is transverse to $e$. $X$ generates an isotopy 
            $\varphi: \mathbb{R} \to \operatorname{Diff}(\Sigma)$
        with $\varphi(0) = \operatorname{Id}$. For convenience, we denote $\varphi(t)$ by $\varphi_t$. Then there exists $T > 0$ such that for any $t \in [- 2T, 2T]$, $e \in \mathcal{T}^{(1)}$ and $\Delta \in \mathcal{T}^{(2)}$,
        \begin{equation}\label{eqn:compare_phi_t}
            \frac{\mathcal{H}^1(\varphi_t(e))}{\mathcal{H}^1(e)} \in (1 - \frac{\tau^2}{100}, 1 + \frac{\tau^2}{100})\,\quad\textrm{ and }\quad
            \frac{\mathcal{H}^2(\varphi_t(\Delta))}{\mathcal{H}^2(\Delta)} \in (1 - \frac{\tau^2}{100}, 1 + \frac{\tau^2}{100})\,.
        \end{equation} 

        For each vertex $v \in \mathcal{T}^{(0)}$, since $T_v \Sigma$ is a $2$-dimensional normed vector space, it is isometrically isomorphic to $\mathbb{R}^2$. The isomorphism induces a smooth $\mathbb{S}^1$ family of vectors at $v$:
        \[
            Y_v: \mathbb{S}^1 \to \{Z \in T_v \Sigma : |Z| = 1\}\,.
        \]
        This can be extended to a continuous family of vector fields on $\Sigma$, 
            $Y: \mathbb{S}^1 \to \Gamma(T\Sigma)$,
        such that for any vertex $v \in \mathcal{T}^{(0)}$ and any $\theta \in \mathbb{S}^1$, $Y(\theta, v) = Y_v(\theta)$.
        Therefore, for any small $\xi > 0$, $Y$ generates a smooth family of diffeomorphisms
            $\psi: \overline{\mathbb{D}_{2\xi}} \to \operatorname{Diff}(\Sigma)$ with $\psi(\mathbf{0}) = \operatorname{Id}$, where $\overline{\mathbb{D}_{2\xi}} \subset \mathbb{R}^2$ is a closed disk centered at the origin with radius $2\xi$. Note that for any $v \in \mathcal{T}^{(0)}$, $\psi(\cdot, v)$ is a diffeomorphism to the geodesic ball $B_\Sigma(v, \xi)$. For simplicity, we denote $\psi(z)$ by $\psi_z$ and $\varphi_t\circ \psi_z$ by $\varphi_{z, t}$. 

        By~\eqref{eqn:compare_phi_t}, there exists a sufficiently small $\xi > 0$ such that for any $z \in \overline{\mathbb{D}_{2\xi}}$, $t \in [-2T, 2T]$, $e \in \mathcal{T}^{(1)}$ and $\Delta \in \mathcal{T}^{(2)}$,
        \begin{equation}\label{eqn:compare_phi_theta_z_t}
            \frac{\mathcal{H}^1(\varphi_{z, t}(e))}{\mathcal{H}^1(e)} \in (1 - {\tau^2}/{10}, 1 + {\tau^2}/{10})\,,
        \end{equation}
        and
        \begin{equation}\label{eqn:compare_phi_theta_z_t_2}
            \frac{\mathcal{H}^2(\varphi_{z, t}(\Delta))}{\mathcal{H}^2(\Delta)} \in (1 - {\tau^2}/{10}, 1 + {\tau^2}/{10})\,.
        \end{equation}
        In the following, we fix such $\xi > 0$. 

        Next we choose $T > 0$ even smaller such that for any $t \in [-2T, 2T]$ and $v \in \mathcal{T}^{(0)}$, the map
        \begin{equation}\label{eqn:def_F}
            F_{v, t}: \overline{\mathbb{D}_\xi} \to \overline{\mathbb{D}_{(1 + \tau)\xi}}, \quad z \mapsto (\psi(\cdot, v))^{-1}(\varphi_t(\psi(z, v)))
        \end{equation} is a diffeomorphism to its image with a bi-Lipschitz constant $1 + {\tau^2}/{100}$ and smoothly depends on $t$. This is possible because $F_{v, 0} = \operatorname{Id}$. 
        Finally, we choose $\zeta > 0$ such that the normal exponential map
            $\exp^\perp: \Sigma \times (-\zeta, \zeta) \to M$
        is a diffeomorphism to its image $N_{\zeta} (\Sigma)$ with a bi-Lipschitz constant $1 + \frac{\tau^2}{100}$. We denote by 
            $\pi: N_{\zeta}(\Sigma) \to \Sigma$ the corresponding closest-point projection. 

        For convenience, in the following, for any $v \in \mathcal{T}^{(0)}$, $e \in \mathcal{T}^{(1)}$, $\Delta \in \mathcal{T}^{(2)}$ (note that they are all closed sets), $z \in \overline{\mathbb{D}}^2_{2\xi}$ and $t \in [-2T, 2T]$, we define:
        \begin{itemize}
            \item the point $v_{z, t} := \varphi_{z, t}(v)$;
            \item the vertical line $\mathrm{I}_{v, z, t} := \pi^{-1}(v_{z, t})$;
            \item the solid cylinder $\mathrm{S}_{v} := \bigcup_{z \in \mathbb{D}^2_{(1 + \tau)\xi}} \mathrm{I}_{v, z, 0}$;
            \item the edge $e_{z, t} := \varphi_{z, t}(e)$;
            \item the rectangle $\mathrm{R}_{e, z, t} := \pi^{-1}(e_{z, t})$;
            \item the cuboid $\mathrm{Cub}_{e, z} := \bigcup_{t \in (-T, T)} \mathrm{R}_{e, z, t}$;
            \item the triangle $\Delta_{z, t} := \varphi_{z, t}(\Delta)$;
            \item the cylinder $\mathrm{Cyl}_{\Delta, z, t} := \pi^{-1}(\partial \Delta_{z, t})$;
            \item the prism $\mathrm{Pri}_{\Delta, z, t} := \pi^{-1}(\Delta_{z, t})$.
        \end{itemize} 

        \medskip
        \paragraph*{\bf Step 1. Existence of a good triangulation} 

        We show that there exists $\delta = \delta_1(M, \mathbf{g}, \varepsilon, g_1, \Sigma) \in (0, 1)$ with the following property: 
        For any embedded smooth orientable closed surface $\Gamma \subset N_{\zeta}(\Sigma)$ with $\mathfrak{g}(\Gamma) \leq g_1$, $\mathbf{F}(\Gamma, \Sigma) < \delta$ and any finite set of disjoint closed disks $\{\overline{D_i}\}^n_{i = 1}$ in $\Gamma$ with 
            $\sum^n_{i = 1} \operatorname{diam}_M(\overline{D_i}) < \delta\,,$
        there exist $z \in \mathbb{D}^2_\xi$ and $t \in (-T, T)$ such that: 
        \begin{enumerate}[label=\normalfont(\arabic*)]
            \item\label{item:v_transverse} For any $v \in \mathcal{T}^{(0)}$, $\mathrm{I}_{v, z, t} \pitchfork \Gamma$ is a single point in $\Gamma \setminus \bigcup^n_{i = 1} \overline{D_i}$. 
            \item\label{item:Delta_transverse} For any $\Delta \in \mathcal{T}^{(2)}$, $\mathrm{Cyl}_{\Delta, z, t} \pitchfork \Gamma = \bigcup^{n_\Delta}_{k = 1} c_{\Delta, k}$ is a finite union of disjoint simple loops in $\Gamma \setminus \bigcup^n_{i = 1} \overline{D_i}$ satisfying
$$\frac{\mathcal{H}^1(\mathrm{Cyl}_{\Delta, z, t} \cap \Gamma)}{\mathcal{H}^1(\partial \Delta_{z, t})} \in (1 - \tau, 1 + \tau)\,.$$
        \end{enumerate} 
        Moreover, for any $\Delta \in \mathcal{T}^{(2)}$, we can arrange $\{c_{\Delta, k}\}^{n_\Delta}_{k = 1} $ in such an order that: 
        \begin{enumerate}[label=\normalfont(\arabic*)]
            \setcounter{enumi}{2}
            \item\label{item:long_loop} $c_{\Delta, 1}$ satisfies $[c_{\Delta, 1}] \neq 0 \in H_1(\mathrm{Cyl}_{\Delta, z, t}; \Z_2)$ and
$\frac{\mathcal{H}^1(c_{\Delta, 1})}{\mathcal{H}^1(\partial \Delta_{z, t})} \in (1 - \tau, 1 + \tau)\,;$ we call it the \emph{long loop} for $\Delta$. 
            \item\label{item:short_loop} Every $c_{\Delta, k}$ with $k \in \{2, \dots, n_\Delta\}$ satisfies $[c_{\Delta, k}] = 0 \in H_1(\mathrm{Cyl}_{\Delta, z, t}; \Z_2)$, $c_{\Delta, k} \cap \bigcup_{v \in \mathcal{T}^{(0)}} \mathrm{I}_{v, z, t} = \emptyset$, and
            $$\frac{\sum^{n_\Delta}_{k = 2} \mathcal{H}^1(c_{\Delta, k})}{\mathcal{H}^1(\partial \Delta_{z, t})} \in (0, \tau)\,;$$ we call them the \emph{short loops} for $\Delta$. 
        \end{enumerate}

        In the following, at each step, we will choose $\delta$ to be successively smaller, depending only on all the objects defined in Step 0, which were chosen in terms of $M$, $\mathbf{g}$, $\varepsilon$, $g_1$ and $\Sigma$. 

        \medskip
        \paragraph*{\textbf{Step 1.1.}} \emph{Choose small $\delta$ such that $\Sigma$ and $\Gamma$ are homologous to each other.} 

        By the isoperimetric inequality, Lemma~\ref{lem:isop_ineq}, we choose $\delta$ small enough, depending on $\Sigma$ and $\zeta$, such that $\mathbf{F}(\Gamma, \Sigma) \leq \delta$ and $\Gamma \subset N_\zeta(\Sigma)$ imply $\Sigma$ and $\Gamma$ are homologous to each other in $N_{\zeta}(\Sigma)$ with $\Z_2$ coefficients. 
        
        This implies that for any $v \in \mathcal{T}^{(0)}$, $e \in \mathcal{T}^{(1)}$, $\Delta \in \mathcal{T}^{(2)}$, $z \in \mathbb{D}^2_{(1 + \tau)\xi}$, $t \in (-T, T)$, and $\Gamma$ with $\mathbf{F}(\Gamma, \Sigma) \leq \delta$, if $\mathrm{I}_{v, z, 0}$ intersects $\Gamma$ transversely, then
        \begin{equation}\label{eqn:transverse_v}
            [\mathrm{I}_{v, z, 0} \cap \Gamma] = [v_{z, 0}] \neq 0 \in H_0(\mathrm{I}_{v, z, 0}; \Z_2)\,;
        \end{equation}
        if $\mathrm{R}_{e, z, t}$ intersects $\Gamma$ transversely, then
        \begin{equation}\label{eqn:transverse_e}
            [\mathrm{R}_{e, z, t} \cap \Gamma] = [e_{z, t}] \neq 0 \in H_1(\mathrm{R}_{e, z, t}; \Z_2)\,;
        \end{equation}
        if $\mathrm{Cyl}_{\Delta, z, t}$ intersects $\Gamma$ transversely, then
        \begin{equation}\label{eqn:transverse_Delta}
            [\mathrm{Cyl}_{\Delta, z, t} \cap \Gamma] = [\partial \Delta_{z, t}] \neq 0 \in H_1(\mathrm{Cyl}_{\Delta, z, t}; \Z_2)\,.
        \end{equation} 
        
        \medskip
        \paragraph*{\textbf{Step 1.2}} \emph{Choose smaller $\delta$ and $W_\Gamma \subset \mathbb{D}^2_{(1 + \tau) \xi}$ of relatively large measure such that for any $v \in \mathcal{T}^{(0)}$ and $z \in W_\Gamma$, $I_{v, z, 0} \pitchfork \Gamma$ at exactly a single point.} 

        For any $v \in \mathcal{T}^{(0)}$, by the definition of $\psi$, we can define a $C^1$ function $\mathfrak{z}_v: \mathrm{S}_v \to \mathbb{D}^2_{(1 + \tau)\xi}$ by $\psi(\mathfrak{z}_v(x), v) = \pi(x)\,,$
        and in $S_v$, the Jacobian
        \[
            J^M_{\mathfrak{z}_v} := \sqrt{\det[(\nabla^M \mathfrak{z}^i_v \cdot \nabla^M \mathfrak{z}^j_v)_{i, j \in \{1, 2\}}]} > 0\,.
        \]
        We also write
        \[
            J^\Gamma_{\mathfrak{z}_v} := \sqrt{\det[(\nabla^\Gamma \mathfrak{z}^i_v \cdot \nabla^\Gamma \mathfrak{z}^j_v)_{i, j \in \{1, 2\}}]}\quad\textrm{ and }\quad J^\Sigma_{\mathfrak{z}_v} := \sqrt{\det[(\nabla^\Sigma \mathfrak{z}^i_v \cdot \nabla^\Sigma \mathfrak{z}^j_v)_{i, j \in \{1, 2\}}]}\,.
        \]
        It is clear that for any $x \in \Gamma \cap S_v$, $J^\Gamma_{\mathfrak{z}_v} \leq J^M_{\mathfrak{z}_v}$ and for any $x \in \Sigma \cap S_v$, $J^\Sigma_{\mathfrak{z}_v} = J^M_{\mathfrak{z}_v}$. 
        
        Therefore, by the coarea formula, for any $v \in \mathcal{T}^{(0)}$,
        \begin{equation}\label{eqn:Sigma_coarea_v}
            \int_{S_v \cap \Sigma} J^M_{\mathfrak{z}_v} = \int_{S_v \cap \Sigma} J^\Sigma_{\mathfrak{z}_v} = \int_{\mathbb{D}^2_{(1 + \tau)\xi}} \mathcal{H}^0(\mathrm{I}_{v, z, 0} \cap \Sigma) \mathrm{d}z = \mathcal{H}^2(\mathbb{D}^2_{(1 + \tau)\xi})\,,
        \end{equation}
        and 
        \begin{equation}\label{eqn:Gamma_coarea_v}
            \int_{S_v \cap \Gamma} J^M_{\mathfrak{z}_v} \geq \int_{S_v \cap \Gamma} J^\Gamma_{\mathfrak{z}_v} = \int_{\mathbb{D}^2_{(1 + \tau)\xi}} \mathcal{H}^0(\mathrm{I}_{v, z, 0} \cap \Gamma) \mathrm{d}z\,.
        \end{equation} 

        Note that for any $v \in \mathcal{T}^{(0)}$, $\partial S_v \pitchfork \Sigma$, so we have
             $\lim_{r \to 0} \sup_{v \in \mathcal{T}^{(0)}} |\Sigma|(N_r(\partial S_v)) = 0\,.$
        By Lemma~\ref{lem:boldF_restriction}, we can choose $\delta$ even smaller such that, $\mathbf{F}(\Gamma, \Sigma) \leq \delta$ implies $\displaystyle \sup_{v \in \mathcal{T}^{(0)}}\mathbf{F}_{S_v}(|\Gamma|, |\Sigma|)$ is so small that for any $v \in \mathcal{T}^{(0)}$,
        \[
            \int_{S_v \cap \Gamma} J^M_{\mathfrak{z}_v} \leq (1 + \frac{\tau^2}{10}) \int_{S_v \cap \Sigma} J^M_{\mathfrak{z}_v}\,.
        \] 

        This together with~\eqref{eqn:Sigma_coarea_v} and~\eqref{eqn:Gamma_coarea_v} implies that for any $v \in \mathcal{T}^{(0)}$,
        \begin{equation}\label{eqn:Gamma_int_upperbound}
            \int_{\mathbb{D}^2_{(1 + \tau)\xi}} \mathcal{H}^0(\mathrm{I}_{v, z, 0} \cap \Gamma) \mathrm{d}z \leq (1 + \frac{\tau^2}{10}) \mathcal{H}^2(\mathbb{D}^2_{(1 + \tau)\xi})\,.
        \end{equation}
        By Sard's theorem, for any $\Gamma$ with $\mathbf{F}(\Gamma, \Sigma) \leq \delta$, there exists an open subset $U_\Gamma \subset \mathbb{D}^2_{(1 + \tau)\xi}$ with $\mathcal{H}^2(U_\Gamma) = \mathcal{H}^2(\mathbb{D}^2_{(1 + \tau)\xi})$ such that for any $z \in U_\Gamma$, $\mathrm{I}_{v, z, 0}$ intersects $\Gamma$ transversely. By~\eqref{eqn:transverse_v}, we have $\mathcal{H}^0(\mathrm{I}_{v, z, 0} \cap \Gamma) \geq 1\,.$
        For any $v \in \mathcal{T}^{(0)}$, let $W_{\Gamma, v} \subset U_\Gamma$ be the set of any $z$ such that $\mathcal{H}^0(\mathrm{I}_{v, z, 0} \cap \Gamma) = 1$, which is also an open subset. It follows from~\eqref{eqn:Gamma_int_upperbound} that
        \[
            \mathcal{H}^2(W_{\Gamma, v}) + 2 (\mathcal{H}^2(\mathbb{D}^2_{(1 + \tau)\xi}) - \mathcal{H}^2(W_{\Gamma, v})) \leq (1 + {\tau^2}/{10}) \mathcal{H}^2(\mathbb{D}^2_{(1 + \tau)\xi})\,,
        \]
        and thus $ \mathcal{H}^2(W_{\Gamma, v}) \geq (1 - {\tau^2}/{10}) \mathcal{H}^2(\mathbb{D}^2_{(1 + \tau)\xi})\,.$

        Hence, we can   define an open set 
        $W_\Gamma := \bigcap_{v \in \mathcal{T}^{(0)}} W_{\Gamma, v}\,.$
        Then for any $v \in \mathcal{T}^{(0)}$ and $z \in W_\Gamma$, $I_{v, z, 0}$ intersects $\Gamma$ transversely at exactly a point.
        Since $\# \mathcal{T}^{(0)} \leq m$, the set satisfies
        \[
            \mathcal{H}^2(W_\Gamma) \geq (1 - m{\tau^2}/{10}) \mathcal{H}^2(\mathbb{D}^2_{(1 + \tau)\xi}) > (1 - {\tau}/{100}) \mathcal{H}^2(\mathbb{D}^2_{(1 + \tau)\xi})\,,
        \] 
        
        \medskip
        \paragraph*{\textbf{Step 1.3}} \emph{Choose $z_\Gamma \in \mathbb{D}^2_\xi$ and $E_{\Gamma} \subset (-T, T)$ of relatively large measure such that for any $v \in \mathcal{T}^{(0)}$ and $t \in E_{\Gamma}$, $I_{v, z_\Gamma, t} \pitchfork \Gamma$ at exactly a single point.} 

        Fix $\Gamma$ with $\mathbf{F}(\Gamma, \Sigma) \leq \delta$. Let $W'_\Gamma := W_\Gamma \cap \mathbb{D}^2_{\zeta}$, which satisfies
        \[
            \mathcal{H}^2(W'_\Gamma) \geq \mathcal{H}^2(\mathbb{D}^2_{\xi}) -  {\tau}{}\mathcal{H}^2(\mathbb{D}^2_{(1 + \tau)\xi})/100 \geq \left(1 - 3\tau\right) \mathcal{H}^2(\mathbb{D}^2_{(1 + \tau)\xi})\,.
        \]
        For any $v \in \mathcal{T}^{(0)}$ and $z \in W'_\Gamma$, since $W'_\Gamma$ is open, the set
        \[
            E_{\Gamma, v, z} := \{t \in (-T, T) : F_{v, t}(z) \in W'_\Gamma\}\,,
        \] 
        is also open, where $F_{v, t}$ is defined in~\eqref{eqn:def_F}. For any $v \in \mathcal{T}^{(0)}$, we have
        \begin{align*}
            \int_{W'_\Gamma} \mathcal{H}^1(E_{\Gamma, v, z}) \mathrm{d}z &= \int_{W'_\Gamma} \int^T_{-T} \mathbf{1}_{W'_\Gamma}(F_{v, t}(z))\mathrm{d}t \mathrm{d}z= \int^T_{-T} \int_{W'_\Gamma} \mathbf{1}_{W'_\Gamma}(F_{v, t}(z)) \mathrm{d}z \mathrm{d}t\\
                &= \int^T_{-T} \mathcal{H}^2\left(W'_\Gamma \cap F^{-1}_{v, t}(W'_\Gamma)\right) \mathrm{d}t\,.
        \end{align*}
        Since for any $t \in (-T, T)$ and $v \in \mathcal{T}^{(0)}$, $F_{v, t}: \mathbb{D}^2_{\xi} \to \mathbb{D}^2_{(1 + \tau)\xi}$ is a diffeomorphism of bi-Lispchitz constant $1 + \frac{\tau^2}{100}$, the Hausdorff measure 
        \begin{align*}
            \mathcal{H}^2(W'_\Gamma \cap F^{-1}_{v, t}(W'_\Gamma)) &\geq (1 - {\tau^2}/{10}) \mathcal{H}^2(F_{v, t}(W'_\Gamma) \cap W'_\Gamma)\\
                &\geq (1 - {\tau^2}/{10}) [\mathcal{H}^2(F_{v, t}(W'_\Gamma)) +  \mathcal{H}^2(W'_\Gamma) - \mathcal{H}^2(\mathbb{D}^2_{(1 + \tau)\xi})]\\
                &\geq (1 - {\tau^2}/{10}) [(1 - {\tau^2}/{10})\mathcal{H}^2(W'_\Gamma) + \mathcal{H}^2(W'_\Gamma) - \mathcal{H}^2(\mathbb{D}^2_{(1 + \tau)\xi})]\\
                &\geq (1 - 10 \tau) \mathcal{H}^2(\mathbb{D}^2_{(1 + \tau)\xi})\,.
        \end{align*} 
            
        These imply that for any $v \in \mathcal{T}^{(0)}$, the set
        \[
            W''_{\Gamma, v} := \left\{z \in W'_\Gamma: \mathcal{H}^1(E_{\Gamma, v, z}) \geq 2T (1 - \sqrt{\tau})\right\}\,,
        \]
        satisfies
        \begin{align*}
            \mathcal{H}^2(W''_{\Gamma, v}) + (1 - \sqrt{\tau})(\mathcal{H}^2(W'_\Gamma) - \mathcal{H}^2(W''_{\Gamma, v})) &\geq \frac 1{2T}{\int_{W'_\Gamma} \mathcal{H}^1(E_{\Gamma, v, z}) \mathrm{d}z}  \geq (1 - 10\tau) \mathcal{H}^2(\mathbb{D}^2_{(1 + \tau)\xi})
        \end{align*}
        and thus, $\mathcal{H}^2(W''_{\Gamma, v}) \geq (1 - 10 \sqrt{\tau}) \mathcal{H}^2(\mathbb{D}^2_{(1 + \tau)\xi})\,.$

        As in {Step 1.2}, we define
            $W''_\Gamma := \bigcap_{v \in \mathcal{T}^{(0)}} W''_{\Gamma, v}\,,$
        which satisfies
        \[
            \mathcal{H}^2(W''_\Gamma) \geq (1 - m\cdot 10 \sqrt{\tau}) \mathcal{H}^2(\mathbb{D}^2_{(1 + \tau)\xi}) > 0\,.
        \] 

        In the following, we set $z_\Gamma \in W''_\Gamma$, and
       $E_{\Gamma} := \bigcap_{v \in \mathcal{T}^{(0)}} E_{\Gamma, v, z_\Gamma}\,.$
        Then $E_\Gamma$ has measure $\mathcal{H}^1(E_{\Gamma}) \geq 2T(1 - m \sqrt{\tau})$. Moreover, by definition, for any $v \in \mathcal{T}^{(0)}$ and $t \in E_\Gamma$, $I_{v, z_\Gamma, t}$ intersects $\Gamma$ at exactly a single point. 

        \medskip
        \paragraph*{\textbf{Step 1.4}} \emph{Choose smaller $\delta$ and $\tilde E_{\Gamma} \subset E_\Gamma$ of relatively large measure such that for any $e \in \mathcal{T}^{(1)}$ and $t \in \tilde E_{\Gamma}$, $\mathcal{H}^1(\mathrm{R}_{e, z_\Gamma, t} \cap \Gamma)/\mathcal{H}^1(e_{z_\Gamma, t}) \in \left(1 - \frac{\tau^2}{10}, 1 + \frac{\tau}{2}\right)$. Furthermore, the path $\gamma_{e, z, t}$ in $\mathrm{R}_{e, z, t} \cap \Gamma$ connecting $\pi^{-1}(\partial e_{z,t})$ is unique.} 

        For any $1$-cell $e \in \mathcal{T}^{(1)}$ and $z \in \mathbb{D}^2_{\xi}$, by the definition of $\varphi$ in {Step 0}, we can define a $C^1$ function $\mathfrak{t}_{e, z}: \mathrm{Cub}_{e, z} \to (-T, T)$ by
        $\pi(x) \in e_{z, \mathfrak{t}_{e, z}(x)}\,.$
        By the coarea formula, for each $e \in \mathcal{T}^{(1)}$, we have
        \begin{equation}\label{eqn:Sigma_coarea_e}
            \int_{\mathrm{Cub}_{e, z} \cap \Sigma} |\nabla^M \mathfrak{t}_{e, z}| = \int_{\mathrm{Cub}_{e, z} \cap \Sigma} |\nabla^\Sigma \mathfrak{t}_{e, z}| = \int^{T}_{-T} \mathcal{H}^1(e_{z, t}) dt\,,
        \end{equation}
        and 
        \begin{equation}\label{eqn:Gamma_coarea_e}
            \int_{\mathrm{Cub}_{e, z} \cap \Gamma} |\nabla^M \mathfrak{t}_{e, z}| \geq \int_{\mathrm{Cub}_{e, z} \cap \Gamma} |\nabla^\Gamma \mathfrak{t}_{e, z}| = \int^{T}_{-T} \mathcal{H}^1(\mathrm{R}_{e, z, t} \cap \Gamma) dt\,.
        \end{equation} 

        Note that for any $e \in \mathcal{T}^{(1)}$ and $z \in \mathbb{D}^2_{\xi}$, since $\partial \mathrm{Cub}_{e,z}$ intersects $\Sigma$ transversely and deforms continuously with respect to $z$, we have
        \[
            \lim_{r \to 0} \sup_{e \in \mathcal{T}^{(1)}, z \in \mathbb{D}^2_{\xi}} |\Sigma|(N_r(\partial \mathrm{Cub}_{e,z})) = 0\,.
        \]
        By Lemma~\ref{lem:boldF_restriction} again, we can choose $\delta$ even smaller such that, $\mathbf{F}(\Gamma, \Sigma) \leq \delta$ implies $$\displaystyle \sup_{e \in \mathcal{T}^{(1)}, z \in \mathbb{D}^2_{\xi}} \mathbf{F}_{\mathrm{Cub}_{e,z}}(|\Gamma|, |\Sigma|)$$ is sufficiently so small that for any $e \in \mathcal{T}^{(1)}$ and $z \in \mathbb{D}^2_\xi$,
        \[
            \int_{\mathrm{Cub}_{e, z} \cap \Gamma} |\nabla^M \mathfrak{t}_{e, z}| \leq (1 + \frac{\tau^2}{10})\int_{\mathrm{Cub}_{e, z}\cap \Sigma} |\nabla^M \mathfrak{t}_{e, z}|\,.
        \] 

        This together with~\eqref{eqn:Sigma_coarea_e} and~\eqref{eqn:Gamma_coarea_e} leads to
        \begin{equation}\label{eqn:compare_tilde_Sigma_Sigma}
            \int^{T}_{-T} \mathcal{H}^1(\mathrm{R}_{e, z,t} \cap \Gamma) dt \leq (1 + \frac{\tau^2}{10}) \int^{T}_{-T} \mathcal{H}^1(e_{z,t}) dt\,,
        \end{equation}
        for any $z \in \mathbb{D}^2_\xi$. 
        
        By Sard's theorem, for any $z \in \mathbb{D}^2_\xi$ and $\Gamma$ with $\mathbf{F}(\Gamma, \Sigma) \leq \delta$, there exists an open set $O_{\Gamma, z} \subset (-T, T)$ with $\mathcal{H}^2(O_{\Gamma, z}) = 2T$, such that for any $t \in O_{\Gamma, z}$ and $e \in \mathcal{T}^{(1)}$, $\mathrm{R}_{e, z, t}$ intersects $\Gamma$ transversely. It follows from~\eqref{eqn:transverse_e} that, $\mathrm{R}_{e, z, t} \cap \Gamma$ has at least one path $\gamma_{e, z, t}$ connecting the two components $\pi^{-1}(\partial e_{z, t})$. Since $\exp^\perp$ is $\left(1 + \frac{\tau^2}{10}\right)$-biLipschitz,
        \begin{equation}\label{eqn:Gamma_e_lowerbound}
            \mathcal{H}^1(\mathrm{R}_{e, z, t} \cap \Gamma) \geq \mathcal{H}^1(\gamma_{e, z, t}) \geq \frac{\mathcal{H}^1(e_{z, t})}{1 + \frac{\tau^2}{10}} > (1 - \frac{\tau^2}{10})\mathcal{H}^1(e_{z, t})\,.
        \end{equation} 

        For any $e \in \mathcal{T}^{(1)}$, $z \in \mathbb{D}^2_\xi$, and $\Gamma$ with $\mathbf{F}(\Gamma, \Sigma) \leq \delta$, we define 
        \[
            \tilde{E}_{\Gamma, e, z} := \{t \in O_{\Gamma, z}: \mathcal{H}^1(\mathrm{R}_{e, z, t} \cap \Gamma) \leq (1 + {\tau}/{2}) \mathcal{H}^1(e_{z, t})\}\,.
        \]
        The inequality~\eqref{eqn:Gamma_e_lowerbound} implies that for any $t \in \tilde{E}_{\Gamma, e, z}$, the path $\gamma_{e, z, t}$ in $\mathrm{R}_{e, z, t} \cap \Gamma$ connecting $\pi^{-1}(\partial e_{z,t})$ is unique. By~\eqref{eqn:compare_phi_theta_z_t} and~\eqref{eqn:Gamma_e_lowerbound}, we have
        \begin{align*}
            \int_{O_{\Gamma, z}} \mathcal{H}^1(\mathrm{R}_{e, z, t} \cap \Gamma)\mathrm{d}t \geq& \mathcal{H}^1(\tilde{E}_{\Gamma, e, z}) (1 - \frac{\tau^2}{10})^2 \mathcal{H}^1(e) + (2T - \mathcal{H}^1(\tilde{E}_{\Gamma, e, z})) (1 + \frac{\tau}{2}) (1 - \frac{\tau^2}{10}) \mathcal{H}^1(e)\,,
        \end{align*}
        and
        \[
            \int^T_{-T} \mathcal{H}^1(e_{z, t}) \mathrm{d}t \leq 2T(1 + {\tau^2}/{10}) \mathcal{H}^1(e)\,.
        \] 

        These together with~\eqref{eqn:compare_tilde_Sigma_Sigma} imply that the Hausdorff measure of $\tilde{E}_{\Gamma, e, z}$ satisfies
$\mathcal{H}^1(\tilde{E}_{\Gamma, e, z}) \geq 2T(1 - \tau)\,.$  

        For any $\Gamma$ with $\mathbf{F}(\Gamma, \Sigma) < \delta$, we define 
        \[
            \tilde{E}_{\Gamma} := \bigcap_{e \in \mathcal{T}^{(1)}} \tilde{E}_{\Gamma, e ,z_\Gamma} \cap E_{\Gamma}\,,
        \]
        which satisfies
        \[
            \mathcal{H}^1(\tilde{E}_{\Gamma}) \geq 2T(1 - m\tau - m\sqrt{\tau}) \geq 2T(1 - \sqrt[4]{\tau}) > 0\,.
        \]
        Moreover, for any $e \in \mathcal{T}^{(1)}$ and $t \in \tilde E_{\Gamma}$, 
        \[
            \frac{\mathcal{H}^1(\mathrm{R}_{e, z_\Gamma, t} \cap \Gamma)}{\mathcal{H}^1(e_{z_\Gamma, t})} \in (1 - \frac{\tau^2}{10}, 1 + \frac{\tau}{2})\,,
        \]
        and the path $\gamma_{e, z_\Gamma, t}$ in $\mathrm{R}_{e, z_\Gamma, t} \cap \Gamma$ connecting $\pi^{-1}(\partial e_{z_\Gamma,t})$ is unique. 

        \medskip
        \paragraph*{\textbf{Step 1.5}} \emph{Choose smaller $\delta$ and $F_{\Gamma, \{\overline{D_i}\}_i} \subset (-T, T)$ of relatively large measure such that for any $e \in \mathcal{T}^{(1)}$ and $t \in F_{\Gamma, \{\overline{D_i}\}_i}$, $\mathrm{R}_{e, z_\Gamma, t} \cap \bigcup^n_{i = 1} \overline{D_i} = \emptyset$.} 

        By the definition of $\varphi_{z, t}$, we choose $\delta$ smaller such that $\sum^n_{i = 1} \diam^M(\overline{D_i}) < \delta$ implies that for any $e \in \mathcal{T}^{(1)}$ and $z \in \mathbb{D}^2_\xi$, there exists $F_{e, z, \{\overline{D_i}\}_i} \subset (-T, T)$ with 
        \[
            \mathcal{H}^1(F_{e, z, \{\overline{D_i}\}_i}) \geq 2T(1 - \sqrt[4]{\tau})
        \] such that for any $t \in F_{e, z, \{\overline{D_i}\}_i}$, we have 
        $\mathrm{R}_{e,z, t} \cap \bigcup^n_{i = 1} \overline{D_i} = \emptyset\,.$
        Then we define
        \[
            F_{\Gamma, \{\overline{D_i}\}_i} := \bigcap_{e \in \mathcal{T}^{(1)}} F_{e, z_\Gamma, \{\overline{D_i}\}_i}\,,
        \]
        which has $\mathcal{H}^1(F_{\Gamma, \{\overline{D_i}\}_i}) \geq 2T(1 - m\sqrt[4]{\tau})$. Clearly, for any $e \in \mathcal{T}^{(1)}$ and $t \in F_{\Gamma, \{\overline{D_i}\}_i}$, we have
        $\mathrm{R}_{e, z_\Gamma, t} \cap \bigcup^n_{i = 1} \overline{D_i} = \emptyset\,.$

        \medskip
        \paragraph*{\textbf{Step 1.6}} \emph{We complete the proof.} Fix an arbitrary $\Gamma$ with $\mathbf{F}(\Gamma, \Sigma) < \delta$ and disks $\{\overline{D_i}\}^n_{i = 1}$ in $\Gamma$ with $\sum^n_{i = 1} \diam^M(\overline{D_i}) < \delta$. We define
        $ G_{\Gamma, \{\overline{D_i}\}_i} := \tilde E_{\Gamma} \cap F_{\{\overline{D_i}\}_i, z_\Gamma}\,,$
        which satisfies 
        \[
            \mathcal{H}^1(G_{\Gamma, \{\overline{D_i}\}_i}) \geq 2T(1 - 2m \sqrt[4]{\tau}) > 0\,.
        \]
        Therefore, we can pick $t_{\Gamma, \{\overline{D_i}\}} \in G_{\Gamma, \{\overline{D_i}\}_i}$. 
        
        For simplicity, in the following, we write $\bar z = z_\Gamma$ and $\bar t = t_{\Gamma, \{\overline{D_i}\}}$. 
        For any $v \in \mathcal{T}^{(0)}$, by {Step 1.3} and {Step 1.5}, $I_{v, \bar z, \bar t} \pitchfork \Gamma$ is a single point in $\Gamma \setminus \bigcup^n_{i = 1} \overline{D_i}$. This confirms~\ref{item:v_transverse}.
        For any $\Delta \in \mathcal{T}^{(2)}$, assume that $\partial \Delta = e^1 \cup e^2 \cup e^3$. Then by {Step 1.4} and {Step 1.5}, for any $e^i$, $\mathrm{R}_{e^i, z, t} \pitchfork \Gamma$ lies in $\Gamma \setminus \bigcup^n_{i = 1} \overline{D_i}$, and
        \[
            \frac{\mathcal{H}^1(\mathrm{R}_{e^i, z, t} \cap \Gamma)}{\mathcal{H}^1({e^i}_{z, t})} \in (1 - \tau / 2, 1 + \tau / 2)\,.
        \]
        This confirms~\ref{item:Delta_transverse}. 
        
        Moreover, for any $\Delta \in \mathcal{T}^{(2)}$ with $\partial \Delta = e^1 \cup e^2 \cup e^3$, by {Step 1.4}, for each $e^i$, there exists a unique connected component in $\mathrm{R}_{e^i, z, t} \cap \Gamma$ connecting the two components $\pi^{-1}(\partial e_{z, t})$. Together with~\ref{item:v_transverse}, we can conclude that the other connected component in $\mathrm{R}_{e^i, z, t} \cap \Gamma$ are loops away from $\pi^{-1}(\partial e_{z, t})$. Therefore, the cylinder $C_{\Delta_{z, t}} \pitchfork \Gamma = \bigcup^{n_\Delta}_{k = 1} c_{\Delta, k}$ has a unique loop $c_{\Delta, 1}$ with $[c_{\Delta, 1}] \neq 0 \in H_1(\mathrm{Cyl}_{\Delta, z, t})$ and
        \[
            \frac{\mathcal{H}^1(c_{\Delta, 1})}{\mathcal{H}^1(\partial \Delta_{z, t})} \in (1 - \tau / 2, 1 + \tau / 2)\,.
        \]
        Every other loop $c_{\Delta, k}$ with $k \geq 2$, satisfies $[c_{\Delta, k}] = 0 \in H_1(\mathrm{Cyl}_{\Delta, z, t})$, $c_{\Delta, k} \cap \bigcup_{v \in \mathcal{T}^{(0)}} \mathrm{I}_{v, z, t} = \emptyset$, and
        \[
            \frac{\sum^{n_\Delta}_{k = 2} \mathcal{H}^1(c_{\Delta, k})}{\mathcal{H}^1(\partial \Delta_{z, t})} \in (0, \tau)\,.
        \]
        These confirm both~\ref{item:long_loop} and ~\ref{item:short_loop}. This completes the proof of {Step 1}. 
        
\medskip

        \paragraph*{\textbf{Step 2. Special case}} {\it We prove the proposition with an extra assumption that $\Gamma \subset N_\zeta(\Sigma)$.} 

        Let $\delta_0$ be as in {Step 0} and $\delta_1 = \delta_1(M, \bg, \varepsilon, g_1, \Sigma)$ be as in {Step 1}. 
Note that for any $\Delta \in \mathcal{T}^{(2)}$, $z \in \mathbb{D}^2_\xi$, $t \in (-T, T)$, since $\mathrm{Cyl}_{\Delta, z, t} \pitchfork \Sigma$ and deforms continuosly with respect to $z$ and $t$, we have
        \[
            \lim_{r \to 0} \sup_{\Delta \in \mathcal{T}^{(2)}, z \in \mathbb{D}^2_\xi, t \in (-T, T)} |\Sigma|(N_r(\mathrm{Cyl}_{\Delta, z, t})) = 0\,.
        \]
        By Lemma~\ref{lem:boldF_restriction}, ~\ref{item:Delta_small_area} and~\eqref{eqn:compare_phi_theta_z_t_2}, there exists positive $\delta = \delta_2(M, \mathbf{g}, \varepsilon, g_1, \Sigma) \in (0, \min(\delta_0 / 3, \delta_1))$ such that for any $\Delta \in \mathcal{T}^{(2)}$, $z \in \mathbb{D}^2_\xi$, $t \in (-T, T)$ and $\Gamma \subset N_\zeta(\Sigma)$ with $\mathbf{F}(\Gamma, \Sigma) < \delta$, we have
        \begin{equation}\label{eqn:Delta_Gamma_small_area}
            \mathcal{H}^2(\mathrm{Pri}_{\Delta, z, t} \cap \Gamma) < \delta_0/3\,.
        \end{equation}
        
        Fix $\Gamma \subset N_\zeta(\Sigma)$ with $g_0 \leq \mathfrak{g}(\Gamma) \leq g_1$ and $\mathbf{F}(\Gamma, \Sigma) < \delta$ and closed disks $\{\overline{D_i}\}^n_{i = 1}$ in $\Gamma$ with $\sum^n_{i = 1} \mathcal{H}^1(\partial D_i) < \delta$ and $\sum^n_{i = 1} \operatorname{diam}_M(\overline{D_i}) < \delta$. Let $z_1 := \bar z \in \mathbb{D}^2_\xi$ and $t_1 := \bar t \in (-T, T)$ be determined by {Step 1.6}. For any $\Delta \in \mathcal{T}^{(2)}$, by~\ref{item:Delta_small_length}, ~\eqref{eqn:compare_phi_theta_z_t_2} and {Step 1}~\ref{item:Delta_transverse}, we have
        \begin{equation}\label{eqn:Delta_Gamma_z_t_small_length}
            \mathcal{H}^1(\mathrm{Cyl}_{\Delta, z_1, t_1} \cap \Gamma) <  \delta_0/3 <  \varepsilon/2\,,
        \end{equation}
        and by~\eqref{eqn:Delta_Gamma_small_area}
        \begin{equation}\label{eqn:Delta_Gamma_z_t_small_area}
            \mathcal{H}^2(\mathrm{Pri}_{\Delta, z_1, t_1} \cap \Gamma) < {\delta_0}/{3}\,.
        \end{equation} 

        For any $\Delta \in \mathcal{T}^{(2)}$, by {Step 1}~\ref{item:v_transverse} and~\ref{item:long_loop}, we know that for any $v \in \Delta^{(0)}$, we have 
        $\mathrm{I}_{v, z_1, t_1} \cap \Gamma = \mathrm{I}_{v, z_1, t_1} \cap c_{\Delta, 1}\,.$
        Therefore, every short loop $c_{\Delta, k}$ with $k \geq 2$ lies in the interior of $\mathrm{R}_{e, z_1, t_1}$ for some $e \in \Delta^{(1)}$. In particular, every short loop for $\Delta \in \mathcal{T}^{(2)}$ coincides with exactly one short loop in another $\Delta' \in \mathcal{T}^{(2)}$. 

        In the following, we regard $\Gamma$ as an \emph{intrinsic surface}.
        First, we cut $\Gamma$ along every loop $\bigcup_{\Delta \in \mathcal{T}^{(2)}}\{c_{\Delta, k}\}^{n_\Delta}_{k = 1}$. Then for any $\Delta \in \mathcal{T}^{(2)}$, 
            $\Gamma_{\Delta} := \Gamma \cap \mathrm{Pri}_{\Delta, z_1, t_1}$
        is a surface with boundary; the boundary consists of smooth short loops $\{c_{\Delta, k}\}^{n_\Delta}_{k = 2}$ and a piecewise smooth long loop $c_{\Delta, 1}$. We glue disks $\{D_{\Delta, k}\}^{n_\Delta}_{k = 1}$ of arbitrary small areas to its boundary and obtain a smooth closed orientable surface $\Gamma'_\Delta$. By~\eqref{eqn:Delta_Gamma_z_t_small_length}, ~\eqref{eqn:Delta_Gamma_z_t_small_area} and the choice of $\delta$, we have 
        \[
            \sum^{n_\Delta}_{k = 1} \mathcal{H}^1(\partial D_{\Delta, k}) + \sum^n_{i = 1} \mathcal{H}^1(\partial D_i) < \delta_0, \quad \mathcal{H}^2(\Gamma'_\Delta) < \delta_0\,.
        \]
        Hence, for any $\Delta \in \mathcal{T}^{(2)}$, since $\mathfrak{g}(\Gamma'_\Delta) \leq g_1$, applying Proposition~\ref{prop:short_loops_I} to $\Gamma'_\Delta$, we obtain $\mathfrak{g}(\Gamma'_\Delta)$ many disjoint loops $\{c'_{\Delta, k}\}^{\mathfrak{g}(\Gamma'_\Delta)}_{k = 1}$ in $\Gamma'_\Delta \setminus \left(\bigcup^n_{i = 1} \overline{D_i} \cup \bigcup^{n_\Delta}_{k = 1} \overline{D_{\Delta, k}}\right)$, each of which has 
        \begin{equation}\label{eqn:Delta_Gamma_z_t_short_loop}
            \mathcal{H}^1(c'_{\Delta, k}) \leq \varepsilon / 2\,, \quad c'_{\Delta, k} \subset \Gamma_\Delta \setminus \bigcup^n_{i = 1} \overline{D_i}\,.
        \end{equation} 

        \begin{figure}
            \centering
            \includegraphics[width=0.5\textwidth]{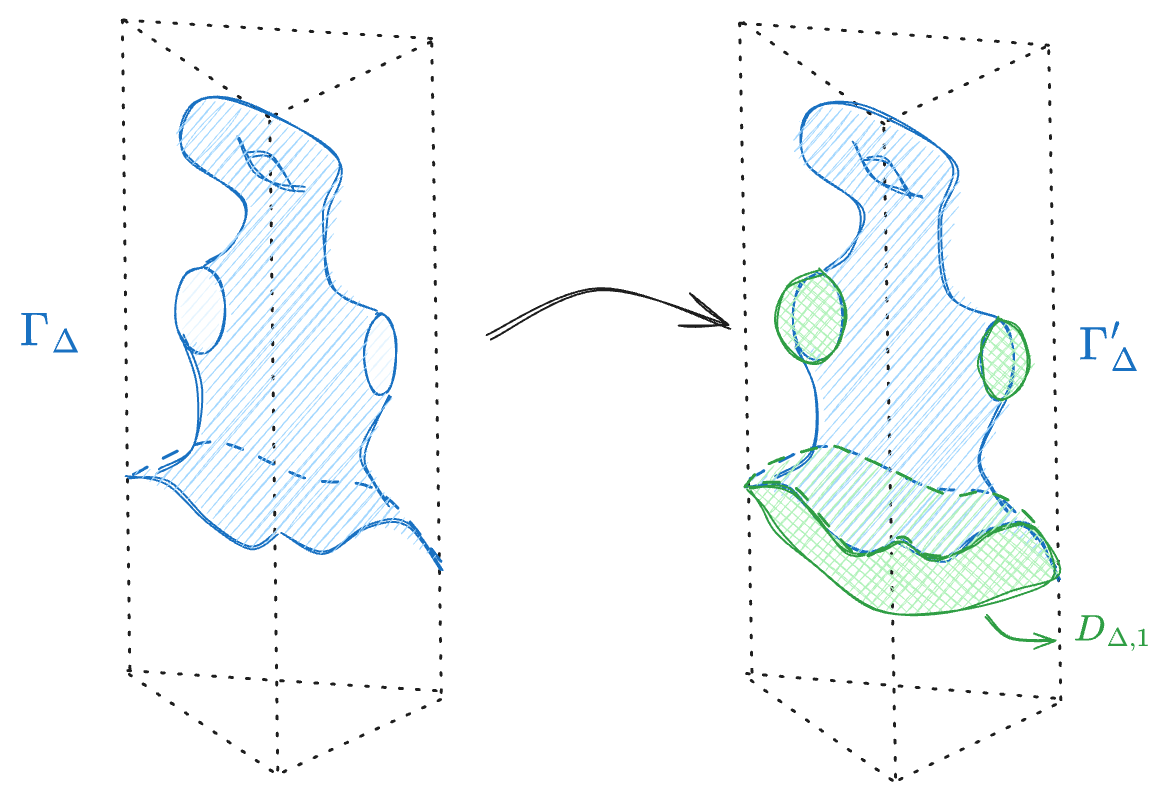}
            \caption{$\Gamma_\Delta$ and $\Gamma'_\Delta$}
            \label{fig:Gamma_Delta}
        \end{figure}

        Then for any $\Delta \in \mathcal{T}^{(2)}$, we perform surgeries on $\Gamma'_\Delta$ along $\{c'_{\Delta, k}\}^{\mathfrak{g}(\Gamma'_\Delta)}_{k = 1}$, and remove $D_{\Delta, 1}$ whose boundary is the long loop $c_{\Delta, 1}$. By Proposition~\ref{prop:short_loops_I}, the resulting surface is a genus $0$ surface, consisting of multiple spheres and a triangle $\tilde{\Delta}$ with boundary $c_{\Delta, 1}$. By Step 1~\ref{item:v_transverse} and~\ref{item:long_loop}, one can glue any triangles $\{\tilde{\Delta}\}_{\Delta \in \mathcal{T}^{(2)}}$ in the same manner as $\mathcal{T}^{(2)} = \{\Delta\}$ to obtain a smooth closed surface $\tilde \Gamma$, which is homeomorphic to $\Sigma$. So $\mathfrak{g}(\tilde \Gamma) = \mathfrak{g}(\Sigma) = g_0\,.$

        Therefore, we can perform surgeries on $\Gamma$ along the disjoint loops 
        \[
            \bigcup_{\Delta \in \mathcal{T}}\{c_{\Delta, k}\}^{n_\Delta}_{k = 2} \cup \{c'_{\Delta, k}\}^{\mathfrak{g}(\Gamma'_\Delta)}_{k = 1}
        \] to obtain a smooth genus $g_0$ closed surface $\tilde \Gamma$ and multiple spheres. There are exactly $g = \mathfrak{g}(\Gamma) - g_0$ disjoint simple loops $\{c_j\}^g_{j = 1}$ from $\bigcup_{\Delta \in \mathcal{T}}\{c_{\Delta, k}\}^{n_\Delta}_{k = 2} \cup \{c'_{\Delta, k}\}^{\mathfrak{g}(\Gamma'_\Delta)}_{k = 1}$, each of which lies in $\Gamma \setminus \bigcup^n_{i = 1} \overline{D_i}$ such that
        \begin{enumerate}[label = (2.\roman*)]
            \item For each $j \in \{1, 2, \dots, g\}$, by~\eqref{eqn:Delta_Gamma_z_t_small_length} and~\eqref{eqn:Delta_Gamma_z_t_short_loop}, $\mathcal{H}^1(c_j) \leq \varepsilon$.
            \item $\{[c_j]\}^g_{j = 1}$ is linearly independent in $H_1(\Gamma; \Z_2)$.
        \end{enumerate} 

        \medskip
        \paragraph*{\bf Step 3. General case} 

        Let $\delta = \delta_3(M, \bg, \Sigma, g_1, \varepsilon) := \delta(M, \bg, \Sigma, \min(\delta_2, \zeta))$ as in Lemma~\ref{lem:restr_to_nbhd}. Then by Lemma~\ref{lem:restr_to_nbhd}, for $\Gamma$ and $\{\overline{D_i}\}^n_{i = 1}$ satisfying all the conditions in the proposition, there exists $\eta \in (0, \zeta)$ we can cut $\Gamma \pitchfork \partial N_\eta(\Sigma) = \bigcup^{n_\Gamma}_{j = 1} c'_j$ and attach disks to obtain two disjoint orientable closed surfaces $\Gamma^1$ and $\Gamma^2$ satisfying:
        \begin{enumerate}[label = (2.\roman*)]
            \item $\Gamma^1 \subset N_{\zeta}(\Sigma)$.
            \item $\mathbf{F}(\Gamma^1, \Sigma) < \delta_2$.
            \item $\mathcal{H}^2(\Gamma^2) < \delta_2 < \delta_0$.
            \item $\sum^{n_\Gamma}_{j = 1} \mathcal{H}^1(\partial D_j) < \delta_2$ and $\sum^{n_\Gamma}_{j = 1} \operatorname{diam}_M(\overline{D_j}) < \delta_2$, where $\{\overline{D_j}\}^{n_\Gamma}_{j = 1}$ is as defined in Lemma~\ref{lem:restr_to_nbhd}.
        \end{enumerate} 
        
        Applying Step 2 to $\Gamma^1$ and Proposition~\ref{prop:short_loops_I} to $\Gamma^2$ with closed disks $\bigcup^{n_\Gamma}_{j = 1}\{\overline{D_j}\}$, we obtain $g' = \mathfrak{g}(\Gamma^1) + \mathfrak{g}(\Gamma^2) - g_0$ disjoint simple loops $\{\tilde c_j\}^{g'}_{j = 1}$ in $\Gamma \setminus \bigcup^{n_\Gamma}_{j = 1} \overline{D_j}$. Moreover, if we view $\Gamma$ as an \emph{intrinsic surface} and perform surgeries on $\Gamma$ along $\{\tilde c_j\}^{g'}_{j = 1} \cup \{c'_j\}^{n_\Gamma}_{j = 1}$, we obtain a genus $g_0$ closed surface and multiple spheres as in Step 2. Therefore, as in Step 2, we can conclude the existence of $g = \mathfrak{g}(\Gamma) - g_0$ disjoint simple loops from $\{\tilde c_j\}^{g'}_{j = 1} \cup \{c'_j\}^{n_\Gamma}_{j = 1}$ satisfying the proposition. 
    \end{proof}
    
\subsection{Small balls in punctate surfaces}\label{subsect:small_balls_punctate_surf} 

    In the previous two subsections, we've proved how to detect simple short loops in a smooth surface. However, neither Proposition~\ref{prop:short_loops_I} or Proposition~\ref{prop:short_loops_II} can be applied to every surface in our Simon--Smith family (Definition~\ref{def:Simon_Smith_family}), as not every surface is smooth therein. 

    In fact, for any $L \in \mathbb{R}^+$, one can construct a genus $1$ punctate surface $T$ of area $1$ but any nontrivial simple loop in $S$ has length at least $L$; see Appendix~\ref{sect:PS_long_loops} for the sketch of such a construction. In particular, Proposition~\ref{prop:short_loops_I} fails for a punctate surface of genus $1$. Fortunately, one can still find a nontrivial simple loop of $T$ that lies in a geodesic ball of uniformly bounded radius. 

    To generalize this statement to all punctate surfaces of any genus, we first prove the following useful lemma, which asserts that finitely many (not necessarily disjoint) small geodesic balls can be covered by relatively larger and pairwise disjoint geodesic balls in a controlled manner. 

    \begin{lem}\label{lem:disjoint_balls}
        In an orientable closed Riemannian $3$-manifold $(M, \mathbf{g})$, for any $q, n \in \mathbb{N}^+$, $\zeta \in \left(0, \frac{\operatorname{injrad}(M, \mathbf{g})}{2n (3n)^q}\right)$, and a finite set of geodesic balls $\{B(p_i, \zeta)\}^q_{i = 1}$ (not necessarily disjoint), there exist disjoint geodesic balls $\{B(p'_i, \zeta')\}^{q'}_{i = 1}$ such that:
        \begin{enumerate}[label=\normalfont(\arabic*)]
            \item $q' \leq q$, $\zeta' = (3n)^{q - q'}\zeta$ and $\{p'_i\}^{q'}_{i = 1} \subset\{p_i\}^{q}_{i = 1}$.
            \item $\displaystyle \bigcup^{q}_{i = 1} B(p_i, \zeta) \subset \bigcup^{q'}_{i = 1}B(p'_i, \zeta')$.
            \item The closed balls $\{\overline{B(p'_i, n\zeta')}\}^{q'}_{i = 1}$ are disjoint from each other.
        \end{enumerate}
    \end{lem}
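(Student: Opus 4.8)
The plan is to prove the lemma by a greedy merging process on the set of center points: whenever two of the currently enlarged closed balls touch, I discard one of the two centers and enlarge the common radius by the factor $3n$, and I iterate until the enlarged closed balls are pairwise disjoint.

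Concretely, I would argue by induction. Set $I_0 := \{1, \dots, q\}$ and $r_0 := \zeta$, so that $\bigcup_{i = 1}^q B(p_i, \zeta) = \bigcup_{i \in I_0} B(p_i, r_0)$ tautologically. Suppose inductively that for some $k \geq 0$ we have produced an index set $I_k \subseteq \{1, \dots, q\}$ with $\# I_k = q - k$, the radius $r_k := (3n)^k \zeta$, and the inclusion $\bigcup_{i = 1}^q B(p_i, \zeta) \subseteq \bigcup_{i \in I_k} B(p_i, r_k)$. If the closed balls $\{\overline{B(p_i, n r_k)}\}_{i \in I_k}$ are pairwise disjoint, the process stops: I set $q' := q - k$, $\zeta' := r_k = (3n)^{q - q'}\zeta$, and take $\{p'_1, \dots, p'_{q'}\}$ to be the (necessarily distinct) points $\{p_i : i \in I_k\}$. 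If instead some $\overline{B(p_i, n r_k)}$ and $\overline{B(p_j, n r_k)}$ with $i \neq j$ in $I_k$ intersect, then by the triangle inequality $\dist(p_i, p_j) \leq 2 n r_k$, whence $B(p_j, r_k) \subseteq B\bigl(p_i, (2n + 1) r_k\bigr) \subseteq B(p_i, 3 n r_k)$, the last inclusion because $n \geq 1$. I then set $I_{k+1} := I_k \setminus \{j\}$ and $r_{k+1} := 3 n r_k = (3n)^{k+1}\zeta$; the desired inclusion $\bigcup_{i \in I_k} B(p_i, r_k) \subseteq \bigcup_{i \in I_{k+1}} B(p_i, r_{k+1})$ follows at once, so the inductive hypothesis is preserved.

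Each merging step decreases $\# I_k$ by exactly one, and when $\# I_k = 1$ the pairwise-disjointness condition holds vacuously, so the process terminates after at most $q - 1$ steps, yielding $q' \geq 1$. This gives conclusion (1) (the subset property $\{p'_i\} \subseteq \{p_i\}$ holds because $I_k \subseteq \{1, \dots, q\}$ throughout, and $\zeta' = (3n)^{q - q'}\zeta$ by construction), conclusion (2) is the terminal inclusion of the induction, and conclusion (3) is precisely the stopping condition. Disjointness of the balls $B(p'_i, \zeta')$ in (1) is then a consequence of (3) since $\zeta' \leq n \zeta'$. It remains only to observe that every radius occurring above is bounded by $\injrad(M, \mathbf g)/2$: indeed $n \zeta' \leq n (3n)^q \zeta < \injrad(M, \mathbf g)/2$ by the hypothesis on $\zeta$, so all the geodesic balls in play are genuine embedded metric balls and the distance estimates are unambiguous.

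I do not anticipate a serious difficulty, since the argument is essentially combinatorial bookkeeping. The only points that need a little care are: that the enlargement factor $3n$ is large enough, which comes down to $2n + 1 \leq 3n$ for $n \geq 1$; that the geometric inclusions used are legitimate, which is exactly where the bound on $\zeta$ in terms of $\injrad(M, \mathbf g)$ enters; and that the exponent in $\zeta' = (3n)^{q - q'}\zeta$ matches the number of merges, which is automatic because each merge removes one center and multiplies the radius by exactly $3n$.
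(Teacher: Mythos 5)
Your proof is correct and is essentially the same greedy merging argument as the paper's: when two of the $n$-enlarged closed balls meet, absorb one center into the other while multiplying the radius by $3n$, and iterate until disjointness holds. Your write-up is just a more explicit inductive formulation of the same construction, including the same inclusion $B(p_j, r_k)\subseteq B(p_i,(2n+1)r_k)\subseteq B(p_i,3nr_k)$ and the same termination count.
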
 
    \begin{proof}
        If the closed balls $\{\overline{B(p_i, n\zeta)}\}^q_{i = 1}$ are disjoint from each other, then the lemma holds trivially with $q' = q$, $p'_i = p_i$ and $\zeta' = \zeta$. 
        Otherwise, there exist two indices $k \neq l \in \{1, 2, \dots, q\}$ such that $\overline{B(p_k, n\zeta)} \cap \overline{B(p_l, n\zeta)} \neq \emptyset$. Then, we have $B(p_k, \zeta) \cup B(p_l, \zeta) \subset B(p_k, 3n \zeta)\,.$
        Set $\zeta_1 := 3n \zeta$ and $\{p_{1, i}\}^{q - 1}_{i = 1} := \{p_i\}^{q}_{i = 1} \setminus \{p_k\}$. Note that we have
        \[
            \bigcup^{q}_{i = 1} B(p_i, \zeta) \subset \bigcup^{q - 1}_{i = 1} B(p_{1, i}, \zeta_1)\,.
        \] 

        Inductively, we define $\zeta_{l + 1} = 3n \zeta_l$ provided that $\left\{\overline{B(p_{l,i}, n \zeta_l)}\right\}^{q - l}_{i = 1}$ are not pairwise disjoint. We also define $\{p_{l + 1, i}\}^{q - l - 1}_{i = 1}$ as a subset of $\{p_{l, i}\}^{q - l}_{i = 1}$ by removing the center of a ball that intersects another as above. Since the number of balls strictly decreases in the induction, this process stops at defining $\zeta_l$ for some $l \in \{1, \dots, q - 1\}$. Hence, we can take 
        $\zeta' = \zeta_l$, $q' = q - l$,  $\{p'_i\}^{q'}_{i = 1} = \{p_{l, i}\}^{q - l}_{i = 1}\,,$
        and complete the proof. 
    \end{proof} 

    Given two punctate surface $S_1, S_2 \in \GS(M)$ with punctate sets $P_1$ and $P_2$, we say that $S_1$ and $S_2$ \emph{intersect transversally} if the following holds: If $S_1 \cap P_2 = S_2 \cap P_1 = \emptyset$, then $S_1 \setminus P_1$ and $S_2 \setminus P_2$   intersect transversely
    We denote their intersection by $S_1 \pitchfork S_2$.
    We now show how to approximate a punctate surface by a smooth surface.

    \begin{lem}\label{lem:approx_PS}
        In an orientable closed Riemannian $3$-manifold $(M, \mathbf{g})$, suppose that $\Sigma$ is either an empty set or an embedded smooth orientable closed surface. For any $q\in \mathbb{N}^+$ and $\varepsilon \in \left(0, \frac{\operatorname{injrad}(M, \mathbf{g})}{2}\right)$, there exists a constant $\delta = \delta(M, \mathbf{g}, \Sigma, q, \varepsilon) \in (0, \varepsilon)$ with the following property. 
        For any punctate surface $S \in \mathcal{S}(M)$ with a punctate set $P$ such that  $\# P \leq q\,$ and $  \mathbf{F}(S, \Sigma) < \delta$,
        there exist an integer $\tilde q \leq q$, a real number $r \in (0, \varepsilon)$ and geodesic balls $\{B(p_i, r)\}^{\tilde q}_{i = 1}$ in $(M, \mathbf{g})$ such that:
        \begin{enumerate}[label=\normalfont{(\arabic*)}]
            \item\label{item:approx_PS_disjoint} The closed balls $\{\overline{B(p_i, r)}\}^{\tilde q}_{i = 1}$ are pairwise disjoint geodesic balls.
            \item\label{item:approx_PS_transverse} For each $i \in \{1, 2, \dots, \tilde q\}$, $\partial B(p_i, r) \pitchfork \Sigma$ is a union of finitely many loops.
            \item\label{item:approx_PS_genus} $S \setminus \bigcup^{\tilde q}_{i = 1} B(p_i, r)$ is an embedded smooth surface with boundary and genus at most $\mathfrak{g}(S)$.
            \item\label{item:approx_PS_approx} There exists an embedded smooth orientable closed surface $\Gamma$ such that
                \begin{enumerate}[label=\normalfont{(\alph*)}]
                    \item $\Gamma \setminus \bigcup^{\tilde q}_{i = 1} B(p_i, r) = S \setminus \bigcup^{\tilde q}_{i = 1} B(p_i, r)$.
                    \item $\bF(\Gamma, \Sigma) < \varepsilon$.
                    \item $\Gamma \cap \bigcup^{\tilde q}_{i = 1} \overline{B(p_i, r)}$ is a union of disjoint closed disks $\{\overline{D_j}\}^n_{j = 1}$ satisfying 
                    $\sum^n_{j = 1} \mathcal{H}^1(\partial D_j) < \varepsilon$ and $\sum^n_{j = 1} \operatorname{diam}(\overline{D_j}) < \varepsilon\,.$
                \end{enumerate}
        \end{enumerate}
    \end{lem}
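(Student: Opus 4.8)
The plan is to excise small geodesic balls around the punctate set $P$ of $S$, merge the overlapping ones into pairwise disjoint balls of controlled radius via Lemma~\ref{lem:disjoint_balls}, then select a single common radius $r$ that is simultaneously transverse to $S$ and to $\Sigma$ while cutting off loops of very small total length, and finally cap off the resulting smooth compact surface-with-boundary by pairwise disjoint small disks placed inside those balls. The output surface is $\Gamma$, and each requested property is then read off.

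\emph{The quantitative input.} First fix $n := 2$ and a small $\zeta$ with $\zeta < \injrad(M,\mathbf{g})/(2n(3n)^q)$ and $\zeta < \varepsilon/(2(3n)^q)$, deferring further smallness of $\zeta$ and the final choice of $\delta$. The one nonroutine ingredient is a \emph{uniform} area bound: $\mathbf{F}(S,\Sigma) < \delta$ should force $\mathcal{H}^2(S \cap B_\rho(x))$ to be smaller than any prescribed threshold $\omega$ for \emph{every} $x \in M$ and every $\rho \le (3n)^q\zeta$. When $\Sigma = \emptyset$ this is immediate from Corollary~\ref{cor:Pitts_bF_flat_quant}, as then $\mathcal{H}^2(S) \le C\delta$. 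When $\Sigma$ is a closed surface, it follows by combining the localized estimate of Lemma~\ref{lem:boldF_restriction} (so that $\mathbf{F}_{B(x,\rho)}(|S|,|\Sigma|)$ is small for each small ball) with the elementary smooth bound $\mathcal{H}^2(\Sigma \cap B_\rho(x)) \le C(\Sigma)\rho^2$; shrinking $\zeta$, and then $\delta$ relative to $\zeta$, gives the claim.

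\emph{The construction.} With $\delta$ so chosen, apply Lemma~\ref{lem:disjoint_balls} to $\{B(p,\zeta)\}_{p\in P}$ to get $\tilde q := q' \le q$ points $\{p_i\} \subseteq P$ and $\sigma := (3n)^{q-q'}\zeta < \varepsilon$ with $P \subseteq \bigcup_i B(p_i,\sigma)$, the balls $B(p_i,\sigma)$ disjoint, and $\{\overline{B(p_i,2\sigma)}\}$ pairwise disjoint (so $\dist(p_i,p_j) > 4\sigma$ for $i\ne j$). Then pick a common $r \in (\sigma,2\sigma)$: by Sard's theorem, for all but a measure-zero set of such $r$, every $\partial B(p_i,r)$ is transverse to $S\setminus P$ and to $\Sigma$ and avoids $P$; and by the coarea inequality $\int_\sigma^{2\sigma}\sum_i\mathcal{H}^1(S\cap\partial B(p_i,r))\,dr \le \mathcal{H}^2(S\cap B_{2\sigma}(P)) \le q\omega$, so $r$ can be picked with $\sum_i\mathcal{H}^1(S\cap\partial B(p_i,r)) \le q\omega/\sigma$, which we arrange to be $< \varepsilon$ (and as small as desired) by shrinking $\omega$, hence $\delta$, relative to $\zeta$ (note $\sigma \ge \zeta$). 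Since $2r < 4\sigma < \dist(p_i,p_j)$ the closed balls $\{\overline{B(p_i,r)}\}$ are pairwise disjoint (item (1)), item (2) holds by transversality, and $r < 2\sigma < \varepsilon$ after a final adjustment of $\zeta$. As $P \subseteq \bigcup_i B(p_i,r)$, the set $S' := S\setminus\bigcup_i B(p_i,r)$ lies in $S\setminus P$ and is a compact smoothly embedded surface with boundary $\bigsqcup_{i,j}\gamma_{i,j}$, $\gamma_{i,j} := S\cap\partial B(p_i,r)$; choosing $\rho \in E$ small with $\rho < r-\sigma$ (so $B_\rho(S_{\sing}) \subseteq \bigcup_i B(p_i,r)$), $S'$ is a codimension-zero subsurface of the smooth surface $S\setminus B_\rho(S_{\sing})$, so by monotonicity of genus under codimension-zero subsurfaces (\cite[Chapter~1~\S 2.1~Lemma~1.5]{CM12}) together with $\fg(S\setminus B_\rho(S_{\sing})) \le \fg(S)$ we obtain item (3). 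Finally, inside each $B(p_i,r)$ the disjoint loops $\{\gamma_{i,j}\}_j$ on the $2$-sphere $\partial B(p_i,r)$ bound, by the innermost-disk (Jordan--Schoenflies) argument, pairwise disjoint smoothly embedded disks $\{D_j\}$ in $\overline{B(p_i,r)}$ meeting $\partial B(p_i,r)$ exactly along the $\gamma_{i,j}$; since $\diam(\gamma_{i,j}) \le \tfrac12\mathcal{H}^1(\gamma_{i,j})$ each loop lies in a small ball, and the $D_j$ may be taken with $\mathcal{H}^2(D_j) \le C\,\mathcal{H}^1(\gamma_{i,j})^2$ and $\diam(\overline{D_j}) \le C\,\mathcal{H}^1(\gamma_{i,j})$. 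Set $\Gamma := S' \cup \bigcup_j D_j$, smoothed near the gluing loops; it is a smooth embedded orientable closed surface with $\Gamma\setminus\bigcup_i B(p_i,r) = S'$ (item (4a)) and $\Gamma\cap\bigcup_i\overline{B(p_i,r)} = \bigsqcup_j\overline{D_j}$, and (4c) holds because $\sum_j\mathcal{H}^1(\partial D_j) = \sum_i\mathcal{H}^1(S\cap\partial B(p_i,r)) < \varepsilon$ and $\sum_j\diam(\overline{D_j}) \le C\sum_i\mathcal{H}^1(S\cap\partial B(p_i,r)) < \varepsilon$. For (4b), $\Gamma$ and $S$ coincide outside $\bigcup_i B(p_i,r)$ and differ inside by pieces of total area $\le \mathcal{H}^2(S\cap B_{2\sigma}(P)) + \sum_j\mathcal{H}^2(D_j)$, which together with the isoperimetric inequality (Lemma~\ref{lem:isop_ineq}) and Corollary~\ref{cor:Pitts_bF_flat_quant} bounds $\mathbf{F}(\Gamma,S)$ by a quantity $< \varepsilon/2$ for $\delta$ small; hence $\mathbf{F}(\Gamma,\Sigma) \le \mathbf{F}(\Gamma,S) + \mathbf{F}(S,\Sigma) < \varepsilon/2 + \delta < \varepsilon$.

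\emph{Main obstacle.} The crux is the uniform area bound above: the centers $\{p_i\}$ of the excised balls are only constrained to lie in $P$, whose location varies with $S$, so one cannot fix the balls in advance and must know that smallness of $\mathbf{F}(S,\Sigma)$ makes $\mathcal{H}^2(S\cap B_\rho(x))$ small for \emph{all} $x$ at once; this is precisely where the localized $\mathbf{F}$-estimate (Lemma~\ref{lem:boldF_restriction}) combined with the smooth local area growth of $\Sigma$ enters. The remaining pieces — the ball-merging of Lemma~\ref{lem:disjoint_balls}, the Sard-plus-coarea choice of a single good radius, the genus monotonicity of \cite{CM12}, and the innermost-disk capping — are routine.
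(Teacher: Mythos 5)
Your proposal is correct and follows essentially the same route as the paper's proof: a uniform-in-$p$ local area bound for $S$ from $\bF$-closeness to $\Sigma$, merging of balls via Lemma~\ref{lem:disjoint_balls}, a Sard-plus-coarea choice of a common transverse radius with small slice length, capping by innermost disks with isoperimetric control, and Corollary~\ref{cor:Pitts_bF_flat_quant} for the final $\bF$-estimate (the paper merely swaps the order of the radius selection and the ball merging). The one point worth tightening is the uniform area bound: rather than citing Lemma~\ref{lem:boldF_restriction} for the family of \emph{all} balls $\{B(x,\rho)\}_{x\in M}$ (whose hypothesis $\sup_x|\Sigma|(\overline{N_r(\partial B(x,\rho))})\to 0$ is not automatic for every $\rho$), derive the bound directly from the definition of the $\bF$-metric with a Lipschitz cutoff equal to $1$ on $B(x,\rho)$ and supported in $B(x,2\rho)$, exactly as the paper does.
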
 
    \begin{proof}
        Since $\Sigma$ is either an empty set or a smooth closed surface, there exists a positive constant $C_\Sigma > 1$ depending only on $M, \mathbf{g}$ and $\Sigma$, such that for any $p \in M$ and $r \in (0, \operatorname{injrad}(M, \bg) / 2)$,
        \[
            \mathcal{H}^2(\overline{B(p, r)} \cap \Sigma) \leq C_\Sigma r^2\,, \quad \mathcal{H}^3(\overline{B(p, r)}) \leq C_\Sigma r^3\,.
        \]

        By the Jordan-Schoenflies theorem and Lemma~\ref{lem:isop_ineq}, there exists $\nu_M > 0$, depending only on $(M, \bg)$, such that for any $p \in M$ and $r \in (0, \operatorname{injrad}(M, \bg) / 2)$, every simple loop $c$ in $\partial B(p, r)$ bounds a disk $D \subset \partial B(p, r)$ with 
        \begin{equation}\label{eqn:approx_PS_isop_ineq}
            \mathcal{H}^2(D) \leq \nu_M \mathcal{H}^1(c)^2\,.
        \end{equation} 

        Let $\eta_1 = \delta(M, \bg, \Sigma, \varepsilon / 2) / 2$ be defined as in Corollary~\ref{cor:Pitts_bF_flat_quant}. We choose $\eta_2 \in (0, \operatorname{injrad}(M, \bg) / 4)$ such that
        \[
            (2\nu_M + 1) \eta^2_2 < \eta_1\,, \quad qC_\Sigma (\eta_2)^3 < \eta_1\,, \quad\text{and } 2\eta_2 < \varepsilon \,.
        \]
        Let $C_q > 1$ be a constant depending only on $q$ and determined later. And we set $\eta_3 = \frac{\eta_2}{8q C_q C_\Sigma}$. 
        
        By the definition of $\mathbf{F}$-metric, there exists $\delta \in (0, \varepsilon / 2)$ such that for any $S \in \GS(M)$ with $\mathbf{F}(S, \Sigma) \leq \delta$ and any $p \in M$,
        \[
            \mathcal{H}^2(\overline{B(p, \eta_3)} \cap S) \leq 2\mathcal{H}^2(\overline{B(p, 2 \eta_3)} \cap \Sigma) \leq 2C_\Sigma (2\eta_3)^2 = 8 C_\Sigma (\eta_3)^2\,.
        \] 

        In the sequel, we fix a punctate surface $S$ with a punctate set $P$ satisfying $ \mathbf{F}(S, \Sigma) \leq \delta$ and $\#P \leq q$. 
        By Sard's theorem and the coarea formula, we can choose a constant $C_q > 0$, only depending on $q$, and a radius $\tilde r \in \left(\eta_3 / 3^q, 2\eta_3 / 3^q\right)$ such that, for any $p \in P$ and any $l \in \{1, 3, 3^2, \dots, 3^q\}$, $\partial B(p, l\tilde r) \pitchfork S$ is a finite union of disjoint simple loops and
        \[
            \mathcal{H}^1(\partial B(p, l\tilde r) \cap S) \leq C_q \mathcal{H}^2(\overline{B(p, 2\eta_3)} \cap \Sigma) / \eta_3 < 8C_qC_\Sigma \eta_3 = \eta_2 / q\,.
        \] 

        By Lemma~\ref{lem:disjoint_balls} with $n = 1$, there exist $P' \subset P$ and a radius $r \in \{\tilde r, 3\tilde r, 3^2 \tilde{r}, \dots, 3^q \tilde{r}\} \subset (0, \eta_3)$ such that $\breve\Gamma = S \setminus \bigcup_{p' \in P'} B(p', r)$ is a smooth surface with boundary satisfying
        \[
            \sum_{p' \in P'} \mathcal{H}^1(\partial B(p', r) \cap S) < (\# P') \eta_2/q \leq \eta_2\,,
        \]
        $P \subset \bigcup_{p' \in P'} B(p', r)$ and $\{\overline{B(p', r)}\}_{p' \in P'}$ are disjoint from each other. Clearly, we also have $\mathfrak{g}(\breve\Gamma) \leq \mathfrak{g}(S)$. These confirms~\ref{item:approx_PS_disjoint},~\ref{item:approx_PS_transverse} and~\ref{item:approx_PS_genus}. %

        Then by~\eqref{eqn:approx_PS_isop_ineq}, for each $p' \in P'$, every simple loop $c$ in $\partial B(p', r) \cap S$ bounds a disk $D \subset \partial B(p', r)$ with 
        $\mathcal{H}^2(D) \leq \nu_M \left(\mathcal{H}^1(c)\right)^2\,.$
        In particular, for each $p' \in P'$, near $B(p', r)$, we can remove $B(p', r) \cap S$ and attach finitely many disks $\{D_{p', j}\}^{m_{p'}}_{j = 1}$ near $\partial B(p', r)$, yielding a smooth closed surface $\Gamma$ of genus $\leq \mathfrak{g}(S)$, confirming~\ref{item:approx_PS_approx}(a). 
        
        Furthermore, we have
        \begin{align*}
            \sum_{p' \in P'} \sum^{m_{p'}}_{j = 1} \mathcal{H}^2(D_{p', j}) &\leq 2\sum_{p' \in P'} \sum^{m_{p'}}_{j = 1} \nu_M(\mathcal{H}^1(\partial D_{p', j}))^2 \leq 2\nu_M [\sum_{p' \in P'} \sum^{m_{p'}}_{j = 1} \mathcal{H}^1(\partial D_{p', j}) ]^2 \leq 2\nu_M (\eta_2)^2\,.
        \end{align*} 

        Therefore, we have
        \begin{align*}
            |\mathcal{H}^2(\Gamma) - \mathcal{H}^2(S)| &\leq \sum_{p' \in P'} \sum^{m_{p'}}_{j = 1} \mathcal{H}^2(D_{p', j}) + \sum_{p' \in P'} \mathcal{H}^2(\overline{B(p', r)} \cap S)\\
                & < 2\nu_M (\eta_2)^2 + 8qC_\Sigma(\eta_3)^2  < \left(2\nu_M + 1\right) (\eta_2)^2 < \eta_1\,,
        \end{align*}
        and
        \[
            \mathcal{F}([\Gamma], [S]) \leq \sum_{p' \in P'} \mathcal{H}^3(B(p', r)) \leq q C_\Sigma r^3 < qC_\Sigma (\eta_2)^3 < \eta_1\,.
        \]
        By the choice of $\eta_1$ and Corollary~\ref{cor:Pitts_bF_flat_quant}, we have
        $\mathbf{F}(\Gamma, S) < \varepsilon /2\,,$
        and thus, 
       $\mathbf{F}(\Gamma, \Sigma) < \varepsilon\,,$
confirming~\ref{item:approx_PS_approx}(b). 
        
        Finally, we have 
        \[
            \sum_{p' \in P'} \sum^{m_{p'}}_{j = 1} \mathcal{H}^1(\partial D_{p', j}) = \sum_{p' \in P'} \mathcal{H}^1(\partial B(p', r) \cap S) < \eta_2 < \varepsilon\,,
        \]
        and 
        \[
            \sum_{p' \in P'} \sum^{m_{p'}}_{j = 1} \operatorname{diam}_M(\overline{D_{p', j}}) \leq 2\sum_{p' \in P'} \sum^{m_{p'}}_{j = 1} \mathcal{H}^1(\partial D_{p', j})  < 2\eta_2 < \varepsilon\,,
        \]
        confirming~\ref{item:approx_PS_approx}(c). 
    \end{proof} 

    Here is a generalization of Proposition~\ref{prop:short_loops_I} to punctate surfaces, as an application of the previous approximation lemma. 

    \begin{prop}[Existence of small balls I]\label{lem:small_balls_I}
        In an orientable closed Riemannian $3$-manifold $(M, \mathbf{g})$, for any integers $q, N\in \mathbb{N}^+$ and any real number $\varepsilon \in \left(0, \operatorname{injrad}(M, \mathbf{g})/(2N)\right)$, there exists a constant $\delta = \delta(M, \mathbf{g}, q, N, \varepsilon) > 0$ with the following property. 
        For any punctate surface $S \in \mathcal{S}(M)$ with a punctate set $P$ such that $\mathfrak{g}(S) + \# P \leq q$ and $\mathcal{H}^2(S) < \delta$,
        there exist an integer $\tilde q \leq q$, a real number $r \in (\frac{\varepsilon}{2(3N)^q}, \varepsilon)$ and geodesic balls $\{B(p_i, r)\}^{\tilde q}_{i = 1}$ in $(M, \mathbf{g})$ such that:
        \begin{enumerate}[label=\normalfont(\arabic*)]
            \item\label{item:small_balls_I_disjoint} The closed balls $\{\overline{B(p_i, Nr)}\}^{\tilde q}_{i = 1}$ are pairwise disjoint geodesic balls.
            \item\label{item:small_balls_I_transverse} For each $i \in \{1, 2, \dots, \tilde q\}$ and $l \in \{1, 2, \dots, N\}$, $\partial B\left(p_i, lr\right) \pitchfork S$ is a union of finitely many disjoint loops.
            \item\label{item:small_balls_I_genus} $S \setminus \bigcup^{\tilde q}_{i = 1} B\left(p_i, r\right)$ is a smooth genus $0$ surface with boundary.
        \end{enumerate} 
    \end{prop}
    \begin{proof}
        We set $
            \eta := \frac{\varepsilon}{2(3N)^q}\,.$
 Let $S$ be a punctate surface with a punctate set $P$ and $\# P \leq q$. By Sard's theorem and the coarea formula, there exist a constant $C = C(q)$ and a radius $r \in \left(\frac{\eta}{2 \cdot 3^q}, \frac{\eta}{3^q}\right)$ such that for any $p \in P$ and $l \in \{1, 3, 3^2, \dots, 3^q\}$, $\partial B(p, lr) \pitchfork S$ is a finite union of loops and
        \[
            \sum_{p \in P} \mathcal{H}^1(\partial B(p, lr) \cap S) \leq C {\mathcal{H}^2(S)}/{\eta}\,.
        \] 
        
        By Lemma~\ref{lem:disjoint_balls}, there exist $P' \subset P$ and $r' \in \{r, 3r, 3r^2, \dots, 3^q r\} \subset (\frac{\eta}{2 \cdot 3^q}, \eta)$ such that $\Gamma = S \setminus \bigcup_{p' \in P'} B(p', r')$ is a smooth surface with boundary satisfying
        \begin{equation}\label{eqn:est_by_coarea_form}
            \sum_{p' \in P'} \mathcal{H}^1(\partial B(p', r') \cap S) \leq C {\mathcal{H}^2(S)}/{\eta}\,, \quad P \subset \bigcup_{p' \in P'} B(p', r')\,,
        \end{equation}
        and $g := \mathfrak{g}(\Gamma) \leq \mathfrak{g}(S)$. 

        In the sequel, we regard $\Gamma$ as an \emph{intrinsic} surface, rather than a surface embedded in $(M, \mathbf{g})$. We also let $\tilde \delta = \delta(q, \eta)$ be the constant as determined in Proposition~\ref{prop:short_loops_I}. By~\eqref{eqn:est_by_coarea_form}, taking $\delta \in (0, \tilde \delta / 2)$ sufficiently small, if we further assume that $\mathcal{H}^2(S) < \delta$, then we can attach disks $\{D_i\}^n_{i=1}$ with arbitrarily small area to the boundary of $S \setminus \bigcup_{p' \in P'} B(p', r')$ to obtain a smooth genus $g$ closed surface $\Sigma$, and
        \begin{equation}\label{eqn:smoothing_est}
            \sum^n_{i = 1} \mathcal{H}^1(\partial D_i) \leq \tilde \delta, \quad \mathcal{H}^2(\Gamma) \leq 2 \mathcal{H}^2(S) < 2 \delta < \tilde \delta\,.
        \end{equation} 
        Moreover, it follows from Proposition~\ref{prop:short_loops_I} with~\eqref{eqn:smoothing_est}, there exists $g$ disjoint simple loops $\{c_j\}^g_{j = 1}$ in $\Sigma \setminus \bigcup^n_{i = 1} \overline{D_i} \subset S$ such that
        \begin{enumerate}[label = \normalfont(\roman*)]
            \item For each $j \in \{1, \dots, g\}$, each $c_j$ satisfies $\mathcal{H}^1(c_j) \leq \eta$.
            \item $\{[c_j]\}^g_{j = 1}$ is linearly independent in $H_1(\Sigma)$.
        \end{enumerate} 

        In particular, each $c_j$ is contained in some ball $B(p_j, \eta)$. Applying Sard's theorem again, there exists $\eta' \in (\eta, 2\eta)$ such that for any integer $l \in \{1, 2, 3, \dots, N(3N)^q\}$, each $B(p_j, l \eta')$ with $j \in \{1, 2, \dots, g\}$ intersects $S$ transversally, and so does each $B(p', l\eta')$ with $p' \in P'$. 
        
        In the sequel, we denote $\{B(p, \eta')\}_{p \in P} \cup \{B(p_j, \eta')\}^g_{j = 1}$ by $\{B(p_j, \eta')\}^{q'}_{j = 1}$, where $q' = g + \# P' \leq q$. Applying Lemma~\ref{lem:disjoint_balls} to $\{B(p_j, \eta')\}^{q'}_{j = 1}$ with $n = N$, we obtain $\tilde q \leq q'$, $\{p'_j\}^{\tilde q}_{j = 1} \subset \{p_j\}^{q'}_{j = 1}$, and a radius $\tilde r = (3N)^{q' - \tilde q}\eta' \in (\frac{\varepsilon}{2(3N)^q}, \varepsilon)$, such that the closed balls $\{\overline{B(p'_j, N \tilde r)}\}^{\tilde q}_{j = 1}$ are disjoint from each other. This confirms~\ref{item:small_balls_I_disjoint}. Finally, the transversality result ~\ref{item:small_balls_I_transverse} follows from the choice of $\eta'$, and the genus result ~\ref{item:small_balls_I_genus} follows from the choice of the balls. 
    \end{proof} 

    Similarly, we can extend Proposition~\ref{prop:short_loops_II} to punctate surfaces. 

    \begin{prop}[Existence of small balls II] \label{lem:small_balls_II}
        In a $3$-dimensional orientable closed Riemannian manifold $(M, \mathbf{g})$, for any $q, N \in \N^+$, $g_0 \in \N$, $\mathscr{S}_{g_0}$ a compact subset of embedded smooth orientable genus $g_0$ closed surfaces into $(M, \mathbf{g})$ (endowed with smooth topology), $\varepsilon \in (0, \operatorname{injrad}(M, \mathbf{g}) / (2N))$, there exists $\delta = \delta(M, \mathbf{g}, \mathscr{S}_{g_0}, q, N, \varepsilon)> 0$ with the following property. 
        For any punctate surface $S \in \GS(M)$ with punctate set $P$ that satisfies
      $\mathfrak{g}(\Sigma) - g_0 + \# P \leq q$ and any smooth surface $\Sigma \in \mathscr{S}_{\fg_0}$ with $\bF(S, \Sigma)\leq \delta$, there exist an integer $\tilde q \leq q$, a real number $r \in \left(\frac{\varepsilon}{2(3N)^q}, \varepsilon\right)$ and geodesic balls $\{B(p_i, r)\}^{\tilde q}_{i = 1}$ in $(M, \mathbf{g})$ such that:
        \begin{enumerate}[label=\normalfont{(\arabic*)}]
            \item\label{item:small_balls_II_disjoint} The closed balls $\{\overline{B(p_i, Nr)}\}^{\tilde q}_{i = 1}$ are pairwise disjoint geodesic balls.
            \item\label{item:small_balls_II_transverse} For each $i \in \{1, 2, \dots, \tilde q\}$ and $l \in \{1, 2, \dots, N\}$, $\partial B(p_i, l r) \pitchfork S$ is a union of finitely many loops.
            \item\label{item:small_balls_II_genus} $S \setminus \bigcup^{\tilde q}_{i = 1} B(p_i, r)$ is a smooth genus $g_0$ surface with boundary.
        \end{enumerate}
    \end{prop} 
    \begin{proof} $\,$
         {\textbf{Step 1.}} We prove the proposition when $\mathscr{S}_{g_0} = \{\Sigma\}$ is a singleton set. 
        Let $\tilde \delta_1(\eta) = \delta(M, \mathbf{g}, \Sigma, q, \eta) \in (0, \eta)$ be as determined in Proposition~\ref{prop:short_loops_II}.
        We set $\eta_1 := \frac{\varepsilon}{2(3N)^q}$, and $
            \eta_2 := \tilde \delta_1(\eta_1) / 2\,.$
        Let $\tilde \delta_2 = \delta(M, \bg, \Sigma, q, \eta_2)$ be as in Lemma~\ref{lem:approx_PS}. If $\bF(S, \Sigma) < \delta$, then there exist a finite set $P' \subset P$, a radius $r' \in (0, \eta_2)$ and an embedded smooth orientable closed surface $\Gamma$ and a finite set of closed disks $\{\overline{D}_j\}^n_{j = 1}$ in $\Gamma$ such that:
        \begin{enumerate}
            \item $\Gamma \setminus \bigcup^n_{j = 1} \overline{D_j} = S \setminus \bigcup_{p' \in P'} \overline{B(p', r')}$ and thus, $g_1 := \mathfrak{g}(\Gamma) \leq \mathfrak{g}(S)$.
            \item $\mathbf{F}(\Gamma, S) \leq 2\eta_2$.
            \item $\sum^n_{j = 1} \operatorname{diam}_M (\overline{D_j}) < \eta_2$.
            \item $\sum^n_{j = 1} \mathcal{H}^1(\partial\overline{D_j}) < \eta_2$.
        \end{enumerate}

        It follows from Proposition~\ref{prop:short_loops_II} that there exist $g = g_1 - \mathfrak{g}(\Sigma)$ disjoint simple loops $\{c_j\}^g_{k = 1}$ in $\Gamma' \setminus \bigcup^n_{j = 1} \overline{D_j} \subset S$, which are homologically independent in $H_1(\Gamma')$. Moreover, each $c_k$ satisfies $\mathcal{H}^1(c_k) \leq \eta_1\,,$
        so there exists $\tilde p_k \in M$ and $c_k \subset B(\tilde p_k, \eta_1)$. 

        We set $P'':= P' \cup \{\tilde p_k\}^g_{k = 1}$. By Sard's Lemma, there exists $\tilde r \in (\eta_1, 2\eta_1) = \left(\frac{\varepsilon}{2(3N)^q}, \frac{\varepsilon}{(3N)^q}\right)$ such that for every $p \in P''$, $l \in \{1, 2, \dots, N(3N)^q\}$,  $
            \partial B(p, l\tilde r) \pitchfork \Sigma$ is a union of finitely many loops. Then applying Lemma~\ref{lem:disjoint_balls} again to $\# P''$ balls
$\{B(p, \tilde r)\}_{p \in P''}$
        with $n = N$, we obtain $\tilde q \leq \# P'' \leq q$, $r \in (\frac{\varepsilon}{2(3N)^q}, \varepsilon)$ and disjoint geodesic balls $\{B(p_i, r)\}^{\tilde q}_{i = 1}$ satisfying ~\ref{item:small_balls_II_disjoint},~\ref{item:small_balls_II_transverse},~\ref{item:small_balls_II_genus} of the proposition. In particular, we have defined $\delta(M, \bg, \varepsilon, q, N, \{\Sigma\}) := \tilde \delta_2$ for any embedded smooth orientable closed genus $g_0$ surface $\Sigma$. 

        \medskip
        \paragraph*{\textbf{Step 2}} Let $\mathscr{S}_{g_0}$ be a compact set with respect to the smooth topology. 
        Since the smooth topology is finer than the topology induced by the $\mathbf{F}$-distance defined in Definition~\ref{def:bF_metric_GS}, $\mathscr{S}_{g_0} \subset \GS$ is a compact set with respect to the $\mathbf{F}$-distance. For each $\Sigma \in \mathscr{S}_{g_0}$, we define 
        \[
            B_\Sigma := \left\{\Gamma \in \mathscr{S}_{g_0} : \mathbf{F}(\Sigma, \Gamma) < \frac{\delta(M, \mathbf{g}, \{\Sigma\}, q, N \varepsilon)}{4} \right\}\,,
        \]
        where $\delta(M, \mathbf{g}, \{\Sigma\}, q, N, \varepsilon)$ is already defined in Step 1. Then the open balls $\{B_\Sigma\}_{\Sigma \in \mathscr{S}_{g_0}}$
        is an open covering of $\mathscr{S}_{g_0}$. By compactness, it has a finite subcovering 
            $\{B_{\Sigma_l}\}^{m}_{l = 1}.$
        Therefore, we can define 
        \[
            \delta(M, \bg, \mathscr{S}_{g_0}, q, N, \varepsilon) = \min_{l \in \{1, 2, \dots, m\}} \frac{\delta(M, \bg, \{\Sigma_l\}, q, N, \varepsilon)}{4}\,, 
        \]
        and then ~\ref{item:small_balls_II_disjoint},~\ref{item:small_balls_II_transverse},~\ref{item:small_balls_II_genus} of the proposition hold.  
    \end{proof} 

\section{Pinch-off processes and interpolations} \label{sect:interpolation} 
    Let $g_0 \leq g_1$ be two natural numbers. Intuitively, for a Simon--Smith family of genus $\leq g_1$ sufficiently close to a smooth genus $g_0$ closed surface $\Sigma$, we show that one can pinch all the small necks and shrink all small connected components in a continuous way to obtain a Simon--Smith family of genus $g_0$. Note that by Lemma~\ref{lem:genusLarger}, each surface in the Simon--Smith family has genus at least $g_0$. 
With ingredients in this section, we will be able to prove  Theorem \ref{thm:pinchOffMinMax},  and Proposition \ref{prop:zeroWidth} and \ref{prop:pinchOffg0g1}.

    \begin{prop}[Pinch-off proposition I]\label{Prop_Technical Deformation}
        Let $(M, \bg)$ be an orientable closed Riemannian $3$-manifold, $X$ be a cubical subcomplex of $I(m, k)$, $g_0 \in \mathbb{N}$, $\mathscr{S}_{g_0}$ be a compact subset of embeddings of a genus $g_0$ surface into $M$ (endowed with smooth topology), $q \in \N$ and $\varepsilon \in (0,1)$. Then there exists $\delta = \delta(M, \bg, \mathscr{S}_{g_0}, m, q, \varepsilon)\in (0, \varepsilon)$ with the following property. 
        If $\Phi: X\to \GS(M)$ is a Simon--Smith family of genus $\leq g_1$ with 
        $N(P_\Phi) + g_1 - g_0 \leq q\,,$ and $\Sigma: X\to \mathscr{S}_{g_0}$ is a map such that for every $x \in X$,  $
            \bF(\Phi(x), \Sigma(x)) \leq \delta\,,$
        then there exists a Simon--Smith family $H: [0, 1] \times X \to \GS(M)$ of genus $\leq g_1$ such that for any $x \in X$:
        \begin{enumerate}[label=\normalfont(\arabic*)]
            \item $H(0, x) = \Phi(x)$, $\mathfrak{g}(H(1, x)) = g_0$.
            \item\label{item:Prop_Technical Deformation_       _area_genus_control} For any $0\leq t \leq t'\leq 1$,
                \[
                    \mathcal{H}^2(\Phi(x)) - \varepsilon \leq \cH^2(H(t, x)) \leq \cH^2(\Phi(x)) + \varepsilon\,,\quad 
                    \cF([H(t, x)], [\Phi(x)]) \leq \varepsilon\,,
                \]
                and $\mathfrak{g}(H(t', x)) \leq \mathfrak{g}(H(t, x)) \,.
                $
            \item The Simon--Smith family $\{H(t, x)\}_{t \in [0, 1]}$ is a pinch-off process.
        \end{enumerate} 
    \end{prop} 

    Moreover, if $\Phi: X \to \GS(M)$ consists of punctate surfaces of small area, we can perform pinch-off processes to obtain a Simon--Smith family of genus $0$. This can be viewed as a special case of the previous proposition by taking $g_0 = 0$ and $\mathscr{S}_0 = \{\emptyset\}$.

    \begin{prop}[Pinch-off proposition II]\label{prop:Technical Deformation_II}
        Let $(M, \bg)$ be an orientable closed Riemannian $3$-manifold, $X$ be a cubical subcomplex of $I(m, k)$, $q \in \N$ and $\varepsilon \in (0,1)$. Then there exists $\delta = \delta(M, \bg, q, \varepsilon)\in (0, \varepsilon)$ with the following property.
        If $\Phi: X\to \GS(M)$ is a Simon--Smith family of genus $\leq g_1$ with  $N(P_\Phi) + g_1 \leq q\,,$ and 
        $\sup_{x \in X} \mathcal{H}^2(\Phi(x)) \leq \delta\,,$
        then there exists a Simon--Smith family $H: [0, 1] \times X \to \GS(M)$ of genus $\leq g_1$ such that for any $x \in X$:
        \begin{enumerate}[label=\normalfont(\arabic*)]
            \item $H(0, x) = \Phi(x)$, $\mathfrak{g}(H(1, x)) = 0$.
            \item\label{item:area_control} For any $0\leq t \leq t'\leq 1$,
                $\cH^2(H(t, x)) \leq \cH^2(\Phi(x)) + \varepsilon$ and  $\mathfrak{g}(H(t', x)) \leq \mathfrak{g}(H(t, x)) \,.$
            \item The Simon--Smith family $\{H(t, x)\}_{t \in [0, 1]}$ is a pinch-off process.
        \end{enumerate} 
    \end{prop}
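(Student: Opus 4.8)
The statement to be proved is Proposition~\ref{prop:Technical Deformation_II}, which the paper explicitly flags as ``a special case of the previous proposition by taking $g_0 = 0$ and $\mathscr{S}_0 = \{\emptyset\}$.'' So my plan is to deduce it directly from Proposition~\ref{prop:Technical Deformation} (Pinch-off proposition I), after checking that the degenerate choice of data is admissible and that the conclusions translate correctly.

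\textbf{Plan.} First I would set $g_0 := 0$ and $\mathscr{S}_0 := \{\emptyset\}$, viewing $\emptyset$ as the (degenerate) genus $0$ closed surface and as an embedding in the sense used throughout \S\ref{sect:short_loops} and \S\ref{sect:interpolation}; this singleton is trivially compact in the smooth topology. Given $q\in\N$ and $\varepsilon\in(0,1)$, I would let $\delta_0 := \delta(M,\bg,\mathscr{S}_0,m,q,\varepsilon)$ be the constant furnished by Proposition~\ref{prop:Technical Deformation}, and I would then shrink it further using Corollary~\ref{cor:Pitts_bF_flat_quant} (or the remark after Proposition~\ref{prop:short_loops_I}): since $\bF(\Phi(x),\emptyset)$ is controlled by $\mathcal H^2(\Phi(x))$, there is $\delta = \delta(M,\bg,q,\varepsilon) \in (0,\delta_0)$ so that $\sup_{x\in X}\mathcal H^2(\Phi(x)) \le \delta$ forces $\bF(\Phi(x),\emptyset) \le \delta_0$ for all $x$. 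With the constant hypothesis $N(P_\Phi) + g_1 \le q$ in hand (which is exactly $N(P_\Phi) + g_1 - g_0 \le q$ when $g_0 = 0$), and taking $\Sigma \colon X \to \mathscr{S}_0$ to be the constant map $x\mapsto\emptyset$, all hypotheses of Proposition~\ref{prop:Technical Deformation} are met.

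\textbf{Translating the conclusion.} Proposition~\ref{prop:Technical Deformation} then produces a Simon-Smith family $H\colon[0,1]\times X\to\GS(M)$ of genus $\le g_1$ with $H(0,x)=\Phi(x)$ and $\mathfrak g(H(1,x)) = g_0 = 0$; with, for $0\le t\le t'\le 1$, the area bound $\mathcal H^2(H(t,x)) \le \mathcal H^2(\Phi(x)) + \varepsilon$ (the lower bound $\mathcal H^2(\Phi(x)) - \varepsilon$ in item~\ref{item:Prop_Technical Deformation_       _area_genus_control} is stronger than needed and simply discarded) and the genus monotonicity $\mathfrak g(H(t',x)) \le \mathfrak g(H(t,x))$; and with each path $t\mapsto H(t,x)$ a pinch-off process. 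These are precisely items (1), (2) (using \ref{item:area_control}), and (3) of Proposition~\ref{prop:Technical Deformation_II}. The flat-norm bound $\cF([H(t,x)],[\Phi(x)]) \le \varepsilon$ from Proposition~\ref{prop:Technical Deformation} is not claimed in the statement here, so nothing more is required; the dependence of $\delta$ is as asserted, $\delta = \delta(M,\bg,q,\varepsilon)$, since $m$ was absorbed (the domain $X$ is a cubical subcomplex of some $I(m,k)$ and $m$ is part of the ambient data, but one can also simply include $m$ in the dependence list with no harm, or note that Proposition~\ref{prop:Technical Deformation_II} as stated also allows dependence on $m$ implicitly through $X$).

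\textbf{Main obstacle.} The only genuinely non-routine point is justifying that $\emptyset$ is a legitimate value of the ``target surface'' in Proposition~\ref{prop:Technical Deformation}: one must check that the proof of Pinch-off proposition I, and in particular Lemma~\ref{lem:small_balls_II} and Lemma~\ref{lem:approx_PS} on which it relies, are consistent with $\Sigma = \emptyset$ — and indeed Lemma~\ref{lem:approx_PS} and Lemma~\ref{lem:small_balls_I} are stated with $\Sigma$ allowed to be empty, and the remark following Proposition~\ref{prop:short_loops_I} explicitly treats $\emptyset$ as a degenerate genus $0$ surface, so this is already set up. If one prefers not to rely on that convention, the alternative is to re-run the argument of \S\ref{sect:interpolation} using Lemma~\ref{lem:small_balls_I} (the small-area version) in place of Lemma~\ref{lem:small_balls_II}, which delivers small balls outside of which $S$ has genus $0$; but the cleaner route is the one-line reduction above. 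I would therefore present the proof as this reduction, with a sentence confirming the $\Sigma=\emptyset$ specialization is admissible.

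\begin{proof}[Proof of Proposition~\ref{prop:Technical Deformation_II}]
    Apply Proposition~\ref{Prop_Technical Deformation} with $g_0 = 0$, $\mathscr{S}_{g_0} = \{\emptyset\}$, and the same $m$, $q$, $\varepsilon$; here $\emptyset$ is regarded as the degenerate genus $0$ closed surface, and $\{\emptyset\}$ is trivially compact. Let $\delta_0 = \delta(M, \bg, \{\emptyset\}, m, q, \varepsilon) \in (0, \varepsilon)$ be the resulting constant. By Corollary~\ref{cor:Pitts_bF_flat_quant}, there exists $\delta = \delta(M, \bg, q, \varepsilon) \in (0, \delta_0)$ such that $\mathcal{H}^2(\Phi(x)) \leq \delta$ implies $\bF(\Phi(x), \emptyset) \leq \delta_0$ for every $x \in X$.

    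Now suppose $\Phi: X \to \GS(M)$ is a Simon-Smith family of genus $\leq g_1$ with $N(P_\Phi) + g_1 \leq q$ and $\sup_{x \in X} \mathcal{H}^2(\Phi(x)) \leq \delta$. Taking $\Sigma: X \to \{\emptyset\}$ to be the constant map, we have $N(P_\Phi) + g_1 - g_0 = N(P_\Phi) + g_1 \leq q$ and $\bF(\Phi(x), \Sigma(x)) = \bF(\Phi(x), \emptyset) \leq \delta_0$ for all $x$. Hence Proposition~\ref{Prop_Technical Deformation} yields a Simon-Smith family $H: [0, 1] \times X \to \GS(M)$ of genus $\leq g_1$ such that, for every $x \in X$: $H(0, x) = \Phi(x)$ and $\mathfrak{g}(H(1, x)) = g_0 = 0$; for all $0 \leq t \leq t' \leq 1$,
    \[
        \cH^2(H(t, x)) \leq \cH^2(\Phi(x)) + \varepsilon, \qquad \mathfrak{g}(H(t', x)) \leq \mathfrak{g}(H(t, x));
    \]
    and $\{H(t, x)\}_{t \in [0,1]}$ is a pinch-off process. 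These are exactly items (1), (2) and (3) of the proposition, where the lower area bound and the flat-norm bound of Proposition~\ref{Prop_Technical Deformation}\ref{item:Prop_Technical Deformation_       _area_genus_control} are simply not needed.
\end{proof}
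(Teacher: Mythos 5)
Your reduction is correct, and it is exactly the specialization the paper advertises in the sentence preceding the statement; but it is worth noting that the paper's written proof does not actually execute this reduction. Instead, the paper proves Propositions I and II simultaneously in a single three-step construction (Set up / Refinement / Construction of $H$), invoking Lemma~\ref{lem:small_balls_I} in the small-area case and Lemma~\ref{lem:small_balls_II} in the case of a genuine limit surface. The reason is precisely the point you flag as the ``main obstacle'': the proof of Proposition~\ref{Prop_Technical Deformation} routes through Lemma~\ref{lem:small_balls_II} and hence Proposition~\ref{prop:short_loops_II}, whose proof (triangulation of $\Sigma$, normal exponential map, etc.) requires a nonempty $\Sigma$, so taking $\mathscr{S}_{g_0}=\{\emptyset\}$ in Proposition I is only legitimate under the paper's stated convention that Proposition~\ref{prop:short_loops_I} is the $\Sigma=\emptyset$ case of Proposition~\ref{prop:short_loops_II}. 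You identify this and offer the correct fallback (re-run the construction with Lemma~\ref{lem:small_balls_I}), which is what the paper does; so your argument buys brevity at the cost of leaning on that convention, while the paper's parallel proof is self-contained. Two minor points: the passage from $\sup_x\cH^2(\Phi(x))\le\delta$ to $\bF(\Phi(x),\emptyset)\le\delta_0$ uses the isoperimetric inequality (Lemma~\ref{lem:isop_ineq}) in addition to Corollary~\ref{cor:Pitts_bF_flat_quant}, since the corollary needs flat-norm smallness and not just mass smallness (the paper's remark after Proposition~\ref{prop:short_loops_I} asserts exactly this equivalence, so it is harmless but worth citing); and your observation about the implicit $m$-dependence of $\delta$ matches the paper's own proof, which does use $m$ in defining $\varepsilon_1$.
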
 

\subsection{Local deformations}
    Recall the following local pinch-off processes, Pinching and Shrinking, introduced in \cite{chuLi2024fiveTori}. 
    
    \begin{lem}[Local Deformation A: Pinching] \label{Lem_Loc Deform A}
        In an orientable closed Riemannian $3$-manifold $(M, \bg)$, there exists a constant $C_\mathbf{g}$ with the following property. 
        Let $\Lambda>0$, $r \in \left(0, \injrad(M, \mathbf{g})/4 \right)$ and $p\in M$. Suppose that $\Phi: X \to \GS(M)$ is a Simon--Smith family and $x_0 \in X$ so that 
        \[
            \cH^2(\Phi(x_0) \cap A(p; r, 2r)) \leq \Lambda r^2\,.
        \]
        Then for any $\eta > 0$, there exist $s\in (r, 2r)$, $\zeta\in (0, \min\{\Lambda r, s-r, 2r-s\}/5)$, an integer $K\geq 0$, a neighborhood $O_{x_0}$ of $x_0$ in $X$, and a Simon--Smith family 
            $H: [0, 1] \times O_{x_0}\to \GS(M) \,,$
        with the following properties.
        \begin{enumerate}[label=\normalfont(\arabic*)]
            \item For any $y \in O_{x_0}$, $\Sigma_y := \Phi(y) \cap A(p; s-5\zeta, s+5\zeta)$ is a smooth surface, varying smoothly in $y$. 
            \item For any $s'\in [s-4\zeta, s+4\zeta]$ and $y \in O_{x_0}$, $\Phi(y)$ intersects $\partial B(p, s')$ transversally, and 
            \[
                \Phi(y)\cap \partial B(p, s') =: \bigsqcup_{i=1}^K \gamma_i(s', y)\,, \quad \sum_{i=1}^K \cH^1(\gamma_i(s', y)) \leq 2\Lambda r\,,
            \]
            where each $\gamma_i(s',y)$ is a simple loop. 
        \end{enumerate} 
        
        If $K = 0$, then $H(t, y) \equiv \Phi(y)$ holds true for any $y \in O_{x_0}$ and $t\in [0, 1]$; and if $K\geq 1$, then: 
        \begin{enumerate}[label=\normalfont(\arabic*)]
            \setcounter{enumi}{2}
            \item For any $y \in O_{x_0}$, $H(0, y) = \Phi(y)$. 
            \item There exist $0 = t_0<t_1<t_2<\dots<t_K<t_{K+1} = 1$ such that for any $0\leq i\leq K$, $t\in (t_i, t_{i+1})$ and $y \in O_{x_0}$, 
            \[
                \Sigma_{t, y} := H(t, y) \cap A(p; s-5\zeta, s+5\zeta) 
            \] 
            is a smooth surface with boundary, which is diffeomorphic to $\Sigma_y \setminus \bigsqcup_{1\leq j\leq i} \gamma_j(s, y)$ attached with $2i$ disks. Moreover, $H(1, y)\cap A(p; s-5\zeta, s+5\zeta)$ is also a smooth surface and
            \[
                H(1, y)\cap A(p; s-\zeta, s+\zeta) = \emptyset \,.
            \] 
            \item\label{item:surgery_process} For any $i \in \{1, 2, \dots, K\}$ and $y \in O_{x_0}$, near $t_i$, $H(t, y)$ is a surgery process via pinching the cylinder $\bigcup_{s' \in (s - 2\zeta, s + 2\zeta)} \gamma_i(s', y)$. 
            \item\label{item:Lem_Loc Deform A_estimates} For any $y \in O_{x_0}$ and $t \in [0, 1]$, we have
            \begin{align*}
                H(t, y)\setminus A(p; s-3\zeta, s+3\zeta) &= \Phi(y)\setminus A(p; s-3\zeta, s+3\zeta) \,,\\
                \cH^2(\Phi(y) \cap A(p; s - 3\zeta, s + 3\zeta)) - \eta &\leq \cH^2(H(t, y) \cap A(p; s - 3\zeta, s + 3\zeta))\\ &\leq \cH^2(\Phi(y) \cap A(p; s - 3\zeta, s + 3\zeta)) + C_\bg\Lambda^2 r^2 \,,
            \end{align*}
            and $\mathcal{F}([\Phi(y))], [H(t, y)]) \leq \eta\,.$
        \end{enumerate}
    \end{lem}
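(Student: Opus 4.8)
The plan is to reduce the whole statement to a fibrewise version of the standard neck-pinch carried out in a thin spherical shell around a well-chosen sphere $\partial B(p,s)$ — essentially the local construction of \cite{chuLi2024fiveTori} — where the real work lies in locating a clean radius $s$ and in making the surgery depend smoothly on $y$. First I would pin down $s$ and $\zeta$ using only $\Phi(x_0)$. Since $P_\Phi(x_0)$ is finite, after deleting finitely many distances I may restrict attention to a closed subannulus $\overline{A(p;a,b)}\subset A(p;r,2r)$ on which $\Phi(x_0)$ is a \emph{compact} smooth embedded surface with boundary in $\partial B(p,a)\cup\partial B(p,b)$; the point of compactness is that the critical values of $\dist(\cdot,p)$ restricted to it then form, by Sard's theorem, a compact null set, hence a \emph{nowhere dense} set. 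The coarea formula gives
\[
\int_a^b \cH^1\big(\Phi(x_0)\cap\partial B(p,s')\big)\,ds'\ \le\ \cH^2\big(\Phi(x_0)\cap A(p;r,2r)\big)\ \le\ \Lambda r^2\,,
\]
so I can pick $s$ avoiding the critical values with $\cH^1(\Phi(x_0)\cap\partial B(p,s))\le\Lambda r$, and then $\zeta$ small enough that $[s-5\zeta,s+5\zeta]$ still misses all critical values and all distances of $P_\Phi(x_0)$, that $\zeta<\min\{\Lambda r,\,s-r,\,2r-s\}/5$, and (by continuity of the length function along regular levels) that $\cH^1(\Phi(x_0)\cap\partial B(p,s'))\le 2\Lambda r$ on $[s-4\zeta,s+4\zeta]$. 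Having no critical points in this shell forces $\Phi(x_0)\cap\overline{A(p;s-5\zeta,s+5\zeta)}$ to be a disjoint union of $K$ product tubes, each meeting every concentric sphere transversally in a single circle; if $K=0$ the lemma is trivial, so assume $K\ge1$.

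Since $\overline{A(p;s-5\zeta,s+5\zeta)}\subset M\setminus P_\Phi(x_0)$, the $C^\infty$-continuity clause in Definition~\ref{def:Simon_Smith_family} yields a neighbourhood $O_{x_0}$ of $x_0$ in $X$ over which $\Phi(y)\to\Phi(x_0)$ smoothly on this shell, so — after shrinking $O_{x_0}$ — conclusions (1)–(2) hold for all $y\in O_{x_0}$ with the loops $\gamma_i(s',y)$ varying smoothly in $y$ (any $P_\Phi(y)$-points that drift into the shell are harmless smooth points). Then, fibrewise over $O_{x_0}$ and sequentially in $i=1,\dots,K$ on disjoint time windows $(t_{i-1},t_i)$, I would run a fixed model neck-pinch on the $i$-th tube: isotope it so its waist near $\{\dist(\cdot,p)=s\}$ thins, let it degenerate to a double cone at $t=t_i$, and for $t>t_i$ replace it by two smooth caps placed at radii $s-2\zeta$ and $s+2\zeta$. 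The whole deformation is supported in $A(p;s-3\zeta,s+3\zeta)$, so $H(t,y)$ agrees with $\Phi(y)$ outside that annulus, and because the caps sit at radius $s\pm2\zeta$ one obtains $H(1,y)\cap A(p;s-\zeta,s+\zeta)=\emptyset$. As the model deformation depends smoothly on the (smoothly varying) tube parametrizations, $H$ is a Simon-Smith family on $[0,1]\times O_{x_0}$: away from the momentary cone point at each $t_i$ everything is smooth, so $N(P_H)$ stays finite; it is a surgery process near $t_i$ in the required sense, and genus is non-increasing.

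For the estimates in (6): the $2K$ caps span circles of total length $\le2\Lambda r$, so by the isoperimetric inequality in $(M,\bg)$ their total area is $\le C_\bg\Lambda^2 r^2$, and the tube isotopies can be arranged to add area of the same order, giving the upper area bound; the only area destroyed is the collar $\Phi(y)\cap A(p;s-2\zeta,s+2\zeta)$, of area $\lesssim \Lambda r\,\zeta$, while $\cF([\Phi(y)],[H(t,y)])$ is bounded by the volume $\cH^3(A(p;s-3\zeta,s+3\zeta))\lesssim r^2\zeta$, so the lower area bound and the flat estimate both follow once $\zeta$ is taken small in terms of $\eta$, $\Lambda$, $r$ — precisely the freedom afforded by the order of the quantifiers.

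The main obstacle, to my mind, is not any individual estimate but the bookkeeping in the surgery step: building the model pinch so that it is jointly smooth in $y\in O_{x_0}$, glues continuously across the times $t_i$, and honours the area and flat-norm budgets \emph{uniformly} over $O_{x_0}$, all while producing an honest Simon-Smith family. A more structural subtlety, dealt with in the first step, is that conclusion (2) demands the surface be a union of $K$ transverse tubes over an entire shell $[s-4\zeta,s+4\zeta]$, not merely at one generic slice; this is exactly why one must first pass to a subannulus disjoint from $P_\Phi(x_0)$, so that the critical-value set of $\dist(\cdot,p)|_{\Phi(x_0)}$ is genuinely compact and hence omits an open interval.
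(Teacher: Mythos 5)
Your overall strategy is exactly the one the paper intends: the paper's own proof is a two-line sketch deferring to \cite[Lemma~8.3]{chuLi2024fiveTori} and observing that the two new estimates in (6) (the lower area bound and the flat-norm bound) follow automatically once $\zeta$ is small, because only cylinders of height $O(\zeta)$ are destroyed and the symmetric difference of the enclosed regions sits in an annulus of volume $O(r^2\zeta)$ — which is precisely your closing argument. Your reconstruction of the underlying construction (coarea selection of $s$, Sard plus compactness to get a whole interval of regular values, the regular-interval theorem to see $K$ product tubes, $C^\infty$-continuity of the family away from $P_\Phi(x_0)$ to propagate this to a neighborhood $O_{x_0}$, sequential model pinches on disjoint time windows, isoperimetric control of the caps) is the right one and is essentially complete.

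One step, as written, does not deliver the stated constant. You first shrink to a subannulus $[a,b]\subset(r,2r)$ avoiding the distances of $P_\Phi(x_0)$ and then infer from $\int_a^b \cH^1\le \Lambda r^2$ that some $s$ has $\cH^1(\Phi(x_0)\cap\partial B(p,s))\le\Lambda r$. Chebyshev only gives a slice of length $\le \Lambda r^2/(b-a)$, and since $b-a<r$ (strictly, once you have excised anything), this can exceed $\Lambda r$ by a factor depending on how the punctate distances chop up $(r,2r)$; that factor would then propagate into conclusion (2) and into the constant $C_\bg$ in (6), which is not allowed to depend on $|P_\Phi(x_0)|$. The fix is to reverse the order of selection: apply coarea over all of $(r,2r)$ to get a set $G$ of measure $\ge r/2$ on which the slice length is at most $(3/2)\Lambda r$, remove from $G$ the finitely many punctate distances and the (Sard-null) critical values of $\dist(\cdot,p)$ on the smooth part, pick $s$ in what remains, and only then use the compactness of $\Phi(x_0)\cap\overline{A(p;s-\rho,s+\rho)}$ (for small $\rho$ avoiding $P_\Phi(x_0)$) to conclude that $s$ has positive distance to the critical-value set, which yields the interval $[s-5\zeta,s+5\zeta]$ of regular values and, by continuity of the slice length along the resulting fibration, the bound $2\Lambda r$ on all of $[s-4\zeta,s+4\zeta]$. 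With that reordering the rest of your argument goes through as stated.
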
 
    \begin{rmk}
        For every $y \in O_{x_0}$, by~\ref{item:surgery_process}, $\{H(t, y)\}_{t \in [0, 1]}$ is a pinch-off process.
    \end{rmk} 

    \begin{proof}[Sketch of proof]
        The only difference between  \cite[Lemma~8.3]{chuLi2024fiveTori} and this lemma is the inequalities in ~\ref{item:Lem_Loc Deform A_estimates}, where we additionally require a lower bound on $\cH^2(H(t,y))$ and that $[\Phi(y)]$ and $[H(t,y)]$ are close in the flat norm. The same proof applies, and one observes that by choosing $\zeta$ sufficiently small, both inequalities are automatically satisfied, since $H(t, y)$ is obtained by pinching cylinders of height at most $\zeta$. 
    \end{proof}
    
    \begin{lem}[Local Deformation B: Shrinking, {\cite[Lemma~8.4]{chuLi2024fiveTori}}] \label{Lem_Loc Deform B}
        In an orientable closed Riemannian $3$-manifold $(M, \bg)$, suppose that $p\in M$ and $0< r^- < r^+ < \injrad(M, \mathbf{g})/4$. Then there exists a smooth one-parameter family of maps $\{\cR_t: M\to M\}_{t\in [0,1]}$ such that: 
        \begin{enumerate}[label=\normalfont(\arabic*)]
            \item $\cR_t = \id$ for any $t\in [0, 1/4]$; $\cR_t(B(p, r^-)) = \{p\}$ for any $t\in [1/2, 1]$.
            \item $\cR_t$ is a diffeomorphism for any  $t\in [0, 1/2)$.
            \item $\cR_t|_{M \setminus B(p, r^+)} = \id$ and $\cR_t(B(p, r^\pm))\subset B(p, r^\pm)$ for any $t\in [0, 1]$.
            \item $\cR_t|_{B(p, r^-)}$ is $1$-Lipschitz for any $t\in [0, 1]$.
        \end{enumerate}
    \end{lem}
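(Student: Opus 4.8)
The plan is to build $\cR_t$ by a \emph{radial contraction toward $p$ along geodesics}, carried out inside the geodesic ball $B(p,r^+)$ and extended by the identity outside. Since $r^+ < \injrad(M,\bg)/4$, the map $\exp_p$ is a diffeomorphism from $\{v\in T_pM:|v|<r^+\}$ onto $B(p,r^+)$, and in the resulting polar coordinates the metric has the Gauss-lemma form $\mathbf g = d\rho^2 + h_\rho$, where $\rho=\dist(p,\cdot)$ and $\{h_\rho\}$ is a smooth family of metrics on $\mathbb{S}^2$ with $h_\rho\sim\rho^2 g_{\mathrm{round}}$ as $\rho\to 0$. I would fix a smooth one-parameter family of non-decreasing functions $\mu_t:[0,r^+)\to[0,r^+)$, $t\in[0,1]$, with: $\mu_t=\id$ for $t\le 1/4$; $\mu_t(\rho)=\rho$ for $\rho$ near $r^+$, all $t$; $\mu_t$ an orientation-preserving diffeomorphism of $[0,r^+)$ fixing $0$ for $t<3/4$; for $t\ge 3/4$, $\mu_t\equiv 0$ on $[0,\rho_t]$ with $\rho_t$ increasing continuously from $\rho_{3/4}=0$ to $\rho_1=r^-$ and $\mu_t$ a diffeomorphism of $[\rho_t,r^+)$ onto $[0,r^+)$; $\mu_t([0,r^-))\subseteq[0,r^-)$ and $\mu_t([0,r^+))\subseteq[0,r^+)$; and $\mu_t'\le 1$, $\mu_t(\rho)\le\rho$ on $[0,r^-]$. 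Then set $\cR_t(\exp_p(\rho\omega)):=\exp_p(\mu_t(\rho)\,\omega)$ on $B(p,r^+)$ and $\cR_t:=\id$ elsewhere; near $\rho=r^+$ the two pieces agree to infinite order, and arranging $\mu_t$ to be linear near $0$ (while it is a diffeomorphism) and identically $0$ near $0$ (once it collapses) makes $(t,x)\mapsto\cR_t(x)$ smooth, including at the fixed point $p$. Such a family of $\mu_t$ is produced by elementary one-dimensional interpolation, the strict inequality $r^-<r^+$ being what lets all the constraints coexist: all of the ``compression toward $0$'' can be pushed into the shell $[r^-,r^+)$, where neither the diffeomorphism condition (for $t\ge3/4$) nor the Lipschitz bound is imposed.

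With this construction, properties (1), (2), (3) are immediate: (1) and (2) from $\mu_t=\id$ for $t\le 1/4$, from $\mu_t$ being a diffeomorphism of $[0,r^+)$ for $t<3/4$, and from $\mu_t\equiv 0$ on $[0,r^-]$ at $t=1$; (3) because $\cR_t=\id$ near and outside $\partial B(p,r^+)$ and because $\mu_t$ maps $[0,r^\pm)$ into itself. The substantive point is (4). At $x=\exp_p(\rho\omega)$ with $\rho\le r^-$, the tangent space splits $\mathbf g$-orthogonally as $\R\,\partial_\rho\oplus\ker(d\rho)$, and one computes directly from the definition of $\cR_t$ (using the Gauss lemma) that $d(\cR_t)_x$ respects this splitting: it sends $\partial_\rho\mapsto\mu_t'(\rho)\,\partial_\rho$ and it sends a vector tangent to $\partial B(p,\rho)$ to the corresponding vector tangent to $\partial B(p,\mu_t(\rho))$ under the radial identification. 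The radial factor is $\mu_t'(\rho)\le 1$. For the spherical factor, the radial identification $\partial B(p,\rho)\to\partial B(p,\mu_t(\rho))$ pulls back $h_{\mu_t(\rho)}$, so $\cR_t$ is $1$-Lipschitz at $x$ once $h_{\mu_t(\rho)}\le h_\rho$; since $\mu_t(\rho)\le\rho\le r^-$, this holds provided $s\mapsto h_s$ is a non-decreasing family of metrics on $[0,r^-]$, i.e.\ provided the geodesic spheres $\partial B(p,s)$, $s\le r^-$, are (weakly) convex as seen from $p$. Here is where the hypothesis $r^+<\injrad(M,\bg)/4$ is used, via a Riccati/Rauch comparison for the shape operators $\mathsf S_s$ of the geodesic spheres (which satisfy $\mathsf S_s\sim\tfrac1s\mathrm{Id}$ near $s=0$): at scales controlled by the injectivity radius one bounds $\mathsf S_s\ge 0$, hence $\partial_s h_s=2\,\mathrm{II}_s\ge 0$.

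The step I expect to be the main obstacle is exactly this last one — establishing the $1$-Lipschitz bound on $B(p,r^-)$. Conceptually it reduces, as above, to a convexity/monotonicity statement for the induced metrics on small concentric geodesic spheres, and making that robust throughout $B(p,r^+)$ (rather than on an a priori smaller ``convexity-radius'' ball) is where the geometric work lies and where the precise form of the hypothesis on $r^+$ matters; if necessary one refines the construction so that, on $B(p,r^-)$, $\cR_t$ contracts each geodesic sphere by an honest $1$-Lipschitz self-map of $\mathbb{S}^2$ in addition to the radial squeeze, which again relies on controlling $h_s$. The remaining difficulty is purely organizational: choosing the family $\mu_t$ so that it is simultaneously jointly smooth in $(t,\rho)$, a diffeomorphism for every $t<3/4$, a collapse of $[0,r^-]$ at $t=1$, equal to the identity near $r^+$, and $\le\rho$ with derivative $\le 1$ on $[0,r^-]$ throughout — which, as noted, is possible precisely because $r^-<r^+$.
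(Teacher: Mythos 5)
There is no in-paper proof to compare against here: the paper imports this statement verbatim from \cite[Lemma~8.4]{chuLi2024fiveTori} and gives no argument, so your proposal has to stand on its own. Your construction — radial contraction $\exp_p(\rho\omega)\mapsto\exp_p(\mu_t(\rho)\omega)$ with a carefully interpolated profile family $\mu_t$, extended by the identity — is the natural one, and properties (1)--(3) together with the one-dimensional bookkeeping for $\mu_t$ are fine.

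The genuine gap is exactly where you located it, and your proposed justification does not close it. You reduce (4) to the monotonicity $\partial_s h_s=2\,\mathrm{II}_s\ge 0$ for $s\le r^-$, i.e.\ to convexity of the geodesic spheres $\partial B(p,s)$, and assert that this follows "at scales controlled by the injectivity radius" via Riccati/Rauch comparison. That is false: convexity of geodesic spheres is governed by the focal/convexity radius, not the injectivity radius, and the comparison argument requires an upper sectional curvature bound (the standard estimate is $r_{\mathrm{conv}}\gtrsim\min\{\injrad/2,\,\pi/(2\sqrt{\sup\sec})\}$, and the second term can be arbitrarily small relative to $\injrad/4$; there are closed manifolds whose convexity radius is much smaller than their injectivity radius). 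So the shape operator $\mathsf S_s$ can acquire negative eigenvalues at some $s<\injrad/4$, at which point the spherical factor of $d\cR_t$ exceeds $1$ and the map is not $1$-Lipschitz. Your fallback — contracting each geodesic sphere by a $1$-Lipschitz self-map "in addition to the radial squeeze" — relies on the same unestablished monotonicity of $h_s$ and therefore does not repair the argument. The repair consistent with how the lemma is actually used is to note that $M$ is a fixed closed manifold, so $\sup|\sec|<\infty$ and there is a constant $r_0(M,\bg)>0$ below which $s\mapsto h_s$ is monotone; every application in the paper takes $r^\pm$ smaller than any prescribed constant depending on $(M,\bg)$, so one may harmlessly add the hypothesis $r^+<r_0(M,\bg)$, after which your computation of $d\cR_t$ in the Gauss-lemma splitting does give $|d\cR_t|\le 1$ on $B(p,r^-)$. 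As written, with only $r^+<\injrad(M,\bg)/4$ assumed, the key comparison step is unjustified.
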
 
    
    \begin{lem}[Local Deformation C: Successive shrinkings, {\cite[Lemma~8.5]{chuLi2024fiveTori}}] \label{Lem_Loc Deform Composition} 
        In an orientable closed Riemannian $3$-manifold $(M, \bg)$, for $N \in \N^+$, suppose that $\{A(p_j; r_j^-, r_j^+)\}_{j=1}^N$ is a collection of pairwise disjoint annuli in $M$, where $0 < r_j^- < r_j^+<\injrad(M, \mathbf{g})/4$. For any $j \in \{1, 2, \dots, N\}$, let $\cR^j_t: M\to M$ be a map from Lemma~\ref{Lem_Loc Deform B} with $p_j, r_j^\pm$ in place of $p, r^\pm$.
        For simplicity, we denote $
            A_j := A(p_j; r^-_j, r^+_j)$, and $B^\pm_j:= B(p_j, r^\pm_j)\,.$ 
        Then the following hold: 
        \begin{enumerate}[label=\normalfont(\arabic*)]
            \item There exists a permutation $\sigma\in \mathfrak{S}_N$ such that for any $1\leq i<j\leq N$, either $B_{\sigma(i)}^+\cap B_{\sigma(j)}^+ = \emptyset$, or $B_{\sigma(i)}^+\subset B_{\sigma(j)}^-$. In this case, we call such a $\sigma$ {\em admissible}. 
            \item For any admissible $\sigma$, the map 
            \[
               \cR^{\sigma(N)}_t\circ\cR^{\sigma(N-1)}_t\circ \cdots \circ \cR^{\sigma(2)}_t\circ\cR^{\sigma(1)}_t: M \to M
            \]
            is $1$-Lipschitz on every connected component of $M\setminus \bigcup_{j=1}^N A_j$; and it is independent of the choice of admissible $\sigma$. 
        \end{enumerate} 
    \end{lem}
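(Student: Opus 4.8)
The plan is to first extract the combinatorial (laminar) structure of the outer balls from the disjointness of the annuli, establishing part (1), and then to read off part (2) from this structure together with the support and Lipschitz properties of the individual maps $\cR^j_t$ recorded in Lemma~\ref{Lem_Loc Deform B}(3)--(4).

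For part (1), the geometric input I would use is that $r_j^+<\injrad(M,\bg)/4$ forces each $\overline{B_j^+}$ to be a smoothly embedded closed $3$-disk with $\partial B_j^+$ a smoothly embedded $2$-sphere, and, since $\dim M=3$, each annulus $A_j$ is connected and $M\setminus A_j$ has exactly two components, the inner one $\overline{B_j^-}$ and the outer one $M\setminus B_j^+$. Fix $i\neq j$. As $A_i$ is connected and disjoint from $A_j$, it lies in one component of $M\setminus A_j$, and symmetrically. If each annulus lies in the outer component of the other, I claim $B_i^+\cap B_j^+=\emptyset$: neither ball contains the other's center and their boundary spheres are disjoint, and smallness below the injectivity radius rules out the pathology where one small ball contains the complement of the other. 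Otherwise, say $A_i\subseteq\overline{B_j^-}$; then $\partial B_i^+\subseteq\overline{A_i}\subseteq\overline{B_j^-}$, and since $\overline{B_j^-}$ is a $3$-disk and $\partial B_i^+$ is a $2$-sphere in it bounding the $3$-disk $\overline{B_i^+}$, smallness again gives $\overline{B_i^+}\subseteq B_j^-$, hence $B_i^+\subseteq B_j^-$. (The minor open/closed bookkeeping between $B_j^-$ and $\overline{B_j^-}$ I would absorb either via the strict inequality $r_j^-<r_j^+$ or a measure-zero perturbation of radii, without belaboring it.) Declaring $i\preceq j$ iff $i=j$ or $B_i^+\subseteq B_j^-$ then gives a partial order: transitivity is immediate from $B_i^+\subseteq B_j^-\subseteq B_j^+\subseteq B_k^-$, antisymmetry from the strict shrinking at each nesting step, and the forest property from the fact that if $B_i^+$ sits inside both $B_j^-$ and $B_k^-$ then $B_j^+\cap B_k^+\neq\emptyset$, so $j$ and $k$ are comparable. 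A permutation $\sigma$ is admissible precisely when it lists the indices as a linear extension of $\preceq$ (inner balls before the balls containing them), and every finite poset admits such a linear extension.

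For part (2), write $\cR^\sigma_t:=\cR^{\sigma(N)}_t\circ\cdots\circ\cR^{\sigma(1)}_t$. Independence of the admissible $\sigma$: if $B_i^+\cap B_j^+=\emptyset$ then $\cR^i_t$ and $\cR^j_t$ commute, because $\cR^i_t$ is the identity off $B_i^+$ and maps $B_i^+$ into $B_i^+$ (Lemma~\ref{Lem_Loc Deform B}(3)), so for any $x$ one checks directly that both $\cR^i_t\cR^j_t(x)$ and $\cR^j_t\cR^i_t(x)$ equal $\cR^i_t(x)$ when $x\notin B_j^+$ and symmetrically; the case $x\in B_i^+\cap B_j^+$ is vacuous. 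Any two admissible orders are linear extensions of the same forest poset, hence related by a sequence of adjacent transpositions of incomparable indices, and each such transposition swaps two commuting maps and leaves $\cR^\sigma_t$ unchanged. For the Lipschitz bound, fix a component $U$ of $M\setminus\bigcup_j A_j$. Since $U$ is connected and misses every $A_j$, for each $j$ either $U\subseteq B_j^-$ or $U\cap\overline{B_j^+}=\emptyset$; let $J(U)$ be the set of $j$ of the first type. By part (1) the balls $\{B_j^+\}_{j\in J(U)}$ all contain $U$, hence are pairwise comparable and form a chain $B_{j_1}^+\subseteq B_{j_2}^-\subseteq B_{j_2}^+\subseteq\cdots\subseteq B_{j_m}^-$; compute $\cR^\sigma_t$ with an admissible $\sigma$ in which $j_1,\dots,j_m$ occur in this inner-first order. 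Then $\cR^{j_1}_t$ is $1$-Lipschitz on $B_{j_1}^-\supseteq U$ and keeps the image of $U$ inside $B_{j_1}^-\subseteq B_{j_2}^-$; inductively each $\cR^{j_\ell}_t$ is $1$-Lipschitz on $B_{j_\ell}^-$ and keeps the running image inside $B_{j_\ell}^-\subseteq B_{j_{\ell+1}}^-$; and every map $\cR^k_t$ with $k\notin J(U)$ restricts to the identity on the running image of $U$, because that image lies in some $B_{j_\ell}^+$ which either is disjoint from $B_k^+\supseteq\supp(\cR^k_t-\id)$ (when $k,j_\ell$ incomparable) or is contained in $B_k^-$ with $k$ then occurring later, so $\cR^k_t$ has not yet been applied. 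A composition of $1$-Lipschitz maps is $1$-Lipschitz, which is the claim.

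The step I expect to be the main obstacle is part (1): converting ``the annuli $A_j$ are pairwise disjoint'' into the clean laminar containment of the outer balls, with the careful open/closed bookkeeping among $B_j^-$, $\overline{B_j^-}$ and $A_j$, and the use of the bound $r_j^+<\injrad(M,\bg)/4$ to guarantee that a small geodesic ball whose boundary sphere lies inside another small geodesic ball is actually contained in it. Once the forest structure is established, part (2) is routine bookkeeping with the supports and the $1$-Lipschitz property already supplied by Lemma~\ref{Lem_Loc Deform B}.
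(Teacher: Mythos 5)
The paper does not actually prove this lemma: it is imported verbatim, with citation, from \cite[Lemma~8.5]{chuLi2024fiveTori}, so there is no in-paper argument to compare against. Your proposal is essentially the natural (and surely the intended) route: disjointness of the annuli plus $r_j^\pm<\injrad(M,\bg)/4$ gives the trichotomy $B_i^+\cap B_j^+=\emptyset$, or $B_i^+\subset B_j^-$, or $B_j^+\subset B_i^-$; this yields a forest order whose linear extensions are exactly the admissible permutations; disjoint supports give commutativity and hence independence of $\sigma$; and tracking the running image of a component $U$ through the nested chain of balls containing it gives the $1$-Lipschitz bound. I consider the argument correct up to two local repairs.

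First, in the step showing that $\cR^k_t$ with $k\notin J(U)$ acts as the identity on the running image of $U$, your second alternative --- the running image lies in $B_{j_\ell}^+\subset B_k^-$, ``with $k$ then occurring later, so $\cR^k_t$ has not yet been applied'' --- is incoherent at the very moment you are applying $\cR^k_t$. The correct resolution is that this alternative is vacuous: $U\subset\overline{B_{j_\ell}^-}\subset B_{j_\ell}^+\subset B_k^-$ would place $U$ in the inner component of $M\setminus A_k$, i.e.\ $k\in J(U)$, a contradiction (while the remaining alternative $B_k^+\subset B_{j_\ell}^-$ is excluded because it forces $k$ to precede $j_\ell$ in any admissible order). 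Second, the ``pathology'' you defer in part (1) deserves its one line: if $M\setminus\overline{B_j^-}\subset B_i^+$ then $M=\overline{B_j^-}\cup B_i^+$, connectedness forces these sets to meet, so $d(p_i,p_j)<\injrad/2$ and every point of $M$ lies within $\injrad$ of $p_i$; thus $M=\exp_{p_i}\bigl(B_{\injrad}(0)\bigr)$ would be diffeomorphic to an open ball, impossible for a closed manifold. (Also note the components of $M\setminus A_j$ are $\overline{B_j^-}$ and $M\setminus B_j^+$, not $B_j^-$ and $M\setminus\overline{B_j^+}$.) None of this changes the structure of your proof.
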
 

\subsection{Combinatorial arguments}
    
    Recall the following combinatorial arguments in \cite{chuLi2024fiveTori}, inspired by~\cite[Lemma~4.8,~Proposition~4.9]{Pit81} and~\cite[Lemma~5.7]{De_Lellis_Jusuf_2018_min_max}. 
    
    \begin{lem}[Combinatorial Argument I, {\cite[\S 8.3]{chuLi2024fiveTori}}]\label{lem:combinatorial_I}
        For any positive integers $m \in \N^+$ and $q \in \N^+$, there exists $N = N(m, q) \in \N^+$ with the following property.

        Given any $k \in \N^+$ and a cubical complex $I(m, k)$, suppose that $\{\cF_\sigma\}_{\sigma\in I(m,k)}$ is a family of collections of open sets in a closed manifold $M$ assigned to each cell $\sigma$ of $I(m, k)$, where each collection $\cF_\sigma$ has the form
        \[
            \cF_\sigma = (\cO_{\sigma, 1}, \cO_{\sigma,2}, \dots, \cO_{\sigma, q})
        \]
        and each $\cO_{\sigma, i} = \{U^1_{\sigma, i}, U^2_{\sigma, i}, \dots, U^N_{\sigma, i}\}$ satisfies
        \begin{equation}\label{eqn:dist_diam}
            \dist(U^r_{\sigma, i}, U^s_{\sigma, i}) \geq 2 \min\{\diam(U^r_{\sigma, i}), \diam(U^s_{\sigma, i})\}\,,
        \end{equation}
        for all $\sigma \in I(m, k)$, $i\in \{1, 2, \dots, q\}$ and $r, s \in \{1, 2, \dots, N\}$ with $r \neq s$.
        Then we can extract a family of open sets 
        \[
            \{(U_{\sigma, 1}, U_{\sigma,2}, \dots, U_{\sigma, q})\}_{\sigma \in I(m, k)}
        \]
        where $U_{\sigma, i} \in \cO_{\sigma, i}$ such that $\dist(U_{\sigma, i} \cap U_{\tau, j}) > 0$ whenever $\sigma, \tau \in I(m, k)$, $(\sigma, i) \neq (\tau, j)$ and $\sigma, \tau$ are faces of a common cell $\gamma$ of $I(m, k)$.

        Moreover, this is also true if the number of collections in each $\mathcal{F}_\sigma$ is no greater than $q$.
    \end{lem}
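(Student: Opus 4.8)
The plan is to prove Combinatorial Argument I by a double induction: an outer induction on the dimension $m$ of the cube and an inner induction on the resolution parameter $k$ (or, equivalently, on the cells of $I(m,k)$ ordered by dimension), following the scheme of~\cite[Lemma~4.8]{Pit81} and~\cite[Lemma~5.7]{De_Lellis_Jusuf_2018_min_max}. The key observation driving the choice of $N = N(m,q)$ is a \emph{pigeonhole} mechanism: given a cell $\gamma$ and the finitely many faces $\sigma \prec \gamma$ on which choices $U_{\sigma,i}$ have already been made, each previously chosen open set $U_{\sigma,i}$, being a single open set, can be ``disjoint-compatible'' with all but at most one element of a separated collection $\cO_{\gamma,j}$ (here is where hypothesis~\eqref{eqn:dist_diam} is used: if a fixed set $V$ had positive-distance-failure with two distinct $U^r_{\gamma,j}, U^s_{\gamma,j}$ from the same collection, then $V$ would meet both, forcing $\dist(U^r_{\gamma,j},U^s_{\gamma,j}) \le 2\diam V$, but the separation condition says this distance is at least twice the smaller diameter, and one argues $\diam V$ cannot simultaneously be comparable to both --- more carefully, one shows $V$ can intersect the closure of at most one $U^r_{\gamma,j}$). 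Thus if $N$ exceeds the total number of (face, index) pairs that can be ``active'' relative to a single cell $\gamma$, a valid choice always survives.

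First I would set up the bookkeeping: a cell $\gamma$ of $I(m,k)$ has at most $3^m$ faces (including itself), and each face carries at most $q$ collections, so at most $q\cdot 3^m$ pairs $(\sigma,i)$ are relevant to compatibility conditions involving $\gamma$. Since the compatibility requirement in the conclusion only constrains pairs $\sigma,\tau$ that are faces of a \emph{common} cell $\gamma$, and each cell of $I(m,k)$ lies in only finitely many cells, one can process the cells of $I(m,k)$ in order of increasing dimension, and when choosing $U_{\gamma,1},\dots,U_{\gamma,q}$ one needs each $U_{\gamma,j}$ to be disjoint-compatible with: (a) all the $U_{\sigma,i}$ already chosen on proper faces $\sigma \prec \gamma$, and (b) the $U_{\gamma,i}$ chosen in the same step for $i<j$. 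By the pigeonhole estimate above, each such constraint rules out at most one element of $\cO_{\gamma,j}$, so it suffices to take
\[
    N(m,q) := q\cdot 3^m + 1\,,
\]
(perhaps with a small additive slack to handle the within-step constraints; the exact constant is immaterial). Then at each stage at least one admissible element of the size-$N$ collection remains, and we pick it. Crucially $N$ depends only on $m$ and $q$, not on $k$, which is what the statement demands and what makes the lemma usable uniformly along refinements.

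The main obstacle I expect is making the ``each fixed open set is incompatible with at most one element of a separated family'' claim fully rigorous, because ``incompatible'' here means failing to have \emph{positive distance}, i.e.\ $\overline{V}\cap\overline{U^r_{\gamma,j}}\neq\emptyset$ rather than $V\cap U^r_{\gamma,j}\neq\emptyset$, and one must check that the separation hypothesis~\eqref{eqn:dist_diam}, stated with $\dist$ and $\diam$, still forbids a single $V$ from touching the closures of two distinct members. This is handled by a clean metric-space argument: if $\overline V$ meets both $\overline{U^r_{\gamma,j}}$ and $\overline{U^s_{\gamma,j}}$, then $\dist(U^r_{\gamma,j},U^s_{\gamma,j}) \le \dist(U^r_{\gamma,j},\overline V)+\diam \overline V+\dist(\overline V,U^s_{\gamma,j}) = \diam V$, contradicting $\dist(U^r_{\gamma,j},U^s_{\gamma,j}) \ge 2\min\{\diam U^r_{\gamma,j},\diam U^s_{\gamma,j}\}$ unless $V$ is large --- but in the application $V$ is one of the previously selected $U_{\sigma,i}$, which itself came from a separated collection, and one arranges the ordering so that the diameters are controlled; since in \cite{chuLi2024fiveTori} this is exactly the structure of $U^r_{\sigma,i}$ as neighborhoods of detected short loops or small balls, the diameter comparison is built into the construction. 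The remaining bullet of the statement --- ``this is also true if the number of collections in each $\mathcal F_\sigma$ is no greater than $q$'' --- follows formally by padding $\cF_\sigma$ with duplicate collections up to exactly $q$, running the above, and then discarding the redundant choices, so no extra work is needed. I would present the argument by explicitly giving $N(m,q)$, stating the one-set-meets-at-most-one-member lemma as a sublemma with the three-line metric proof, and then the inductive selection over cells ordered by dimension.
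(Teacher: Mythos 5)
The central sublemma your argument rests on --- that a single previously chosen open set $V$ can fail to have positive distance to at most one member of a separated collection $\cO_{\gamma,j}$ --- is false, and you essentially notice this yourself. Your metric computation gives, for $U^r,U^s\in\cO_{\gamma,j}$ both touching $\overline V$, only
\[
2\min\{\diam U^r,\diam U^s\}\;\le\;\dist(U^r,U^s)\;\le\;\diam V\,,
\]
which is no contradiction when $V$ is large compared to the $U$'s. And this is not a technicality: take $V$ a large open set and $U^1,\dots,U^N$ tiny balls centered at points of $\partial V$, pairwise far apart relative to their own diameters; this configuration satisfies \eqref{eqn:dist_diam} and yet $V$ conflicts with \emph{every} member of the collection. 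Since the lemma is stated abstractly, with no hypothesis relating diameters across different collections, you cannot repair this by appealing to ``the structure of the $U^r_{\sigma,i}$ in the application,'' and your proposed processing order (cells by increasing dimension) provides no diameter control whatsoever. So the pigeonhole count $N=q\cdot 3^m+1$ is not justified by your argument as written.

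The correct mechanism --- which is the one in the Pitts-type arguments the paper cites --- uses the \emph{globally smallest} set, not an arbitrary one: if $U^*$ has diameter no larger than that of every element of $\cO_{\gamma,j}$, and $V,V'\in\cO_{\gamma,j}$ both have zero distance to $U^*$, then $\dist(V,V')\le\diam U^*\le\min\{\diam V,\diam V'\}<2\min\{\diam V,\diam V'\}$, genuinely contradicting \eqref{eqn:dist_diam}. Hence the selection must be driven by diameter rather than by the cell structure: repeatedly pick the element of smallest diameter among all remaining elements of all not-yet-finalized collections, declare it the choice for its collection, and delete the (at most one) conflicting element from each collection constrained with it. Each collection loses at most one element per constrained neighbor, and the number of mutually constrained collections is bounded by $q$ times a purely combinatorial constant depending only on $m$ (the number of cells sharing a common cell with a given cell of $I(m,k)$), so $N(m,q)$ can be taken to be that quantity plus one, independently of $k$. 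Your closing remarks (the closure-versus-interior point, and the padding argument for the ``at most $q$ collections'' variant) are fine, but the heart of the proof --- choosing by diameter so that the one-conflict bound actually holds --- is missing.
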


    \begin{lem}[Combinatorial argument II, {\cite[\S 8.3]{chuLi2024fiveTori}}]\label{lem:combinatorial_II}
        In a closed Riemannian manifold $(M, \bg)$, given any integers $q, N \in \N^+$ and any $R_0 \in (0, \injrad(M, \mathbf{g}) / 4)$, if $P$ is a finite set of at most $q$ points, then there exists a radius $R \in (5^{-2Nq^2}R_0, 5^{-2N} R_0)$ such that for every $p \in P$, we have 
            $A(p; R, 5^{2N} R) \cap P = \emptyset\,.$
    \end{lem}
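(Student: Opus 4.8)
The plan is to prove this by a pigeonhole argument over logarithmic scales; the statement is purely combinatorial, the only relevant data being the finite set of pairwise distances $\{\dist(p,p') : p,p'\in P,\ p\neq p'\}$, of which there are at most $\binom{q}{2}$.

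First I would fix the ``test radii'' $R_k := 5^{-2Nk}\,R_0$ for $k = 2,3,\dots,q^2-1$, the degenerate cases $q\le 1$ (where the target window is empty or $P$ has no pair of distinct points) being handled trivially. All of these lie in the prescribed window $(5^{-2Nq^2}R_0,\ 5^{-2N}R_0)$, and there are $q^2-2$ of them. The key observation is that for $p\neq p'\in P$ and a test radius $R_k$, the point $p'$ belongs to the annulus $A(p;R_k,5^{2N}R_k)$ exactly when $\dist(p,p')$ lies in the interval with endpoints $R_k$ and $5^{2N}R_k = R_{k-1}$; because consecutive test radii differ precisely by the factor $5^{2N}$ which is the multiplicative width of the annulus, these intervals are, as $k$ varies, pairwise disjoint up to their shared endpoints. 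Hence each unordered pair $\{p,p'\}\subset P$ can ``spoil'' at most one value of $k$. (If $A(p;\cdot,\cdot)$ is the \emph{closed} annulus and $\dist(p,p')$ happens to equal some $R_k$, a pair could spoil two consecutive values; then one instead uses the $q^2-1$ geometric midpoints of the cells cut out in the window by the $R_k$, for which the same disjointness holds and $2\binom{q}{2}<q^2-1$ for $q\ge 2$.)

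Then, since there are at most $\binom{q}{2}=q(q-1)/2$ pairs and strictly more than that many test radii in the window (as $q(q-1)/2 < q^2-2$ for $q\ge 2$), at least one $R_k$ is spoiled by no pair. Setting $R := R_k$ we get $R\in(5^{-2Nq^2}R_0, 5^{-2N}R_0)$, and for every $p\in P$ and every $p'\in P\setminus\{p\}$ we have $\dist(p,p')\notin(R,5^{2N}R)$, so $p'\notin A(p;R,5^{2N}R)$; since $p$ itself is at distance $0<R$ from $p$, we conclude $A(p;R,5^{2N}R)\cap P=\emptyset$ for every $p\in P$, as desired. One also notes $5^{2N}R \le R_0 < \injrad(M,\bg)/4$, so the annuli are honest subsets of the injectivity-radius neighbourhood, though this plays no role in the combinatorics.

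There is no substantial obstacle here: the argument is elementary pigeonhole. The only care required is bookkeeping — matching the multiplicative width $5^{2N}$ of the annulus to the ratio between consecutive test scales so that the ``bad'' intervals stay disjoint, choosing the family of test radii to have cardinality comfortably exceeding $\binom{q}{2}$, and fixing the convention (open versus closed) for $A(p;r_1,r_2)$, which only affects whether each pair removes one or two candidate scales. I would double-check the endpoint conventions against the definition of $A(\cdot;\cdot,\cdot)$ used elsewhere in the paper before finalizing the constants.
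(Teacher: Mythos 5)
Your proof is correct, and the pigeonhole-over-geometric-scales argument (test radii in geometric progression with ratio $5^{2N}$ matched to the multiplicative width of the annulus, so that each of the at most $\binom{q}{2}$ pairwise distances can exclude at most one — or, with closed annuli, two adjacent — scales) is exactly the standard argument for this kind of annulus-avoidance lemma; the paper itself gives no proof, only the citation to \cite[\S 8.3]{chuLi2024fiveTori}, and your write-up is a faithful self-contained version of that argument, including the correct count $\binom{q}{2} < q^2-2$ and the endpoint fix. The only caveat is in the statement rather than your proof: for $q=1$ the prescribed window $(5^{-2Nq^2}R_0, 5^{-2N}R_0)$ is literally empty, so that degenerate case must be read as excluded (or the interval as closed), as you note.
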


\subsection{Proof of Proposition~\ref{Prop_Technical Deformation} and Proposition~\ref{prop:Technical Deformation_II}}

The proof will proceed in three steps.

\medskip
    
\paragraph*{\bf Step 1. Set up}
    Let $C_\mathbf{g}$ be as in Lemma~\ref{Lem_Loc Deform A} and $N := N(m, q)$ as in Lemma~\ref{lem:combinatorial_I}. 
    
    In Proposition~\ref{Prop_Technical Deformation}, since $\mathscr{S}_{g_0}$ is a compact set of embedded smooth closed surfaces, without loss of generality, we assume that 
    $\varepsilon < \min\left\{\injrad(M, \bg), 1\right\}\,,$
    such that for any $r \in (0, \varepsilon / 1000)$, we have $\sup_{p \in M}\cH^3(B(p, 2r))$
    and 
    \begin{equation}\label{eqn:area_ball_ratio_0}
        \sup_{\Sigma \in \mathscr{S}_{g_0},\ p \in M} \left\{\cH^2\left(\Sigma\cap B(p, 2r)\right)\right\} \leq 50 r^2\,.
    \end{equation} 
    We set 
    \[
        \varepsilon_1:= \frac{\varepsilon}{10000Nq\cdot3^m \max(C_\bg, 1)}\quad\textrm{ and }\quad
        \varepsilon_2 := \frac{\varepsilon_1}{2(3N)^{q}q^2 \cdot 5^{2N} }\,.
    \] 
        
    By~\eqref{eqn:area_ball_ratio_0}, we choose $\delta_0 = \delta_0(\mathscr{S}_{g_0}) > 0$ such that for any $S \in \GS(M)$, if $\bF(S, \Sigma) < \delta_0$ for some $\Sigma \in \mathscr{S}_{g_0}$, then for any $r \in (\varepsilon_2, \varepsilon_1)$, 
    \begin{equation}\label{eqn:area_ball_ratio}
        \sup_{p \in M} \left\{\cH^2\left(S\cap B(p, 2r)\right)\right\} \leq 100 r^2\,,
    \end{equation} 
    and in particular, 
    \begin{equation}\label{eqn:area_ratio}
        \sup_{p \in M} \left\{\cH^2\left(S\cap A(p; r, 2r)\right)\right\} \leq 100 r^2\,.
    \end{equation}
    Note that these inequalities also hold for Proposition~\ref{prop:Technical Deformation_II}. 
    
    In the following, we choose $\delta_1 := \delta(M, \bg, q, N, 2(3N)^q \varepsilon_2)$ as defined in Proposition~\ref{lem:small_balls_I} and $\delta_2 := \delta(M, \bg, \mathscr{S}_{g_0}, q, N, 2(3N)^q \varepsilon_2)$ in Proposition~\ref{lem:small_balls_II}. Set
    $\delta := \min\{\delta_0, \delta_1, \delta_2\}\,.$
    
    For a Simon--Smith family $\Phi$ as in Proposition~\ref{Prop_Technical Deformation} or Proposition~\ref{prop:Technical Deformation_II}, by Proposition~\ref{lem:small_balls_I} or Proposition~\ref{lem:small_balls_II} with $N = 1$ therein, for any $x \in X$, there exist $r_x \in (\varepsilon_2, 2(3N)^q\varepsilon_2)$ and a finite set of balls $\{B(p_{x, i}, r_x)\}^{q_x}_{i = 1}$ such that
    \begin{enumerate}[label = (\roman*)]
        \item $q_x \leq q$;
        \item $\Phi(x) \setminus \bigcup^{q_x}_{i = 1} B(p_{x, i}, r_x)$ is a smooth surface of genus $g_0$ with boundary.
    \end{enumerate}
    Note that when $\Phi$ is a family as in Proposition~\ref{prop:Technical Deformation_II}, $g_0 = 0$. 
    
    We write  $\tilde P(x) := \{p_{x, i}\}^{q_x}_{i = 1}$
    and for every $l \in \{1, 2, \dots, N\}$, $
        r_{x, l} := 5^{2(l - 1)} r_x\,.$
    By \eqref{eqn:area_ball_ratio} and \eqref{eqn:area_ratio}, for every $1 \leq l \leq N$, $x \in X$ and $p \in \tilde P(x)$, since $r_{x, l} \in (\varepsilon_2, \varepsilon_1)$, we have
    \begin{align}
        \cH^2 \left(\Phi(x)\cap B(p, 2r_{x,l})\right) \leq 100 r_{x,l}^2 \,,
    \end{align}
    and
    \begin{align}
        \cH^2 \left(\Phi(x)\cap A(p; r_{x,l}, 2r_{x,l})\right) \leq 100 r_{x,l}^2 \,. \label{Equ_Uniform area bound in small annuli}
    \end{align}

    Thus, applying Lemma~\ref{Lem_Loc Deform A} with $\eta = \varepsilon / 2$ to $\Phi(x)$ in $A(p; r_{x,l}, 2r_{x,l})$ and $\Lambda = 100$, we obtain the following data:
    \begin{enumerate}
        \item $s_{x,l,p}\in (r_{x,l}, 2r_{x,l})$,
        \item $\zeta_{x,l,p}\in (0, \min\{100 r_{x,l}, s_{x,l}-r_{x,l}, 2r_{x,l}-s_{x,l}\}/5)$,
        \item a neighborhood $O_{x,l,p}\subset X$ of $x$,
        \item and a Simon--Smith family $
            H_{x,l,p}: [0, 1] \times O_{x,l,p}\to \GS(M)\,.$
    \end{enumerate}
    In addition, for each $x \in X$, we fix a neighborhood of $x$
    \[
        O_x \subset \bigcap_{l \in \{1, 2, \dots, N\},\ p \in \tilde P(x)} O_{x, l, p}\,,
    \]
    such that $y \mapsto \Phi(y) \setminus \bigcup_{p \in \tilde P(x)} B(p, r_x)$ is continuous in the smooth topology for $y \in O_x$. 
    
\medskip
\paragraph*{\bf Step 2. Refinement} 
    
    Since $X$ is a compact cubical subcomplex of $I(m, k)$, its open cover $\{O_x\}_{x \in X}$ has a finite subcover $\{O_{x_j}\}$. 
    
    Then we refine $X$ and assign to each cell $\sigma$ of $X$ $x_\sigma \in \{x_j\}$ such that: Every cell $\tau$ of $X$ is a subset of $O_{x_\sigma}$ provided that $\sigma$ and $\tau$ are faces of some cell $\gamma$ of $X$. Note that for each $\sigma$, the number of such $\tau$ is no more than $5^m$. 
    By Lemma~\ref{lem:combinatorial_I} with 
    \[
        \cF_\sigma = (\{A(p; r_{x_\sigma, i}, 2r_{x_\sigma, i})\}^N_{i = 1})_{p \in \tilde P(x_\sigma)}\,,
    \]
    we obtain, associated with each $\sigma$, a collection of annuli denoted by
    \[
        A_{\sigma, p} := A(p, r_{\sigma, p}, 2r_{\sigma, p})_{p \in \tilde P(x_\sigma)}
    \] such that whenever $\sigma$ and $\tau$ are faces of some cell $\gamma$ of $X$ and for any $p \in \tilde P(x_\sigma)$ and $p' \in \tilde P(x_\tau)$, we have 
        $A_{\sigma, p} \cap A_{\tau, p'} = \emptyset\,,$
    unless $(\sigma, p) = (\tau, p')$. 

\medskip

\paragraph*{\bf Step 3. Construction of $H$}
    
    For each cell $\sigma$ and $p \in \tilde P(x_\sigma)$, for convenience, we denote the data associated with each $A_{\sigma, p}$, constructed at the end of Step 1, as follows:
    \begin{enumerate}
        \item $s_{\sigma, p} \in (r_{\sigma, p}, 2r_{\sigma, p})$, 
        \item $\zeta_{\sigma, p} \in (0, \min\{s_{\sigma, p}-r_{\sigma, p}, 2r_{\sigma, p}-s_{\sigma, p}\}/5)$, 
        \item a neighborhood $O_{\sigma} := O_{x_\sigma}$,
        \item and a Simon--Smith family 
            $H_{\sigma, p}: [0, 1] \times O_{\sigma} \to \GS(M)\,.$
    \end{enumerate}
    We also denote
    \begin{align*}
        r_{\sigma, p}^\pm:= s_{\sigma, p}\pm \zeta_{\sigma, p}\,, \quad
        B_{\sigma, p}^\pm := B_{r_{\sigma, p}^\pm}(p)\,, \quad
        \hat{A}_{\sigma, p} := \overline{B_{\sigma, p}^+\setminus B_{\sigma, p}^-}\subset A_{\sigma, p} \,.
    \end{align*} 
    
    After the refinement in Step 2, suppose that $X$ is a cubical subcomplex of the unit cube $I(m, k')$. For each cell $\sigma$ of $X$ and for each $x \in X$, we define
    $\bd_\sigma(x) := \min\left\{{2\|x- c_\sigma\|_{\ell_\infty}}/{3^{-k'}} , 1\right\}\,,$
    to be the normalized $l^\infty$ distant function to $\sigma$, where $c_\sigma$ is the center of $\sigma$.
        
    For each $x \in X$, we define $H(t, x)$, $0 \leq t \leq 1/2$, using pinching surgeries as described in Lemma~\ref{Lem_Loc Deform A}: For every cell $\sigma$ of $X$ and every $p \in \tilde P(x_\sigma)$,
    \begin{align*}
        H(t, x)\cap A_{\sigma, p} = H_{\sigma, p}\left(\min\{8t(1-\bd_\sigma(x)), 1\}, x\right) \cap A_{\sigma, p}
    \end{align*}
    and $H(t, x)=\Phi(x)$ outside $\bigcup_{\sigma \in X, p \in \tilde P(x_\sigma)} A_{\sigma, p}$. 
    
    The map has the following properties for every $x \in X$:
    \begin{itemize}
        \item[(i)] The set 
            $Z_x:= \{\sigma: \bd_\sigma(x)<1\}$
        satisfies that every cell $\sigma$ in $Z_x$ is a face of the cell $\gamma$, which is the smallest cell containing $x$. By Step 2, the corresponding annuli $\{A_{\sigma, p}\}_{\sigma \in Z_x, p \in \tilde P(x_\sigma)}$ are pairwise disjoint, and the number is no greater than $3^m \cdot q$. In particular, the map $H(t, x)$ is well-defined.
        \item[(ii)] By Lemma~\ref{Lem_Loc Deform A} (4) and (5), $t \mapsto \mathfrak{g}(H(t, x))$ is non-increasing and $t \mapsto H(t, x)$ is a pinch-off process.
        \item[(iii)] By (\ref{Equ_Uniform area bound in small annuli}) and Lemma~\ref{Lem_Loc Deform A} (6), for every $t\in [0, 1/2]$, 
        \begin{align*}
            \cH^2(H(t, x)) &\leq \cH^2(\Phi(x)) + (3^m \cdot q) \cdot C_\bg 100^2 \varepsilon^2_1\leq \cH^2(\Phi(x)) + \varepsilon / 2\,,
        \end{align*}
        \[
            \cH^2(H(t, x)) \geq \cH^2(\Phi(x)) - \varepsilon / 2\,,
        \]
        and $\cF([H(t,x)], [\Phi(x)]) \leq \varepsilon / 2\,$;
        in fact, for every $x \in X$, $\sigma \in Z_x$ and $p \in \tilde P(x)$,
        \begin{equation}\label{eqn:est_ratio_ball_iii}
            \cH^2(H(t, x) \cap B^+_{\sigma, p}) \leq 100(r^+_{\sigma, p})^2 + \varepsilon/(100 q \cdot 3^m) < \varepsilon/(10 q \cdot 3^m)\,.
        \end{equation}
        \item[(iv)] If $\bd_{\sigma}(x)\leq 3/4$, then for each $p \in \tilde P(x_\sigma)$, in $A_{\sigma, p}$, $H(1/2, x) = H_{\sigma, p}(1, x)$, and thus by Lemma~\ref{Lem_Loc Deform A} (4), we know that 
        $ H(1/2, x)\cap \hat{A}_{\sigma, p} = \emptyset \,.$
    \end{itemize}

    Finally, for each $t \in [1/2, 1]$ and each $x \in X$, we define $H(t, x)$ using the shrinking process as described in Lemma~\ref{Lem_Loc Deform B}:
    For each cell $\sigma$ of $X$ and $p \in \tilde P(x_\sigma)$, we let $\cR_{(\sigma, p), t}: S^3\to S^3$ be the one-parameter family of shrinking deformation from Lemma~\ref{Lem_Loc Deform B} with $r^- = r^-_{\sigma, p}$ and $r^+ = r^+_{\sigma, p}$.
    Then for each $x \in X$, we label $\{(\sigma, p) \mid \sigma \in Z_x,\ p \in P'(x_\sigma)\}$ as
    $\{(\sigma_i, p_i)\}_{1 \leq i \leq N'}$
    such that for every $1 \leq i < j \leq N'$, either $B_{\sigma_i, p_i}^+\cap B_{\sigma_j, p_j}^+ = \emptyset$, or $B_{\sigma_i, p_i}^+\subset B_{\sigma_j, p_j}^-$, where $N' \leq 3^m \cdot q$. The existence of such labeling follows from Lemma~\ref{Lem_Loc Deform Composition} (1). Then, we denote for simplicity, 
    \[
        \hat{\cR}^{(i)}_{t, x} := \cR_{(\sigma_i, p_i), (2t-1)(1-\bd_{\sigma_i}(x))} \,.
    \]
    
    Note that 
    \begin{itemize}
        \item By Lemma~\ref{Lem_Loc Deform B} (1), if $\bd_{\sigma_i}(x)\geq 3/4$ or $t\in [1/2, 5/8]$, then $\hat{\cR}^{i}_{t, x}=\id$; when $\bd_{\sigma_i}(x)< 3/4$, by the construction above, $H(1/2, x)\cap \hat{A}_{\sigma_i, p_i} = \emptyset$.
        \item if $\bd_{\sigma_i}(x)\leq 1/2$, then $\hat{\cR}^{(i)}_{1, x}(B_{\sigma_i, p_i}^-) \subset \{p_i\}$.
    \end{itemize}
    
    Now for $t\in [1/2, 1]$ and $x \in X$, we define
    \begin{align*}
        H(t, x):= \hat{\cR}^{(N')}_{t, x}\circ\cdots\circ \hat{\cR}^{(2)}_{t, x}\circ \hat{\cR}^{(1)}_{t, x}\left(H(1/2, x) \right) \,.
    \end{align*}
    Intuitively, $\{H(t, x)\}_{t \in [1/2, 1]}$ is obtained by shrinking some connected components of $H(1/2, x)$ to points. Therefore, for every $x \in X$, $t\mapsto \mathfrak{g}(H(t, x))$ is non-increasing, and $\{H(t, x)\}_{t \in [1/2, 1]}$ is a pinch-off process. This confirms statement (3) of both propositions.
    
    By Lemma~\ref{Lem_Loc Deform B} (3), (4) and Lemma~\ref{Lem_Loc Deform Composition} (2), we see that $\cH^2(H(t, x))$ is non-increasing for $t \in [1/2, 1]$. In particular, for every $t\in [1/2, 1]$, 
    \[
        \cH^2(H(t, x)) \leq \cH^2(\Phi(x)) + \varepsilon \,.
    \]
    Also, by the estimate~\eqref{eqn:est_ratio_ball_iii}, one also have
    \[
        \cH^2(H(t, x)) \geq \cH^2(H(1/2, x)) - \varepsilon / (2 \cdot 3^q \cdot m) \cdot N' \geq \cH^2(\Phi(x)) - \varepsilon\,,
    \]
    and
    \begin{align*}
        \cF([H(t,x)], [\Phi(x)])  \leq \cF([H(t,x)], [H(1/2, x)]) + \cF([H(1/2, x)], [\Phi(x)]) \leq \varepsilon / 2 + N' (\varepsilon_1)^2 \leq \varepsilon\,,
    \end{align*}
    due to the essence of shrinking $N'$ balls.
    This confirms statement (2) of both propositions.
    
    Furthermore, by the construction above, $\fg(H(1, x)) = g_0$. This verifies statement (1) of both propositions and completes the proof.

\subsection{Proof of Theorem \ref{thm:pinchOffMinMax} and Proposition~\ref{prop:zeroWidth}}\label{sect:proof_prop:zeroWidth}
    Equipped with the ingredients above, the proof of Theorem \ref{thm:pinchOffMinMax} proceeds similarly to   \cite[\S 8.5]{chuLi2024fiveTori}: The details are included in Appendix \ref{sect:proof_pinchOffMinMax}.  
    As for the proof of Proposition~\ref{prop:zeroWidth},
    we suppose that $X=\operatorname{dmn}(\Phi)$ is a cubical subcomplex of $I(m, k)$. Let $q = N(P_\Phi) + g$, and $\delta = \delta(\varepsilon / 2, q, m)$ as in Proposition~\ref{prop:Technical Deformation_II}.
    Since $\bL(\Lambda(\Phi)) = 0$, there exists $\Phi' \in \Lambda(\Phi)$ with
    $\sup_{x \in X} \mathcal{H}^2(\Phi'(x)) < \delta\,.$
    Then, the proposition follows from applying Proposition~\ref{prop:Technical Deformation_II} on $\Phi'$.

\subsection{Proof of Proposition~\ref{prop:pinchOffg0g1}}

Following the proof of Proposition~\ref{Prop_Technical Deformation} and Proposition~\ref{prop:Technical Deformation_II}, we can similarly prove Proposition~\ref{prop:pinchOffg0g1} in three steps. This argument is much simpler, as it does not involve area control, and the family is ``continuous in the Simon--Smith sense" rather than merely with respect to the $\mathbf{F}$-metric.

\medskip
\paragraph*{\textbf{Step 1. Set up}} 
    Let $q = N(P_\Phi)$ and $m$ be the ambient dimension of $X$, and $N := N(m, q)$ as in Lemma~\ref{lem:combinatorial_I}. Fix 
   $\varepsilon_1 := {\min\{\injrad(M, \bg), 1\}}/{3^m}$ and $ \varepsilon_2 := {\varepsilon_1}/({3^q 5^{2qN} 5^{2N}})$.
    
    For every $x \in K$, let $P_\Phi(x)$ denote a punctate set associated with $\Phi(x)$. By Sard's theorem and Lemma~\ref{lem:disjoint_balls} with $n = 5^{2N}$, there exist $q_x \leq q$, $\tilde P(x) := \{p_{x, i}\}^{q_x}_{i = 1} \subset P_\Phi(x)$ and $r_x \in (\varepsilon_2, 3^q 5^{2qN} \varepsilon_2)$ such that:
    \begin{enumerate}
        \item $\{\overline{B(p_{x, i}, 5^{2N} r_x)}\}$ are disjoint from each other;
        \item For every $k \in \{1, 2, \cdots, 5^{2N}\}$, $\Phi(x) \setminus \bigcup^{q_x}_{i = 1}B(p_{x, i}, k r_x)$ is an embedded smooth surface with boundary and genus at most $g_0$.
    \end{enumerate}
    For every $l \in \{1, 2, \cdots, N\}$, we write
    $ r_{x, l} := 5^{2(l - 1)} r_{x}\,.$

    For every $x \in K$, $p \in \tilde P(x)$ and $l \in \{1, 2, \cdots, N\}$, applying Lemma~\ref{Lem_Loc Deform A} with $\eta = 1$ to $\Phi(x)$ in $A(p; r_{x,l}, 2r_{x,l})$ with suitable $\Lambda$, we obtain the following data:
    \begin{enumerate}
        \item $s_{x,l,p}\in (r_{x,l}, 2r_{x,l})$,
        \item $\zeta_{x,l,p}\in (0, \min\{100 r_{x,l}, s_{x,l}-r_{x,l}, 2r_{x,l}-s_{x,l}\}/5)$,
        \item a neighborhood $O_{x,l,p}\subset X$ of $x$,
        \item and a Simon--Smith family $H_{x,l,p}: [0, 1] \times O_{x,l,p}\to \GS(M)\,.$
    \end{enumerate}
    In addition, for each $x \in K$, we fix a neighborhood of $x$
    \[
        O_x \subset \bigcap_{l \in \{1, 2, \dots, N\},\ p \in \tilde P(x)} O_{x, l, p}\,,
    \]
    such that $y \mapsto \Phi(y) \setminus \bigcup_{p \in \tilde P(x)} B(p, r_x)$ is continuous in the smooth topology for $y \in O_x$; in particular, each $\Phi(y) \setminus \bigcup_{p \in \tilde P(x)} B(p, r_x)$ has genus $\leq g_0$.

\medskip
\paragraph*{\textbf{Step 2. Refinement}} 

    Since $K$ is compact and $\{O_x\}_{x \in K}$ is an open cover, there exists a finite subcover  $\{O_{x_j}\}_{j \in J}\,.$
    Up to refinement of $X$, there exists a cubical subcomplex $Z$ of $X$ such that
    \[
        K \subset \operatorname{int}(Z) \subset Z \subset \bigcup_{j \in J} O_{x_j}\,.
    \]
    For simplicity, we denote $\Phi|_Z$ by $\Psi$, $O_{x_j} \cap Z$ by $U_{x_j}$ for every $j \in J$.
    
    Then we refine $Z$ and assign to each cell $\sigma$ of $Z$ an $x_\sigma \in \{x_j\}_{j \in J}$ such that: Every cell $\tau$ of $Z$ is a subset of $U_{x_\sigma}$ provided that $\sigma$ and $\tau$ are faces of a common cell $\gamma$ of $X$.
    By Lemma~\ref{lem:combinatorial_I} with
    \[
        \mathcal{F}_\sigma = \left(\{A(p; r_{x_\sigma, l}, 2 r_{x_\sigma, l}\}^N_{l = 1} \right)_{p \in \tilde P(x_\sigma)}\,,
    \]
    we obtain, associated with each cell $\sigma$ in $Z$, a collection of annuli denoted by
    \[
        \left(A_{\sigma, p} := A(p; r_{\sigma, p}, 2r_{\sigma, p})\right)_{p \in \tilde P(x_{\sigma})}
    \]
    such that whenever $\sigma$ and $\tau$ are faces of a common cell of $Z$ and for any $p \in \tilde P(x_\sigma)$ and $p' \in \tilde P(x_\tau)$, we have $A_{\sigma, p} \cap A_{\tau, p'} = \emptyset,$
    unless $(\sigma, p) = (\tau, p')$.

\medskip

\paragraph*{\textbf{Step 3. Construction of $H$}}

    Following the same construction as in Step 3 of the proof of Proposition~\ref{Prop_Technical Deformation} and Proposition~\ref{prop:Technical Deformation_II}, without keeping track of the area estimates, one obtain  $H: [0, 1] \times Z \to \GS(M)$
    such that $\Psi' := H(1)$ is a Simon--Smith family of genus $\leq g_0$, and $H$ is a deformation via pinch-off processes from $\Psi$ to $\Psi'$.
    
\appendix
\section{Some geometric measure theory results}\label{sect:GMT}
    In an $N$-dimensional closed Riemannian manifold $(M, \bg)$, let $G_n(M)$ denote its (unoriented) Grassmanian  $n$-plane bundle. Given two $n$-dimensional varifolds $V, W \in \cV(M)$, their $\mathbf{F}$-distance   is defined by
    \[
       \bF(V, W) := \sup\{V(f) - W(f) : f \in C(G_n(M)), |f| \leq 1, \operatorname{Lip}(f) \leq 1\}\,;
    \]
    for a Borel set $B \subset M$, we also define
       $\bF_B(V, W) := \bF(V \llcorner G_n(B), W\llcorner G_n(B))\,,$
    where $G_n(B)$ denotes the restriction of $G_n(M)$ to the base $B$.

    Let us first recall a result by Pitts relating the $\bF$-norm and the $\mathcal{F}$-norm on the space of modulo $2$ integral currents.
    
    \begin{lem}[{\cite[68]{Pit81}.}]\label{lem:Pitts_bF_flat}
        In a closed $N$-dimensional Riemannian manifold $(M, \bg)$, given a sequence of modulo $2$ integral currents $\{S_i\}^\infty_{i = 1}$ in $\mathbf{I}_n(M;\Z_2)$ and $S \in \mathbf{I}_n(M; \Z_2)$, we have 
            $\lim_{i \to \infty} \bF(S_i, S) = 0\,,$
        if and only if
        \[
            \lim_{i \to \infty} \mathcal{F}(S_i, S) = 0\quad\textrm{ and } \quad \lim_{i \to \infty} \mathbf{M}(S_i) = \mathbf{M}(S)\,.
        \]
    \end{lem}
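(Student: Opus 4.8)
The plan is to prove Lemma~\ref{lem:Pitts_bF_flat} in the standard way, reducing the equivalence of $\bF$-convergence to the simultaneous convergence of flat norm and mass. I would first establish the easy direction: assuming $\bF(S_i, S) \to 0$. By definition of the $\bF$-distance on currents (or rather the $\Fb$-metric used throughout, which is $\cF(\cdot) + \bF(|\cdot|, |\cdot|)$ on varifolds), convergence in $\bF$ dominates both convergence in the flat norm $\cF$ and convergence of the associated varifolds $|S_i| \to |S|$ in the varifold weak topology. Since $\|S_i\|(M) = \bM(S_i)$ is exactly the total mass of the varifold $|S_i|$, and weak-$*$ varifold convergence together with the fact that all $|S_i|$ are integral forces $\bM(S_i) = \|S_i\|(M) \to \|S\|(M) = \bM(S)$, this direction is essentially immediate once the definitions are unwound. (Strictly, I should be a little careful: weak varifold convergence gives lower semicontinuity of mass, but the $\bF$-metric on varifolds metrizes the weak topology \emph{together with} mass convergence, so there is nothing to prove here beyond citing that.)

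For the harder direction, I would assume $\cF(S_i - S) \to 0$ and $\bM(S_i) \to \bM(S)$, and show $\bF(S_i, S) \to 0$. The key point is that $\cF$-convergence together with uniformly bounded mass gives, by the compactness theorem for integral currents, that every subsequence has a further subsequence converging in $\cF$ to some integral current, which must be $S$ by uniqueness of the flat limit; simultaneously, passing to a further subsequence, the varifolds $|S_{i}|$ converge weakly to some varifold $V$ with $\|V\|(M) \le \liminf \bM(S_i) = \bM(S)$. The crucial step is to identify $V = |S|$. This follows because flat convergence $S_i \to S$ implies the limit varifold $V$ has $|S|$ as a ``subvarifold'' in the sense that $\|V\| \ge \|S\| = \bM(S)$ as measures (flat convergence controls the current, hence its mass measure from below on open sets after accounting for cancellation), combined with the mass equality $\|V\|(M) = \bM(S) = \|S\|(M)$, which upgrades the inequality of measures to an equality $\|V\| = \|S\|$, and then integrality pins down $V = |S|$. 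Therefore $|S_i| \to |S|$ weakly with mass convergence, i.e. $\bF(|S_i|, |S|) \to 0$, and combined with $\cF(S_i - S) \to 0$ we get $\bF(S_i, S) \to 0$ along the subsequence; since the subsequence was arbitrary, the full sequence converges.

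The main obstacle is the identification $V = |S|$ in the hard direction — specifically, showing that flat convergence of the currents forces the limiting varifold mass measure to dominate the mass measure of the flat limit. This is where one genuinely uses the structure of $\Z_2$-currents: lower semicontinuity of mass under flat convergence (which holds on every open set) gives $\|V\| \ge \|S\|$ locally, and then the global mass equality forces equality of the measures everywhere, after which rectifiability and integrality of both $V$ and $|S|$ identify them. Since this is precisely the content of Pitts's argument in \cite[68]{Pit81}, I would cite that reference for the detailed measure-theoretic estimate rather than reproduce it, and present the proof at the level of organizing the two implications and the subsequence argument. No new ideas beyond the cited result are needed; the lemma is a packaging statement that we record here because it is invoked repeatedly (e.g.\ in Corollary~\ref{cor:Pitts_bF_flat_quant} and throughout \S\ref{sect:short_loops}).
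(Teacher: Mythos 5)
The paper gives no proof of this lemma at all---it is recorded as a direct citation of Pitts \cite{Pit81}---and your proposal ultimately rests on the same citation for the one substantive step (identifying the limit varifold of $|S_i|$ with $|S|$), so the two approaches coincide; your organization of the two implications and the easy direction are correct. One caution: the phrase ``rectifiability and integrality of both $V$ and $|S|$ identify them'' glosses over the real difficulty, namely that rectifiability of the limit varifold $V$ is \emph{not} automatic here (there is no first-variation bound, so Allard compactness does not apply), and deducing it from the mass-pinching $\|V\|(M)=\bM(S)$ together with flat convergence is precisely the content of Pitts's argument that you are citing.
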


    A direct consequence of this lemma is the following quantitative corollary.

    \begin{cor}\label{cor:Pitts_bF_flat_quant}
        In a closed $N$-dimensional Riemannian manifold $(M, \bg)$, for any $S \in \GS(M)$ and $\varepsilon > 0$, there exists $\delta = \delta(M, \bg, S, \varepsilon) > 0$ such that for any $S' \in \GS(M)$,
        \begin{itemize}
            \item if  $\bF(S', S) < \delta\,,$
        then 
            $\mathcal{F}([S'], [S]) < \varepsilon$ and  $|\mathcal{H}^2(S') - \mathcal{H}^2(S)| < \varepsilon$,
            \item if 
        $\mathcal{F}([S'], [S]) < \delta$ and $|\mathcal{H}^2(S') - \mathcal{H}^2(S)| < \delta\,$
        then  $ \bF(S', S) < \varepsilon\,.$
        \end{itemize}
    \end{cor}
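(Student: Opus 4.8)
\emph{Proof plan for Corollary~\textup{\ref{cor:Pitts_bF_flat_quant}}.} The plan is to deduce the corollary directly from Pitts' Lemma~\ref{lem:Pitts_bF_flat}, using only two bookkeeping facts: first, that for a punctate surface $S$ the induced cycle satisfies $\mathbf{M}([S]) = \|\,|S|\,\|(M) = \mathcal{H}^2(S)$ (the isolated points of $S$ are $\mathcal{H}^2$-null and $S \setminus S_{\mathrm{iso}}$ is an embedded, hence multiplicity-one, surface, as set up in \S\ref{sect:basicDef} and Remark~\ref{rmk:bFdist}); second, that the $\mathbf{F}$-metric on $\mathcal{S}(M)$ from Definition~\ref{def:bF_metric_GS} is, by construction, the restriction under $S \mapsto ([S], |S|)$ of the $\mathbf{F}$-metric on $\mathcal{Z}_2(M;\Z_2)$, which itself splits as $\mathcal{F}(\cdot\,,\cdot) + \mathbf{F}(|\cdot|, |\cdot|)$.

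For the first implication one can simply take $\delta = \varepsilon$. Indeed, straight from Definition~\ref{def:bF_metric_GS}, $\mathcal{F}([S'],[S]) \leq \mathcal{F}([S'],[S]) + \mathbf{F}(|S'|,|S|) = \mathbf{F}(S',S) < \varepsilon$. For the mass estimate I would test the varifold distance against the constant function $f \equiv 1$ on $G_2(M)$, which is Lipschitz with constant $0 \leq 1$ and satisfies $|f| \leq 1$; applying the same with $f \equiv -1$ gives
\[
    \mathbf{F}(|S'|,|S|) \;\geq\; \big|\,\|\,|S'|\,\|(M) - \|\,|S|\,\|(M)\,\big| \;=\; |\mathcal{H}^2(S') - \mathcal{H}^2(S)|\,,
\]
and since $\mathbf{F}(|S'|,|S|) \leq \mathbf{F}(S',S) < \varepsilon$, the claim follows.

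For the converse I would argue by contradiction. Fix $\varepsilon > 0$ and suppose there were a sequence $\{S_i\} \subset \mathcal{S}(M)$ with $\mathcal{F}([S_i],[S]) \to 0$ and $|\mathcal{H}^2(S_i) - \mathcal{H}^2(S)| \to 0$ but $\mathbf{F}(S_i, S) \geq \varepsilon$ for all $i$. By the identity above, $\mathbf{M}([S_i]) = \mathcal{H}^2(S_i) \to \mathcal{H}^2(S) = \mathbf{M}([S])$, and $[S]$ is an integral cycle by Remark~\ref{rmk:bFdist}; hence Lemma~\ref{lem:Pitts_bF_flat} applies to $\{[S_i]\}$ and yields $\mathbf{F}([S_i],[S]) \to 0$ in the $\mathbf{F}$-metric on $\mathcal{Z}_2(M;\Z_2)$, i.e. $\mathcal{F}([S_i],[S]) + \mathbf{F}(|S_i|,|S|) \to 0$. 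By Definition~\ref{def:bF_metric_GS} this is exactly $\mathbf{F}(S_i, S) \to 0$, contradicting $\mathbf{F}(S_i,S) \geq \varepsilon$. I do not expect a genuine obstacle here: the only non-formal ingredient is the identification $\mathbf{M}([S]) = \mathcal{H}^2(S)$ for punctate surfaces, already available from \S\ref{sect:basicDef}, so the corollary is essentially a repackaging of Pitts' theorem in the language of $\mathcal{S}(M)$.
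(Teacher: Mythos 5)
Your proposal is correct and, for the nontrivial direction, follows the same route as the paper: a contradiction argument reducing to Pitts' Lemma~\ref{lem:Pitts_bF_flat} via the identifications $\mathbf{M}([S])=\cH^2(S)$ and $\bF(S',S)=\cF([S'],[S])+\bF(|S'|,|S|)$. The only difference is that you prove the first implication directly with the explicit choice $\delta=\varepsilon$ (testing the varifold distance against $f\equiv\pm1$), whereas the paper runs both directions through the same compactness/contradiction argument; your version is slightly sharper but the content is the same.
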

    \begin{proof}
        We argue by contradiction.
        Suppose there exists a sequence $\{S_i\}^\infty_{i = 1}$ in $\GS(M)$ such that $\bF(S_i, S) < \frac{1}{i}$, but $\mathcal{F}([S_i], [S]) \geq \varepsilon$. Then we have  $\lim_{i \to \infty} \bF(S_i, S) = 0$ and $\liminf_{i \to \infty} \mathcal{F}([S_i], [S]) > \varepsilon$,
        contradicting Lemma~\ref{lem:Pitts_bF_flat}. 
        The other direction can be proved similarly.
    \end{proof}

    \begin{lem}\label{lem:boldF_restriction}
        In an $N$-dimensional closed Riemannian manifold $(M, \bg)$, for any $V\in \mathcal{V}_n(M)$, $\varepsilon > 0$ and any family of Borel sets $\{B_\alpha\}_{\alpha \in I}$ such that 
        \[
            \lim_{r \to 0} \sup_{\alpha \in I} V(\overline{N_r(\partial B_\alpha)}) = 0\,,
        \]
        where $\overline{N_r(\partial B_\alpha)} = \{x \in M : \operatorname{dist}_M(x, \partial B_\alpha) \leq r\}$, there exists a constant $\delta = \delta(M, \bg, V, \varepsilon, \{B_\alpha\}_{\alpha \in I})$ satisfying the following property:
        For any $W \in \mathcal{V}_n(M)$, if $\mathbf{F}(V, W) < \delta$, then $\sup_{\alpha \in I} \mathbf{F}_{B_\alpha}(V, W) \leq \varepsilon\,.$
    \end{lem}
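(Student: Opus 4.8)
The plan is to argue by contradiction, exactly mirroring the proof of Corollary~\ref{cor:Pitts_bF_flat_quant}, but now using the compactness of the unit ball of $1$-Lipschitz functions on $G_n(M)$ in the $C^0$-topology together with the hypothesis controlling the mass of $V$ near the boundaries $\partial B_\alpha$. First I would unwind the definition: for a fixed $\alpha$ and a test function $f \in C(G_n(M))$ with $|f|\le 1$ and $\operatorname{Lip}(f)\le 1$, the quantity $(V\llcorner G_n(B_\alpha))(f) - (W\llcorner G_n(B_\alpha))(f)$ is what we must bound uniformly in $\alpha$. The obstacle to directly estimating this by $\mathbf{F}(V,W)$ is that $\mathbf{1}_{B_\alpha}\cdot f$ is not Lipschitz (it jumps across $\partial B_\alpha$), so one cannot plug it into the definition of $\mathbf{F}$. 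The standard remedy is to replace $\mathbf{1}_{B_\alpha}$ by a Lipschitz cutoff $\chi_{\alpha, r}$ that equals $1$ outside $N_r(\partial B_\alpha)$ on the $B_\alpha$-side, equals $0$ outside the $N_r(\partial B_\alpha)$ on the complement side, is supported appropriately, and has $\operatorname{Lip}(\chi_{\alpha,r}) \le 2/r$. Then $\tfrac{r}{2}\,\chi_{\alpha,r} f$ (or rather, after also mollifying $f$) is a legitimate test function up to the explicit Lipschitz constant.

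The key steps, in order, would be: (1) Fix $\varepsilon>0$. Using the hypothesis $\lim_{r\to 0}\sup_\alpha V(\overline{N_r(\partial B_\alpha)}) = 0$, choose $r>0$ so small that $\sup_\alpha V(\overline{N_r(\partial B_\alpha)}) < \varepsilon/4$. (2) For each $\alpha$ build the Lipschitz cutoff $\chi_{\alpha,r}: M \to [0,1]$ described above with $\operatorname{Lip}(\chi_{\alpha,r})\le 2/r$; this also defines a cutoff on $G_n(M)$ by pullback under the bundle projection, preserving the Lipschitz bound up to the bundle metric (absorb the resulting dimensional constant into a constant $C=C(M,\bg)$). (3) Given any $f$ with $|f|\le 1$, $\operatorname{Lip}(f)\le 1$, write
\[
    (V\llcorner G_n(B_\alpha))(f) - (W\llcorner G_n(B_\alpha))(f) = \big(V(\chi_{\alpha,r} f) - W(\chi_{\alpha,r} f)\big) + E_V - E_W,
\]
where $E_V := (V\llcorner G_n(B_\alpha))(f) - V(\chi_{\alpha,r} f)$ and similarly for $W$. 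The error $E_V$ is supported in $G_n(\overline{N_r(\partial B_\alpha)})$ and $|f|\le 1$, $|\chi_{\alpha,r}|\le 1$, so $|E_V| \le 2V(\overline{N_r(\partial B_\alpha)}) < \varepsilon/2$ by step (1). (4) The main term: $\chi_{\alpha,r} f$ has $|\chi_{\alpha,r} f| \le 1$ and Lipschitz constant at most $C(1 + 2/r) =: C_r$, so $\frac{1}{C_r}\chi_{\alpha,r}f$ is admissible in the definition of $\mathbf{F}(V,W)$, giving $|V(\chi_{\alpha,r}f) - W(\chi_{\alpha,r}f)| \le C_r\,\mathbf{F}(V,W)$. (5) For $E_W$ we do not yet have a bound, since $\mathbf{F}(V,W)$ small controls $W$ near $\partial B_\alpha$ only through $V$; but $|E_W|\le 2W(\overline{N_r(\partial B_\alpha)}) \le 2V(\overline{N_r(\partial B_\alpha)}) + 2|W(\overline{N_r(\partial B_\alpha)}) - V(\overline{N_r(\partial B_\alpha)})|$, and the last difference can be estimated by testing $\mathbf{F}(V,W)$ against a Lipschitz bump that is $1$ on $\overline{N_r(\partial B_\alpha)}$ and supported in $N_{2r}(\partial B_\alpha)$, provided we also enlarged $r$ in step (1) so that $\sup_\alpha V(\overline{N_{2r}(\partial B_\alpha)}) < \varepsilon/4$. (6) Collect terms: choosing $\delta = \delta(M,\bg,V,\varepsilon,\{B_\alpha\})$ small enough that $C_r\delta$ and the $W$-near-boundary contribution are each $< \varepsilon/4$, we obtain $\sup_\alpha \mathbf{F}_{B_\alpha}(V,W) \le \varepsilon$, as required.

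I expect the main obstacle to be the treatment of $E_W$, i.e. controlling the mass of the \emph{unknown} varifold $W$ in the thin shell $\overline{N_r(\partial B_\alpha)}$: the hypothesis only bounds $V$ there, so one must transfer this control to $W$ using that $\mathbf{F}(V,W)$ is small, and this requires choosing the test bump and the parameter $r$ in the right order (first $r$, then $\delta$) and being careful that all constants ($C$ from the bundle geometry, $C_r$ from the cutoff, and the mollification parameter if one mollifies $f$) depend only on the allowed data. A secondary technical point is that $f\in C(G_n(M))$ need not be Lipschitz a priori — but in the definition of $\mathbf{F}$ we may restrict to $\operatorname{Lip}(f)\le 1$ functions, so multiplying by the Lipschitz cutoff $\chi_{\alpha,r}$ keeps us inside that class up to a computable constant, and no mollification of $f$ is actually needed. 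Everything else is a routine cutoff-function estimate.
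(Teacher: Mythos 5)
Your proposal is correct and follows essentially the same route as the paper's proof: choose the shell radius $r$ first using the hypothesis on $V$, transfer the small-mass control on $\overline{N_r(\partial B_\alpha)}$ from $V$ to $W$ by testing a Lipschitz bump against $\mathbf{F}(V,W)$, then cut off the test function near $\partial B_\alpha$ so the main term is bounded by $\operatorname{Lip}\cdot\mathbf{F}(V,W)$ and the errors by the shell masses of $V$ and $W$. The only cosmetic difference is that the paper uses a Lipschitz extension of $f|_{B_\alpha}$ vanishing outside the $r$-shell rather than the product $\chi_{\alpha,r}f$, which merely relocates the error term from inside $B_\alpha$ to outside it.
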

    \begin{proof}
        Since $\lim_{r \to 0} \sup_{\alpha \in I} V(\overline{N_r(\partial B_\alpha)}) = 0$, we can pick $r_0 \in (0, 1)$ such that $\sup_{\alpha \in I} V(\overline{N_{r_0}(\partial B_\alpha)}) < \varepsilon / 10$. 
        We set $\delta :=  r_0 \varepsilon/1000\,.$
        
        Firstly, we claim that 
        \[
            \sup_{\alpha \in I} W(\overline{N_{r_0 / 2}(\partial B_\alpha)}) < \varepsilon / 5\,.
        \]
        Suppose by contradiction that there exists $\alpha_0 \in I$ such that $W(\overline{N_{r_0 / 2}(\partial B_{\alpha_0})}) \geq \varepsilon / 5$. We can define a cut-off function $h: M \to [0, 1]$ with $h \equiv 1$ in $\overline{N_{r_0 / 2}(\partial B_{\alpha_0})}$, $g \equiv 0$ outside $\overline{N_{r_0}(\partial B_{\alpha_0})}$ and $\operatorname{Lip}(g) \leq \frac{3}{r_0}$. Then we have
        \[
            \frac{\varepsilon}{10} > V(\overline{N_{r_0}(\partial B_\alpha)}) \geq V(g) \geq W(g) - \mathbf{F}(V, W) \operatorname{Lip}(g) \geq \frac{\varepsilon}{5} - \frac{\delta r_0}{3} \geq \frac{\varepsilon}{10}\,,
        \]
        a contradiction.

        Then fix an arbitrary $\alpha \in I$. For any Lipschitz function $f: M \to \mathbb{R}$ with $\sup |f| \leq 1$ and $\operatorname{Lip}(f) \leq 1$, define 
        \[
            \tilde f(x) = \begin{cases}
                f(x), & x \in B_\alpha\\
                0, & \operatorname{dist}_M(x, \partial B_\alpha) \geq r_0\,.
            \end{cases}
        \]
        We can extend $\tilde f$ to a function defined on $M$ with Lipschitz constant $\frac{1}{r_0}$ and $L^\infty$-norm $1$, still denoted by $\tilde f$. Then we have
        \begin{align*}
            &\quad W\llcorner G_n(B_\alpha)(f) - V\llcorner G_n(B_\alpha)(f)\\
            &= W(\tilde f) - V(\tilde f)   - W\llcorner G_n(N_{r_0}(\partial B_\alpha) \setminus B_\alpha)(\tilde f) + V\llcorner G_n(N_{r_0}(\partial B_\alpha) \setminus B_\alpha)(\tilde f)\\
            &\leq |W(\tilde f) - V(\tilde f)| + \|W\|(N_{r_0}(\partial B_\alpha) \setminus B_\alpha) + \|V\|(N_{r_0}(\partial B_\alpha) \setminus B_\alpha)\\
            &\leq \mathbf{F}(W, V) \operatorname{Lip}(\tilde f) + \|W\|(N_{r_0}(\partial B_\alpha) \setminus B_\alpha) + \|V\|(N_{r_0}(\partial B_\alpha) \setminus B_\alpha)\\
            &\leq \mathbf{F}(W, V)/{r_0} + \varepsilon / 5 < \varepsilon\,.
        \end{align*}
        This implies that $\mathbf{F}_{B_\alpha}(V, W) \leq \varepsilon$.
        Since $\alpha \in I$ is arbitrary, we conclude that $\sup_{\alpha \in I} \mathbf{F}_{B_\alpha}(V, W) \leq \varepsilon$.
    \end{proof}

    \begin{lem}[Isoperimetric inequality, {\cite[6.2]{Federer_Fleming_60_currents}}]\label{lem:isop_ineq}
        In a closed $N$-dimensional Riemannian manifold $(M, \bg)$, there exists $\delta = \delta(M, \bg) > 0$ and $\nu > 0$ such that, if $T \in \mathcal{Z}_n(M; \Z_2)$ and $\mathcal{F}(T) < \delta$, there exists $S \in \mathbf{I}_{n + 1}(M; \Z_2)$ with $\partial S = T$ and $ \mathbf{M}(S) \leq \nu \mathbf{M}(T)^{\frac{n+1}{n}}\,.$
    \end{lem}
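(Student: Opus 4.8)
The plan is to deduce this from the classical Euclidean isoperimetric inequality for integral currents (Federer--Fleming, the cited 6.2) by transferring that statement to $M$ via a Nash isometric embedding together with the nearest--point retraction, splitting into cases according to the size of $\mathbf{M}(T)$.

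First I would fix an isometric embedding $M \hookrightarrow \R^L$ and a tubular neighborhood $U \supset M$ on which the nearest--point projection $\pi : U \to M$ is well defined and Lipschitz, with $\operatorname{Lip}(\pi|_U) \le C_0 = C_0(M,\bg)$. Choose $\varepsilon_0 = \varepsilon_0(M,\bg) > 0$ small enough that the Euclidean filling $S_0$ of any mod $2$ $n$--cycle $T$ in $\R^L$ with $\mathbf{M}(T) < \varepsilon_0$ stays inside $U$; this is legitimate because the Federer--Fleming filling satisfies simultaneously $\mathbf{M}(S_0) \le c(L,n)\,\mathbf{M}(T)^{(n+1)/n}$ and a support bound of the form $\operatorname{dist}(\spt S_0,\spt T) \le c(L,n)\,\mathbf{M}(T)^{1/n}$, so when $\mathbf{M}(T)$ is small the filling cannot stray far from $\spt T \subset M$.

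Second, assume $\mathbf{M}(T) < \varepsilon_0$. Regarding $T \in \mathcal{Z}_n(M;\Z_2)$ as a mod $2$ integral $n$--cycle in $\R^L$ supported in $M$, the Euclidean isoperimetric inequality provides $S_0 \in \mathbf{I}_{n+1}(\R^L;\Z_2)$ with $\partial S_0 = T$, $\spt S_0 \subset U$, and $\mathbf{M}(S_0) \le c(L,n)\,\mathbf{M}(T)^{(n+1)/n}$. Set $S := \pi_\# S_0 \in \mathbf{I}_{n+1}(M;\Z_2)$. Since $\pi$ restricts to $\mathrm{id}$ on $M \supset \spt T$ we get $\partial S = \pi_\#(\partial S_0) = \pi_\# T = T$, and $\mathbf{M}(S) \le \operatorname{Lip}(\pi|_U)^{n+1}\,\mathbf{M}(S_0) \le C_0^{n+1} c(L,n)\,\mathbf{M}(T)^{(n+1)/n}$, which is the claimed bound with $\nu := C_0^{n+1} c(L,n)$. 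If instead $\mathbf{M}(T) \ge \varepsilon_0$ the inequality is essentially free on a closed manifold: by definition of $\mathcal{Z}_n(M;\Z_2)$ we have $T = \partial Q$ with $Q \in \mathbf{I}_{n+1}(M;\Z_2)$, and being a top--dimensional mod $2$ current $Q = [\Omega]$ for a set of finite perimeter $\Omega \subset M$, so $[M \setminus \Omega]$ also fills $T$; hence one may take $S$ with $\mathbf{M}(S) \le \tfrac12 \mathcal{H}^N(M) \le \bigl(\tfrac{\mathcal{H}^N(M)}{2\,\varepsilon_0^{(n+1)/n}}\bigr)\,\mathbf{M}(T)^{(n+1)/n}$. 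Enlarging $\nu$ to dominate this constant as well covers both regimes; the smallness hypothesis $\mathcal{F}(T) < \delta$ is only needed to guarantee that $T$ is a genuine $\Z_2$--boundary (which is built into the definition of $\mathcal{Z}_n$), so $\delta = \delta(M,\bg)$ may be taken to be any positive number.

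I expect the only genuinely non-formal point to be the support estimate for the Euclidean Federer--Fleming filling that makes the pushforward $\pi_\# S_0$ meaningful, namely that a low-mass cycle admits a filling whose support lies within distance $\lesssim \mathbf{M}(T)^{1/n}$ of $\spt T$. This is classical --- it comes out of the iterated cone construction underlying the deformation theorem --- but it is the step that cannot be skipped, since without it one cannot retract the Euclidean filling back into $M$.
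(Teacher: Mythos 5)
The paper does not prove this lemma at all --- it is stated as a black box with a citation to Federer--Fleming, so there is no in-paper argument to compare yours against. Your derivation (Euclidean isoperimetric inequality with support control for the low-mass filling, transferred to $M$ by the nearest-point retraction of a tubular neighborhood, plus a separate argument when $\mathbf{M}(T)$ is bounded below) is the standard way to obtain the manifold version, and the substantive half --- the small-mass case --- is correct, including the point you flag as the only non-formal one: the deformation-theorem filling of a cycle $T$ with $\mathbf{M}(T)$ small does come with $\spt S_0$ within distance $c\,\mathbf{M}(T)^{1/n}$ of $\spt T$, so the retraction is legitimate.

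The one step that fails for the lemma \emph{as stated} is in your large-mass case: you write that $Q\in\mathbf{I}_{n+1}(M;\Z_2)$ with $\partial Q=T$ is ``a top-dimensional mod $2$ current $Q=[\Omega]$,'' which presumes $n+1=N$; for $n+1<N$ there is no complement trick and no a priori uniform bound on the minimal filling mass, so your conclusion that ``$\delta$ may be taken to be any positive number'' is not justified in general codimension. The repair is immediate and shows why the hypothesis $\mathcal{F}(T)<\delta$ is there: by the paper's definition $\mathcal{F}(T)=\inf\{\mathbf{M}(Q):T=\partial Q\}$, so when $\mathbf{M}(T)\ge\varepsilon_0$ one simply takes any $Q$ with $\mathbf{M}(Q)<\delta\le \bigl(\delta/\varepsilon_0^{(n+1)/n}\bigr)\mathbf{M}(T)^{(n+1)/n}$ and enlarges $\nu$. (In the codimension-one setting $n+1=N$, which is the only one the paper ever invokes, your complement argument is fine as written.)
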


\section{A genus inequality}\label{sect:genusIneq}
    \begin{lem}\label{lem:genus_bound_of_close_in_nbhd}
        Let $\Sigma$ be a smooth closed oriented surface, $S\subset \Sigma\times [0, 1]$ be an embedded, smooth, oriented  surface homologous to $\Sigma\times \{0\}$ in $H_2(\Sigma\times [0, 1]; \Z_2)$. Then  $\fg(S) \geq \fg(\Sigma)\,.$
    \end{lem}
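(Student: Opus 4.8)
The statement asserts that a separating surface $S$ inside a product $\Sigma \times [0,1]$ that is homologous to $\Sigma \times \{0\}$ cannot have smaller genus than $\Sigma$. The natural approach is to produce a degree-one (mod 2) map, or more precisely a surjection on $\Z_2$-homology in degree one, from (a piece of) $S$ onto $\Sigma$. First I would use the homological hypothesis: since $[S] = [\Sigma \times \{0\}]$ in $H_2(\Sigma \times [0,1]; \Z_2)$, the projection $\pi: \Sigma \times [0,1] \to \Sigma$ restricted to $S$ satisfies $\pi_*[S] = [\Sigma]$ in $H_2(\Sigma; \Z_2)$, i.e. $\pi|_S : S \to \Sigma$ has degree one mod $2$. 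A standard fact is that a degree-one map between closed surfaces (or a mod-$2$ degree-one map, in the $\Z_2$ setting) is surjective on $H_1(\cdot; \Z_2)$; this is essentially Poincaré duality plus naturality of the cap product, since $\pi_*(\pi^*(\alpha) \frown [S]) = \alpha \frown \pi_*[S] = \alpha \frown [\Sigma]$, and $\alpha \mapsto \alpha \frown [\Sigma]$ is an isomorphism $H^1(\Sigma) \to H_1(\Sigma)$. Hence $\dim_{\Z_2} H_1(S; \Z_2) \geq \dim_{\Z_2} H_1(\Sigma; \Z_2)$, which for closed orientable surfaces gives $2\fg(S) \geq 2\fg(\Sigma)$, i.e. $\fg(S) \geq \fg(\Sigma)$.

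The main point to be careful about is that $S$ need not be connected, and $\pi|_S$ need not be smooth/transverse in a way that makes "degree one mod 2" literally a count of preimages; but working purely homologically avoids transversality entirely, so this is not a real obstacle. I would organize the proof as follows. (1) Observe $\pi_* : H_2(\Sigma \times [0,1]; \Z_2) \to H_2(\Sigma; \Z_2)$ is an isomorphism (the projection is a homotopy equivalence), so $\pi_*[S] = \pi_*[\Sigma \times \{0\}] = [\Sigma]$. (2) Let $f = \pi|_S : S \to \Sigma$. For any $\alpha \in H^1(\Sigma; \Z_2)$, by the projection formula $f_*(f^*\alpha \frown [S]) = \alpha \frown f_*[S] = \alpha \frown [\Sigma]$. (3) Since $\Sigma$ is a closed orientable surface, cap product with $[\Sigma]$ is an isomorphism $PD: H^1(\Sigma;\Z_2) \xrightarrow{\sim} H_1(\Sigma; \Z_2)$; therefore $f_* : H_1(S; \Z_2) \to H_1(\Sigma; \Z_2)$ is surjective (its image contains $PD(\alpha)$ for every $\alpha$). (4) Conclude $\dim H_1(S;\Z_2) \geq \dim H_1(\Sigma; \Z_2) = 2\fg(\Sigma)$. (5) Since $S$ is a closed orientable (possibly disconnected) surface, $\dim_{\Z_2} H_1(S;\Z_2) = 2\fg(S)$ where $\fg(S)$ is the sum of genera of components (matching the paper's convention for disconnected surfaces). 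This yields $\fg(S) \geq \fg(\Sigma)$.

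One subtlety worth a remark: the statement only assumes $S$ is \emph{oriented}, and the genus convention for a disconnected surface is the sum over components; the identity $\dim_{\Z_2} H_1(S;\Z_2) = 2\fg(S)$ still holds because each closed orientable component of genus $h$ contributes $2h$ to $b_1$. If one wanted $S$ connected one could alternatively appeal to the fact that a $\Z_2$-degree-one map of connected closed surfaces is $\pi_1$-surjective, but the homological argument above is cleaner and handles all cases uniformly. I expect essentially no serious obstacle here — the only thing to state carefully is the projection formula (naturality of cap product, e.g. \cite[p.~241]{hatcher2002book}) and that $\pi$ is a homotopy equivalence; both are routine. The brief argument above is exactly what I would write, expanded only slightly to spell out the two displayed identities.
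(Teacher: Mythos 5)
Your proof is correct and follows essentially the same route as the paper: both arguments boil down to the fact that the projection restricted to $S$ has mod-$2$ degree one, and then apply Poincar\'e duality on $\Sigma$ to force $\dim H_1(S;\Z_2)\geq \dim H_1(\Sigma;\Z_2)$. The only differences are cosmetic — you obtain $\pi_*[S]=[\Sigma]$ from the homotopy equivalence rather than the paper's preimage count over a regular value, and you phrase the duality step as surjectivity of $\pi_*$ on $H_1$ via the projection formula instead of injectivity of $\pi^*$ on $H^1$ via nondegeneracy of the cup product, which are dual formulations of the same argument.
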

    \begin{proof}
        Without loss of generality, we assume $\Sigma$ and $S$ are connected and $S\cap (\Sigma\times \{0\}) = \emptyset$. 
        Consider the projection map onto the first factor $\varpi: \Sigma\times [0,1]\to \Sigma$. Since $S$ is homologous to $\Sigma\times\{0\}$, we claim that the mod $2$ degree of $\pi:=\varpi|_S$ is nonzero. 
        To see this, let $x\in \Sigma$ be a regular value of $\pi$, $D\subset \Sigma\times [0,1]$ be the domain such that $\partial[D] = [S] - [\Sigma\times \{0\}]$ in $\bI_2(\Sigma\times[0,1], \Z_2)$. Then $D\cap \varpi^{-1}(x)$ is a finite union of line segments. Note, exactly one endpoint in $D\cap \varpi^{-1}(x)$ lies in $\Sigma\times \{0\}$, and so there is an odd number of remaining endpoints, which together form the set $\pi^{-1}(x)$.

        Now we shall show that $\pi^*: H^1(\Sigma; \Z_2)\to H^1(S; \Z_2)$ is injective, which immediately implies the genus inequality. Let $\alpha\in \ker\pi^*\cap H^1(\Sigma; \Z_2)$ be arbitrary, for every $\beta\in H^1(\Sigma; \Z_2)$, \[
         \langle \alpha\cup\beta, [\Sigma] \rangle = \langle \alpha\cup\beta, \pi_*[S]\rangle = \langle \pi^*\alpha\cup \pi^*\beta, [S]\rangle = 0\,. 
        \]
        By the nondegeneracy of cup product, it follows that $\alpha = 0$.
    \end{proof}
 
    \begin{proof}[Proof of Lemma \ref{lem:genusLarger}]
        Let $\eta$ be defined as in Corollary~\ref{cor:restr_PS_to_nbhd}, and $\varepsilon = \delta(M, \bg)$ as in Lemma~\ref{lem:isop_ineq}. We also let $r = \delta(M, \bg, \Sigma, \varepsilon)$ as defined in Corollary~\ref{cor:restr_PS_to_nbhd}. 
        
        By Corollary~\ref{cor:restr_PS_to_nbhd}, there exists a punctate surface $S'$ such that:
        \begin{enumerate}[label=(\roman*)]
            \item\label{item:genusLarger_in_nbhd} $S' \subset N_{\eta}(\Sigma)$.
            \item\label{item:genusLarger_F_close} $\mathcal{F}(S' - \Sigma) \leq \mathbf{F}(S', \Sigma) < \varepsilon$.
            \item\label{item:genusLarger_genus_bound} $\mathfrak{g}(S') \leq \mathfrak{g}(S)$.
        \end{enumerate}

        By~\ref{item:genusLarger_in_nbhd}, ~\ref{item:genusLarger_F_close} and Lemma~\ref{lem:isop_ineq}, $S'$ and $\Sigma$ are homologous in $H_2(N_\eta(\Sigma), \Z_2)$.
Since $N_\eta(\Sigma) = \exp^\perp(\Sigma, (-\eta, \eta))$ is diffeomorphic to $\Sigma \times (0, 1)$, by ~\ref{item:genusLarger_genus_bound} and Lemma~\ref{lem:genus_bound_of_close_in_nbhd}, we can conclude that $\mathfrak{g}(S) \geq \mathfrak{g}(S') \geq \mathfrak{g}(\Sigma)\,. $
    \end{proof}

\section{Punctate surfaces with arbitrarily long loops}\label{sect:PS_long_loops}

    In this section, we show that for all $L \in \mathbb{R}^+$, there exists a punctate surface $S$ of genus $1$ in $\mathbb{R}^3$ such that its area is $1$ but any nontrivial loop in $S$ has length at least $L$.

    We start with a flat torus whose meridian has length $L$ and longitude $1/L$. Then we remove $9/10$ of an orbit of slope $2L^2$ to obtain a ``slit torus'', denoted by $T_1$,  as illustrated in Figure~\ref{fig:Torus_Slit}. It is easy to see that every nontrivial loop in the slit torus has length more than $L$ while the slit torus still has area $1$. However, note that this is not a punctate surface in $\mathbb{R}^3$, as it is not embedded in $\mathbb{R}^3$ and the singular set is a slit instead of a finite set of points.

    \begin{figure}
        \centering
        \includegraphics[width=0.4\textwidth]{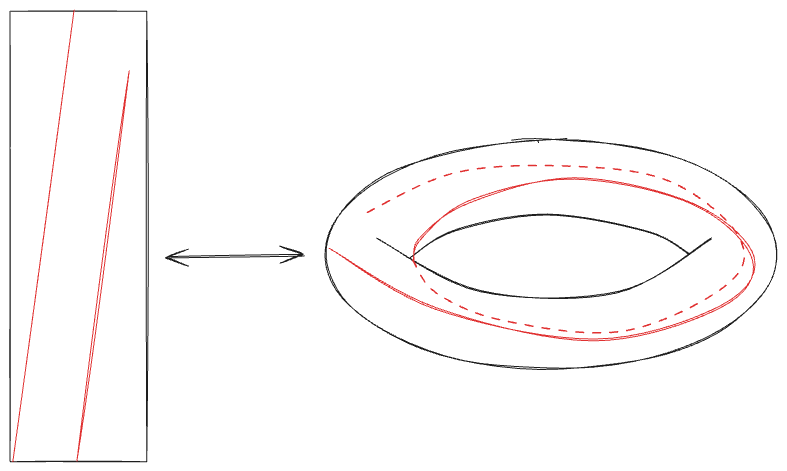}
        \caption{A slit torus}
        \label{fig:Torus_Slit}
    \end{figure} 

    Next, in $\mathbb{R}^3$, we can choose a sufficiently small embedded torus with one point removed, denoted by $T_2$, and a smooth embedding $\Phi: T_1 \to \mathbb{R}^3$ which maps the slit in $T_1$ to the point in $T_2$. Moreover, we require that $\Phi$ is a \emph{strictly short} map in the sense that its Lipschitz constant is less than $1$. Finally, one can follow Nash-Kuiper's spiral-strain construction \cite{Kuiper_1955_C1_isometric} to deform $\Phi$ and, without taking the limit, to obtain a smooth \emph{almost isometry} $\Phi': T_1 \to \mathbb{R}^3$ such that $T'_2 := \Phi'(T_1)$ is still a torus with a point removed. One can also verify that $T'_2$ is a punctate surface that we need. The details are left to the readers. 

\section{Proof of Theorem~\ref{thm:pinchOffMinMax}}\label{sect:proof_pinchOffMinMax}
    In this section, we use Proposition~\ref{Prop_Technical Deformation} and the following local min-max theorem to finish the proof of Theorem~\ref{thm:pinchOffMinMax}. The proof closely follows that in \cite[\S~8.5]{chuLi2024fiveTori}, with a slight improvement: we no longer modify the parameter space. For completeness, we include the details below.

    Let ${\mathbb{B}}^k$ be the open unit $k$-ball, and $\overline{\mathbb{B}}^k$ the closed unit $k$-ball.

    \begin{thm}[Local min-max theorem {\cite[Theorem~6.1]{MN21}}]\label{thm:local-min-max}
        Let $\Sigma$ be a closed, smooth, embedded non-degenerate minimal surface with Morse index $k$ and multiplicity one, in a closed $3$-dimensional manifold $(M, g)$. For every $\beta > 0$, there exists $\varepsilon_0 > 0$ and a smooth family $\{F_v\}_{v \in \overline{\mathbb{B}}^k} \subset \operatorname{Diff}^\infty(M)$ such that
        \begin{enumerate}[label=\normalfont(\arabic*)]
            \item $F_0 = \operatorname{Id}$, $F_v = F^{-1}_v$ for all $v \in \overline{\mathbb{B}}^k$.
            \item $\|F_v - \operatorname{Id}\|_{C^1} < \beta$ for all $v \in \overline{\mathbb{B}}^k$.
            \item The function 
            $A^\Sigma: \overline{\mathbb{B}}^k \to [0, \infty]$ given by $v \mapsto \cH^2((F_v)_\# \Sigma)$
            is strictly concave.
            \item For every $T \in \cZ_2(M; \Z_2)$ with $\mathcal{F}(T, [\Sigma]) < \varepsilon_0$, we have
            \[
                \max_{v \in \overline{\mathbb{B}}^k} \bM((F_v)_\# T) \geq \cH^2(\Sigma)
            \]
            with equality only if $[\Sigma] = (F_v)_\# T$ for some $v \in \overline{\mathbb{B}}^k$.
        \end{enumerate}
    \end{thm}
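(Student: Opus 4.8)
The plan is to build $\{F_v\}$ explicitly from the Jacobi data of $\Sigma$, verify conclusions~(1)--(3) by elementary means, and then obtain the trapping property~(4), which is the only substantial point. Since $M$ is orientable and $\Sigma$ has multiplicity one it is two-sided; fix a unit normal $\nu$ and the Jacobi operator $L_\Sigma u=\Delta_\Sigma u+(|A_\Sigma|^2+\operatorname{Ric}_g(\nu,\nu))u$. Non-degeneracy says $0\notin\operatorname{spec}(-L_\Sigma)$, and Morse index $k$ says $-L_\Sigma$ has exactly $k$ negative eigenvalues $\mu_1\le\cdots\le\mu_k<0$ with $L^2(\Sigma)$-orthonormal eigenfunctions $\varphi_1,\dots,\varphi_k$; the second variation of area $Q(u)=-\int_\Sigma u\,L_\Sigma u$ then satisfies $Q(\sum_i v_i\varphi_i)=\sum_i\mu_i v_i^2\le\mu_k|v|^2<0$ for $v\ne0$.

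Next I would construct the family. Take a tubular neighborhood $N\cong\Sigma\times(-2\delta_0,2\delta_0)$ of $\Sigma$ via the normal exponential map, with normal coordinate $s$, and a cutoff $\chi\in C_c^\infty((-2\delta_0,2\delta_0))$, $\chi\equiv1$ on $[-\delta_0,\delta_0]$. Define $X_i\in\Gamma(TM)$ by $X_i=\chi(s)\varphi_i(y)\,\partial_s$ on $N$ and $X_i=0$ off $N$, pick a small $\lambda>0$, and for $v\in\overline{\mathbb B}^k$ let $F_v$ be the time-one flow of $\lambda\sum_i v_iX_i$. Then $(v,x)\mapsto F_v(x)$ is smooth, $F_0=\operatorname{Id}$, $F_{-v}=(F_v)^{-1}$ (the flow of $-\lambda\sum_iv_iX_i$), and $\|F_v-\operatorname{Id}\|_{C^1}\le C_0\lambda$ with $C_0=C_0(M,g,\Sigma)$; choosing $\lambda<\beta/C_0$ gives~(1) and~(2). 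For~(3), note $A^\Sigma(v)=\mathcal H^2(F_v(\Sigma))$ is smooth in $v$ (area is a smooth functional of the smooth family of embeddings $F_v|_\Sigma$), and since $(\lambda\sum_iv_iX_i)|_\Sigma=\lambda\langle v,\varphi(\cdot)\rangle\nu$, the surface $F_v(\Sigma)$ is the normal graph of $\lambda\langle v,\varphi(\cdot)\rangle+O(\lambda^2)$; minimality of $\Sigma$ and the Taylor expansion of area give
\[
  A^\Sigma(v)=\mathcal H^2(\Sigma)+\tfrac{\lambda^2}{2}\sum_i\mu_i v_i^2+O(\lambda^3),
\]
so $\nabla A^\Sigma(0)=0$ and $\operatorname{Hess}A^\Sigma=\lambda^2\operatorname{diag}(\mu_1,\dots,\mu_k)+O(\lambda^3)\prec0$ on $\overline{\mathbb B}^k$ for $\lambda$ small; hence $A^\Sigma$ is strictly concave with unique maximum $A^\Sigma(0)=\mathcal H^2(\Sigma)$ and $A^\Sigma(v)<\mathcal H^2(\Sigma)$ for $v\ne0$.

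The hard part is~(4). Write $c=\mathcal H^2(\Sigma)$ and $g_T(v)=\mathbf M((F_v)_\#T)$; for fixed $T$ the map $v\mapsto g_T(v)$ is continuous (the pushforward of $T$ by the smooth, uniformly $C^1$-bounded family $F_v$), so $\max_v g_T$ is attained, and if $\mathbf M(T)\ge c$ we are done at $v=0$. So assume $\mathbf M(T)<c$. The strategy: shrink $\delta_0$ so every graph $F_v(\Sigma)$ lies in $N'=\Sigma\times(-\delta_0,\delta_0)$, and use that a mod-$2$ cycle supported in $N'$ and homologous there to $[\Sigma]$ has mass $\ge(1-C\delta_0)c$, via the degree-one projection $N'\to\Sigma$ (the quantitative form of the argument in the proof of Lemma~\ref{lem:genus_bound_of_close_in_nbhd}). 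Then, with $\varepsilon_0$ small, write $T-[\Sigma]=\partial Q$ with $\mathbf M(Q)<\varepsilon_0$ (Lemma~\ref{lem:isop_ineq}) and split off the part of $T$ outside $N'$: either that part already raises $g_T(0)$ to $\ge c$, or $T$ is concentrated in $N'$ and homologous there to $[\Sigma]$; translating along the flow lines of the $X_i$ and using $F_{-v}=(F_v)^{-1}$, one reduces to $T$ being $\mathbf M$-close to $[\Sigma]$, whence Corollary~\ref{cor:Pitts_bF_flat_quant} (via Lemma~\ref{lem:Pitts_bF_flat}) upgrades $\mathcal F$-closeness to $\mathbf F$-closeness and $T$ is a normal graph over the bulk of $\Sigma$; maximizing the area of its $F_v$-image over $v\in\overline{\mathbb B}^k$ is then a finite-dimensional problem controlled by the strict concavity of the previous paragraph, and moving $v$ into the appropriate negative eigendirections yields a surface of mass $\ge c$. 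The equality clause is obtained by tracking where each inequality is sharp, which forces $(F_v)_\#T=[\Sigma]$ for the relevant $v$.

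I expect the chief obstacle to be exactly this last reduction: an arbitrary cycle $T$ merely flat-close to $[\Sigma]$ is a priori neither graphical nor supported near $\Sigma$, so controlling it precisely enough to feed it into the finite-dimensional concavity argument, and then extracting the sharp equality statement, is where the real work lies; the explicit flow construction, the Taylor computation, and properties~(1)--(3) are, by contrast, routine.
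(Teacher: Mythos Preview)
The paper does not prove this theorem; it quotes it verbatim from \cite[Theorem~6.1]{MN21} and uses it as a black box in Appendix~\ref{sect:proof_pinchOffMinMax}. So there is no ``paper's own proof'' to compare against, and your proposal should be read as an attempt to reproduce the Marques--Neves argument.

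Your treatment of (1)--(3) is correct and is essentially the standard construction: extend an $L^2$-orthonormal basis of the negative eigenspace of the Jacobi operator to compactly supported vector fields, flow, and read off the Taylor expansion of area. This is exactly how the family is built in \cite{MN21}.

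The gap is in (4). Two of the steps you outline do not go through:
\begin{itemize}
\item The reduction ``one reduces to $T$ being $\mathbf{M}$-close to $[\Sigma]$'' is unjustified. Pushing $T$ by $F_v$ changes its mass only by a factor controlled by $\|F_v\|_{C^1}$, so it cannot manufacture mass-closeness to $\mathcal H^2(\Sigma)$ from mere flat-closeness. Corollary~\ref{cor:Pitts_bF_flat_quant} therefore cannot be invoked to upgrade to $\mathbf F$-closeness.
\item Even granting $\mathbf F$-closeness, a $\Z_2$-cycle need not be graphical over any part of $\Sigma$: it may contain thin necks, filigree, small spheres, or multiply-covered pieces (cf.\ the discussion in \S\ref{subsect:short_loops_large_area} and Appendix~\ref{sect:PS_long_loops}). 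So the phrase ``$T$ is a normal graph over the bulk of $\Sigma$'' and the subsequent ``finite-dimensional problem controlled by the strict concavity'' are not available. The strict concavity you proved is for $v\mapsto \mathcal H^2(F_v(\Sigma))$, not for $v\mapsto \mathbf M((F_v)_\#T)$, and there is no mechanism in your outline to transfer one to the other.
\end{itemize}

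The actual argument in \cite{MN21} avoids any graphicality or $\mathbf F$-upgrade. It works directly with the first variation of mass: one computes $\delta\big((F_v)_\#T\big)$ against the generating fields and shows it is uniformly close to $\nabla A^\Sigma(v)$ when $\mathcal F(T,[\Sigma])$ is small, then uses a degree/ODE argument on $\overline{\mathbb B}^k$ (the map $v\mapsto(F_v)_\#T$ restricted to $\partial\mathbb B^k$ is flat-close to $v\mapsto[\Sigma_v]$, and $A^\Sigma$ has a strict interior maximum) to locate a $v^\ast$ at which the inequality and its equality case follow. If you want to complete your write-up, that is the missing mechanism; the graphical route does not reach (4).
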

    
    We proceed with the proof in three steps. 

    \medskip
    
    \paragraph*{\bf Step 1. Set up}
    
    Since $\cW_{L, \leq g}(M, \mathbf{g})$ consists of a varifold associated with a non-degenerate multiplicity-one minimal surface $\Gamma$, we define
     $\delta_0 := \delta(M, \mathbf{g}, \Gamma, r/10), $
    where the function $\delta(\cdot, \cdot, \cdot, \cdot)$ is as defined in Corollary~\ref{cor:Pitts_bF_flat_quant}.
    Let $\varepsilon_0 > 0$ and $\{F_v\}_{v \in \overline{\mathbb{B}}^k}$ be the smooth family associated with $\Gamma$ given by Theorem~\ref{thm:local-min-max} such that:
    \begin{itemize}
        \item[(i)] For every $S_1, S_2 \in \GS(M)$ and $v \in \overline{\mathbb{B}}^k$, 
        $$ 
            \bF(F_v(S_1), F_v(S_2)) \leq 2\bF(S_1, S_2)\quad \textrm{ and }\quad  
            \mathbf{F}(F_v(S_1), S_1) \leq \frac{\delta_0}{10 L} \cH^2(S_1) \,.$$
    \end{itemize} 
    We can choose smaller $\varepsilon_0 > 0$ such that:
    \begin{itemize}
        \item[(ii)] For every $S \in \GS(M)$ with $\mathbf{F}(S, \Gamma) < \varepsilon_0$, the function 
            $A^T: \overline{\mathbb{B}}^k \to [0, \infty]$ given by $v \mapsto \cH^2(F_v(S))$ 
        is strictly concave with a unique maximum in $\mathbb{B}^k_{1/2}$.
        \item[(iii)] We have
        \[
            \varepsilon_1 := \min_{v \in \partial \mathbb{B}^k} (\mathcal{H}^2(\Gamma) - \mathcal{H}^2(F_v(\Gamma))) > 0\,.
        \]
    \end{itemize}
    Let $\mathscr{S} := \{F_v(\Gamma)\}_{v \in \overline{\mathbb{B}}^k}$, which is a compact subset of embeddings into $(M, g)$. We set
    \[
        \delta_1 := \delta(M, \bg, \mathscr{S}, m + 1, N_P(\Phi) + g, \min(\delta_0, \varepsilon_0, \varepsilon_1) / 10)
    \] as defined in Proposition~\ref{Prop_Technical Deformation}.

    \medskip

    \paragraph*{\bf Step 2. Initial Simon--Smith family}
    Let $r_0 := \delta_1/10$.
    It follows from Theorem~\ref{thm:currentsCloseInBoldF} with $r = r_0$ that there exist $\eta \in (0, \delta_0/100)$ and $\Phi_1 \in \Lambda(\Phi)$ such that $\mathcal{H}^2(\Phi_1(x)) \geq L - 4\eta$ implies  $ [\Phi_1(x)] \in \bB^\bF_{r_0}([\Gamma])\,.$
    In particular, there exists a Simon--Smith family  $ H_1: [0, 1] \times X \to \GS(M)$ such that $H_1(0, \cdot) = \Phi$ and $H_1(1, \cdot) = \Phi_1$. In fact, for every $x \in X$, $H_1(\cdot, x)$ is a pinch-off process, because it is induced by a one-parameter group of diffeomorphisms.
    
    We refine the cubical subcomplex $X$ so that, for each cell $\sigma$ of $X$, exactly one of the following conditions holds:
    \begin{itemize}
        \item There exists a point $x_0 \in \sigma$ such that $\mathcal{H}^2(\Phi_1(x_0)) \geq L - 2\eta$; Moreover, in this case, for every cell $\sigma'$ with $\sigma \cap \sigma' \neq \emptyset$ and $x \in \sigma'$, $\mathcal{H}^2(\Phi_1(x)) \geq L - 4\eta$.
        \item For every $x \in \sigma$, $\mathcal{H}^2(\Phi_1(x)) < L - 2\eta$.
    \end{itemize}
    We denote the side length of each cell in $X$ by $\ell$.
    Let $Y$ be the smallest cubical subcomplex of $X$ containing all cells that satisfy the first condition, and let $\tilde Y$ be the cubical subcomplex of $X$ whose underlying set is $\{x \in X : d(x, Y) \leq \ell\}\,.$
    Let $Z := \overline{\tilde Y \setminus Y}$, which is compact. Clearly, for every $z \in Z$, 
    \[
        L - 4\eta \leq \cH^2(\Phi_1(z)) \leq L - 2\eta\,.
    \]

    \medskip

    \paragraph*{\bf Step 3. Deformation}
    
    For every $z \in Z$, let $A^z: \overline{\mathbb{B}}^k \to [0, \infty)$ be the function given  by $v\mapsto \mathcal{H}^2(F_v(\Phi_1(z))).$
    By (ii) of Step 1, $A^z$ is strictly concave and has a unique maximum $m(z) \in \mathbb{B}^k_{1/2}$. As shown in \cite[\S 2]{chuLi2024fiveTori}, the Simon--Smith family $\Phi_1$ is continuous in the $\bF$-metric, so the function $m: Z \to \mathbb{B}^k_{1/2} $
    is also continuous.

    It follows from Theorem~\ref{thm:local-min-max} (4) and $\cH^2(\Phi_1(z)) < L$, that $m(z) \neq 0$ for every $z \in Z$. Hence, there exists $\alpha > 0$ such that  $\alpha \leq |m(z)| < 1/2\,. $
    Consider the one-parameter flow $\{\phi^z(\cdot, t)\}_{t \geq 0} \subset \operatorname{Diff}(\overline{\mathbb{B}}^k)$ generated by
        $v \mapsto -(1 - |v|^2) \nabla A^z(v)\,.$
    For every $v \in \overline{\mathbb{B}}^k \setminus m(y)$, $t \mapsto A^z(\phi^z(v, t))$ is decreasing, and the limit $\lim_{t \to \infty} \varphi^z(v, t) \in \partial \mathbb{B}^k$. In particular, by (iii) of Step 1, we have
    \[
        A^z(0) - \lim_{t \to \infty} A^z(\phi^z(v, t)) \geq \varepsilon_1\,.
    \]
    By the compactness of $Z$, we can choose $T_0 > 0$ such that for every $z \in Z$, $t \mapsto A^z(\phi^z(v, t))$ is decreasing along $[0, T_0]$ and
    \[
        A^z(0) - A^z(\phi^z(v, T_0)) \geq \varepsilon_1 / 2\,.
    \]

    Now, let us define a bump function $\xi: [0, \infty) \to [0, 1]$ by
    \[
        \xi(s) := \begin{cases}
            3s/\ell & s \in [0, \ell/3)\\
            1 & s \in [\ell/3, 2\ell/3)\\
            3 - 3s/\ell & s \in [2\ell/3, \ell)\\
            0 & s \in [\ell, \infty)\,.
        \end{cases}
    \] Then we define a new Simon--Smith family $\Phi_2: X \to \GS(M)$ by
    \[
        \Phi_2(x) := F_{\phi^x(0, \xi(d(x, Y))\cdot T_0)}(\Phi_1(x))\,. 
    \]
    We also define a deformation $H_2: [0, 1] \times X \to \GS(M)$ from $\Phi_1$ to $\Phi_2$ by
    \begin{align*}
        H_2(t, x) &= F_{\phi^x(0, t\cdot\xi(d(x, Y))\cdot T_0)}(\Phi_1(x))\,.
    \end{align*}
    Note that when $x \notin Z$, we abuse the notation by setting $F_{\phi^x(0, 0)} = \operatorname{Id}$.
    
    Similarly, for every $x \in X$, $H_2(\cdot, x)$ is also induced by an isotopy, and thus, a pinch-off process. In particular, we have
    \begin{equation}\label{eqn:NP_estimates}
        N_P(\Phi) = N_P(\Phi_1) = N_P(\Phi_2)\,.
    \end{equation}
    Furthermore, $\Phi_2$ has the following properties: For every $x \in X$, 
    \begin{itemize}
        \item[(i)] if $d(x, Y) > 0$, then  $\cH^2(\Phi_2(x)) \leq L - 2\eta\,;$
        \item[(ii)] if $d(x, Y) \leq 2\ell/3$, by (i) of Step 1, 
            \[
                \bF(\Phi_2(x), \mathscr{S}) \leq 2 \bF(\Phi_1(x), \Gamma) < 2 r_0 < \delta_1\,,
            \]
            \begin{align*}
                \bF(\Phi_2(x), \Gamma) & \leq \bF(\Phi_2(x), \mathscr{S}) + \bF(\mathscr{S}, \Gamma) < \delta_1 + \delta_0/10  \leq \delta_0/5,
            \end{align*}
            and thus, $\cF([\Phi_2(x)], [\Gamma]) \leq \delta_0/10\,;$
        \item[(iii)] if $d(x, Y) \in [\ell/3, 2\ell/3]$, then  $\cH^2(\Phi_2(x)) \leq L - 2\eta - \varepsilon_1 / 2\,.$
    \end{itemize}
    \medskip

    \paragraph*{\bf Step 4. Interpolation}

    Let $Y' := \{x \in X : d(x, Y) \leq 2\ell/3\}$. By Step 3 (ii), we can apply Proposition~\ref{Prop_Technical Deformation} to $\Phi_2|_{Y'}$ and obtain a deformation 
    $H_{3, Y'}: [0, 1] \times Y' \to \GS(M)\,.$
    Note that by the choice of $\delta$, we can assume $\varepsilon = \min(\delta_0, \varepsilon_0, \varepsilon_1)/10$ in Proposition~\ref{Prop_Technical Deformation}.

    Consequently, we can define a Simon--Smith family $H_3: [0, 1] \times X \to \GS(M)$ as
    \begin{align*}
        H_3(t, x) = \begin{cases}
            H_{3, Y'}(t, x) & d(x, Y) \leq \ell/3\\
            H_{3, Y'}\left((2-3d(x, Y)/\ell) t, x\right) & x \in Y'\\
            \Phi_2(x), & x \notin Y'\,.
        \end{cases}
    \end{align*}
    We denote $H_3(1, \cdot): X \to \GS(M)$ by $\Phi_3$.
    
    By Proposition~\ref{Prop_Technical Deformation} (1), for every $x \in X$ with $d(x, Y) \leq \ell/3$, we have 
  $\fg(\Phi_3(x)) = \fg(H_3(1, x)) = \mathfrak{g}(\Gamma)\,,$
    and
    \[
        \cH^2(\Phi_3(x)) = \cH^2(H_{3,Y'}(1, x)) \leq \cH^2(\Phi_2(x)) + \delta_0/10\,.
    \]
    By Proposition~\ref{Prop_Technical Deformation} (2), Step 3 (ii) and (iii), for every $x \in X$ with $d(x, Y) \in [\ell/3, 2\ell/3]$, we have
    \[
        \cH^2(\Phi_3(x)) = \cH^2\left((2-3d(x, Y)/\ell) t, x\right) \leq \cH^2(\Phi_2(x)) + \varepsilon_1 / 10 \leq L - 2\eta\,.
    \]
    By Step 3 (i), for every $x \in X$ with $d(x, Y) \geq 2\ell/3$,
    \[
        \cH^2(\Phi_3(x)) = \cH^2(\Phi_2(x)) \leq L - 2\eta\,.
    \]
    Therefore, for every $x \in X$ with $\cH^2(\Phi_3(x)) \geq L - \eta$, we have $d(x, Y) \leq \ell/3$, which implies:
    \begin{itemize}
        \item[(i)] $\mathfrak{g}(\Phi_3(x)) = \mathfrak{g}(\Gamma)$;
        \item[(ii)] $|\cH^2(\Phi_3(x)) - L| \leq \delta_0$;
        \item[(iii)] 
            $\cF([\Phi_3(x)], [\Gamma]) \leq \cF([\Phi_3(x)], [\Phi_2(x)]) + \cF([\Phi_2(x)], [\Gamma])
            < \delta_0\,.$
    \end{itemize}
    In particular, by (ii), (iii) and Corollary~\ref{cor:Pitts_bF_flat_quant}, we can conclude that  $\bF(\Phi_3(x), \Gamma) < r\,.$

    Moreover, by Proposition~\ref{Prop_Technical Deformation} (3), for every $x \in X$, $\{H_3(t, x)\}_{t \in [0, 1]}$ is also a pinch-off process.
    Finally, we can set $\Phi' = \Phi_3$, which satisfies (1) of the theorem. The deformation map $H: [0, 1] \times X \to \GS(M)\,,$
    is simply the concatenation of $H_1$, $H_2$ and $H_3$ defined above.
    Clearly, it is a Simon--Smith family of genus $\leq g$ satisfying (2)(a) and (2)(b) of the theorem.
This completes the proof.

\printbibliography

\end{document}